\DeclareMathOperator{\bN}{{\mathbb{N}}}
\DeclareMathOperator{\gp}{gp}
\DeclareMathOperator{\Res}{Res}
\DeclareMathOperator{\Spec}{Spec}
\DeclareMathOperator{\Zar}{Zar}
\DeclareMathOperator{\et}{\acute{e}t}
\DeclareMathOperator{\dR}{dR}
\DeclareMathOperator{\dRW}{dRW}
\DeclareMathOperator{\crys}{crys}
\DeclareMathOperator{\Rcrys}{Rcrys}
\DeclareMathOperator{\logcrys}{log-crys}
\DeclareMathOperator{\MW}{MW}
\DeclareMathOperator{\logMW}{log-MW}
\DeclareMathOperator{\rig}{rig}
\DeclareMathOperator{\ord}{ord}
\DeclareMathOperator{\Supp}{Supp}
\DeclareMathOperator{\Fil}{Fil}
\begin{document}
\title[On relative and overconvergent de Rham-Witt cohomology]{On relative and overconvergent de Rham-Witt cohomology for log schemes}
\author{Hironori Matsuue}
\thanks{Planning Dept., Treasury Unit,Sumitomo Mitsui Banking Corporation,1-1-2, Marunouchi, Chiyoda-ku, Tokyo, Japan}
\thanks{E-mail:\ \url{matsuue.hironori@gmail.com}}

\newtheorem{thm}{Theorem}[section]
\newtheorem{lem}[thm]{Lemma}
\newtheorem{prop}[thm]{Proposition}
\newtheorem{prop-def}[thm]{Proposition-Definition}
\newtheorem{cor}[thm]{Corollary}
\theoremstyle{definition}
\newtheorem{defn}[thm]{Definition}
\newtheorem{rem}[thm]{Remark}

\begin{abstract}
We construct the relative log de Rham-Witt complex.
This is a generalization of the relative de Rham-Witt complex of Langer-Zink to log schemes.
We prove the comparison theorem between the hypercohomology of the log de Rham-Witt complex
and the relative log crystalline cohomology in certain cases.
We construct the $p$-adic weight spectral sequence for relative proper strict semistable log schemes.
When the base log scheme is a log point, We show it degenerates at $E_2$
after tensoring with the fraction field of the Witt ring.
We also extend the definition of the overconvergent de Rham-Witt complex of Davis-Langer-Zink
to log schemes $(X,D)$ associated with smooth schemes with simple normal crossing divisor over a perfect field.
Finally, we compare its hypercohomology with the rigid cohomology of $X \setminus D$.
\end{abstract}

\maketitle
\tableofcontents

\section{Introduction}
The de Rham-Witt complex $\{ W_m \Omega_X^\bullet \}_{m\in \bN }$ was defined by Illusie \cite{ICrys} for a scheme $X$ of characteristic $p>0$.
He defined it as the initial object of $V$-pro-complexes.
When $X$ is smooth over a perfect scheme, the hypercohomology of the de Rham-Witt complex computes the crystalline cohomology.
Also, Illusie and Raynaud \cite{IR} remarked that
one can define the de Rham-Witt complex by using
the crystalline cohomology sheaf in the process of definition.

Langer and Zink \cite{LZDRW} extended Illusie's definition to relative situations.
Let $S$ be a $\mathbb{Z}_{(p)}$-scheme such that $p$ is nilpotent in $S$.
They defined the log de Rham-Witt complex $\{ W_m \Omega_{X/S}^\bullet \}_{m\in \bN }$
for a scheme $X$ over  $S$.
Their definition is close to that of Illusie: In fact, they
defined it as the initial object of $F$-$V$-pro-complexes.
The hypercohomology of Langer and Zink's de Rham-Witt complex also computes the  crystalline cohomology in smooth cases.

Olsson \cite{OlCrys} extended Langer-Zink's definition to the case of algebraic stacks.
He also gave another possible definition of the de Rham-Witt complex
via the crystalline cohomology sheaf
and compared two definitions, but it seems that they do not always coincide.

It is natural to extend the definition of the de Rham-Witt complex to the case of log schemes in the sense of Fontaine-Illusie-Kato (\cite{KatoLog}),
which is our main interest.
Hyodo and Kato \cite{HK} defined the log de Rham-Witt complex
for a log smooth log scheme of Cartier type over a perfect field of characteristic $p>0$
by using the log crystalline cohomology sheaf.
They also proved the comparison theorem to the log crystalline cohomology.

Nakkajima \cite{Nak} introduced a theory of formal de Rham-Witt complexes
as a kind of axiomatization of Hyodo-Kato's construction.
It also covers the cohomological construction of the de Rham-Witt complex
for smooth schemes with simple normal crossing divisor
over a perfect field of characteristic $p>0$.

In this paper, we construct the log de Rham-Witt complex
for a fine log scheme $X$ over a fine log scheme $S$ over $\mathbb{Z}_{(p)}$.
We follow the definition of Langer-Zink,
and construct the log de Rham-Witt complex as the initial object of log $F$-$V$-pro-complexes.
Note that we cannot apply the methods of Hyodo, Kato and Nakkajima
directly to our log de Rham-Witt complex
because their methods seem to be applicable only to the case of perfect base log schemes
and because the definition using the log crystalline cohomology sheaf seems not to be good in the case of non-perfect base log schemes.
We prove the comparison theorem between the hypercohomology of
the log de Rham-Witt complex
and the relative log crystalline cohomology
in case of relative semistable log schemes
and that of log schemes associated to smooth schemes with normal crossing divisor.

Mokrane \cite{Mo} used the de Rham-Witt complex of Hyodo-Kato to construct
the ($p$-adic) weight spectral sequence for the crystalline cohomology of
strictly semistable log schemes.
He proved its $E_2$-degeneration modulo torsion
when the base scheme is the spectrum of a finite field.
Nakkajima \cite{Nak} extended his result to the case where the base
scheme is the spectrum of any perfect field by using the specialization
argument of Illusie-Deligne (\cite{IReport}).
In this paper, we construct the $p$-adic
weight spectral sequence for the relative crystalline cohomology of
a relative strictly semistable log schemes and prove its $E_2$-degeneration
modulo torsion when the base scheme is the spectrum of a (not necessarily perfect) field.

Since our definition of the log de Rham-Witt complex follows that of
Langer-Zink and differs from that of Hyodo-Kato,
the proof of our results is similar to that of Langer-Zink and
differs from that of Hyodo-Kato and Mokrane.
The key ingredient is to find certain explicit basis of the log de Rham-Witt complex
called the log basic Witt differentials in explicit cases,
which are generalizations of the basic Witt differentials of Langer-Zink.

We also introduce the notion of the overconvergent log de Rham-Witt complex.
Davis, Langer and Zink \cite{DLZODRW} introduced the notion of the overconvergent de Rham-Witt complex
for smooth schemes over a perfect field of characteristic $p > 0$.
They proved the comparison theorem between its hypercohomology and the Monsky-Washnitzer cohomology in the affine case.
They also proved that its hypercohomology calculates the rigid cohomology
in the case of smooth quasi-projective varieties using Gro\ss e-Kl\"onne's theory of dagger spaces \cite{GKDagger}.
We first treat the case of smooth affine varieties with simple normal crossing divisor over a perfect field of characteristic $p > 0$
such that they admit global coordinates and divisors are defined by the coordinates.
We define the overconvergent log de Rham-Witt complex in this case and prove the comparison theorem
between its hypercohomology and the log Monsky-Washnitzer cohomology of Tsuzuki \cite{Tsu}.
More generally, we can extend the definition of the overconvergent log de Rham-Witt complex
to arbitrary log schemes obtained by smooth schemes with simple normal crossing divisor.
By combining the result of local cases with a result in \cite{Tsu}, we can prove
the comparison theorem with rigid cohomology.

The content of each section is as follows:
In \S \ref{sect:preliminaries}, we fix notations which we use in this paper
and give the definition of the crystalline cohomology over non-adic base, which we need later.

In \S \ref{sect:log-F-V-procpx-and-dRWcpx},
we define the log version of $F$-$V$-procomplexes and the de Rham-Witt complex defined by Langer and Zink.
We extend their fundamental results to our log cases.

In \S \ref{sect:basicwittdiffs},
we define the log $p$-basic elements and the log basic Witt differentials in specific cases.
They are generalizations of the $p$-basic elements
and the basic Witt differentials defined in \cite{LZDRW} \S 2.1, 2.2.
We prove that any element of the log de Rham-Witt complex is
written as a convergent sum of the log basic Witt differentials.
The notion of the basic differentials is a powerful tool for us and it plays a role in proofs in the later sections.

In \S \ref{sect:log-witt-lift-and-log-frobenius-lift},
we give the definition of log Witt lifts and log Frobenius lifts for log smooth log schemes.
We prove that there exists a log Frobenius lift \'etale locally.

In \S \ref{sect:comparison-morphism},
We construct the comparison morphism between the log crystalline cohomology
and the hypercohomology of the log de Rham-Witt complex
for log smooth log schemes using log Frobenius lifts.

In \S \ref{sect:comparison},
we prove the comparison theorem for smooth schemes with normal crossing divisor and semistable log schemes.

In \S \ref{sect:ssq-semistable},
we define the weight filtrations of the log de Rham-Witt complex
and construct the $p$-adic Steenbrink complex for proper semistable log schemes over arbitrary base.
The $p$-adic Steenbrink complex defines a spectral sequence,
which we call the $p$-adic weight spectral sequence.
When the base scheme is the spectrum of a (not necessarily perfect) field,
we prove $E_2$-degeneration after tensoring with the fractional field of the Witt ring
by using Nakkajima's specialization method.

In \S \ref{sect:ssq-open-smooth},
we construct the $p$-adic weight spectral sequence of
proper smooth schemes with simple normal crossing divisor and prove
its $E_2$-degeneration (after tensoring with the fraction field of the Witt ring)
when the base scheme is the spectrum of a (not necessarily perfect) field.

In \S \ref{sect:overconv},
we give the definition of the overconvergent log de Rham-Witt complex
for a log scheme $(X,D)$ defined by a smooth scheme $X$
with simple normal crossing divisor $D$ over a perfect field $k$ of characteristic $p >0$.
We see that the overconvergent log de Rham-Witt complex coincides
with the overconvergent de Rham Witt complex of Davis-Langer-Zink (\cite{DLZODRW})
when the log structure is trivial.
We compare the overconvergent log de Rham-Witt cohomology
with the log Monsky-Washnitzer cohomology in affine cases,
and with the rigid cohomology of $X \setminus D$ in general cases.

Finally, note that there exist several other variants of the de Rham-Witt complex:
When $p$ is odd, Hesselholt and Madsen defined the absolute de Rham-Witt complex
$\{ W_m \Omega_X^\bullet \}_{m\in \bN }$ for any $\mathbb{Z}_{(p)}$-scheme $X$ (\cite{HMKtheory}, \cite{HMDRW}).
They also proved the existance of the absolute de Rham-Witt complex for pre-log rings
(\cite{HMKtheory} Proposition 3.2.2).
Hesselholt studied the relation with the Langer and Zink's relative de Rham-Witt complex using $K$-theoretic methods,
with brief sketch also in the logarithmic setting (\cite{He}).
When $p$ is odd and nilpotent in $S$ and $X$ is $S$-scheme,
there is a canonical surjective map $\{ W_m \Omega_X^\bullet \}_{m\in \bN }\twoheadrightarrow \{ W_m \Omega_{X/S}^\bullet \}_{m\in \bN }$
from the absolute de Rham-Witt complex to the Langer-Zink's relative de Rham-Witt complex.
Cuntz and Deninger defined the relative de Rham-Witt complex in arbitrary truncated sets
by a different approach so that the big and the $p$-isotypical theories are covered (\cite{CD}).
It would be an interesting problem to generalize their constructions
to the case of log schemes, and compare them with our construction.

We also remark that there are other studies
to construct a ($p$-adic) weight filtration.
Nakkajima and Shiho \cite{NS} construct a theory of weights
on the log crystalline cohomology of a family of open smooth variety.
They used the log de Rham complex of a lift to define
a weight filtration.
Nakkajima \cite{Nakpreprint} applied their method to
a proper truncated simplicial SNCL
(=simple normal crossing log) scheme
having affine truncated simplicial open covering.
Tsuji \cite{Tsuji} used filtrations of sheaves of $\mathcal{D}$-modules
to construct a weight spectral sequence for a semistable log scheme
over a complete discrete valuation ring.
It is also an interesting problem to consider their situations
using our de Rham-Witt complex and to compare with their results.

\subsection*{Acknowledgments}
This paper is based on my master thesis in the University of Tokyo
under the guidance of my supervisor Atsushi Shiho.
I would like to express my sincere gratitude to
him for the helpful discussions,
reading the draft several times and providing valuable
suggestions for improvement. This work would not have been possible without his advice.
I would also like to thank Yukiyoshi Nakkajima for sending me his preprint \cite{Nakpreprint}.

\subsection*{Notations}
We fix a prime number $p$ throughout this paper.
All schemes are assumed to be defined and separated over $\mathbb{Z}_{(p)}$.

Let $R$ be a ring. For a $W(R)$-module $N$,
we write $N_{[F]}$ for the $W(R)$-module whose underlying set is $N$ and its module structure is obtained by the Frobenius map $F:W(R)\to W(R)$.

If $R$ is an $\mathbb{F}_p$-algebra and $L$ is a sheaf of $W(R)$-modules equipped with an endomorphism $\phi$
which is $F$-linear ($F$: Frobenius  map on $W(R)$) and $r$ is a \textit{negative} integer, the Tate twist $L(r)$ denotes a sheaf $L$ with the endomorphism $p^{-r}\phi$.

We use the convention of Nakkajima about signs. (\cite{Nak}, Conventions)

For a $\mathbb{Z}_{(p)}$-algebra $R$, the ring of Witt vectors of any length $W_m(R)$ has a canonical pd-structure on the ideal $I={}^V W_m(R)$ given by
\[
	\gamma_n({}^V\xi)=\frac{p^{n-1}}{n!}{}^{V}(\xi^n),
	\xi \in W_{m-1}(R), n\ge 1.
\]
We always consider this pd-structure on the ring of Witt vectors.

For a $\mathbb{Z}_{(p)}$-scheme $S$ and an $S$-scheme $X$, $\{ W_m\Omega_{X/S}^\bullet \}_{m\in \bN}$ denotes the de Rham-Witt complex constructed in \cite{LZDRW} \S 1.3.

For a complex $(E^\bullet,d^\bullet)$ and for an integer $n$,
$(E^\bullet \{ n \},d^\bullet \{ n \})$ denotes the following complex:
$(E^\bullet \{ n \})^q := E^{q+n}$ with the boundary morphism $d^\bullet \{ n \}:=d^{\bullet+n}$.

\section{Preliminaries}
\label{sect:preliminaries}

\subsection{Logarithmic geometry}

In this paper, we use freely the terminologies concerning logarithmic geometry in the sense of Fontaine-Illusie-Kato.
The basic reference is \cite{KatoLog}.
All log schemes are assumed to be fine and separated and defined over $\mathbb{Z}_{(p)}$.
If $X$ is a log scheme, we denote by $\mathring{X}$ the underlying scheme of $X$.

\begin{defn}
(1) A pre-log ring is a triple $(A,P,\alpha)$ consisting of a commutative ring $A$,
a commutative fine monoid $P$ and a morphism of monoids $P\to A$
where $A$ is regarded as a monoid by its multiplicative structure.
We usually suppress $\alpha$ in the notation.
We denote by $\{*\}$ the trivial monoid.

(2) If $(A,P)$ is a pre-log ring,
$\Spec (A,P)$ is the log scheme whose underlying scheme is $X = \Spec A$
with the log structure associated to the pre-log structure $P \to \mathcal{O}_X$
induced by the structure map $\alpha: P \to A$.

(3) We say $(Y,\mathcal{N})$ is a log scheme over a pre-log ring $(A,P)$
to mean that $(Y,\mathcal{N})$ comes equipped with a morphism of log schemes $(Y,\mathcal{N}) \to \Spec (A,P)$.
\end{defn}

\begin{defn}
A morphism $(A,P) \to (B,Q)$ of pre-log rings is said to be log smooth (resp. log \'etale)
if the kernel and the torsion part of the cokernel
(resp. the kernel and the cokernel) of $P^{\gp}\to Q^{\gp}$
are finite groups of orders invertible on $B$
and the induced morphism $A\otimes_{\mathbb{Z}[P]}\mathbb{Z}[Q]\to B$ is an \'etale ring map.
\end{defn}

We recall the toroidal characterization of log smoothness
(\cite{KatoLog} (3.5), \cite{KatoLogSmoothDeform} Theorem 4.1):

\begin{thm}
\label{thm:toroidal-characterization-log-smoothness}
Let $f:(X,\mathcal{M})\to (Y,\mathcal{N})$ be a morphism of fine log schemes
and $Q \to \mathcal{N}$ a chart of $\mathcal{N}$.
Then the following conditions are equivalent.

(1) $f$ is log smooth (resp. log \'{e}tale).

(2) There exists \'{e}tale locally a chart
$(P \to \mathcal{M}, Q \to \mathcal{N}, Q \to P)$ of $f$
 extending $Q\to \mathcal{N}$ such that

(a) The kernel and the torsion part of the cokernel
(resp. the kernel and the cokernel)
of $Q^{\gp} \to P^{\gp}$ are finite groups of orders invertible on $X$.

(b) The induced map $X \to Y\times_{\Spec \mathbb{Z}[Q]} \Spec \mathbb{Z}[P]$
of schemes is \'{e}tale (in the usual sense).
\end{thm}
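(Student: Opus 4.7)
The plan is to prove the two directions separately, with $(2) \Rightarrow (1)$ being essentially formal and $(1) \Rightarrow (2)$ being the substantive direction.

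For $(2) \Rightarrow (1)$, I would first observe that the morphism $\Spec \mathbb{Z}[P] \to \Spec \mathbb{Z}[Q]$ (with their canonical log structures) is log smooth (resp.~log étale) in the sense of pre-log rings: the induced ring map $\mathbb{Z}[Q]\otimes_{\mathbb{Z}[Q]}\mathbb{Z}[P] = \mathbb{Z}[P]\to \mathbb{Z}[P]$ is the identity, hence étale, and the condition on $Q^{\gp}\to P^{\gp}$ holds by hypothesis (a). Base change along $Y \to \Spec \mathbb{Z}[Q]$ preserves log smoothness, so $Y\times_{\Spec \mathbb{Z}[Q]}\Spec \mathbb{Z}[P] \to Y$ is log smooth. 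Since étale maps of schemes give log étale maps for the pullback log structure, condition (b) lets us compose to conclude that $f$ itself is log smooth.

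For $(1) \Rightarrow (2)$, I would work étale locally on $X$. First, since $\mathcal{M}$ is fine, I can choose some chart $P_0 \to \mathcal{M}$ étale locally. The difficulty is that this chart may not be compatible with the given chart $Q\to \mathcal{N}$ in the sense of (a): there is no reason for $Q^{\gp}\to P_0^{\gp}$ to have the required finiteness on kernel and torsion cokernel. The key modification is to replace $P_0$ by a monoid $P$ that fits into a compatible chart. One builds $P$ by taking the pushout (or an appropriate pull-back along $\mathcal{M}^{\gp}$) of $P_0$ and $Q$, then adjoining finitely many elements of $\mathcal{M}$ whose images in $\mathcal{M}^{\gp}/\mathcal{O}_X^{\times}$ generate the cokernel of $Q^{\gp}\to \mathcal{M}^{\gp}/\mathcal{O}_X^{\times}$ modulo torsion. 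Log smoothness is used via the differential module: the sheaf $\Omega^{1,\log}_{X/Y}$ is locally free of finite rank, and its rank controls how many generators are needed. The torsion control in (a) comes from the logarithmic Kähler differentials fitting into an exact sequence that detects exactly the kernel and torsion cokernel of $Q^{\gp}\to P^{\gp}$ up to groups of order invertible on $X$.

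Once the chart $(P\to \mathcal{M}, Q\to \mathcal{N}, Q\to P)$ satisfying (a) is constructed, I would verify (b) by using the infinitesimal lifting criterion. Let $g\colon X \to Z:=Y\times_{\Spec \mathbb{Z}[Q]}\Spec \mathbb{Z}[P]$ denote the induced morphism of schemes. The first part gives that $Z\to Y$ is log smooth (resp.~log étale) with the natural log structure from $P$, which agrees with $\mathcal{M}$ on $X$ by construction of the chart. By the universal property of log smoothness and the fact that both $X\to Y$ and $Z\to Y$ are log smooth (resp.~log étale) of the same ``toroidal rank'', $g$ becomes formally étale; combined with being locally of finite presentation this yields étaleness in the classical sense.

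The main obstacle is the combinatorial monoid construction in the $(1) \Rightarrow (2)$ direction: extracting from the abstract log smoothness hypothesis a chart whose group part approximates $Q^{\gp}\to \mathcal{M}^{\gp}$ up to finite torsion of order invertible on $X$. This is where one has to carefully lift generators from $\mathcal{M}^{\gp}/(\text{image of }Q^{\gp}+\mathcal{O}_X^{\times})$ and show that only torsion of invertible order is introduced; the ranks are controlled by $\Omega^{1,\log}_{X/Y}$ via the exact sequence computing log differentials in terms of a chart.
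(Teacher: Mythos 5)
The paper does not actually prove this theorem: it cites it as a known result of Kato (\cite{KatoLog} (3.5) and \cite{KatoLogSmoothDeform} Theorem 4.1), so there is no ``paper's proof'' to compare against. Your sketch is therefore being evaluated against Kato's argument, which it captures in broad outline, but there are two points worth flagging.

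In the $(2)\Rightarrow(1)$ direction, your claim that $\Spec\mathbb{Z}[P]\to\Spec\mathbb{Z}[Q]$ is itself log smooth ``in the sense of pre-log rings'' is not literally correct under the paper's definition. That definition requires the kernel and torsion cokernel of $Q^{\gp}\to P^{\gp}$ to have orders invertible on the \emph{target} ring, i.e.\ on $\mathbb{Z}[P]$, which fails whenever the torsion order is greater than one (nothing greater than one is invertible in a $\mathbb{Z}$-algebra like $\mathbb{Z}[P]$). Condition (a) only gives invertibility on $X$. The argument is salvageable --- you should base change to $Y$, form $Z := Y\times_{\Spec\mathbb{Z}[Q]}\Spec\mathbb{Z}[P]$, and restrict to the open subscheme of $Z$ where the relevant integer is invertible (this contains the image of the étale map from $X$), at which point $Z\to Y$ is log smooth; then $X\to Z$ is étale, hence log étale for the pulled-back log structure, and composing gives the result. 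As written, though, the claim about $\Spec\mathbb{Z}[P]\to\Spec\mathbb{Z}[Q]$ is false.

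In the $(1)\Rightarrow(2)$ direction, the outline is right (pick sections of $\mathcal{M}$ whose $d\log$'s give a basis of the locally free sheaf $\Lambda^1_{(X,\mathcal{M})/(Y,\mathcal{N})}$, then build $P$ by adjoining them to $Q$ together with a few units), but the heart of Kato's proof is the verification that the resulting $P$ satisfies both (a) and (b), which is where the infinitesimal lifting criterion for log smoothness enters in an essential way. Your remark that ``one has to carefully lift generators\ldots and show that only torsion of invertible order is introduced'' names the gap without closing it, and the claim that $X\to Z$ ``becomes formally étale'' by comparing ``toroidal ranks'' is too loose; one actually has to check the square-zero lifting property directly using the exact sequence relating the chart to $\Lambda^1$. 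Since the paper delegates all of this to Kato's references, you are of course entitled to do the same, but as a standalone proof the sketch does not yet close the hard step.
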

We see if $(A,P)\to (B,Q)$ is a log smooth (resp. log \'etale) morphism of pre-log rings,
the induced map $\Spec (B,Q)\to \Spec (A,P)$ is a log smooth (resp. log \'etale) morphism of log schemes.

\begin{defn}
\label{defn:semistable}
(1) Let $f:X\to S$ be a smooth morphism of schemes and $D\subset X$ a reduced Cartier divisor.
Let $j:U:=X\setminus D\to X$ be the natural open immersion.
We call $D$ a simple normal crossing divisor (SNCD)
(resp. a normal crossing divisor (NCD))
if, for any point of $z$ of $D$, there exist a Zariski open neighbourhood $V$ of $z$ in $X$
(resp. an \'{e}tale morphism $V\to X$ such that the image of $V$ contains $z$)
and the following cartesian diagram of schemes
\[
	\xymatrix{
	D\times_X V \ar[r]^{\subset} \ar[d]
	&
	V \ar[d]^{g}
	\\
	\Spec(\mathcal{O}_S[T_1,\ldots,T_n]/(T_1\cdots T_d)) \ar[r]
	&
	\Spec(\mathcal{O}_S[T_1,\ldots,T_n]),
	}
\]
where $g$ is an \'{e}tale morphism.
Let $\mathcal{M}_X:=j^{\text{log}}_*(\mathcal{O}_U^\times)$ be the direct image of the trivial log structure on $U$.
The log scheme $(X,\mathcal{M}_X)$ is log smooth if $D$ is a NCD. By abuse of notation, we write $(X,D)$ instead of $(X,\mathcal{M}_X)$.

(2) Let $S=(\mathring{S},\bN)$ be a log scheme whose log structure is associated to a homomorphism
$\bN \to \Gamma(\mathring{S},\mathcal{O}_{\mathring{S}}); 1\mapsto 0$.
The map $\bN^d\to \mathcal{O}_{\mathring{S}}[T_1,\ldots,T_n]/(T_1 \cdots T_d)$
given by $e_i\mapsto T_i$ defines a fine log scheme
$\Spec(\mathcal{O}_{\mathring{S}}[T_1,\ldots,T_n]/(T_1\cdots T_d),\bN^d)$.

A fine log $S$-scheme $Y$ is called semistable (resp. strictly semistable)
if \'{e}tale locally (resp. Zariski locally) on $Y$,
the structure morphism $f :Y \to S$ has a decomposition
\[
	Y
	\xrightarrow{u}
	\Spec(\mathcal{O}_{\mathring{S}}[T_1,\ldots,T_n]/(T_1\cdots T_d),\bN^d)
	\xrightarrow{\delta}
	S
\]
with $u$ exact and \'{e}tale (in usual sense), $1 \le d \le n$,
and $\delta$ is induced by the diagonal map $\bN \to \bN^d$.
A semistable log $S$-scheme is log smooth and integral over $S$.
\end{defn}

\subsection{Witt scheme}
Let $X$ be a scheme such that $p$ is nilpotent in $X$ and $m$ be a positive integer.
\begin{defn}
(1) The Witt scheme $W_m(X)$ is the scheme $(|X|,W_m(\mathcal{O}_X))$
whose underlying topological space is that of $X$
and whose structure sheaf $W_m(\mathcal{O}_X)$ associates to an affine open subset $U=\Spec R \subset X$ the ring of Witt vectors $W_m(R)$.
We identify the underlying topological spaces of $X$ and $W_m(X)$.
See Appendix A.1 of \cite{LZDRW}.

(2) Let $\alpha : \mathcal{M}\to \mathcal{O}_X$ be a log structure of $X$.
Then the log Witt scheme of the log scheme $(X,\mathcal{M},\alpha)$ is the log scheme
$(W_m(X),W_m(\mathcal{M}),W_m(\alpha))$ whose underlying scheme $W_m(X)$ is
the Witt scheme of $X$, and whose sheaf of monoids $W_m(\mathcal{M})$ is defined by
$\mathcal{M} \oplus \ker(W_m(\mathcal{O}_X)^\times  \to \mathcal{O}_X^\times),$
and whose structure morphism $W_m(\alpha):W_m(\mathcal{M})\to W_m(\mathcal{O}_X)$ is induced by
$\mathcal{M} \to W_m(\mathcal{O}_X), q\mapsto [\alpha (q)]$,
where $[\alpha (q)]$ is the Teichm\"uller lift of $\alpha (q)$.
We sometimes write $W_m(X,\mathcal{M})$ instead of
$(W_m(X),W_m(\mathcal{M}),W_m(\alpha))$.
\end{defn}

\begin{defn}
Let $(A,P,\alpha)$ be a pre-log ring and $m$ a positive integer.

The Witt pre-log ring of $(A,P)$ is the pre-log ring $(W_m(A),P)$ where
the structure morphism is given by $P \to W_m(A), q \mapsto [\alpha (q)]$.
We denote by $W_m(A,P)$ this pre-log ring.
We see $\Spec W_m(A,P)$ is nothing but the Witt scheme of $\Spec (A,P)$.
We also define a pre-log ring $W(A,P)=(W(A),P)$ in a similar way.
\end{defn}

\subsection{Crystalline cohomology over non-adic base}
We give the definitions of crystalline cohomology
and log crystalline cohomology over non-adic base.
Let $A$ be a $\mathbb{Z}_{(p)}$-algebra,
$I_1 \subset A$ an ideal of $A$ equipped with a pd-structure compatible
with the canonical pd-structure on $p\mathbb{Z}_{(p)}\subset\mathbb{Z}_{(p)}$. Let
\[
	A\supset I_1 \supset I_2 \supset \cdots \supset I_m \supset I_{m+1} \supset \cdots
\]
be a decreasing filtration of sub pd-ideals satisfying the following condition:
For all $n \ge m$, there exists a positive integer $a$ (it may depend on $n$ and $m$)
such that $I_m^a \subset I_n$.

Set $A_m := A/I_m$ and we assume that $A$ is complete for the topology defined by $\{I_m\}_{m\in \bN}$, that is, $A\simeq \varprojlim_m A_m$.
We also assume that  $A_1$ is Noetherian and that $p$ is nilpotent in $A_1$.

A basic example is the Witt vector $A=W(R)$ of a Noetherian $\mathbb{Z}_{(p)}$-algebra $R$ in which $p$ is nilpotent
and the ideals $I_m={}^{V^m}W(R)$ equipped with the canonical pd-structure.

Let $X$ be a proper smooth scheme over $A_1$.
We have a canonical morphism of crystalline topoi
$i_{mn}:(X/A_m)_{\crys}\to (X/A_n)_{\crys}$ for $n\ge m$.
We say the system $\mathcal{E}=\{\mathcal{E}_m \}_m$ is a compatible system of locally free crystals of finite rank
if for each $m$, $\mathcal{E}_m$ is a locally free crystal of finite rank on the crystalline site $\text{Crys}(X/A_m)$
and for each $n \ge m$, $i_{mn}^*\mathcal{E}_n\simeq \mathcal{E}_m$.

\begin{defn}
\label{defn:strictly-perfect-cpx}
Let $R$ be a ring and $D(R)$ be the derived category of the category of complexes of $R$-modules.

(1) We call $K^\bullet\in D(R)$ is perfect
if $K^\bullet$ is quasi-isomorphic to a bounded above complex of finite free $R$ modules
and it has finite tor dimension.
This is equivalent to the condition that $K^\bullet$ is quasi-isomorphic to
a bounded complex of finite projective $R$-modules.

(2) We call $K^\bullet\in D(R)$ is strictly perfect
if $K^\bullet$ is quasi-isomorphic to a bounded complex of finite free $R$-modules.
\end{defn}

\begin{lem}
Suppose given $K_m\in D(A_m)$ and a map $K_{m+1}\to K_m$ in $D(A_{m+1})$ for each $m\ge 1$. We assume

(1) $K_1$ is a perfect object, and

(2) The maps induce isomorphisms
\[
	K_{m+1}\otimes_{A_{m+1}}^{\mathbb{L}} A_m
	\to
	K_m.
\]
Then $K=\mathbb{R}\varprojlim K_m$ is a perfect object of $D(A)$
and $K\otimes_A^{\mathbb{L}}A_m\to K_m$ is an isomorphism for all $m$.
\end{lem}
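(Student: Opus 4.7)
The plan is to construct a compatible system of strictly perfect representatives $P_m^\bullet$ for the $K_m$ and then take the inverse limit to obtain a strictly perfect representative for $K$. Since $K_1$ is perfect by hypothesis, we may choose a strictly perfect representative, namely a bounded complex $P_1^\bullet$ of finite free $A_1$-modules quasi-isomorphic to $K_1$.

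The main inductive step is to produce, given $P_m^\bullet$, a strictly perfect complex $P_{m+1}^\bullet$ of finite free $A_{m+1}$-modules together with a quasi-isomorphism $P_{m+1}^\bullet \simeq K_{m+1}$ in $D(A_{m+1})$ and an equality $P_{m+1}^\bullet \otimes_{A_{m+1}} A_m = P_m^\bullet$ compatible, via the quasi-isomorphisms to $K_m$ and $K_{m+1}$, with the assumed derived base-change isomorphism $K_{m+1} \otimes_{A_{m+1}}^{\mathbb{L}} A_m \simeq K_m$. The key observation enabling the lift is that the kernel $I_m/I_{m+1}$ of the surjection $A_{m+1} \twoheadrightarrow A_m$ is nilpotent: by the filtration hypothesis there exists $a$ with $I_m^a \subset I_{m+1}$, so $(I_m/I_{m+1})^a = 0$. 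We may therefore interpose a finite chain of square-zero thickenings between $A_m$ and $A_{m+1}$ by filtering the kernel by its powers, and at each square-zero stage the obstruction to lifting a bounded complex of finite free modules is controlled by $\mathrm{Ext}$-groups which, for strictly perfect complexes, can be killed by adjusting the lifted differentials; the identification with $K_{m+1}$ in the derived category is then forced by the given compatibility of the $K_m$ under derived reduction.

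Granted the compatible system $\{P_m^\bullet\}$, set $P^i := \varprojlim_m P_m^i$ termwise. By construction each $P_m^i$ is free of a fixed rank $r_i$ independent of $m$, and the transition maps $P_{m+1}^i \twoheadrightarrow P_m^i$ are surjective with kernels $(I_m/I_{m+1})\, P_{m+1}^i$, so the completeness $A \simeq \varprojlim_m A_m$ yields that $P^i$ is finite free of rank $r_i$ over $A$ with $P^i \otimes_A A_m = P_m^i$. Hence $P^\bullet$ is strictly perfect over $A$, and since the $P^i$ are flat the isomorphism $P^\bullet \otimes_A A_m \simeq P_m^\bullet$ coincides with the derived base change. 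Surjectivity of the transition maps in each degree yields $\mathbb{R}\varprojlim P_m^\bullet = \varprojlim P_m^\bullet = P^\bullet$ by Mittag-Leffler, so $K = \mathbb{R}\varprojlim K_m \simeq P^\bullet$ is perfect over $A$ and the map $K \otimes_A^{\mathbb{L}} A_m \to K_m$ is identified with the already-established isomorphism $P^\bullet \otimes_A A_m \simeq P_m^\bullet \simeq K_m$.

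The hard part is the inductive lifting step: one must not only produce a lift of $P_m^\bullet$ to a strictly perfect complex over $A_{m+1}$ but ensure that it realizes the prescribed object $K_{m+1}$ in $D(A_{m+1})$, compatibly across the chain of intermediate square-zero extensions used to handle the non-square-zero thickening $A_{m+1} \twoheadrightarrow A_m$. This requires a careful deformation-theoretic argument, either through obstruction theory for perfect complexes or through a direct construction exploiting the freeness of the terms to adjust differentials; once this bookkeeping is in place the remaining steps reduce to standard applications of the Mittag-Leffler condition and of completeness.
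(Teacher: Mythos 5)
Your proposal takes essentially the same approach as the paper: choose a bounded complex of finitely generated projective modules representing $K_1$, inductively lift it across the nilpotent thickenings $A_{m+1}\twoheadrightarrow A_m$ to a compatible system of such representatives, and pass to the (derived = ordinary) inverse limit. The paper outsources the inductive lifting step to \cite{Stacks} More on Algebra, Tag 09AR, whereas you sketch the deformation-theoretic argument that underlies that tag (factoring through square-zero extensions and controlling obstructions); this is the same mechanism, just spelled out versus cited.

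One small inaccuracy: you insist on finite \emph{free} terms (strictly perfect representatives) for $P_1^\bullet$, but perfectness of $K_1$ only guarantees a bounded complex of finite \emph{projective} $A_1$-modules. The paper correctly works with projectives. The rest of your argument (lifting, termwise limits, Mittag-Leffler) goes through verbatim with projective terms, once one notes that finite projective modules lift uniquely along nilpotent surjections and that the termwise inverse limit of a compatible system of finite projective $A_m$-modules is finite projective over $A\simeq\varprojlim A_m$; so the discrepancy is easily repaired and does not affect the structure of the proof.
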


\begin{proof}
Take $P_1$ a bounded complex of finite projective $R$-modules such that
$K_1$ is quasi-isomoprhic to $P_1$.
We know $I_m / I_{m+1}$ is nilpotent for all $m$.
By \cite{Stacks} More On Algebra, Tag 09AR, we find inductively for all $m$
a bounded complex of finite projective $R$-modules $P_m$ such that
there is an isomorphism of complexes $P_{m+1}\otimes_{A_{m+1}}A_m \simeq P_m$.
In this way $P_{m+1}$ has the same amplitude as $P_m$ and each term of
pro-complex $P_\bullet$ satisfies the Mittag-Leffler condition.
Hence $K:=\mathbb{R}\varprojlim K_m=\varprojlim P_m$ and it satisfies the conditions from the lemma.
\end{proof}

\begin{prop}
\label{prop:derivedlimit}
(cf. \cite{Stacks} Crystalline Cohomology, Tag 07MX)

There exists a perfect object $\mathbb{R}\Gamma_{\crys}(X/A,\mathcal{E})$ in $D(A)$ such that
\[
	\mathbb{R}\Gamma_{\crys}(X/A,\mathcal{E})\otimes_A^{\mathbb{L}}A_m
	\simeq
	\mathbb{R}\Gamma_{\crys}(X/A_m,\mathcal{E}_m).
\]
\end{prop}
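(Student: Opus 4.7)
The plan is to apply the preceding lemma to the pro-system $K_m := \mathbb{R}\Gamma_{\crys}(X/A_m, \mathcal{E}_m)$ equipped with the transition maps induced by the morphism of crystalline topoi $i_{m,m+1} : (X/A_m)_{\crys} \to (X/A_{m+1})_{\crys}$ together with the compatibility isomorphism $i_{m,m+1}^* \mathcal{E}_{m+1} \simeq \mathcal{E}_m$. If both hypotheses of the lemma can be verified, then $\mathbb{R}\varprojlim_m K_m$ is automatically a perfect object of $D(A)$ whose derived reduction modulo $I_m$ recovers $K_m$, which is exactly the statement.

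The first hypothesis is that $K_1 = \mathbb{R}\Gamma_{\crys}(X/A_1, \mathcal{E}_1)$ is perfect. This is the classical finiteness theorem for crystalline cohomology: since $X$ is proper smooth over the Noetherian ring $A_1$ on which $p$ is nilpotent, and $\mathcal{E}_1$ is a locally free crystal of finite rank, Berthelot's theorem guarantees that $\mathbb{R}\Gamma_{\crys}(X/A_1, \mathcal{E}_1)$ is a perfect complex of $A_1$-modules. This step is essentially a citation.

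The second, and main, hypothesis is the derived base change isomorphism
\[
K_{m+1} \otimes_{A_{m+1}}^{\mathbb{L}} A_m \xrightarrow{\sim} K_m.
\]
Here one must use that $A_m = A_{m+1}/(I_m/I_{m+1})$ and that the ideal $I_m/I_{m+1}$ is a sub pd-ideal. The general base change formula for crystalline cohomology along a pd-morphism of pd-bases (Stacks tag 07MX, cited in the statement; ultimately a derived version of Berthelot–Ogus' base change theorem) gives precisely this comparison when the crystal $\mathcal{E}_{m+1}$ pulls back to $\mathcal{E}_m$, which is guaranteed by the assumed compatibility of the system $\mathcal{E}$. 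This is the step I expect to be the main obstacle, because one needs the base change to hold in derived form rather than just on cohomology; however, for locally free crystals of finite rank on a proper smooth $X/A_1$ it is standard.

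With these two inputs, the preceding lemma produces a perfect object $\mathbb{R}\varprojlim_m K_m \in D(A)$ together with isomorphisms $\mathbb{R}\varprojlim_m K_m \otimes_A^{\mathbb{L}} A_m \simeq K_m$ for every $m$. Defining $\mathbb{R}\Gamma_{\crys}(X/A, \mathcal{E}) := \mathbb{R}\varprojlim_m K_m$ therefore yields the desired object. One may also record independence of $\mathbb{R}\Gamma_{\crys}(X/A, \mathcal{E})$ from the choice of filtration by noting that any two filtrations satisfying the hypotheses give cofinal pro-systems, so that the derived limit is intrinsic to the completion $A \simeq \varprojlim_m A_m$.
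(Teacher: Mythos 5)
Your proof is correct and follows the same strategy as the paper: apply the preceding lemma to the pro-system $K_m = \mathbb{R}\Gamma_{\crys}(X/A_m,\mathcal{E}_m)$, using derived base change (Berthelot--Ogus Theorem 7.8) for the second hypothesis and finiteness of crystalline cohomology for the first. The only difference is that the paper does not merely cite the perfectness of $K_1$: it proves it explicitly by passing through the de Rham comparison ($\mathbb{R}\Gamma_{\crys}(X/A_1,\mathcal{E}_1)\simeq \mathbb{R}\Gamma_{\Zar}(X,(\mathcal{E}_1)_X\otimes\Omega^\bullet_{X/A_1})$) and then using the stupid filtration to reduce to coherent cohomology of the locally free sheaves $(\mathcal{E}_1)_X\otimes\Omega^q_{X/A_1}$ on a proper scheme over a Noetherian base.
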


\begin{proof}
Base change theorem (\cite{BO} Theorem 7.8) gives
\[
	\mathbb{R}\Gamma_{\crys}(X/A_{m+1},\mathcal{E}_{m+1})\otimes_{A_{m+1}}^{\mathbb{L}}A_m
	\simeq
	\mathbb{R}\Gamma_{\crys}(X/A_m,\mathcal{E}_m)
\]
for every $n$. By this result and the comparison theorem (\cite{BO} Theorem 7.1) we obtain
\[
	\mathbb{R}\Gamma_{\crys}(X/A_1,\mathcal{E}_1)
	\simeq
	\mathbb{R}\Gamma_{\Zar}(X,(\mathcal{E}_1)_X \otimes_{\mathcal{O}_X} \Omega_{X/A_1}^\bullet).
\]
We show first that $\mathbb{R}\Gamma_{\Zar}(X,(\mathcal{E}_1)_X \otimes \Omega_{X/A_1}^\bullet)$ is perfect.
By using the stupid filtration on $(\mathcal{E}_1)_X \otimes \Omega_{X/A_1}^\bullet$,
we are reduced to showing that $\mathbb{R}\Gamma_{\Zar}(X,(\mathcal{E}_1)_X \otimes \Omega_{X/A_1}^q)$ is perfect
by \cite{Stacks} More On Algebra, Tag 066R.
It follows from the fact that $(\mathcal{E}_1)_X \otimes \Omega_{X/A_1}^q$ is
a locally free sheaf of finite type and $X$ is proper over a Noetherian ring $A_1$.
Thus we have a perfect object $D:=\mathbb{R}\varprojlim_m \mathbb{R} \Gamma_{\crys}(X/A_m,\mathcal{E}_m)$
and it has the property $D\otimes_A^{\mathbb{L}}A_m \simeq \mathbb{R}\Gamma_{\crys}(X/A_m,\mathcal{E}_m)$.
\end{proof}

We have the same proposition for a proper log smooth integral scheme $X$　(in the sense of \cite{KatoLog} Definition 4.3) over $A_1$, with
$A$ (hence $A_1$) endowed with a log structure.
(Use \cite{KatoLog} Theorem (6.10)).

\begin{defn}
Assume that $R$ is a Noetherian $\mathbb{Z}_{(p)}$-algebra in which $p$ is nilpotent.

If $X$ is a proper smooth scheme over $R$ and $\mathcal{E}=\{\mathcal{E}_m \}_m$ is a compatible system of locally free crystals of $X$,
we define crystalline cohomology of $X$ with coefficients $\mathcal{E}$ over $W(R)$ by $H^*_{\crys}(X/W(R),\mathcal{E}):=\mathbb{R}^*\Gamma_{\crys}(X/W(R),\mathcal{E})$.
We define crystalline cohomology of $X$ by $H^*_{\crys}(X/W(R)):=H^*_{\crys}(X/W(R),\mathcal{O}_{X/W(R)})$.

If $(R,P)$ is a pre-log ring and $X$ is a log scheme proper log smooth integral over $(R,P)$,
we define the log crystalline cohomology $H^*_{\logcrys}(X/W(R,P))$ in the similar fashion.
\end{defn}

\begin{thm} \label{thm:infinitecomparison}
Let $R$ be a Noetherian $\mathbb{Z}_{(p)}$-algebra in which $p$ is nilpotent, and $X$ be a proper smooth scheme over $R$.
Then we have a canonical isomorphism
\[
	H_{\crys}^* (X/W(R))
	\to
	\mathbb{H}_{\Zar}^* (X,W\Omega_{X/R}^\bullet).
\]
\end{thm}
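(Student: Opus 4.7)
The plan is to establish the isomorphism at the level of complexes by taking the derived inverse limit of the finite-level Langer-Zink comparison, and then pass to cohomology. More precisely, I would proceed in the following steps.

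First, I would invoke the Langer-Zink comparison theorem at each truncation level: for every $m\ge 1$, there is a canonical isomorphism $\mathbb{R}\Gamma_{\crys}(X/W_m(R))\simeq \mathbb{R}\Gamma_{\Zar}(X,W_m\Omega_{X/R}^\bullet)$ in $D(W_m(R))$, and these isomorphisms are compatible with the transition maps induced by the projections $W_{m+1}\twoheadrightarrow W_m$ and the restriction maps $W_{m+1}\Omega_{X/R}^\bullet\twoheadrightarrow W_m\Omega_{X/R}^\bullet$.

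Second, I would apply $\mathbb{R}\varprojlim_m$ on both sides. On the crystalline side, Proposition \ref{prop:derivedlimit} (applied with $A=W(R)$, $A_m=W_m(R)$, $I_m={}^{V^m}W(R)$) identifies $\mathbb{R}\varprojlim_m \mathbb{R}\Gamma_{\crys}(X/W_m(R))$ with the perfect object $\mathbb{R}\Gamma_{\crys}(X/W(R))$. On the de Rham-Witt side, I would argue that
\[
  \mathbb{R}\varprojlim_m \mathbb{R}\Gamma_{\Zar}(X,W_m\Omega_{X/R}^\bullet) \simeq \mathbb{R}\Gamma_{\Zar}(X,W\Omega_{X/R}^\bullet).
\]
The key point here is that the transition maps $W_{m+1}\Omega_{X/R}^q\to W_m\Omega_{X/R}^q$ are surjective morphisms of quasi-coherent sheaves on the Noetherian scheme $X$, so the pro-system $\{W_m\Omega_{X/R}^q\}_m$ satisfies Mittag-Leffler and $\mathbb{R}^i\varprojlim$ vanishes for $i>0$ on each term. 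Combined with the fact that $X$ is quasi-compact and quasi-separated (so $\mathbb{R}\Gamma_{\Zar}$ commutes with $\mathbb{R}\varprojlim$ of such systems) and with $W\Omega_{X/R}^\bullet = \varprojlim_m W_m\Omega_{X/R}^\bullet$ by definition, one obtains the displayed quasi-isomorphism.

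Putting these two steps together yields a canonical isomorphism $\mathbb{R}\Gamma_{\crys}(X/W(R))\simeq \mathbb{R}\Gamma_{\Zar}(X,W\Omega_{X/R}^\bullet)$ in $D(W(R))$; taking $H^*$ gives the asserted comparison. The main technical obstacle will be the second step: verifying rigorously that derived limits commute with Zariski hypercohomology on this pro-system, and that the compatible system of finite-level comparison isomorphisms of Langer-Zink assembles into a morphism of pro-objects (not merely isomorphisms at each level) so that $\mathbb{R}\varprojlim$ can be applied. Once this bookkeeping is settled, the remaining ingredients---perfectness and base change from Proposition \ref{prop:derivedlimit}, together with surjectivity of the restriction maps of the de Rham-Witt pro-complex---deliver the theorem.
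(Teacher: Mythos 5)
Your overall strategy matches the paper's exactly: take the finite-level Langer--Zink comparison, assemble it into a morphism of pro-objects, apply $\mathbb{R}\varprojlim$, and identify the two sides with $\mathbb{R}\Gamma_{\crys}(X/W(R))$ (via Proposition~\ref{prop:derivedlimit}) and $\mathbb{R}\Gamma_{\Zar}(X,W\Omega^\bullet_{X/R})$ (via surjectivity of the restriction maps, which is the content of \cite{LZDRW} Proposition~1.13). The final sentence of your proposal correctly isolates the obstacle.

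But the obstacle you defer as ``bookkeeping'' is in fact the entire content of the proof, and you cannot simply cite ``the Langer--Zink comparison theorem at each truncation level'' as already providing the compatibility. Each isomorphism $\mathbb{R}u_{m*}\mathcal{O}_{X/W_m(R)}\to W_m\Omega_{X/R}^\bullet$ is built, after a simplicial reduction, from a choice of embedding of $X$ into a smooth affine $Y$ with a Frobenius lift $Y_m$, and one must show that the resulting arrows commute with the restriction maps on both sides; this is not automatic from the statement of the finite-level theorem. The paper handles it by two explicit diagram-chases: first, naturality of the Berthelot--Ogus comparison (\cite{BO} Theorem~7.1, Remark~7.5) makes $\mathbb{R}u_{m*}\mathcal{O}_{X/W_m(R)}\to\Omega^\bullet_{\overline{Y}_m/W_m(R)}$ compatible with the projection $m\mapsto m-1$; second, the very definition of the Frobenius lift gives a commutative square
\[
\xymatrix{
	\mathcal{O}_{Y_m} \ar[r]^-{\Delta_m^*} \ar[d] & W_m(\mathcal{O}_Y) \ar[d] \\
	\mathcal{O}_{Y_{m-1}} \ar[r]^-{\Delta_{m-1}^*} & W_{m-1}(\mathcal{O}_Y),
}
\]
which forces the maps $\Omega^\bullet_{\overline{Y}_m/W_m(R)}\to W_m\Omega^\bullet_{X/R}$ to also commute with the projections. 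Without this step you have a family of isolated isomorphisms $K_m\simeq L_m$ in each $D(W_m(R))$, and there is no object of the pro-derived category $D(\bN,(W_m(R)))$ to which $\mathbb{R}\varprojlim$ can be applied. So the missing idea is precisely the Witt-lift/Frobenius-lift machinery that rigidifies the comparison morphism enough to make it a morphism of inverse systems; once that is supplied, the rest of your argument goes through and coincides with the paper's.
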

\begin{proof}
Let $u_m : (X/W_m(R))_{\crys}\to X_{\Zar}$ be the canonical morphism of topoi.
Using the simplicial method (cf. \cite{LZDRW} \S 3.2) we may assume $X$ is embedded in a smooth affine scheme $Y$ which admits a Frobenius lift $Y_m$.
Let $\overline{Y}_m$ be the pd-envelope of the closed immersion $X \hookrightarrow Y_m$.
From the naturality of the comparison morphism of crystalline cohomology and de Rham cohomology (\cite{BO} Theorem 7.1, Remark 7.5)
we have a commutative diagram
\[
\xymatrix{
	\mathbb{R} u_{m*}\mathcal{O}_{X/W_m(R)} \ar[r] \ar[d]
	&
	\Omega_{\overline{Y}_m/W_m(R)}^\bullet \ar[d]
	\\
	\mathbb{R} u_{m-1*}\mathcal{O}_{X/W_{m-1}(R)} \ar[r]
	&
	\Omega_{\overline{Y}_{m-1}/W_{m-1}(R)}^\bullet .
}
\]
Moreover, the Frobenius lift $Y_m$ makes the following diagram commutative
\[
\xymatrix{
	\mathcal{O}_{Y_m} \ar[r]^-{\Delta_m^*} \ar[d]
	&
	W_m(\mathcal{O}_Y) \ar[d]
	\\
	\mathcal{O}_{Y_{m-1}} \ar[r]^-{\Delta_{m-1}^*}
	&
	W_{m-1}(\mathcal{O}_Y),
}
\]
where $\Delta_n^*\ (n=m-1,m)$ is the map induced by the morphism
$\Delta_n:W_n(Y)\to Y_n$ in \cite{LZDRW} (3.5).
Hence we see comparison isomorphisms
$\mathbb{R} u_{m*}\mathcal{O}_{X/W_m(R)}\to W_m\Omega_{X/R}^\bullet$
are compatible with restriction
and then obtain the canonical isomorphism
$\mathbb{R}\Gamma_{\crys}(X/W_*(R))\to \mathbb{R}\Gamma_{\Zar}(X,W_*\Omega_{X/R}^\bullet)$ in $D(\bN,(W_m(R)))$.
Apply $\mathbb{R}\varprojlim$ to this,
then we get the isomorphism $\mathbb{R}\Gamma_{\crys}(X/W(R))\to \mathbb{R}\Gamma_{\Zar}(X,W\Omega_{X/R}^\bullet)$
by Proposition \ref{prop:derivedlimit} and \cite{LZDRW} Proposition 1.13.
\if0
We obtain a exact commutative diagram:
\[
\xymatrix{
	0 \ar[r] & \mathbb{R}^1\varprojlim H_{\crys}^{q-1}(X/W_n(R)) \ar[d] \ar[r] & H_{\crys}^q (X/W(R)) \ar[d] \ar[r] & \varprojlim H_{\crys}^q(X/W_n(R)) \ar[d]\ar[r] & 0\\
	0 \ar[r] & \mathbb{R}^1\varprojlim \mathbb{H}^{q-1}(X,W_n\Omega_{X/R}^\bullet) \ar[r] & \mathbb{H}^q(X,W\Omega_{X/R}^\bullet) \ar[r] & \varprojlim \mathbb{H}^q(X,W_n\Omega_{X/R}^\bullet) \ar[r] & 0
}
\]
The middle arrow is an isomorphism because the left and right vertical arrows are isomorphism.
\fi
\end{proof}

\section{Log \texorpdfstring{$F$}{F}-\texorpdfstring{$V$}{V}-procomplexes and log de Rham-Witt complex}
\label{sect:log-F-V-procpx-and-dRWcpx}

\subsection{Log pd-derivations}
\begin{defn}
(\cite{Og} Definition 1.1.9)

Let $\theta$ be a morphism of pre-log rings
\[
\xymatrix{
	Q \ar[r]^\beta
	&
	S
	\\
	P \ar[r]^\alpha \ar[u]^{\theta^\flat}
	&
	R \ar[u]_{\theta^\sharp}
}
\]
and let $M$ be a $S$-module.
Then a log-derivation of $(S,Q)/(R,P)$ with values in $M$ is a pair $(D,\delta)$,
where $D : S \to M$ is a derivation of $S/R$ with values in $M$
and $\delta : Q \to M$ is a homomorphism of monoids
such that the following conditions are satisfied:

(1) For every $q\in Q, D(\beta(q)) = \beta(q)\delta(q).$

(2) For every $p\in P, \delta(\theta^\flat(p))=0.$
\end{defn}

\begin{defn}
Let $R$ be a ring and $S$ an $R$-algebra. Let $I \subset S$ be an ideal equipped with a pd-structure $\{\gamma_n \}_{n\in \bN}$. Let $M$ be a $S$-module.

(1) (\cite{LZDRW} Definition 1.1)
A pd-derivation of $S/R$ with values in $M$ is a derivation
$D : S \to M$ of $S/R$ which satisfies
\[
	D (\gamma_n(b)) = \gamma_{n-1}(b)D (b)
\]
for $n \ge 1$ and each $b \in I$.

(2) Let $(R,P) \to (S,Q)$ be a morphism of pre-log rings.
A log derivation $(D : S \to M, \delta : Q \to M)$ is called a log pd-derivation if $D$ is a pd-derivation.
\end{defn}

We denote by $\breve{\mbox{Der}}_{(R,P)}((S,Q),M)$ the set of log pd-derivations.
The functor $M \mapsto \breve{\mbox{Der}}_{(R,P)}((S,Q),M)$ is representable by a universal object
\[
	(d : S \to \breve{\Lambda}^1_{(S,Q)/(R,P)},
	d\log : Q \to \breve{\Lambda}^1_{(S,Q)/(R,P)}),
\]
where the $S$-module $\breve{\Lambda}^1_{(S,Q)/(R,P)}$ is obtained as the quotient module of the log differentials $\Lambda^1_{(S,Q)/(R,P)}$
by the submodule generated by all elements $d (\gamma_n(b))-\gamma_{n-1}(b)db$ for $b\in I,n\ge 1$.

\begin{defn}
(1) Let $R\to S$ be a morphism of rings. A differential graded $S/R$-algebra is a unitary graded $S$-algebra
\[
	E^\bullet=\bigoplus_{i\ge 0}E^i
\]
equipped with an $R$-linear differential $d:E^\bullet \to E^\bullet$ such that the following relations hold:
\begin{align*}
	\omega \eta &= (-1)^{ij} \eta \omega,\ \omega \in E^i, \eta \in E^j, \\
	\omega \cdot \omega &= 0,\ \text{with}\ i \ \text{odd}, \\
	d(\omega \eta) &= d(\omega) \eta + (-1)^i \omega d(\eta),\ \omega \in E^i, \eta \in E^j,\\
	d^2 &= 0.
\end{align*}

(2) Let $(R,P)\to (S,Q)$ be a morphism of pre-log rings.
A log differential graded $(S,Q)/(R,P)$-algebra is a triple $(E^\bullet,d ,\partial)$,
where $(E^\bullet, d)$ is a differential graded $S/R$-algebra and $\partial : Q \to E^1$ is a morphism of monoids,
such that $(d : S\to E^0 \to E^1, \partial)$ is a log derivation and $d\partial = 0$.

A morphism of log differential graded $(S,Q)/(R,P)$-algebras
$f:(E^\bullet ,d, \partial) \to(E^{\prime \bullet} ,d^\prime, \partial^\prime)$ is a morphism of differential graded $S/R$-algebras
$f:(E^\bullet ,d) \to (E^{\prime \bullet} ,d^\prime)$ that satisfies $\partial^\prime = f \circ \partial$.

(3) A log pd-differential graded $(S,Q)/(R,P)$-algebra is a log differential graded $(S,Q)/(R,P)$-algebra $(E^\bullet ,d, \partial)$ such that $d : S \to E^0 \to E^1$ is a pd-derivation.
\end{defn}

 \subsection{Frobenius action on log pd-derivations}
We consider a continuous (i.e. it factors through $W_l(S)$ for some $l > 0$) $W(R)$-linear pd-derivation $D :W(S) \to M$ to
a discrete $W(S)$-module (i.e. it is obtained by restriction of scalars $W(S) \to W_l(S)$ for some $l > 0$).
By \cite{LZDRW} \S 1.1, we have a map $^F D : W(S)  \to M$
given by $\xi = [x]+{}^{V}\rho \mapsto [x^{(p-1)}]D ([x])+D(\rho), x \in S$.
Then $^F D : W(S) \to M_{[F]}$ is a continuous $W(R)$-linear pd-derivation. We extend this to pre-log rings.

Let $(S,Q)$ be a pre-log ring and $(D ,\delta):W(S,Q) \to M$ a $W(R)$-linear log pd-derivation. Then the pair $(^F D , \delta )$ is also a log pd-derivation. In fact, for $q\in Q$, we have
\begin{align*}
	^F D (W(\alpha)(q))
	&= {}^FD ([\alpha(q)])\\
	&= [\alpha(q)^{(p-1)}]D ([\alpha(q)])\\
	&= [\alpha(q)^{(p-1)}]\cdot [\alpha(q)] \delta(q)\\
	&= {}^F(W(\alpha)(q))\delta(q).
\end{align*}

By the universal property of the logarithmic differential sheaf,
we obtain a morphism $F : \breve{\Lambda}^1_{W_{m+1}(S,Q)/W_{m+1}(R,P)}\to (\breve{\Lambda}^1_{W_m(S,Q)/W_m(R,P)})_{[F]}$ from
\[
	(d, d\log ) : (W_m(S),Q)
	\to
	\breve{\Lambda}^1_{W_m(S,Q)/W_m(R,P)}
\]
and it satisfies $F\circ (d,d\log)=({}^F d,d\log)$.
By definition, we obtain
\begin{align*}
	^F(d\xi)&=(^Fd)(\xi), \ \xi \in W_{m+1}(S),\\
	^F(d\log m)&=d\log m, \ m\in Q,\\
	^Fd([x])&=[x]^{p-1}d[x], \ x\in S,\\
	d(^F\xi )&=p{}^Fd\xi, \ \xi \in W_{m+1}(S)\\
	^Fd(^V\xi) &=d\xi,\ \xi \in W_m(S).
\end{align*}

\subsection{Log \texorpdfstring{$F$}{F}-\texorpdfstring{$V$}{V}-procomplexes}
Let $(R,P) \to (S,Q)$ be a morphism of pre-log rings.

\begin{defn}
A log $F$-$V$-procomplex over $(R,P)$-algebra $(S,Q)$ is a projective system
\[
	\{ E_m^\bullet=(E_m^\bullet,D_m, \partial_m), \pi_m : E_{m+1}^\bullet \to E_m^\bullet\}_{m\in\bN}
\]
of a log differential graded $W_m(S,Q)/W_m(R,P)$-algebra $(E_m^\bullet,D_m ,\partial _m)$,
\[
	\ldots
	\to E_{m+1}^\bullet
	\xrightarrow{\pi_m} E_{n}^\bullet
	\to \ldots
	\to E_1^\bullet
	\to E_0^\bullet=0.
\]
Moreover, $\{E_m^\bullet \}$ is equipped with two sets of homomorphisms of graded abelian groups,
\[
F:E_{m+1}^\bullet \to E_m^\bullet,
V:E_m^\bullet\to E_{m+1}^\bullet,
m\ge 0,
\]
and the following properties hold.

(i) $\partial_m$ are compatible with $\pi_m$,
i.e., $\partial_m=\pi_m \circ \partial_{m+1}$ for any $m \ge 0$.

(ii) The morphisms $W_m(S) \to E_m^0$ are compatible with $F$ and $V$ for any $m \ge 0$.

(iii) The restriction maps $\pi_m : E_m^\bullet \to E_{m-1}^\bullet$ are compatible with $F$ and $V$ for any $m \ge 1$.

(iv) Let $E_{m,[F]}^\bullet$ be the graded $W_{m+1}(S)$-algebra obtained via restriction of scalars $F:W_{m+1}(S) \to W_m(S)$. Then $F$ induces a homomorphism of graded $W_{m+1}(S)$-algebras,
\[
	F:E_{m+1}^\bullet
	\to
	E_{m,[F]}^\bullet.
\]

(v) We have
\begin{align*}
	^{FV}\omega &= p\omega, \ \omega \in E_m^\bullet,\ n\ge 0, \\
	^{F}D_{m+1}{}^{V}\omega &= D_m\omega, \\
	^{F}D_{m+1}[x] &= [x^{p-1}]D_m[x], \ x\in S, \\
	^{V}(\omega ^{F}\eta) &= (^{V}\omega)\eta, \ \eta \in E_{m+1}^\bullet,\\
	^{F}\partial_{m+1} (q) &= \partial_m (q), \ q\in Q.
\end{align*}
\end{defn}

A morphism of log $F$-$V$-procomplexes $f:\{E_m^\bullet = (E_m^\bullet,D_m,\partial_m),\pi\} \to \{E_m^{\prime \bullet} = (E_m^{\prime \bullet},D_m^\prime,\partial_m^\prime),\pi^\prime\}$ is a morphism of
pro-log differential graded $W_*(S,Q)/W_*(R,P)$-algebras $f=\{f_m:E_m^\bullet \to E_m^{\prime \bullet} \}$ that is compatible with $F$ and $V$.

\subsection{Construction of log de Rham-Witt complex}
\label{subsect:odrwc-construction}

Let $R$ be a $\mathbb{Z}_{(p)}$-algebra.
For a pre-log ring $(S,Q)$ over $(R,P)$, we construct the log de Rham-Witt complex
\[
	\{ W_m\Lambda^{\bullet} \}_{m\in \bN}
	= \{W_m\Lambda^{\bullet}_{(S,Q)/(R,P)} \}_{m\in \bN}
\]
as the universal log $F$-$V$-procomplex by induction on $m$.
We set
\[
W_1\Lambda^{\bullet}
:=\Lambda^{\bullet}_{(S,Q)/(R,P)}
=\breve{\Lambda}^{\bullet}_{W_1(S,Q)/W_1(R,P)}.
\]
To define $W_{m+1}\Lambda^\bullet$ we assume that we have

\begin{itemize}
\item $\{W_n\Lambda^{\bullet}\}_{n\le m}$, a system of log pd-differential graded $W_n(S,Q)/W_n(R,P)$-algebras $W_n\Lambda^{\bullet}$,
\item $\breve{\Lambda}^{\bullet}_{W_n(S,Q)/W_n(R,P)} \to W_n\Lambda^{\bullet}$,
for $n\le m$, surjective morphisms of log differential graded algebras
which are compatible with the restriction maps and with $F$,
\item $V:W_n\Lambda^{\bullet} \to W_{n+1}\Lambda^{\bullet}$, additive maps for $1\le n < m$,
\item $W_n\Lambda^0=W_n(S)$ for $1\le n < m$,
\item $^{FV}\omega = p\omega,\ ^Fd^V\omega = \omega,\ ^V(\omega ^F\eta )= {}^V\omega \cdot \eta$ \ for $\omega \in W_n\Lambda^\bullet,\  \eta \in W_{n+1}\Lambda^\bullet$, $1\le n < m$.
\end{itemize}
We define an ideal $I\subset \breve{\Lambda}^\bullet_{W_{m+1}(S,Q)/W_{m+1}(R,P)}$ as follows. Consider all relations of the form
\[
	\sum_{l=1}^M \xi^{(l)} \cdot d\log q_1^{(l)}\cdots d\log q_{r_l}^{(l)}
	\cdot d\eta_{r_l+1}^{(l)}\cdots d\eta_{i}^{(l)}=0
\]
in $W_m\Lambda^\bullet$, where $\xi^{(l)},\eta_k^{(l)}\in W_m(S),q_k^{(l)}\in Q$.
Then $I\subset \breve{\Lambda}^\bullet_{W_{m+1}(S,Q)/W_{m+1}(R,P)}$ is defined to be the ideal generated by the elements
\begin{align*}
	\sum_{l=1}^M {}^V\xi^{(l)}
	\cdot d\log q_1^{(l)}\cdots d\log q_{r_l}^{(l)}
	\cdot d^V\eta_{r_l+1}^{(l)}\cdots d^V\eta_{i}^{(l)},\\
	\sum_{l=1}^M d^V\xi^{(l)}
	\cdot d\log q_1^{(l)}\cdots d\log q_{r_l}^{(l)}
	\cdot d^V\eta_{r_l+1}^{(l)}\cdots d^V\eta_{i}^{(l)}.
\end{align*}
Then $I$ is stable by $d$. Moreover,
\[
	F:\breve{\Lambda}_{W_{m+1}(S,Q)/W_{m+1}(R,P)}^{\bullet}
	\xrightarrow{F}
	\breve{\Lambda}_{W_m(S,Q)/W_m(R,P)}^{\bullet}
	\to
	W_m\Lambda^\bullet
\]
annihilates $I$ since we have
\begin{align*}
	^{FV}\xi &=p\xi \in \breve{\Lambda}_{W_m(S,Q)/W_m(R,P)}^0=W_m(S),\
	\xi \in W_m(S)\\
	^Fd^V\eta &= d\eta \in \breve{\Lambda}_{W_m(S,Q)/W_m(R,P)}^1,\
	\eta \in W_m(S)\\
	^F d\log q &= d\log q \in \breve{\Lambda}_{W_m(S,Q)/W_m(R,P)}^1, \
	q\in Q.
\end{align*}

Therefore $F$ induces
\[
	F:\bar{\Lambda}_{m+1}^\bullet :=
	\breve{\Lambda}_{W_{m+1}(S,Q)/W_{m+1}(R,P)}^{\bullet}/I
	\to
	W_m\Lambda^\bullet.
\]

On the other hand, we have a well-defined map
\begin{align*}
	V:W_m\Lambda^\bullet &\to \bar{\Lambda}_{m+1}^\bullet,\\
	\xi \cdot d\log q_1 \cdots d\log q_r
	\cdot d\eta_{r+1} \cdots d\eta_{i}
	&\mapsto {}^V\xi \cdot
	d\log q_1 \cdots d\log q_r
	\cdot d^V\eta_{r+1} \cdots d^V\eta_{i}.
\end{align*}

We have $^Fd^V\omega = d\omega$.
Let $J$ be the ideal of $\bar{\Lambda}_{m+1}^\bullet$ generated by the elements
\begin{align*}
	^V(\omega \cdot {}^F\eta)&- {}^V\omega \cdot \eta,\\
	d(^V(\omega \cdot {}^F\eta)&- {}^V\omega \cdot \eta),
\end{align*}
where $\omega \in W_m\Lambda^\bullet$ and $\eta \in \bar{\Lambda}_{m+1}^\bullet$.
We see that $F$ annihilate $J$.
We set $W_{m+1}\Lambda^\bullet:=\bar{\Lambda}_{m+1}^\bullet / J$.
Then we have maps
\begin{align*}
	F&:W_{m+1}\Lambda^\bullet
	\to
	W_m\Lambda^\bullet,
	\\
	V&:W_m\Lambda^\bullet
	\to
	W_{m+1}\Lambda^\bullet.
\end{align*}

We can see that all requirements of the definition of log $F$-$V$-procomplexes are satisfied.
We set $W\Lambda^\bullet := \varprojlim_m W_m\Lambda^\bullet$.
By the construction, the log de Rham-Witt complex we made
is a natural extension of the de Rham-Witt complex constructed in \cite{LZDRW}:
i.e., $W_m\Lambda_{(S,\{*\})/(R,\{*\})}^\bullet \simeq W_m\Omega_{S/R}^\bullet$.

The following proposition is clear from the definition.

\begin{prop}
\label{prop:universal-property}
(cf. \cite{LZDRW} Proposition 1.6)
Let $\{E_m^\bullet,D_m,\partial_m \}_{m \in \bN}$ be a log $F$-$V$-procomplex
over $(S,Q)/(R,P)$.
Then there is a unique morphism of log $F$-$V$-procomplexes
\[
	\{ W_m\Lambda_{(S,Q)/(R,P)}^\bullet \}
	\to
	\{ E_m^\bullet \}
\]
over $(S,Q)/(R,P)$.
\end{prop}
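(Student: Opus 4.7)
The plan is to proceed by induction on $m$, mirroring the inductive construction of $W_m\Lambda^\bullet$ given in Subsection~\ref{subsect:odrwc-construction}. The statement essentially amounts to unwinding that the relations imposed at each stage are exactly those that must hold in any log $F$-$V$-procomplex.

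For the base case $m=1$, we have $W_1\Lambda^\bullet = \breve{\Lambda}^\bullet_{W_1(S,Q)/W_1(R,P)}$, which is by construction the universal log pd-differential graded algebra. The structure maps $W_1(S) \to E_1^0$ and $\partial_1 \colon Q \to E_1^1$ together with the log pd-dga structure of $E_1^\bullet$ therefore give a unique $\phi_1 \colon W_1\Lambda^\bullet \to E_1^\bullet$ of log pd-dga's.

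For the inductive step, assume we have constructed unique morphisms $\phi_n \colon W_n\Lambda^\bullet \to E_n^\bullet$ for all $n \le m$, compatible with restriction, $F$, $V$, $d$, and $\partial$. First, the universal property of $\breve{\Lambda}^\bullet_{W_{m+1}(S,Q)/W_{m+1}(R,P)}$ applied to the log pd-dga structure of $E_{m+1}^\bullet$ (with structure map $W_{m+1}(S) \to E_{m+1}^0$ and $\partial_{m+1} \colon Q \to E_{m+1}^1$) produces a unique morphism $\tilde{\phi}_{m+1} \colon \breve{\Lambda}^\bullet_{W_{m+1}(S,Q)/W_{m+1}(R,P)} \to E_{m+1}^\bullet$. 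I then need to show that $\tilde{\phi}_{m+1}$ factors through the two quotients $I$ and $J$ defining $W_{m+1}\Lambda^\bullet$. For a relation $\sum_l \xi^{(l)} d\log q_1^{(l)}\cdots d\log q_{r_l}^{(l)} d\eta_{r_l+1}^{(l)} \cdots d\eta_i^{(l)} = 0$ in $W_m\Lambda^\bullet$, the inductive hypothesis transports it into $E_m^\bullet$ via $\phi_m$; the corresponding element of $I$ is then the image under $V$ and $d^V$ of this relation, and vanishing of its image in $E_{m+1}^\bullet$ is forced by the axioms ${}^{FV}\omega=p\omega$, ${}^F d^V\omega=d\omega$, and ${}^F\partial_{m+1}(q)=\partial_m(q)$ together with the projection formula ${}^V(\omega\cdot{}^F\eta)={}^V\omega\cdot\eta$ (the latter is precisely the relation defining $J$, which $\tilde{\phi}_{m+1}$ automatically satisfies once $E_{m+1}^\bullet$ is a log $F$-$V$-procomplex).

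Having obtained the descended map $\phi_{m+1} \colon W_{m+1}\Lambda^\bullet \to E_{m+1}^\bullet$, I verify compatibility with the remaining structure maps: the restriction $\pi_m$ is automatic by the inductive construction; compatibility with $F$ follows from the commutativity of the square used to define $F$ on $W_{m+1}\Lambda^\bullet$ together with the induction hypothesis; compatibility with $V$ is forced by the very definition of the $V$-map on $W_{m+1}\Lambda^\bullet$, which sends generators $\xi\, d\log q_1\cdots d\eta_i$ to ${}^V\xi\, d\log q_1\cdots d^V\eta_i$, matched on the $E$-side by the procomplex axioms. Uniqueness at stage $m+1$ follows from uniqueness in the universal property of $\breve{\Lambda}^\bullet_{W_{m+1}(S,Q)/W_{m+1}(R,P)}$ combined with surjectivity of $\breve{\Lambda}^\bullet_{W_{m+1}(S,Q)/W_{m+1}(R,P)} \twoheadrightarrow W_{m+1}\Lambda^\bullet$.

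The main (and really the only) obstacle is the well-definedness step: checking that every generator of $I$ and $J$ maps to zero under $\tilde{\phi}_{m+1}$. This is a purely axiomatic manipulation, but one must carefully use the identity ${}^V(\omega\cdot{}^F\eta)={}^V\omega\cdot\eta$ to move $V$ past products and the identity ${}^Fd^V=d$ to handle differentials, so that an arbitrary relation in $W_m\Lambda^\bullet$ can be pushed via the inductive map into a vanishing expression in $E_{m+1}^\bullet$. Once that bookkeeping is done, everything else in the proposition is immediate from the inductive construction, which is why the author remarks that the statement is clear from the definition.
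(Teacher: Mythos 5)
Your proof follows the only reasonable approach to this statement, which is to unwind the inductive construction of $W_m\Lambda^\bullet$ and check directly that the universal property holds, and the outline is correct. Since the paper itself disposes of the proposition in one sentence (``clear from the definition''), your more explicit treatment is essentially the proof the author intends.

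One point is slipped over that deserves to be flagged. You write that the universal property of $\breve{\Lambda}^\bullet_{W_{m+1}(S,Q)/W_{m+1}(R,P)}$ is ``applied to the log pd-dga structure of $E_{m+1}^\bullet$'' to produce $\tilde{\phi}_{m+1}$. But the definition of a log $F$-$V$-procomplex only asks that each $E_m^\bullet$ be a log \emph{dga} over $W_m(S,Q)/W_m(R,P)$, with no pd-condition on the derivation $D_m$. To invoke the universal property of the \emph{pd}-de Rham complex $\breve{\Lambda}^\bullet$ you must first observe that the axioms ${}^F D_{m+1}{}^V\omega = D_m\omega$, ${}^V(\omega\cdot{}^F\eta) = {}^V\omega\cdot\eta$ and ${}^{FV}=p$ force $D_{m+1}$ to be a pd-derivation with respect to the canonical pd-structure on ${}^V W_m(S)$ — i.e.\ $D_{m+1}(\gamma_n({}^V\xi)) = \gamma_{n-1}({}^V\xi)\,D_{m+1}({}^V\xi)$. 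This is exactly the preliminary remark Langer--Zink make before stating their Proposition~1.6, and it is what licenses the factorization through $\breve{\Lambda}^\bullet$ rather than just $\Lambda^\bullet$. It is a short computation, but without it the very first step of your induction has a gap. Once that is in place, your handling of the ideals $I$ and $J$ (pushing $V$ and $d^V$ inward via the projection formula ${}^V(\omega\cdot{}^F\eta)={}^V\omega\cdot\eta$, reducing to $\phi_m$ of a relation that vanishes, and using Leibniz plus $d\circ\partial_{m+1}=0$ for the $d^V$-generators) is correct; one small wording quibble is that $\tilde{\phi}_{m+1}$ does not ``satisfy'' the projection formula, rather its target $E_{m+1}^\bullet$ does, and this is what makes the image of $J$ vanish.
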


\subsection{Standard Filtration}
The differential graded ideals
\[
	\Fil^sE_m^i = V^sE_{m-s}^i+dV^sE_{m-s}^{i-1}\subset E_m^i
\]
gives a filtration of a log $F$-$V$-procomplex $\{E_m^\bullet \}$
and it is called the standard filtration.
Since restriction maps and $F, V$ are compatible, we find
\begin{align*}
\left.
\begin{array}{l}
	\pi(\Fil^sE_m^i) \subset \Fil^sE_{m-1}^i,\\
	F(\Fil^sE_m^i) \subset \Fil^{s-1}E_{m-1}^i,\\
	V(\Fil^sE_m^i) \subset \Fil^{s+1}E_{m+1}^i,\\
	d(\Fil^sE_m^i) \subset \Fil^sE_m^{i+1}.
\end{array}
\right\} \cdots (*)
\end{align*}

\begin{prop}
\label{prop:fil-exact-sequence}
(cf. \cite{HMKtheory} 3.2.4)
Let $(R,P) \to (S,Q)$ be a morphism of pre-log rings and $m, s$ positive integers satisfying $m\ge s$.
We set $W_m\Lambda^\bullet := W_m\Lambda_{(S,Q)/(R,P)}^\bullet$.
Then we have the following exact sequence:
\[
	0
	\to
	\Fil^sW_m\Lambda^\bullet
	\to
	W_m\Lambda^\bullet
	\xrightarrow{\pi^{m-s}}W_s\Lambda^\bullet
	\to
	0.
\]
\end{prop}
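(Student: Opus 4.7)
The plan is to verify $\Fil^s W_m\Lambda^\bullet \subset \ker(\pi^{m-s})$ and the surjectivity of $\pi^{m-s}$ directly, and then establish the reverse inclusion via the universal property (Proposition \ref{prop:universal-property}). For the easy inclusion, iterating the compatibility $\pi V = V\pi$ from axiom (iii) gives $\pi^{m-s} \circ V^s = V^s \circ \pi^{m-s}$, where the right-hand $\pi^{m-s}$ lands in $W_0\Lambda^\bullet = 0$; combined with $\pi d = d\pi$, this yields $\pi^{m-s}(V^s W_{m-s}\Lambda^\bullet + dV^s W_{m-s}\Lambda^{\bullet-1}) = 0$. Hence $\pi^{m-s}$ factors as $\bar\pi : W_m\Lambda^\bullet/\Fil^s \to W_s\Lambda^\bullet$, and it is surjective because each transition $\pi: W_{n+1}\Lambda^\bullet \to W_n\Lambda^\bullet$ in the construction of \S \ref{subsect:odrwc-construction} is surjective (it is induced from the surjection $W_{n+1}(S) \twoheadrightarrow W_n(S)$ through the quotient).

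For the reverse inclusion, I introduce the auxiliary system
\[
	\tilde E_n^\bullet := W_{n+m-s}\Lambda^\bullet / \Fil^n W_{n+m-s}\Lambda^\bullet, \qquad n \ge 0,
\]
with all structure induced from $\{W_{n+m-s}\Lambda^\bullet\}$. The compatibility relations $(*)$ state that $\pi$ preserves $\Fil^n$, that $F$ sends $\Fil^{n+1}$ into $\Fil^n$, that $V$ sends $\Fil^n$ into $\Fil^{n+1}$, and that $d$ preserves $\Fil^n$; combined with the fact that the standard filtration is decreasing ($\Fil^{n+1}\subset \Fil^n$), all four operations descend to well-defined maps on $\{\tilde E_n^\bullet\}$. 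Because $\Fil^n W_{n+m-s}\Lambda^0 = V^n W_{m-s}(S)$, the zero-th component is $\tilde E_n^0 \simeq W_n(S)$ by the standard Witt-vector short exact sequence, giving each $\tilde E_n^\bullet$ the structure of a log differential graded $W_n(S,Q)/W_n(R,P)$-algebra; all remaining $F$-$V$-procomplex identities (including $^{FV}\omega = p\omega$, $^V(\omega \cdot {}^F\eta) = {}^V\omega \cdot \eta$, and $^F\partial(q) = \partial(q)$) hold on the quotients because they already hold in $\{W_{n+m-s}\Lambda^\bullet\}$. Thus $\{\tilde E_n^\bullet\}_{n \ge 0}$ is a log $F$-$V$-procomplex over $(S,Q)/(R,P)$.

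By Proposition \ref{prop:universal-property}, there is a unique morphism of log $F$-$V$-procomplexes $\Psi = \{\psi_n\} : \{W_n\Lambda^\bullet\} \to \{\tilde E_n^\bullet\}$; set $\psi := \psi_s$. Conversely, the same factorization argument as in the easy inclusion (applied at each level, using $\pi^{m-s}V^n = V^n\pi^{m-s} = 0$) produces well-defined maps $\bar\pi_n : \tilde E_n^\bullet \to W_n\Lambda^\bullet$ induced by $\pi^{m-s}$, which assemble into a morphism $\bar\Pi$ of log $F$-$V$-procomplexes with $\bar\pi_s = \bar\pi$. Then $\bar\Pi \circ \Psi$ is a self-morphism of $\{W_n\Lambda^\bullet\}$ and by uniqueness in Proposition \ref{prop:universal-property} equals the identity, so in particular $\bar\pi \circ \psi = \mathrm{id}$ and $\psi$ is injective. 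Since $W_m\Lambda^\bullet/\Fil^s$ is generated as an algebra by the images of $W_m(S)$, $dW_m(S)$, and $d\log Q$, which coincide in the quotient with $W_s(S)$, $dW_s(S)$, and $d\log Q$, and $\psi$ maps the corresponding generators of $W_s\Lambda^\bullet$ onto them, $\psi$ is also surjective. Hence $\psi$ and $\bar\pi$ are mutually inverse isomorphisms, giving $\ker(\pi^{m-s}) = \Fil^s W_m\Lambda^\bullet$. The main technical hurdle will be the careful verification that every axiom of log $F$-$V$-procomplex, including those involving $\partial$ and the compatibility with the $W_n(S,Q)/W_n(R,P)$-algebra structures, genuinely descends to $\{\tilde E_n^\bullet\}$; once this bookkeeping is settled using $(*)$ and the identities already valid in $\{W_{n+m-s}\Lambda^\bullet\}$, the remainder is a formal application of universality.
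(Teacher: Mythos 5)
Your proof is correct and takes essentially the same route as the paper: both prove the easy containment $\Fil^s\subset\ker\pi^{m-s}$ from the compatibility axioms, introduce the auxiliary system $W_{n+m-s}\Lambda^\bullet/\Fil^n$, verify it is a log $F$-$V$-procomplex using $(*)$, and then invoke the universal property of $\{W_n\Lambda^\bullet\}$ to conclude that the induced map is an isomorphism. The only cosmetic difference is at the final step: the paper shows directly that the quotient system satisfies the universal property (deducing uniqueness of the outgoing morphism from the surjectivity of $\Lambda^\bullet_{W_n(S,Q)/W_n(R,P)}\to W_n\Lambda^{\prime\bullet}$), whereas you exhibit both comparison morphisms $\Psi$ and $\bar\Pi$, obtain $\bar\Pi\circ\Psi=\mathrm{id}$ from uniqueness of self-morphisms, and then supply surjectivity of $\Psi$ by the same observation about algebra generators; these are equivalent ways of closing the argument.
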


\begin{proof}
For any log $F$-$V$-procomplex $\{ E_m^\bullet \}$,
the composition of the two morphisms $\Fil^sE_m^\bullet \hookrightarrow E_m^\bullet \xrightarrow{\pi^{m-s}} E_s^\bullet$ is zero
since $\pi$ commutes with $F$ and $V$, and $\pi^{m}(E_m^\bullet) \subset E_0^\bullet = 0$.
Therefore $\pi^{m-s}$ induces a morphism
\[
	\pi^{m-s}:
	E_m^\bullet / \Fil^s(E_m^\bullet)
	\to
	E_s^\bullet.
\]

Fix $r := m-s$ and set $E_n^{\prime \bullet} := E_{n+r}^\bullet / \Fil^n E_{n+r}^\bullet$.
Then $\{ E_n^{\prime \bullet} \}$ is a log $F$-$V$-procomplex over $(S,Q)/(R,P)$ by $(*)$.
We show that $\{ W_n\Lambda^{\prime\bullet} \}$ is the universal log $F$-$V$-procomplex.
Since the projection map $\pi$ of $W_n\Lambda^{\bullet}$ is surjective,
we have the canonical surjective morphism
$\{ W_n\Lambda^{\prime\bullet} \} \to \{ W_n\Lambda^{\bullet} \}$ of log $F$-$V$-procomplexes.
The diagram:
\[
\xymatrix{
	\Lambda_{W_{n+r}(S,Q)/W_{n+r}(R,P)}^\bullet \ar@{->>}[d] \ar@{->>}[r]
	&
	W_{n+r}\Lambda^\bullet \ar@{->>}[d]
	\\
	\Lambda_{W_{n}(S,Q)/W_{n}(R,P)}^\bullet \ar[r]
	&
	W_{n}\Lambda^{\prime \bullet}.
}
\]
shows the morphism $\Lambda_{W_{n}(S,Q)/W_{n}(R,P)}^\bullet
\to W_{n}\Lambda^{\prime \bullet}$ is surjective.

Let $\{ E_n^\bullet \}$ be any log $F$-$V$-procomplex over $(S,Q)/(R,P)$.
By the universal property of $\{ W_{n}\Lambda^{\bullet} \}$,
there is a unique morphism $\{ W_{n}\Lambda^{\bullet}\} \to \{ E_n^\bullet \}$ of log $F$-$V$-procomplexes.
We compose it with canonical surjection $\{ W_n\Lambda^{\prime\bullet}\} \to \{ W_n\Lambda^{\bullet} \}$.
Then we get a morphism  $\{ W_{n}\Lambda^{\prime \bullet}\} \to \{ E_n^\bullet \}$.
This is the unique morphism of log $F$-$V$-procomplexes from $\{ W_{n}\Lambda^{\prime \bullet} \}$ to $\{ E_n^\bullet \}$
because  $\Lambda_{W_{n}(S,Q)/W_{n}(R,P)}^\bullet \to W_{n}\Lambda^{\prime \bullet}$ is surjective.
Hence $\{ W_{n}\Lambda^{\prime \bullet} \}$ has the universal property.
\end{proof}

\subsection{Base change for \'{e}tale morphisms}

We establish the \'{e}tale base change property of log de Rham-Witt complexes.
The following propositions can be shown by the same method used in \cite{LZDRW} Proposition 1.7 and 1.9.

\begin{prop}
\label{prop:etale-base-change-property}
Let $R$ be a ring such that $R$ is $F$-finite (in the sense of \cite{LZDRW} Proposition A.2) or $p$ is nilpotent in $R$.
Let $(R,P) \to (S,Q)$ be a morphism of pre-log rings and
$S \to S'$ be an \'etale ring map.
Then the natural morphism
\[
	W_m\Lambda_{(S',Q)/(R,P)}^\bullet
	\to
	W_m(S')\otimes_{W_m(S)}W_m\Lambda_{(S,Q)/(R,P)}^\bullet
\]
is an isomorphism.
\end{prop}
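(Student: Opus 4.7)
My plan is to follow the strategy of Langer-Zink's Propositions 1.7 and 1.9 very closely, since the monoid $Q$ does not change under base change and the log part of the data will be inherited in an essentially trivial way. The idea is to equip
\[
    E_m^\bullet := W_m(S') \otimes_{W_m(S)} W_m\Lambda_{(S,Q)/(R,P)}^\bullet
\]
with the structure of a log $F$-$V$-procomplex over $(S',Q)/(R,P)$ and then verify that it satisfies the universal property of Proposition \ref{prop:universal-property}, giving a canonical inverse to the natural map.

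The key preliminary fact is that an étale ring map $S \to S'$ induces an étale ring map $W_m(S) \to W_m(S')$ for every $m$; this is precisely where the hypothesis (either $R$ is $F$-finite or $p$ is nilpotent in $R$) enters, and it is Proposition A.2 of \cite{LZDRW}. Granted this, the differential $d_m$ extends uniquely to $E_m^\bullet$ because $W_m(S) \to W_m(S')$ being étale forces the module of pd-derivations to behave linearly under base change. The log derivation $\partial_m : Q \to E_m^1$ is defined simply as the composition of $\partial_m : Q \to W_m\Lambda_{(S,Q)/(R,P)}^1$ with the inclusion into the tensor product — here the log-structure part is purely pulled back, because $Q$ is the same on both sides.

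For the Frobenius, étaleness of $W_{m+1}(S) \to W_{m+1}(S')$ combined with the fact that $F: W_{m+1}(S) \to W_m(S)$ lifts the absolute Frobenius on $W_1(S)$ lets us define $F$ uniquely on $W_{m+1}(S')$ as the unique étale lift, and then extend it to $E_{m+1}^\bullet \to E_m^\bullet$ compatibly. The hardest step, as in Langer-Zink, is defining the Verschiebung $V$ on $E_m^\bullet$, because $V$ is not $W_m(S)$-linear. I would use the projection formula $^V(\omega \cdot {}^F\eta) = ({}^V\omega) \cdot \eta$ to define $V(x \otimes \omega) := V'(x) \otimes \omega$ for $x$ of the form $x = {}^V y$ (using the existence and uniqueness of $V': W_m(S') \to W_{m+1}(S')$ compatible with the base), and check independence of the chosen representation by exploiting the étaleness of $F$. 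This is the main obstacle: one must verify that the proposed $V$ is well-defined on the tensor product and then that it satisfies all five identities of the log $F$-$V$-procomplex axioms, particularly $^{FV}\omega = p\omega$ and $^V(\omega \cdot {}^F\eta) = {}^V\omega \cdot \eta$.

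Once $E_m^\bullet$ is exhibited as a log $F$-$V$-procomplex over $(S',Q)/(R,P)$, the universal property argument is formal: given any log $F$-$V$-procomplex $\{E_m'^\bullet\}$ over $(S',Q)/(R,P)$, restrict scalars to regard it as a log $F$-$V$-procomplex over $(S,Q)/(R,P)$, obtain a unique morphism $W_m\Lambda_{(S,Q)/(R,P)}^\bullet \to E_m'^\bullet$ from Proposition \ref{prop:universal-property}, and extend it uniquely by $W_m(S')$-linearity to $E_m^\bullet \to E_m'^\bullet$, using that $E_m'^\bullet$ is already a $W_m(S')$-module. This shows $\{E_m^\bullet\}$ is the universal log $F$-$V$-procomplex over $(S',Q)/(R,P)$, hence canonically isomorphic to $\{W_m\Lambda_{(S',Q)/(R,P)}^\bullet\}$, and inspection shows this canonical isomorphism is inverse to the morphism in the proposition.
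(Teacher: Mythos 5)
Your overall strategy is the same as the paper's, which simply refers to the method of \cite{LZDRW} Propositions 1.7 and 1.9: equip $E_m^\bullet := W_m(S') \otimes_{W_m(S)} W_m\Lambda^\bullet_{(S,Q)/(R,P)}$ with a log $F$-$V$-procomplex structure and invoke the universal property of Proposition~\ref{prop:universal-property}. The log part, as you observe, really is harmless because $Q$ is unchanged. However, two of the individual steps are not stated correctly and would need repair.

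First, the claim that one must define $F$ on $W_{m+1}(S')$ ``as the unique étale lift'' is a misconception: the Frobenius $F:W_{m+1}(S')\to W_m(S')$ exists for every ring $S'$ as part of the Witt vector functor, and the square relating it to $F$ on $W_{m+1}(S)\to W_m(S)$ commutes by functoriality alone, no étaleness needed. Defining $F$ on $E_{m+1}^\bullet$ by $F(a\otimes\omega):=F(a)\otimes F(\omega)$ is then immediate. The étaleness is used elsewhere: for identifying $\breve{\Lambda}^1_{W_m(S',Q)/W_m(R,P)}$ with $W_m(S')\otimes_{W_m(S)}\breve{\Lambda}^1_{W_m(S,Q)/W_m(R,P)}$ (so that $d$ extends), and, crucially, for the Verschiebung.

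Second, the proposed formula $V(x\otimes\omega):=V'(x)\otimes\omega$ is not well-typed: $V'(x)\in W_{m+1}(S')$ while $\omega$ still lives in $W_m\Lambda^\bullet$, so the expression does not lie in $E_{m+1}^\bullet = W_{m+1}(S')\otimes_{W_{m+1}(S)}W_{m+1}\Lambda^\bullet$. The correct construction in the style of Langer--Zink uses the structural fact that, under the $F$-finiteness (or $p$-nilpotence) hypothesis, the square
\[
\xymatrix{
W_{m+1}(S) \ar[r]\ar[d]_{F} & W_{m+1}(S')\ar[d]^{F}\\
W_m(S)\ar[r] & W_m(S')
}
\]
is cocartesian, i.e.\ $W_m(S')\simeq W_{m+1}(S')\otimes_{W_{m+1}(S),F}W_m(S)$. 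This gives a canonical identification $E_m^\bullet \simeq W_{m+1}(S')\otimes_{W_{m+1}(S),F}W_m\Lambda^\bullet$, where $W_m\Lambda^\bullet$ is a $W_{m+1}(S)$-module via $F$. Under this identification, $V:E_m^\bullet\to E_{m+1}^\bullet$ is simply $\mathrm{id}_{W_{m+1}(S')}\otimes V$, which is well-defined precisely because $V:W_m\Lambda^\bullet_{[F]}\to W_{m+1}\Lambda^\bullet$ is a $W_{m+1}(S)$-module map (this is the projection formula you correctly single out). Without stating this cocartesian property, ``exploiting the étaleness of $F$'' is too vague to carry the weight of the argument. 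Once $V$ is defined this way, the axiom checks and the concluding universal-property argument go through as you describe.
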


\begin{prop}
Let $(R,P)$ be a pre-log ring such that $R$ is $F$-finite or $p$ is nilpotent in $R$.
Assume we are given an unramified ring homomorphism $R \to R'$
and a morphism $(R',P) \to (S,Q)$ of pre-log rings.
Then we have a natural isomorphism of log $F$-$V$-procomplexes relative to $(S,Q)/(R,P)$:
\[
	\{ W_m\Lambda_{(S,Q)/(R,P)}^\bullet \}
	\to
	\{W_m\Lambda_{(S,Q)/(R',P)}^\bullet \}.
\]
\end{prop}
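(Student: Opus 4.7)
The plan is to mimic the strategy of \cite{LZDRW} Proposition 1.9, reducing everything to the statement that the Witt-vector functor preserves unramifiedness under our hypotheses. First I would note that the ring map $(R,P) \to (R',P)$ (with the monoid $P$ unchanged on both sides) turns any log $F$-$V$-procomplex relative to $(S,Q)/(R',P)$ into one relative to $(S,Q)/(R,P)$ purely by restriction of scalars, since none of the procomplex axioms refer to the base pre-log ring beyond the requirement that the underlying pd-derivation be $W_m$-linear over the base. Applying this to $\{W_m\Lambda_{(S,Q)/(R',P)}^\bullet\}$ and invoking the universal property of Proposition \ref{prop:universal-property} yields the canonical morphism in the statement.

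For the inverse, the goal is to show that any log $F$-$V$-procomplex $\{E_m^\bullet, D_m, \partial_m\}$ over $(S,Q)/(R,P)$ is already a log $F$-$V$-procomplex over $(S,Q)/(R',P)$. All of the $F$-$V$-relations, the restriction compatibilities, and the log-structure condition $\partial_m \circ \theta^\flat = 0$ are intrinsic to $\{E_m^\bullet\}$ and the monoid $P$, which is unchanged; they therefore continue to hold verbatim. The only nontrivial point is to verify that the pd-derivation $D_m: W_m(S) \to E_m^1$ is $W_m(R')$-linear, i.e.\ that the composite $W_m(R') \to W_m(S) \xrightarrow{D_m} E_m^1$ vanishes. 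By hypothesis this composite is a $W_m(R)$-linear pd-derivation from $W_m(R')$ into the $W_m(S)$-module $E_m^1$.

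The heart of the argument is to establish that $W_m(R) \to W_m(R')$ is formally unramified whenever $R \to R'$ is unramified, under the $F$-finite or $p$-nilpotent assumption. This is the Witt-vector base change step of \cite{LZDRW} Proposition A.2 (and its use in \cite{LZDRW} Proposition 1.9), which carries over directly since the argument is purely about the structure of Witt rings and does not interact with the log data. From it one deduces $\Omega^1_{W_m(R')/W_m(R)} = 0$, and since pd-derivations are in particular derivations, the restriction $D_m|_{W_m(R')}$ must vanish. Hence $D_m$ is $W_m(R')$-linear, and $\{E_m^\bullet\}$ acquires the structure of a log $F$-$V$-procomplex over $(S,Q)/(R',P)$.

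Combining these two directions, the universal property of Proposition \ref{prop:universal-property} applied on the $(R',P)$-side produces a morphism $\{W_m\Lambda_{(S,Q)/(R',P)}^\bullet\} \to \{E_m^\bullet\}$ functorial in the $(R,P)$-procomplex $\{E_m^\bullet\}$. Taking $E_m^\bullet = W_m\Lambda_{(S,Q)/(R,P)}^\bullet$ produces an inverse to the map in the statement, and the uniqueness parts of the two universal properties show that the composites in both directions are the identity. The principal obstacle is the Witt-vector unramifiedness lemma referenced above; once that is in hand, the log refinement is a formal consequence because the log-part $\partial_m$ and the monoid $P$ are decoupled from the base change $R \to R'$.
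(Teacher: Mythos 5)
Your proof is correct and follows exactly the route the paper intends: the paper itself only says these propositions ``can be shown by the same method used in \cite{LZDRW} Proposition 1.7 and 1.9,'' and your argument is precisely that method carried over to the log setting — restriction of scalars in one direction, and in the other direction the observation that a log $F$-$V$-procomplex over $(S,Q)/(R,P)$ is automatically one over $(S,Q)/(R',P)$ because $W_m(R)\to W_m(R')$ is formally unramified (so $D_m$ vanishes on $W_m(R')$) while the monoid data $P$, and hence the condition $\partial_m\circ\theta^\flat=0$, are unchanged. The reduction to the Witt-vector unramifiedness lemma and the remark that the log part is decoupled from the base change $R\to R'$ are exactly the two points one needs.
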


We define the log de Rham-Witt complex on log schemes.
The following lemma immediately follows from \cite{Og}, Chapter II, Proposition 2.2.1.
\begin{lem}
\label{lem:chart}
Let $\beta: Q\to \mathcal{M}$ be a chart for a sheaf of fine monoids $\mathcal{M}$ on a scheme $X$. Suppose that $\beta$ factors
\[
	Q
	\xrightarrow{\theta}
	Q'
	\xrightarrow{\beta'}
	\mathcal{M},
\]
where $Q'$ is a constant sheaf of a finitely generated monoid. Then, \'etale locally on $X$, $\beta'$ can be factored
\[
	Q'
	\xrightarrow{\theta'}
	Q''\xrightarrow{\beta"}
	\mathcal{M},
\]
where $\beta''$ is a chart for $\mathcal{M}$ and $Q''$ is finitely generated.
\end{lem}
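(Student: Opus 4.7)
The plan is to reduce to a local construction at a geometric point of $X$ and to combine the given factorization $Q \to Q' \to \mathcal{M}$ with a local fine chart of $\mathcal{M}$; the result then follows from the basic flexibility of the chart construction, which is the content of \cite{Og} II.2.2.1.

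First I would fix a geometric point $\bar{x}$ of $X$. Since $\mathcal{M}$ is a sheaf of fine monoids, \'etale-locally around $\bar{x}$ there exists a chart $\gamma : P \to \mathcal{M}$ with $P$ a fine (hence finitely generated and integral) monoid; after shrinking $X$ in the \'etale topology we may assume this chart is defined on our working neighborhood. The idea is now to enlarge $Q'$ by $P$ so as to inherit the chart property from $\gamma$ while retaining the factorization through $Q'$.

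Concretely, I would set $Q''' := Q' \oplus P$ (direct sum of monoids), which is finitely generated since both summands are, and define $\beta''' : Q''' \to \mathcal{M}$ by $(q',p) \mapsto \beta'(q')\cdot \gamma(p)$, with $\theta' : Q' \hookrightarrow Q'''$ the inclusion into the first factor. By construction $\beta''' \circ \theta' = \beta'$. Since $\gamma$ is already a chart, the image of $\beta'''$ generates the same log structure; equivalently, the log structure associated to $Q''' \to \mathcal{O}_X$ is isomorphic to the one associated to $P \to \mathcal{O}_X$, namely $\mathcal{M}$, so $\beta'''$ is itself a chart. If one insists that $Q''$ be integral (as is natural when working with fine log structures), one simply replaces $Q'''$ by its image $Q''$ in $\mathcal{M}_{\bar{x}}$; this image is finitely generated and integral, and the induced $\theta' : Q' \to Q''$ and $\beta'' : Q'' \to \mathcal{M}$ inherit the factorization $\beta'' \circ \theta' = \beta'$ as well as the chart property.

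The only real obstacle is the appeal to the \'etale-local existence of a fine chart $\gamma : P \to \mathcal{M}$, but this is a standard consequence of $\mathcal{M}$ being fine (one lifts finitely many generators of the finitely generated sharp monoid $\overline{\mathcal{M}}_{\bar{x}}$ to $\mathcal{M}_{\bar{x}}$ and extends \'etale-locally). Once this is granted, the rest of the argument is purely formal monoid-theoretic manipulation, which is why the lemma is stated as following immediately from \cite{Og} II.2.2.1.
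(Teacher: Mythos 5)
Your central claim --- that for $Q''' := Q' \oplus P$ with $\beta'''(q',p) := \beta'(q')\cdot\gamma(p)$ the morphism $\beta'''$ is automatically a chart because it ``generates the same log structure'' --- is false, and passing to the image of $Q'''$ in $\mathcal{M}_{\bar x}$ does not repair it. The inclusion $P\hookrightarrow Q'''$ only produces a \emph{split} surjection of log structures $(Q''')^{a}\to\mathcal{M}$, and such a map need not be injective: enlarging the chart monoid can strictly enlarge the associated log structure. Concretely, take $X=\Spec k[t]$ (with $\mathrm{char}\,k\neq 2$) and $\mathcal{M}$ the log structure of the divisor $t=0$. Let $Q=Q'=\bN$, $\theta=\mathrm{id}$ and $\beta=\beta'$ given by $1\mapsto 2t$; this $\beta$ is a chart (the induced $\bN\oplus\mathcal{O}^*_{\bar 0}\to\mathcal{M}_{\bar 0}$, $(n,u)\mapsto 2^n u\,t^n$, is bijective), so the lemma's hypotheses hold. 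Choose $\gamma\colon P=\bN\to\mathcal{M}$, $1\mapsto t$. The composite prelog structure $Q'''=\bN^2\to\mathcal{O}_X$ sends $(a,b)\mapsto 2^a t^{a+b}$ with preimage of $\mathcal{O}^*_{\bar 0}$ equal to $\{(0,0)\}$, so $(Q''')^a_{\bar 0}\cong\bN^2\oplus\mathcal{O}^*_{\bar 0}$; the canonical map to $\mathcal{M}_{\bar 0}$ sends both $(1,0,1)$ and $(0,1,2)$ to $2t$, so it is not injective and $\beta'''$ is not a chart. The image $\{2^a t^c : 0\le a\le c\}\subset\mathcal{M}_{\bar 0}$ of $Q'''$ is not a chart either: its only unit is $1$, and the associated map to $\mathcal{M}_{\bar 0}$ sends the distinct elements $(2t,1)$ and $(t,2)$ to $2t$.

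The missing ingredient is the adjunction of \emph{units}, and with it the genuine use of the hypothesis that $\beta$ itself (not merely some auxiliary chart $\gamma$) is a chart that factors through $Q'$. Choose generators $q'_1,\dots,q'_m$ of $Q'$; since $\beta$ is a chart, one may write $\beta'(q'_i)=u_i\,\beta(p_i)$ with $u_i\in\mathcal{O}^*_{\bar x}$ and $p_i\in Q$. Let $U\subset\mathcal{O}^*_{\bar x}$ be the subgroup generated by the $u_i$ and set $Q'':=\beta(Q)\cdot U\subset\mathcal{M}_{\bar x}$, a finitely generated integral submonoid. Then $\beta'(Q')\subseteq Q''$, so $q'\mapsto\beta'(q')$ gives $\theta'\colon Q'\to Q''$ with $\beta''\circ\theta'=\beta'$, and since $U\subseteq Q''\cap\mathcal{O}^*_{\bar x}$ while $Q/\beta^{-1}(\mathcal{O}^*_{\bar x})\to\mathcal{M}_{\bar x}/\mathcal{O}^*_{\bar x}$ is an isomorphism, one checks directly that $Q''/(Q''\cap\mathcal{O}^*_{\bar x})\to\mathcal{M}_{\bar x}/\mathcal{O}^*_{\bar x}$ is also an isomorphism, i.e.\ $Q''$ is a chart at $\bar x$. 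Spreading out the finitely many generators and relations over an \'etale neighbourhood of $\bar x$ then gives the chart asserted by the lemma. Your construction never introduces the comparison units $u_i$, which is precisely why the associated log structure of $Q'\oplus P$ comes out too big.
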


\begin{prop-def}
\label{prop-def:chart-welldefined}
Let $f:(X, \mathcal{M}) \to (Y,\mathcal{N})$ be a morphism of fine log schemes over $\mathbb{Z}_{(p)}$.
We assume that $p$ is nilpotent in $Y$.
We identify the \'etale topology of $W_m(X)$
and that of $X$ (See \cite{Stacks} \'Etale Cohomology, Tag 03SI).
Then there is a unique quasi-coherent sheaf
$W_m\Lambda_{(X, \mathcal{M})/(Y, \mathcal{N})}^\bullet$ on $X_{\et}$
which has the following property:
If there is a commutative diagram
\[
\xymatrix{
	U=\Spec\ S' \ar[r] \ar[d]_{\gamma'}
	&
	V=\Spec\ R' \ar[d]^\gamma
	\\
	X \ar[r]^{f}
	&
	Y,
}
\]
where $\gamma$ and $\gamma'$ are \'etale morphisms and
there is a chart $(Q \to \mathcal{M}|_{U}, P \to \mathcal{N}|_{V}, P \to Q)$ of the morphism $(U,\mathcal{M}|_U) \to (V,\mathcal{N}|_V)$,
then we have a canonical isomorphism
\[
	\Gamma (U,W_m\Lambda_{(X, \mathcal{M})/(Y, \mathcal{N})}^\bullet)
	= W_m\Lambda_{(S',Q)/(R',P)}^\bullet.
\]
\end{prop-def}

\begin{proof}
When $X=\Spec S$ and $Y=\Spec R$ are affine and $f$ has a chart
$(Q\to \mathcal{M}, P\to \mathcal{N}, P\to Q)$, the presheaf
\[
	X_{\text{\'{e}t}} \ni (U'=\Spec S' \to X)
	\mapsto
	W_m\Lambda_{(S',Q)/(R,P)}^\bullet
\]
defines a quasi-coherent sheaf on $X_{\text{\'{e}t}}$
because of the base change property of \'{e}tale morphisms
(Proposition \ref{prop:etale-base-change-property}).
We temporarily denote by $\mathcal{F}_{(P,Q)}$ this sheaf .
We have to show if there exists another chart
$(Q'\to \mathcal{M}, P'\to \mathcal{N}, P'\to Q')$ of $f$,
we have an isomorphism $\mathcal{F}_{(P,Q)}\simeq \mathcal{F}_{(P',Q')}$.
We denote by
\[
	W_m(f):
	(W_m(X)=\Spec W_m(S),W_m(\mathcal{M}))
	\to
	(W_m(Y)=\Spec W_m(R), W_m(\mathcal{N}))
\]
the morphism induced by $f$.
Since $(Q\to \mathcal{M}, P\to \mathcal{N}, P\to Q)$
(resp. $(Q'\to \mathcal{M}, P'\to \mathcal{N}, P'\to Q')$) is a chart of $f$,
we have a canonical chart
\begin{align*}
	&(Q\to W_m(\mathcal{M}), P\to W_m(\mathcal{N}), P\to Q)\\
	(\text{resp.} &(Q'\to W_m(\mathcal{M}), P'\to W_m(\mathcal{N}), P'\to Q'))
\end{align*}
of $W_m(f)$.
Then we have an isomorphism
\[
	\Lambda^{\bullet}_{W_m(S,Q)/W_m(R,P)}
	\simeq
	\Lambda^{\bullet}_{W_m(X,\mathcal{M})/W_m(Y,\mathcal{N})}(W_m(X))
	\simeq
	\Lambda^{\bullet}_{W_m(S,Q')/W_m(R,P')}
\]
by \cite{Og} Corollary 1.1.11.
Hence it induces an isomorphism
\[
	\breve{\Lambda}^{\bullet}_{W_m(S,Q)/W_m(R,P)}
	\simeq
	\breve{\Lambda}^{\bullet}_{W_m(S,Q')/W_m(R,P')}.
\]
Let
\[
	I_{(P,Q)}\subset \breve{\Lambda}^\bullet_{W_{m+1}(S,Q)/W_{m+1}(R,P)}\
	(\text{resp. } I_{(P',Q')}\subset \breve{\Lambda}^\bullet_{W_{m+1}(S,Q')/W_{m+1}(R,P')})
\]
and
\begin{align*}
	J_{(P,Q)} &\subset \breve{\Lambda}_{W_{m+1}(S,Q)/W_{m+1}(R,P)}^{\bullet}/I_{(P,Q)}\\
	(\text{resp. } J_{(P',Q')} &\subset \breve{\Lambda}_{W_{m+1}(S,Q')/W_{m+1}(R,P')}^{\bullet}/I_{(P',Q')})
\end{align*}
be ideals defined in the construction of $W_m\Lambda_{(S,Q)/(R,P)}^\bullet$
(resp. $W_m\Lambda_{(S,Q')/(R,P')}^\bullet$). See \S \ref{subsect:odrwc-construction}.

First we assume that there is a morphism of charts
\[
	(Q\to \mathcal{M}, P\to \mathcal{N}, P\to Q)
	\to
	(Q'\to \mathcal{M}, P'\to \mathcal{N}, P'\to Q').
\]
This morphism induces a canonical map
$\mathcal{F}_{(P,Q)}\to \mathcal{F}_{(P',Q')}$.
We show that it is an isomorphism.
Let $\alpha : Q_X\to \mathcal{O}_X$ and $\alpha' : Q'_X\to \mathcal{O}_X$ be the structure morphisms.
Let $\beta : Q\to Q'$ be a morphism of monoids induced by the morphism of charts.
For any geometric point $\bar{x} \rightarrow X$, we have isomorphisms
\[
	Q/\alpha^{-1}(\mathcal{O}_{X,\bar{x}}^*)
	\xrightarrow[\sim]{\bar{\beta}_{\bar{x}}}
	Q'/\alpha'^{-1}(\mathcal{O}_{X,\bar{x}}^*)
	\xrightarrow{\sim}
	\mathcal{M}_x/\mathcal{O}_{X,\bar{x}}^*.
\]
Since $Q'$ is finitely generated,
by replacing $X$ with some \'etale neighbourhood of $\bar{x}$,
we can assume that for any $q' \in Q'$
there exists $q\in Q, s , s' \in \alpha'^{-1}(\mathcal{O}_{X}(X)^*)$
such that $q'\cdot s' = \beta(q)\cdot s$.
We see
\begin{align*}
	d\log q' &=d\log(q' \cdot s')-d\log s'\\
	&=d\log(\beta(q)\cdot s)-d\log s'\\
	&=d\log \beta(q) + d\log s - d\log s'\\
	&=d\log \beta(q) + \alpha'(s)^{-1}ds - \alpha'(s')^{-1}ds'.
\end{align*}
Hence we see that $I_{(P,Q)}\to I_{(P',Q')}$ is an isomorphism. This isomorphism induces
\[
	\breve{\Lambda}_{W_{m+1}(S,Q)/W_{m+1}(R,P)}^{\bullet}/I_{(P,Q)}
	\xrightarrow{\sim}
	\breve{\Lambda}_{W_{m+1}(S,Q')/W_{m+1}(R,P')}^{\bullet}/I_{(P',Q')}.
\]
By the construction, we have $J_{(P,Q)}\xrightarrow{\sim} J_{(P',Q')}$ via this morphism. So we see
\[
W_m\Lambda_{(S,Q)/(R,P)}^\bullet \xrightarrow{\sim} W_m\Lambda_{(S,Q')/(R,P')}^\bullet,
\]
and this shows $\mathcal{F}_{(P,Q)}\xrightarrow{\sim} \mathcal{F}_{(P',Q')}$.

We consider the general case.
Let $\bar{x} \rightarrow X$ be any geometric point of $X$.
By Lemma \ref{lem:chart}, there exists a commutative diagram
\[
\xymatrix{
	U=\Spec\ S' \ar[r] \ar[d]_{\gamma'}
	&
	V=\Spec\ R' \ar[d]^\gamma
	\\
	X \ar[r]^{f}
	&
	Y,
}
\]
where $U$ is an \'etale neighbourhood of $\bar{x}$,
and the morphisms $\gamma, \gamma'$ are \'etale,
such that we admit a chart $(Q''\to \mathcal{M}|_U, P''\to \mathcal{N}|_V, P''\to Q'')$ of $(U,\mathcal{M}|_U)\to (V,\mathcal{N}|_V)$
and morphisms of coherent charts
\begin{align*}
	(Q\to \mathcal{M}|_U, P\to \mathcal{N}|_V, P\to Q)
	&\to (Q''\to \mathcal{M}|_U, P''\to \mathcal{N}|_V, P''\to Q''),
	\\
	(Q'\to \mathcal{M}|_U, P'\to \mathcal{N}|_V, P'\to Q')
	&\to (Q''\to \mathcal{M}|_U, P''\to \mathcal{N}|_V, P''\to Q'').
\end{align*}
Then we see that
\begin{align*}
	&\mathcal{F}_{(P,Q)}(U)=W_m\Lambda_{(S,Q)/(R,P)}^\bullet\\
	\xrightarrow{\sim}
	&\mathcal{F}_{(P'',Q'')}(U)=W_m\Lambda_{(S,Q'')/(R,P'')}^\bullet\\
	\xleftarrow{\sim}
	&\mathcal{F}_{(P',Q')}(U)=W_m\Lambda_{(S,Q')/(R,P')}^\bullet
\end{align*}
by the proof of the previous case.
The collection of these maps glue to an isomorphism
$\mathcal{F}_{(P,Q)}\simeq \mathcal{F}_{(P',Q')}$.
\end{proof}

\subsection{Exact sequences}
The log de Rham-Witt sheaves satisfy the same exact sequences as the usual K\"{a}hler differentials.
The following results are generalizations of a part of \cite{LZGMconn}.
\begin{prop}
\label{prop:exact-sequence-Kahler-diffs}
(1) Let $X \to Y \to S$ be morphisms of fine log schemes.
Then there is the following exact sequences:
\[
	W_m\Lambda_{Y/S}^1\otimes_{W_m({\mathcal{O}_Y})}W_m\Lambda_{X/S}^{\bullet-1}
	\to
	W_m\Lambda_{X/S}^\bullet
	\to
	W_m\Lambda_{X/Y}^\bullet
	\to 0.
\]
(2) Let $X\xrightarrow{i} Y \to S$ be morphisms of fine log schemes,
where $i:X \to Y$ is an exact closed immersion defined by
a quasi-coherent ideal $\mathfrak{a}\subset \mathcal{O}_Y$.
Then there is the following exact sequences:
\[
	W_m(\mathfrak{a})/W_m(\mathfrak{a})^2\otimes_{W_m(\mathcal{O}_Y)}W_m\Lambda_{Y/S}^{\bullet-1}
	\to
	W_m(\mathcal{O}_X)\otimes_{W_m(\mathcal{O}_Y)}W_m\Lambda_{Y/S}^\bullet
	\to
	W_m\Lambda_{X/S}^\bullet
	\to
	0.
\]
\end{prop}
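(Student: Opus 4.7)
Both statements follow the same universal-property strategy; I describe (1) in detail and indicate the modifications for (2). The plan is to identify $W_m\Lambda_{X/Y}^\bullet$ with the cokernel $E_m^\bullet := W_m\Lambda_{X/S}^\bullet/I_m^\bullet$, where $I_m^\bullet \subset W_m\Lambda_{X/S}^\bullet$ is the image of the first arrow of (1), i.e., the $W_m(\mathcal{O}_X)$-submodule generated by products $\omega_Y \cdot \eta$ with $\omega_Y$ coming from $W_m\Lambda_{Y/S}^1$ and $\eta \in W_m\Lambda_{X/S}^{\bullet-1}$.

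Viewing $W_m\Lambda_{X/Y}^\bullet$ as a log $F$-$V$-procomplex over $(X,\mathcal{M})/(S,\mathcal{M}_S)$ via the composition $X \to Y \to S$, Proposition \ref{prop:universal-property} yields a canonical morphism $\varphi_m \colon W_m\Lambda_{X/S}^\bullet \to W_m\Lambda_{X/Y}^\bullet$, which is surjective because both sides are generated as graded $W_m(\mathcal{O}_X)$-algebras by the images of $d$ and $d\log$. The composition of the first arrow with $\varphi_m$ vanishes, since the generators $d\xi$ (for $\xi \in W_m(\mathcal{O}_Y)$) and $d\log q$ (for $q \in \mathcal{N}$) of $W_m\Lambda_{Y/S}^1$ become zero in $W_m\Lambda_{X/Y}^1$ by definition. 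Hence $\varphi_m$ factors through a surjection $\bar\varphi_m \colon E_m^\bullet \twoheadrightarrow W_m\Lambda_{X/Y}^\bullet$.

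The core of the argument is to show that $I_m^\bullet$ is a differential graded ideal stable under $F$ and $V$, so that $E_m^\bullet$ inherits the structure of a log $F$-$V$-procomplex over $(X,\mathcal{M})/(Y,\mathcal{N})$. Stability under $F$ is immediate from ${}^F(\omega_Y \cdot \eta) = {}^F\omega_Y \cdot {}^F\eta$, and stability under $d$ follows from the Leibniz rule once we observe that $W_m\Lambda_{Y/S}^i$ is generated as a $W_m(\mathcal{O}_Y)$-module by products of $i$ degree-one elements (a property inherited from the pd-differentials $\breve{\Lambda}^\bullet$ through the construction in \S \ref{subsect:odrwc-construction}). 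I expect stability under $V$ to be the main obstacle, since $V$ is only additive: my plan is to rewrite each relevant product of the form $V\alpha \cdot \beta$ using the identity ${}^V(\omega \cdot {}^F\eta) = ({}^V\omega) \cdot \eta$ so as to expose a factor in $W_m\Lambda_{Y/S}^1$, handling elements not visibly in the image of $F$ by combining this with the structural relations ${}^Fd{}^V = d$ and $d{}^V = p{}^V d$, and inducting on the Witt length $m$ with base case $m=1$ reducing to the classical statement for log Kähler differentials.

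Once the stability is established, the universal property (Proposition \ref{prop:universal-property}) yields a morphism $\psi_m \colon W_m\Lambda_{X/Y}^\bullet \to E_m^\bullet$ of log $F$-$V$-procomplexes over $(X,\mathcal{M})/(Y,\mathcal{N})$. The composition $\bar\varphi_m \circ \psi_m$ is the identity of $W_m\Lambda_{X/Y}^\bullet$ by uniqueness in that universal property, and $\psi_m \circ \bar\varphi_m$ is the identity of $E_m^\bullet$ because its pre-composition with the surjective quotient $W_m\Lambda_{X/S}^\bullet \twoheadrightarrow E_m^\bullet$ coincides with that quotient by the uniqueness of morphisms $W_m\Lambda_{X/S}^\bullet \to E_m^\bullet$ of log $F$-$V$-procomplexes over $(X,\mathcal{M})/(S,\mathcal{M}_S)$. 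This yields $E_m^\bullet \cong W_m\Lambda_{X/Y}^\bullet$ and hence the exactness of (1). The proof of (2) is entirely analogous, with $E_m^\bullet$ replaced by the cokernel of the first arrow of (2); the only new input is that $W_m(\mathfrak{a})$ is stable under $F$ and $V$ (immediate from the definitions and the exactness of $i$), after which the $V$-stability of the analogous ideal is again the main obstacle.
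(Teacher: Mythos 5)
Your proposal is correct and follows exactly the paper's route: reduce to pre-log rings, take $I_m^\bullet$ to be the graded ideal generated by the classes of $d\xi$ ($\xi \in W_m(\mathcal{O}_Y)$) and $d\log q$ coming from $Y$, check $F$-, $V$- and $d$-stability, and identify $W_m\Lambda_{X/S}^\bullet/I_m^\bullet$ with $W_m\Lambda_{X/Y}^\bullet$ via the universal property of Proposition~\ref{prop:universal-property}; part (2) is handled the same way with the ideal $W_m(\mathfrak{a})W_m\Lambda_{Y/S}^\bullet + dW_m(\mathfrak{a})W_m\Lambda_{Y/S}^{\bullet-1}$. For the $V$-stability that you flag as the main obstacle: no induction on $m$ is needed, because each degree-one generator of $I_m^\bullet$ is an $F$-image of the corresponding generator at level $m+1$ (since $d\xi = {}^F d\,{}^V\xi$ and $d\log q = {}^F d\log q$), so after anticommuting such a factor to the right one gets $V(\eta \cdot d\xi) = V\bigl(\eta \cdot {}^F(d\,{}^V\xi)\bigr) = {}^V\eta \cdot d\,{}^V\xi \in I_{m+1}^\bullet$ directly from the rule ${}^V(\omega \cdot {}^F\eta') = {}^V\omega \cdot \eta'$; for (2) one instead absorbs the $W_m(\mathfrak{a})$-coefficient into the Witt-vector coefficient appearing in the defining formula for $V$ and uses $V(W_m(\mathfrak{a})) \subset W_{m+1}(\mathfrak{a})$.
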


\begin{proof}
(1) Since the problem is local, we can assume that morphisms of log schemes are associated to morphisms of pre-log rings $(R,P) \to (S,Q) \to (S',Q')$.

Let $I_m^\bullet \subset W_m\Lambda_{(S',Q')/(R,P)}^\bullet$ be the ideal generated by the elements of the form
$ds,d\log m$ where $s\in W_m(S), m\in W_m(Q)$.
Then we see that $I_m^\bullet$ is invariant under $F, V$ and $d$.

The natural surjective morphism
\[
W_m\Lambda_{(S',Q')/(R,P)}^\bullet \to W_m\Lambda_{(S',Q')/(S,Q)}^\bullet
\]
factors $W_m\Lambda_{(S',Q')/(R,P)}^\bullet /I_m^\bullet$.

Since $I_m^\bullet$ is stable by $F, V$ and $d$,
we see $\{ W_m\Lambda_{(S',Q')/(R,P)}^\bullet / I_m^\bullet \}$
is a log $F$-$V$-procomplex over $(S',Q')/(S,Q)$.
We obtain an isomorphism
\[
W_m\Lambda_{(S',Q')/(R,P)}^\bullet /I_m^\bullet \simeq W_m\Lambda_{(S',Q')/(S,Q)}^\bullet
\]
and a short exact sequence
\[
	0
	\to
	I_m^\bullet
	\to
	W_m\Lambda_{(S',Q')/(R,P)}^\bullet
	\to
	W_m\Lambda_{(S',Q')/(S,Q)}^\bullet
	\to
	0.
\]
Thus we have the following exact sequence
\begin{align*}
	W_m\Lambda_{(S',Q')/(R,P)}^1\otimes_{W_m(S)}W_m\Lambda_{(S',Q')/(R,P)}^{\bullet-1}
	&\to
	W_m\Lambda_{(S',Q')/(R,P)}^\bullet\\
	&\to
	W_m\Lambda_{(S',Q')/(S,Q)}^\bullet
	\to
	0.
\end{align*}

(2) This is reduced to the case of a morphism of log schemes associated to a morphism of
pre-log rings $(R,P) \to (S,Q)\to (S':=S/\mathfrak{a},Q)$.
Since the canonical morphism $W_m(S,Q) \to W_m(S',Q)$ is a strict closed immersion defined by the ideal $W_m(\mathfrak{a})$,
we have the following exact sequence (\cite{Og}, Prop 2.3.2):
\[
	W_m(\mathfrak{a})/W_m(\mathfrak{a})^2
	\xrightarrow{d}
	W_m(S')\otimes_{W_m(S)}\Lambda_{W_m(S,Q)/W_m(R,P)}^1
	\to
	\Lambda_{W_m(S',Q)/W_m(R,P)}^1
	\to
	0.
\]
Then we have the following complexes:
\begin{align*}
	W_m(\mathfrak{a})/W_m(\mathfrak{a})^2\otimes_{W_m(S)}W_m\Lambda_{(S,Q)/(R,P)}^{\bullet -1}
	&\xrightarrow{d}
	W_m(S')\otimes_{W_m(S)}W_m\Lambda_{(S,Q)/(R,P)}^\bullet
	\\
	&\to
	W_m\Lambda_{(S',Q)/(R,P)}^\bullet
	\to
	0.
\end{align*}

It remains to prove the exactness at $W_m(S')\otimes_{W_m(S)}W_m\Lambda_{(S,Q)/(R,P)}^\bullet$.
Since
\[
	W_*(\mathfrak{a})W_*\Lambda_{(S,Q)/(R,P)}^\bullet+dW_*(\mathfrak{a})W_*\Lambda_{(S,Q)/(R,P)}^{\bullet-1}
\]
is stable by operators $F,V$ and $d,$
\begin{align*}
	&\{ W_m \omega_{(S',Q)/(R,P)}^\bullet \}\\
	:=
	&\{ W_m\Lambda_{(S,Q)/(R,P)}^\bullet/(W_m(\mathfrak{a})W_m\Lambda_{(S,Q)/(R,P)}^\bullet+dW_m(\mathfrak{a})W_m\Lambda_{(S,Q)/(R,P)}^{\bullet-1}) \}
\end{align*}
is a log $F$-$V$-procomplex over $(S',Q)/(R,P)$.

It is easy to verify that $\{ W_m \omega_{(S',Q)/(R,P)}^\bullet \}$
satisfies the universal property, so we have an isomorphism
$	\{ W_m \omega_{(S',Q)/(R,P)}^\bullet \}
	\xrightarrow{\sim}
	\{ W_m\Lambda_{(S',Q)/(R,P)}^\bullet \}$.
\end{proof}

\subsection{Log phantom components}
\label{subsect:log-phantom-components}
Let $R$ be a $\mathbb{Z}_{(p)}$-algebra, $(R,P) \to (S,Q)$ a morphism of pre-log rings and $M$ an $S$-module.
We denote by $M_{\mathbf{w}_m}$ the $W(S)$-module $M$ obtained by the restriction of scalars $\mathbf{w}_m:W(S) \to S$ via the Witt polynomial.

We establish the log version of phantom components defined in \cite{LZDRW} \S 2.4.
Define a complex $E_m^\bullet$ of $W_m(S)$-modules by
\[
	E_m^\bullet =
	\bigoplus_{i=0}^{m-1}\Lambda_{(S,Q)/(R,P),\mathbf{w}_i}^\bullet.
\]
We define $F:E_m^\bullet  \to E_{m-1}^\bullet$
and $V:E_m^\bullet \to E_{m+1}^\bullet$ by following formulas:
For $[\rho_0,\ldots,\rho_{m-1}]\in E_m^\bullet$, $\rho_i\in \Lambda_{(S,Q)/(R,P),\mathbf{w}_i}^\bullet$,
\begin{align*}
	F [\rho_0,\ldots,\rho_{m-1}]& := [\rho_1,\ldots,\rho_{m-1}],\\
	V [\rho_0,\ldots,\rho_{m-1}]& := [0, p\rho_0,\ldots,p\rho_{m-1}].
\end{align*}

For $1 \le i \le m$, let
$\omega_m : W_m\Lambda_{(S,Q)/(R,P)}^\bullet \to
\Lambda_{(S,Q)/(R,P),\mathbf{w}_m}^\bullet$ be the composition of the Frobenius
$F^i : W_m\Lambda_{(S,Q)/(R,P)}^\bullet \to W_{m-i}\Lambda_{(S,Q)/(R,P)}^\bullet$
and the restriction map $\pi^{m-i} : W_{m-i}\Lambda_{(S,Q)/(R,P)}^\bullet \to \Lambda_{(S,Q)/(R,P),\mathbf{w}_m}^\bullet$.
The sum of the maps $(\omega_0,\ldots,\omega_{m-1})$ define a homomorphism of projective systems of algebras
\[
	\underline{\omega}^m
	: W_m\Lambda_{(S,Q)/(R,P)}^\bullet
	\to
	E_m^\bullet.
\]
We can prove the following proposition by using the same argument of \cite{LZDRW} Proposition 2.15.

\begin{prop}
\label{prop:log-phamtom-component-factors}
The morphism $\underline{\omega}^m$ commutes with $F$ and $V$. We have
\[
	d\underline{\omega}^m =[1,p,p^2,\ldots \ ]\underline{\omega}^m d
\]
where $[1,p,p^2,\ldots \ ]\in \prod S =E_m^0$.
\end{prop}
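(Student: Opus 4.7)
The strategy is to reduce the three claimed identities to componentwise statements about the individual maps $\omega_i^{(m)} = \pi^{m-i-1}\circ F^i\colon W_m\Lambda^\bullet \to \Lambda^\bullet_{\mathbf{w}_i}$ and verify them directly, following the pattern of \cite{LZDRW} Proposition 2.15. All maps in sight are additive, so each identity may be checked on a generating set.

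The compatibility with $F$ and $V$ is formal. Since $F_E$ on $E_m^\bullet$ drops the zeroth component, the identity $F_E\circ\underline{\omega}^m = \underline{\omega}^{m-1}\circ F$ is equivalent to $\omega_{j+1}^{(m)} = \omega_j^{(m-1)}\circ F$ for $0\le j\le m-2$, which is immediate from $F^{j+1} = F\circ F^j$ and the commutation of $\pi$ with $F$. For $V$, the identity $V_E\circ\underline{\omega}^m = \underline{\omega}^{m+1}\circ V$ splits into $\omega_0^{(m+1)}\circ V = 0$ and $\omega_i^{(m+1)}\circ V = p\,\omega_{i-1}^{(m)}$ for $i\ge 1$. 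The first follows from $\pi^{m}\circ V = V\circ\pi^m$ together with $W_0\Lambda^\bullet = 0$; the second uses the procomplex axiom $FV = p\cdot\mathrm{id}$ iterated to $F^iV = p\,F^{i-1}$.

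The differential identity $d\circ\underline{\omega}^m = [1,p,p^2,\ldots]\cdot\underline{\omega}^m\circ d$ amounts componentwise to $d\circ\omega_i^{(m)} = p^i\omega_i^{(m)}\circ d$. Because $\pi$ commutes with $d$, this reduces to the key identity
\[
	d\circ F^i = p^i\,F^i\circ d \quad\text{on}\ W_m\Lambda^\bullet,
\]
and by induction on $i$ to the case $dF = pFd$. I would prove the latter by induction on the form degree. In degree $0$, one checks it on Teichmüller lifts and on $V$-elements: $dF[x] = d[x^p] = p[x]^{p-1}d[x] = p\,Fd[x]$ by the defining axiom $^Fd[x] = [x^{p-1}]d[x]$, while $dF({}^V\xi) = d(p\xi) = p\,d\xi = p\,Fd({}^V\xi)$ by $FV = p\cdot\mathrm{id}$ and ${}^Fd{}^V\xi = d\xi$. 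For higher degree, any element of $W_m\Lambda^\bullet$ is a sum of monomials $\xi\cdot d\xi_1\cdots d\xi_r\cdot d\log q_1\cdots d\log q_s$; applying $d$ and using multiplicativity of $F$, the Leibniz rule, $F(d\log q) = d\log q$, and the vanishings $d(d\log q_j) = 0$ and $d(F(d\xi_i)) = 0$, forces the Leibniz expansion of $dF\omega$ to collapse to $pFd\omega$.

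The main obstacle is the auxiliary identity $d\circ F\circ d = 0$ on $W_m(S)$ used in the inductive step, since without it the Leibniz expansion does not simplify. For generators this is a quick computation: $dF(d{}^V\xi) = d(d\xi) = 0$ follows from ${}^Fd{}^V\xi = d\xi$, while $dF(d[x]) = d([x^{p-1}]d[x]) = (p-1)[x^{p-2}](d[x])^2 = 0$. Once this vanishing is established, everything else is bookkeeping.
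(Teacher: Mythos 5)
Your proposal is correct and fills in exactly the details that the paper itself delegates to the argument of Langer–Zink Proposition 2.15: reduce each identity componentwise, use $\pi^{m}V = V\pi^{m}$ together with $W_0\Lambda^\bullet = 0$ and $F^iV = pF^{i-1}$ for the $F$- and $V$-compatibility, and reduce the differential identity to $dF = pFd$ which you then verify on the generators $[x]$, ${}^V\xi$, $d[x]$, $d{}^V\xi$, $d\log q$ using the procomplex axioms and the auxiliary vanishing $dFd = 0$ on $W_m(S)$. The overall logic and the degree-zero computations (including $dF(d[x]) = d([x]^{p-1}d[x]) = (p-1)[x]^{p-2}(d[x])^2 = 0$) are sound, and the higher-degree Leibniz collapse is justified once $dFd = 0$ is in hand.
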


\section{Log basic Witt differentials in special cases}
\label{sect:basicwittdiffs}

\subsection{SNCD case}
\label{subsect:log-basic-witt-diffs-sncd}
In this subsection, we consider the log version of the $p$-basic elements and the basic Witt differentials in the SNCD case.
In fact, we treat a slightly more generalized case which we need later.

Let $R$ be a $\mathbb{Z}_{(p)}$-algebra and $R[T]:=R[T_1,\ldots , T_n]$
and $e$ an integer such that $0\le e\le n$
and $f$ an nonnegative integer.
We consider the log structure associated with the map
$\bN^e \oplus \bN^f \to R[T],\ \bN^e \ni e_i\mapsto T_i \ (1\le i \le e),
 \bN^f \ni c_i\mapsto 0 \ (1\le i \le f)$
 where $e_i$ (resp. $c_i$) are basis of $\bN^e$ (resp. $\bN^f$).

We define the log $p$-basic differentials of $\Lambda_{(R[T],\bN^e \oplus \bN^f)/R}^\bullet := \Lambda_{(R[T],\bN^e \oplus \bN^f)/(R,\{*\})}^\bullet$ as follows.
Let $p^{-\infty}$ be a symbol
and we set $p\cdot p^{-\infty}:=p^{-\infty}$
and $p^{-1}\cdot p^{-\infty}:=p^{-\infty}$
and $\text{ord}_p(p^{-\infty}):=-\infty$.

A function $k:[1,n] \to \mathbb{Z}_{\ge 0} \sqcup \{p^{-\infty}\}$
is called a weight if for every $e<i\le n, k_i=k(i)\in \mathbb{Z}_{\ge 0}$.
Let $\Supp k:=\{i\in[1,n]\mid k_i\neq 0\}$.

We associate a weight without poles $k^+$ to a weight $k$ by
\begin{align*}
(k^+)_i:=
\left\{
\begin{array}{ll}
	0 & (k_i=p^{-\infty}),
	\\
	k_i & (k_i\neq p^{-\infty}).
\end{array}
\right.
\end{align*}

For each weight $k$,
we fix a total order on $\Supp\ k=\{i_1,\ldots,i_r\}$ in such a way that
\[
\text{ord}_p k_{i_1}\le \text{ord}_p k_{i_2}\le \cdots \le \text{ord}_p k_{i_r},\
\text{ord}_p k_{i_j}=\text{ord}_p k_{i_{j+1}}\Rightarrow i_j\le i_{j+1}.
\]

If $I$ is an interval of $\Supp k$,
the restriction of $k$ to $I$ will be given by $k_I$.

We say $(I_{-\infty},I_0,I_1,\ldots ,I_l)$ is a partition of $\Supp k$
if $I_j$ are intervals of $\Supp k$
and $I_{-\infty} = \{i\in[1,n]\mid k_i=p^{-\infty}\},
\Supp k = I_{-\infty} \sqcup I_0 \sqcup I_1 \sqcup \cdots \sqcup I_l$,
the elements of $I_j$ are smaller than that of $I_{j+1}$
(with respect to the fixed order)
and $I_1,\ldots ,I_l$ are not empty.
$I_{-\infty}$ and $I_0$ can be empty.
We associate the element
\[
\epsilon (k,\mathcal{P},J)
:= \left(\prod _{i\in J} d\log c_i\right)
\cdot \left(\prod _{i\in I_{-\infty}} d\log T_i\right)
\cdot e(k^+,\mathcal{P}'=(I_0, I_1, \cdots, I_l))
\]
of $\Lambda_{(R[T],\bN^e \oplus \bN^f)/R}^{|J|+|I_{-\infty}|+l}$
to the triple $(k,\mathcal{P}=(I_{-\infty},I_0,\ldots ,I_l),J)$.
Here $d\log T_i:=d\log e_i$, $k$ is a weight,
$\mathcal{P}$ is a partition of $\Supp k$ and
$J$ is a subset of $[1,f]$.
\[
e(k^+,\mathcal{P}'=(I_0, I_1, \cdots, I_l))=T^{k_{I_0}}(p^{-\text{ord}_p{k_{I_1}}}dT^{k_{I_1}})\cdots (p^{-\text{ord}_p{k_{I_l}}}dT^{k_{I_l}})
\]
is the $p$-basic element defined in \cite{LZDRW} \S 2.1, where
$T^{k_{I_j}} = \prod_{i \in I_j}T_i^{k_i}$ and
$\text{ord}_p k_{I_j} = \min_{i \in I_j} \text{ord}_p k_i$.
We call the elements of this form log $p$-basic elements.

\begin{lem}
\label{lem:log-p-basic-diffs-sncd}
The log $p$-basic elements form a base of the log de Rham complex $\Lambda_{(R[T],\bN^e \oplus \bN^f)/R}^\bullet$ as an $R$-module.
\end{lem}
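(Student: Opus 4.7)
My plan is to exhibit an $R$-module decomposition of $\Lambda^\bullet:=\Lambda^\bullet_{(R[T],\bN^e\oplus\bN^f)/R}$ indexed by the subsets $I\subset[1,e]$ and $J\subset[1,f]$ appearing in the definition of log $p$-basic elements, and then apply Langer--Zink's non-logarithmic basis result summand-by-summand.

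First, applying Kato's explicit description of log differentials (\cite{KatoLog}) to the chart $\bN^e\oplus\bN^f\to R[T]$ sending $e_i\mapsto T_i$ ($i\le e$) and $c_j\mapsto 0$, one sees that $\Lambda^1$ is the quotient
\[
\Bigl(\Omega^1_{R[T]/R}\oplus\bigoplus_{i=1}^{e}R[T]\,d\log T_i\oplus\bigoplus_{j=1}^{f}R[T]\,d\log c_j\Bigr)\Big/\bigl\langle dT_i-T_i\,d\log T_i:1\le i\le e\bigr\rangle,
\]
the relations arising from $c_j\mapsto 0$ being trivial. Eliminating $dT_i$ for $i\le e$ via these relations shows that $\Lambda^1$ is the free $R[T]$-module on $\{dT_i\}_{e<i\le n}\cup\{d\log T_i\}_{1\le i\le e}\cup\{d\log c_j\}_{1\le j\le f}$, and consequently $\Lambda^\bullet$ is the exterior $R[T]$-algebra on these generators.

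Next, for $I\subset [1,e]$ and $J\subset [1,f]$, write $d\log T_I$ and $d\log c_J$ for the ordered wedge products in the fixed orders, and consider the $R$-linear multiplication map
\[
\Phi:\bigoplus_{I,J}\Omega^{\bullet-|I|-|J|}_{R[T_i:i\notin I]/R}\cdot d\log T_I\cdot d\log c_J\;\longrightarrow\;\Lambda^\bullet.
\]
I claim $\Phi$ is an isomorphism. Any $R$-basis monomial $T^{\alpha}\cdot(\text{wedge of differentials})$ of $\Lambda^\bullet$ furnished by the previous paragraph can be normalised by rewriting $T_i^{\alpha_i}\,d\log T_i=T_i^{\alpha_i-1}\,dT_i$ for each $i\le e$ with $\alpha_i>0$ and $d\log T_i$ present; in the resulting expression $d\log T_i$ ($i\le e$) appears only when $\alpha_i=0$, placing the monomial in a unique summand on the left. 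This exhibits a bijection between the two natural $R$-bases, so $\Phi$ is an $R$-linear isomorphism.

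Finally, by \cite{LZDRW} \S 2.1 the non-logarithmic $p$-basic elements $e(k,\mathcal{P}')$ form an $R$-basis of $\Omega^\bullet_{R[T_i:i\notin I]/R}$ for each $I$. Multiplying summand-wise by $d\log T_I\cdot d\log c_J$ and transporting through $\Phi$, the resulting collection is an $R$-basis of $\Lambda^\bullet$. Setting $I_{-\infty}=I$ and extending $k$ to $[1,n]$ by declaring $k_i=p^{-\infty}$ for $i\in I$ (so that $k^+$ recovers the original weight on $[1,n]\setminus I$), this is precisely the family $\{\epsilon(k,\mathcal{P},J)\}$ of log $p$-basic elements from the statement. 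The principal technical point is the second step: showing that the ``absorb $T_i^{\alpha_i}$ into $dT_i$'' normalisation yields a well-defined bijection between the two natural $R$-bases, so that the decomposition is genuine rather than merely a surjection.
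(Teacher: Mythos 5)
Your proof is correct and takes essentially the same approach as the paper's: decompose $\Lambda^\bullet_{(R[T],\bN^e\oplus\bN^f)/R}$ according to the sets $I\subset[1,e]$ and $J\subset[1,f]$ of logarithmic poles, and apply the Langer--Zink basis result for $p$-basic elements to each summand. The paper refines the decomposition by also fixing the weight $k$ and closes with a rank count plus a generation argument, whereas you exhibit the $R$-linear isomorphism $\Phi$ directly and transport the basis, but the substance is the same.
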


\begin{proof}
The $R$-module $\Lambda_{(R[T],\bN^e \oplus \bN^f)/R}^l$ has the following basis:
\begin{align}
	d\log c_{h_1} \cdots d\log c_{h_s}
	\cdot d\log T_{i_1}\cdots d\log T_{i_m}
	\cdot \left(\prod_{j \in [1,n]}T_j^{k_j}\right)
	\cdot d\log T_{j_1}\cdots d\log T_{j_{l-m-s}},
\end{align}
where
$1\le h_1 < \cdots < h_s\le f,
1\le i_1 < \cdots < i_m\le e,
k:[1,n]\to \mathbb{Z}_{\ge 0},
k|_{\{i_1,\ldots,i_m \}}=0,
j_1< \cdots < j_{l-m-s} \in \Supp k$.

Let $\Lambda^l (I,k,J)\subset \Lambda_{(R[T],\bN^e \oplus \bN^f)/R}^l$ be the free $R$-submodule spanned by all elements of the form $(1)$
for a fixed $I=\{i_1,\ldots,i_m \}\subset [1,e]$, $J =\{ h_1,\ldots,h_s \} \subset [1,f]$
and a weight $k:[1,n]\to \mathbb{Z}_{\ge 0}$
such that $k|_{\{i_1,\ldots,i_m \}}=0$.
We have a decomposition as free $R$-modules:
\[
\Lambda_{(R[T],\bN^e \oplus \bN^f)/R}^l
=
\bigoplus_{I,k,J} \Lambda^l (I,k,J).
\]
The rank of $\Lambda^l (I,k,J)$ is $\binom ms$
where $m=|\Supp k |$ and $s=l-| I | - | J |$.
The number of log $p$-basic elements of the form
\[
	\left(\prod _{i\in J} d\log c_i\right)
	\cdot \left(\prod _{i\in I} d\log T_i\right)
	\cdot e(k,\mathcal{P})
\]
for fixed $I$ and $J$ and $k$ is also $\binom ms$.
Hence it is enough to show that the log $p$-basic elements of this form generate $\Lambda^l (I,k,J)$ as an $R$-module.
It follows from the proof of \cite{LZDRW} Proposition 2.1.
\end{proof}

Next we determine the log version of the basic Witt differentials for the pre-log ring
$(R[T],\bN^e \oplus \bN^f)=(R[T_1,\ldots,T_m],\bN^e \oplus \bN^f)$ over $R=(R,\{*\})$.
We denote by $X_i \in W(R)$ the Teichm\"{u}ller lift $[T_i]$ of $T_i$ .
We consider the log de Rham-Witt complex
$W\Lambda_{(R[T],\bN^e \oplus \bN^f)/R}^\bullet$.

We call a function $k:[1,n] \to \mathbb{Z}_{\ge 0}[1/p] \sqcup \{p^{-\infty}\}$ a weight
if for $e <i\le n,\ k_i:=k(i)\in \mathbb{Z}_{\ge 0}[1/p]$.
Set $t(k_{i_l}):=-\text{ord}_p k_{i_l}$ and $u(k_{i_l})=\max (0,t(k_{i_l}))$.
For each weight $k$, we fix a total order on $\Supp\ k=\{i_1,\ldots,i_r\}$ in such a way that
\[
	\text{ord}_p k_{i_1}\le \text{ord}_p k_{i_2}\le \cdots \le \text{ord}_p k_{i_r},\
	\text{ord}_p k_{i_j}=\text{ord}_p k_{i_{j+1}}\Rightarrow i_j\le i_{j+1}.
\]

If $I=\{i_t,\cdots,i_{t+m}\}$ is an interval of $\Supp k$,
the restriction of $k$ to $I$ will be given by $k_I$.
We set $t(k_I)=t(k_{i_t}), u(k_I)=u(k_{i_t})$.
If $k$ is fixed in our discussion,
we write $t(I)$ and $u(I)$ instead of $t(k_I)$ and $u(k_I)$.

We say $(I_{-\infty},I_0,I_1,\ldots ,I_l)$ is a partition of $\Supp\ k$
if $I_j$ are intervals of $\Supp k$,
$I_{-\infty} = \{i\in[1,n]\mid k_i=p^{-\infty}\},
\Supp k = I_{-\infty} \sqcup I_0 \sqcup I_1 \sqcup \cdots \sqcup I_l$,
the elements of $I_j$ are smaller than that of $I_{j+1}$
(with respect to the fixed order)
and $I_1,\ldots ,I_l$ are not empty.
$I_{-\infty}$ and $I_0$ can be empty.

Let $(\xi,k,\mathcal{P},J)$ be a quadruple such that $k$ is a weight,
$\mathcal{P}=(I_{-\infty},I_0,\ldots,I_l)$ is a partition of $k$,
$\xi \in {}^{V^{u(k^+)}}W(R)$ and $J \subset [1,f]$.
We define a log basic Witt differential $\epsilon=\epsilon(\xi,k,\mathcal{P},J)\in W\Lambda_{(R[T],\bN^e \oplus \bN^f)/R}^{|J| + |I_{-\infty}|+l}$ by
\[
	\epsilon =
	\left(\prod _{i\in J} d\log c_i\right)
	\cdot \left( \prod_{i\in I_{-\infty}}d\log X_i \right)
	\cdot e(\xi,k^+,(I_0,\ldots,I_l)),
\]
where $e(\xi,k^+,(I_0,\ldots,I_l))$ is the basic Witt differential defined in \cite{LZDRW} \S 2.2.
We call the log basic Witt differential $\epsilon(\xi,k,\mathcal{P},J)$ is integral
if $e(\xi,k^+,(I_0,\ldots,I_l))$ is integral,
i.e., $(k^+)_i\in \mathbb{Z}_{\ge 0}$ for all $i$.
The log basic Witt differential $\epsilon(\xi,k,\mathcal{P},J)$ is called fractional if it is not integral.

We denote by $\epsilon_m (\xi,k,\mathcal{P},J)$ the image of $\epsilon (\xi,k,\mathcal{P},J)$ in $W_m\Lambda_{(R[T],\bN^e \oplus \bN^f)/R}^\bullet$.
The element $\epsilon_m (\xi,k,\mathcal{P},J)$ depends only on the residue class $\bar{\xi}$ of $\xi$ in $W_m(R)$.
We see $\bar{\xi}\in {}^{V^u}W_{m-u}(R)$ because $\xi \in {}^{V^u}W(R)$ for $u=u(k^+)$.
We have $\epsilon_m (\xi,k,\mathcal{P},J)=0$ if $p^{m-1}\cdot k^+$ is not integral.

The relations
\begin{align*}
	{}^F d\log c_i=\log c_i,\
	&{}^F d\log X_i=\log X_i,\\
	{}^V d\log c_i=V(1)\log c_i,\
	&{}^V d\log X_i=V(1)\log X_i,\\
	d(d\log c_i)=0,\
	&d(d\log X_i)=0
\end{align*}
and \cite{LZDRW} Proposition 2.5 and 2.6 give the following formulas:

(1)
\begin{align*}
&^F\epsilon(\xi,k,(I_{-\infty},I_0,\ldots,I_l),J)\\
=&\left\{
\begin{array}{ll}
	\epsilon(^F\xi,pk,(I_{-\infty},I_0,\ldots,I_l),J)
	& (I_0 \neq \emptyset, k^+\ \text{is integral}),
	\\
	\epsilon(^{V^{-1}}\xi,pk,(I_{-\infty},I_0,\ldots,I_l),J)
	& (I_0 = \emptyset,k^+\ \text{not integral}).
\end{array}
\right.
\end{align*}

(2)
\begin{align*}
&^V\epsilon(\xi,k,(I_{-\infty},I_0,\ldots,I_l),J)\\
=&\left\{
\begin{array}{ll}
	\displaystyle \epsilon(^V\xi,\frac{1}{p}k,(I_{-\infty},I_0,\ldots,I_l),J)
	&
	(I_0 \neq \emptyset \ \text{or}\ k^+\ \text{is integral and divisible by}\ p),\\
	\displaystyle \epsilon(p^V\xi,\frac{1}{p}k,(I_{-\infty},I_0,\ldots,I_l),J)
	&
	(I_0 = \emptyset,(1/p)k^+\ \text{is not integral}).
\end{array}
\right.
\end{align*}

(3) If $I=\Supp k^+$ and $t=t(k_I)$,
\begin{align*}
&d\epsilon(\xi,k,(I_{-\infty},I_0,\ldots,I_l),J)\\
=&\left\{
\begin{array}{ll}
	0
	&
	(I = \emptyset \ \text{or}\ I_0=\emptyset),
	\\
	\epsilon(\xi,k,(I_{-\infty},\emptyset,I_0,\ldots,I_l),J)
	&
	(I_0 \neq \emptyset, k^+\ \text{not integral}),
	\\
	p^{-t}\epsilon(\xi,k,(I_{-\infty},\emptyset,I_0,\ldots,I_l),J)
	&
	(I_0 \neq \emptyset,k^+\ \text{integral}).
\end{array}
\right.
\end{align*}

Let
\[
	\widetilde{\omega}_m:W\Lambda_{(R[T_1,\ldots,T_n],\bN^e \oplus \bN^f)/R}^\bullet
	\to
	\Lambda_{(R[T_1,\ldots,T_n],\bN^e \oplus \bN^f)/R,\mathbf{w}_m}^\bullet.
\]
be the composition of
\[
\omega_m:
	W_{m+1}\Lambda_{(R[T_1,\ldots,T_n],\bN^e \oplus \bN^f)/R}^\bullet
	\to
	\Lambda_{(R[T_1,\ldots,T_n],\bN^e \oplus \bN^f)/R,\mathbf{w}_m}^\bullet
\]
which we defined in \S \ref{subsect:log-phantom-components}
followed by the natural projection map
\[
	W\Lambda_{(R[T_1,\ldots,T_n],\bN^e \oplus \bN^f)/R}^\bullet
	\to
	W_{m+1}\Lambda_{(R[T_1,\ldots,T_n],\bN^e \oplus \bN^f)/R}^\bullet.
\]

\begin{prop}
\label{prop:calc-phantom-component-sncd}
Let $\epsilon = \epsilon(\xi,k,(I_{-\infty},I_0,\ldots,I_l),J)
\in W\Lambda_{(R[T_1,\ldots,T_n],\bN^e \oplus \bN^f)/R}^{|J| + | I_{-\infty} |+l}$
be a log basic Witt differential where $\xi = {}^{V^u}\eta, u=u(k^+)$. Then

\begin{align*}
&\widetilde{\omega}_m(\epsilon)\\
=&
\left\{
\begin{array}{l}
	0 \hspace{18em} (\text{if } p^m\cdot k^+\ \text{not integral}),
	\\
	\mathbf{w}_m(\xi)
	\cdot \left( \prod_{i\in J}d\log c_i \right)
	\cdot \left( \prod_{i\in I_{-\infty}}d\log T_i \right)\cdot\\
	\hspace{8em}
	T^{p^m k_{I_0}} (p^{-\text{ord}p^m k_{I_1}}dT^{p^m k_{I_1}})
	\cdots (p^{-\text{ord}p^m k_{I_l}}dT^{p^m k_{I_l}})\\
	\hspace*{12em}(\text{if } p^m \cdot k^+\ \text{integral},
	I_0 \neq \emptyset \  \text{or}\ k^+ \ \text{integral}),\\
	\mathbf{w}_{m-u}(\eta)\cdot \left( \prod_{i\in J}d\log c_i \right)
	\cdot \left( \prod_{i\in I_{-\infty}}d\log T_i \right) \cdot \\
	\hspace{8em} (p^{-\text{ord}p^m k_{I_1}} dT^{p^m k_{I_1}})
	\cdots (p^{-\text{ord}p^m k_{I_l}}dT^{p^m k_{I_l}})\\
	\hspace*{18em}(\text{if}\ p^m\cdot k^+\ \text{integral}, I_0 = \emptyset).\\
\end{array}
\right.
\end{align*}
 \end{prop}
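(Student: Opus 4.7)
The plan is to use that the phantom component map $\underline{\omega}^m$ (and hence $\widetilde{\omega}_m$) is a morphism of graded algebras (Proposition \ref{prop:log-phamtom-component-factors}), so it respects products. I will therefore compute $\widetilde{\omega}_m$ on each of the three kinds of factors appearing in
\[
\epsilon = \Bigl(\prod_{i\in J} d\log c_i\Bigr)\cdot \Bigl(\prod_{i\in I_{-\infty}} d\log X_i\Bigr)\cdot e(\xi,k^+,(I_0,\ldots,I_l)),
\]
and then multiply the results.

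For the log factors, I will use the relations $^{F}d\log c_i = d\log c_i$ and $^{F}d\log X_i = d\log X_i$ recorded in \S\ref{subsect:log-basic-witt-diffs-sncd} (together with the fact that these elements are compatible with $\pi$). Since $\omega_m$ is by definition a composition of powers of $F$ followed by $\pi$, each factor $d\log c_i$ (resp.\ $d\log X_i$, where $X_i=[T_i]$) is carried to the corresponding $d\log c_i$ (resp.\ $d\log T_i$) in $\Lambda^\bullet_{(R[T],\bN^e\oplus\bN^f)/R,\mathbf{w}_m}$. For the basic Witt differential factor $e(\xi,k^+,(I_0,\ldots,I_l))$, I will invoke the Langer--Zink computation (\cite{LZDRW} Proposition 2.17), which gives precisely the three cases of the stated formula: the image vanishes if $p^m\cdot k^+$ is not integral; if $p^m\cdot k^+$ is integral and either $I_0\neq\emptyset$ or $k^+$ itself is integral, the image is $\mathbf{w}_m(\xi)$ times the polynomial--times--differential expression; and if $p^m\cdot k^+$ is integral with $I_0=\emptyset$, the $\xi={}^{V^u}\eta$ structure forces the scalar $\mathbf{w}_{m-u}(\eta)$ to appear instead.

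Combining the three contributions by multiplicativity reproduces the three cases of the statement, since the $d\log$ factors simply sit in front without affecting the case distinction on $k^+$ and $I_0$. The main bookkeeping obstacle is the third case, where $\xi\in{}^{V^u}W(R)$ with $u=u(k^+)$: one needs to check that when the Frobenius iterates in $\omega_m$ act on $e(\xi,k^+,(I_0,\ldots,I_l))$, the repeated use of the identity $^{F}d^{V}\omega = d\omega$ unwinds the outer $V^u$ on $\xi$ into the scalar $\mathbf{w}_{m-u}(\eta)$ before the residual $F$'s act on the $dX^{k_{I_j}}$ factors. This is exactly the computation carried out in \cite{LZDRW}, and it goes through verbatim here because the extra log factors $d\log c_i$ and $d\log X_i$ are $F$-invariant and $d$-closed, so they commute through all the intervening operations without generating correction terms.
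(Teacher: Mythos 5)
Your proposal matches the paper's own (extremely terse) proof: the paper likewise reduces to the Langer--Zink computation of the phantom components on basic Witt differentials (cited there as \cite{LZDRW} Proposition 2.16) and handles the extra $d\log$ factors by an explicit ``calculation of log parts,'' which is exactly your observation that these factors are $F$-invariant, $d$-closed, compatible with restriction, and therefore commute through the operations in $\widetilde{\omega}_m$ via the multiplicativity recorded in Proposition \ref{prop:log-phamtom-component-factors}. The only discrepancy is the reference number in Langer--Zink (you cite 2.17, the paper cites 2.16), which does not affect the substance of the argument.
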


\begin{proof}
It follows from the construction of $\mathbf{w}_m$, \cite{LZDRW} Proposition 2.16 and calculations of log parts.
\end{proof}

\begin{prop}
\label{prop:expression-basic-witt-sncd}
Any element of $W\Lambda_{(R[T_1,\ldots,T_n],\bN^e \oplus \bN^f)/R}^\bullet$ has a unique expression as a convergent sum of log basic Witt differentials:
\begin{align}
\label{log-de-rham-witt-diff-expression}
	\sum_{k,\mathcal{P},J}\epsilon(\xi_{k,\mathcal{P},J},k,\mathcal{P},J),
\end{align}
where $k$ runs over all possible weights,
$\mathcal{P}$ over all partitions
and $J$ over all subsets of $[1,f]$.
A convergent sum means that for any given number $m$,
we have $\xi_{k,\mathcal{P},J}\in {}^{V^m}W(R)$ for all but finitely many weights $k$.
\end{prop}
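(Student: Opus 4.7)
The plan is to adapt Langer--Zink's treatment of the non-logarithmic case (\cite{LZDRW}, Proposition 2.17) to the present logarithmic setting, so the argument splits naturally into existence, convergence, and uniqueness.

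For existence, proceed by induction on $m$ and show that every element of $W_m\Lambda^\bullet$ is a finite sum of the reductions $\epsilon_m(\xi,k,\mathcal{P},J)$. The base case $m=1$ is precisely Lemma \ref{lem:log-p-basic-diffs-sncd}. For the inductive step, use the exact sequence of Proposition \ref{prop:fil-exact-sequence}: given $\omega \in W_{m+1}\Lambda^\bullet$, its image $\pi(\omega)\in W_m\Lambda^\bullet$ is by hypothesis a sum of log basic Witt differentials; lifting termwise and subtracting, the residue lies in $\Fil^m W_{m+1}\Lambda^\bullet = V^m W_1\Lambda^\bullet + d V^m W_1\Lambda^{\bullet-1}$. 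Both summands are resolved by applying the formulas $(2)$ and $(3)$ above (which describe ${}^V\epsilon$ and $d\epsilon$) iteratively to the log $p$-basic elements of Lemma \ref{lem:log-p-basic-diffs-sncd}. Convergence is then automatic: since $\xi \in {}^{V^s}W(R)$ forces $\epsilon(\xi,k,\mathcal{P},J)\in \Fil^s W\Lambda^\bullet$, a sum satisfying the convergence hypothesis collapses to a finite sum in each $W_m\Lambda^\bullet$, so $\varprojlim$ produces a well-defined element of $W\Lambda^\bullet$.

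For uniqueness, employ the log phantom components $\widetilde{\omega}_m$ computed in Proposition \ref{prop:calc-phantom-component-sncd}. First reduce to the case that $R$ is $p$-torsion-free by choosing a $\mathbb{Z}_{(p)}$-flat surjection $\tilde{R}\twoheadrightarrow R$ (for instance a polynomial $\mathbb{Z}_{(p)}$-algebra); by functoriality of the log basic Witt differentials in $R$, uniqueness over $\tilde{R}$ descends to uniqueness over $R$. For $p$-torsion-free $R$, the product phantom map $\prod_m \mathbf{w}_m\colon W(R)\hookrightarrow \prod_m R$ is injective. Given a putative relation $\sum_{k,\mathcal{P},J}\epsilon(\xi_{k,\mathcal{P},J},k,\mathcal{P},J)=0$, apply $\widetilde{\omega}_m$ to produce an identity in $\Lambda_{(R[T],\bN^e\oplus \bN^f)/R}^\bullet$ whose summands are $\mathbf{w}_m(\xi_{k,\mathcal{P},J})$ (or $\mathbf{w}_{m-u}$ of a reduction of $\xi_{k,\mathcal{P},J}$, according to the trichotomy of Proposition \ref{prop:calc-phantom-component-sncd}) times an explicit log $p$-basic element. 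Since the log $p$-basic elements form an $R$-basis by Lemma \ref{lem:log-p-basic-diffs-sncd}, coefficient-matching forces each of these phantom components to vanish, and then injectivity of $\prod_m \mathbf{w}_m$ yields $\xi_{k,\mathcal{P},J}=0$ for all quadruples.

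The main technical obstacle is the combinatorial bookkeeping needed to match the three cases of Proposition \ref{prop:calc-phantom-component-sncd} against the basis of Lemma \ref{lem:log-p-basic-diffs-sncd}: different quadruples $(k,\mathcal{P},J)$ may produce log $p$-basic elements of the same degree with overlapping index sets, so one has to verify that the partition data $\mathcal{P}$ is faithfully recorded by the phantom image. A secondary subtlety is the reduction to $p$-torsion-free $R$, which requires a clean naturality argument showing that both the representation and its uniqueness are preserved under a surjective base change $\tilde{R}\twoheadrightarrow R$.
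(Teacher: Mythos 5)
Your existence argument takes a different route from the paper's: you propose an induction on $m$ via the filtration $\Fil^m W_{m+1}\Lambda^\bullet = V^m W_1\Lambda^\bullet + dV^m W_1\Lambda^{\bullet-1}$, whereas the paper expands a general element of $W\Lambda^\bullet$ directly as a convergent sum of explicit terms ${}^{V^{u_0}}(\eta_0 X^{p^{u_0}k^{(0)}})\, d^{V^{u_1}}(\eta_1 X^{p^{u_1}k^{(1)}})\cdots$ multiplied by $d\log$-factors, and rewrites each such term via a four-case analysis keyed to the relative sizes of the $u_i$. Your route is plausible, but it only produces a finite representation in each $W_m\Lambda^\bullet$; to obtain a single convergent representative in $W\Lambda^\bullet=\varprojlim_m W_m\Lambda^\bullet$ you must choose compatible lifts across levels, and at that stage you cannot yet appeal to uniqueness, which you prove afterwards. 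The phantom-component argument you give for uniqueness over $p$-torsion-free $R$ is essentially the paper's.

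The genuine gap is the reduction to $p$-torsion-free $R$. The claim that "by functoriality of the log basic Witt differentials in $R$, uniqueness over $\widetilde{R}$ descends to uniqueness over $R$" does not hold as stated. Lifting a relation $\sum\epsilon(\xi_{k,\mathcal{P},J},k,\mathcal{P},J)=0$ over $R$ to coefficients $\widetilde{\xi}_{k,\mathcal{P},J}\in W(\widetilde{R})$ only places the lifted sum in the kernel of $W\Lambda^\bullet_{(\widetilde{R}[T],\bN^e\oplus\bN^f)/\widetilde{R}}\twoheadrightarrow W\Lambda^\bullet_{(R[T],\bN^e\oplus\bN^f)/R}$; what must be proved is that this kernel is \emph{exactly} the set of convergent sums of log basic Witt differentials whose coefficients lie in $W(\mathfrak{a})$, $\mathfrak{a}=\ker(\widetilde{R}\to R)$, for then $\widetilde{\xi}_{k,\mathcal{P},J}\in W(\mathfrak{a})$ and hence $\xi_{k,\mathcal{P},J}=0$. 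This is not a formal naturality statement: a priori the kernel is generated by $W(\mathfrak{a}[T])\cdot W\Lambda^\bullet$ and $dW(\mathfrak{a}[T])\cdot W\Lambda^{\bullet-1}$ (Proposition \ref{prop:exact-sequence-Kahler-diffs}), which is not obviously the same thing. The paper's argument defines $W\Lambda^\bullet_{(\mathfrak{a}\widetilde{R}[T],\ldots)/\widetilde{R}}$ as the set of such $W(\mathfrak{a})$-coefficient sums, checks (using the already-proved $p$-torsion-free case) that it is an ideal stable under $F$, $V$ and $d$, forms the quotient pro-complex $\{E_m^\bullet\}$, verifies this is a log $F$-$V$-procomplex over $(R[T],\bN^e\oplus\bN^f)/R$, and finally applies the universal property of Proposition \ref{prop:universal-property} to factor $W\Lambda^\bullet_{(R[T],\ldots)/R}\to E^\bullet$ and pin down the kernel. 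That universal-property construction is the missing step in your proposal; describing it as a "clean naturality argument" obscures the actual work involved.
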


\begin{proof}
For $\xi \in W(R[T])$, we can see that $\xi$ can be written uniquely as a convergent sum
$ \xi = \sum_{k,m\ge 0} {}^{V^m}([a_{k,m}]X^k)$,
where $X_r=[T_r]$, $a_{k,m}\in R$ and $k$ runs all possible integral weights.

For a given nonnegative integer $m$ and a weight $k$,
$\rho \le m$ denotes the maximum nonnegative integer such that $p^{-\rho}k$  is integral. Then we have
\[
	{}^{V^m}([a_{k,m}]X^k) = {}^{V^{m-\rho}}({}^{V^\rho}[a_{k,m}]X^{p^{-\rho}k}).
\]
Hence $\xi$ is written as the convergent sum
$\xi= \sum_{k:\text{weight}} {}^{V^{u(k)}}(\eta_{k} X^{p^{u(k)}k})$.

Since we have a canonical surjective map
\begin{align*}
	\Omega_{W(R[T])/W(R)}^1
	\oplus \bigoplus_{i=1}^e W(R[T])d\log T_i
	\oplus \bigoplus_{i=1}^f W(R[T])& d\log c_i
	\to
	\\
	&\Lambda_{(W(R[T]),\bN^e \oplus \bN^f)/W(R)}^1,
\end{align*}
any element in $W\Lambda_{(R[T_1,\ldots,T_n],\bN^e \oplus \bN^f)/R}^\bullet$ is written as a convergent sum of elements of the form
\begin{align*}
	d\log c_{i_1} &\cdots d\log c_{i_s}
	\cdot d\log X_{j_1}\cdots d\log X_{j_l}\cdot\\
	&{}^{V^{u_0}}
	(\eta_0 X^{p^{u_0}k^{(0)}})
	d^{V^{u_1}}(\eta_1 X^{p^{u_1}k^{(1)}})
	\cdots d^{V^{u_m}}(\eta_m X^{p^{u_m}k^{(m)}}),\ \cdots (*)
\end{align*}
where
$1\le i_1<\cdots <i_s \le f,
1\le j_1 < \cdots < j_l \le e$,
$k^{(0)},\ldots ,k^{(m)}$ are weights
and $u_i$ is the least nonnegative integer such that $p^{u_i}\cdot k^{(i)}$ is integral.

We prove that all the elements of the form $(*)$ can be written as a sum of log basic Witt differentials, by dividing them into four cases.

Case 0. $\{j_1,\ldots ,j_l \}\cap (\bigcup_{i=0}^m \Supp\ k^{(i)}) = \emptyset$.

If $\{j_1,\ldots ,j_l \}\cap (\bigcup_{i=0}^m \Supp\ k^{(i)}) = \emptyset$,
$(*)$ can be written as a sum of log basic Witt differentials
by \cite{LZDRW} Theorem 2.8 and our definition of log basic Witt differentials.

Case 1. $k^{(i)}$ are all integral, i.e., $u_0=u_1=\cdots=u_m=0$.
\begin{align*}
	(*)=d\log c_{i_1} \cdots d\log c_{i_s}
	\cdot d\log X_{j_1} & \cdots d\log X_{j_l}\cdot\\
	& (\eta_0 X^{k^{(0)}})d(\eta_1 X^{k^{(1)}})\cdots d(\eta_m X^{k^{(m)}}).
\end{align*}
It can be reduced to the case 0 by following calculations.

We write $e_i$ for $(0,\ldots, 1 ,\ldots,0)$,
whose $i$th entry is $1$ and the others are $0$.
If $k$ is an integral weight without poles and $t \in \Supp k$, we have
\begin{align*}
	d\log X_t\cdot X^k &= X^{k-k_t e_t} \cdot X_t^{k_t-1}dX_t,\\
	d\log X_t \cdot dX^k
	&= d\log X_t \cdot (X_t^{k_t}dX^{k-k_t e_t}+X^{k-k_t e_t}dX_t^{k_t})\\
	&= d\log X_t \cdot X_t^{k_t}dX^{k-k_t e_t}\\
	&= X_t^{k_t-1}dX_t \cdot dX^{k-k_t e_t}.
\end{align*}

Case 2. $u_0 \ge u_j$ for $j=1,\ldots ,m$.

We can rewrite $(*)$ as follows:
\begin{align*}
V^{u_0}(d\log c_{i_1}& \cdots d\log c_{i_s}
\cdot d\log X_{j_1}\cdots d\log X_{j_l} \cdot \\
&\eta_0 X^{p^{u_0}k^{(0)}}\cdot
{}^{F^{u_0-u_1}}d(\eta_1 X^{p^{u_1}k^{(1)}})
\cdots {}^{F^{u_0-u_m}}d(\eta_m X^{p^{u_m}k^{(m)}}).
\end{align*}
Since $V$ maps log basic Witt differentials to log basic Witt differentials,
it follows from case 1.

Case 3. $u_1 \ge u_j$ for $j=0,\ldots ,m$.

We apply Leibniz rule:
\begin{align*}
	^{V^{u_0}}(\eta_0 X^{p^{u_0}k^{(0)}})d^{V^{u_1}}(\eta_1 X^{p^{u_1}k^{(1)}})
	=d(^{V^{u_0}}(\eta_0 X^{p^{u_0}k^{(0)}})^{V^{u_1}}(\eta_1 X^{p^{u_1}k^{(1)}}))\\
	-^{V^{u_1}}(\eta_1 X^{p^{u_1}k^{(1)}})d^{V^{u_0}}(\eta_0 X^{p^{u_0}k^{(0)}}).
\end{align*}

By the Leibniz rule and the fact that $d$ maps log basic Witt differentials to log basic Witt differentials,
we can reduce it to the former three cases.

Next we prove the independence of the log basic Witt differentials.
Suppose the element $\omega = \sum_{k,\mathcal{P},J}\epsilon (\xi_{k,\mathcal{P},J},k,\mathcal{P},J)$ of the form as (\ref{log-de-rham-witt-diff-expression}) is equal to zero.
We show $\xi_{k,\mathcal{P},J}=0$ for all $k,\mathcal{P}, J$.
It suffices to show that the image of $\xi_{k,\mathcal{P},J}$ in $W_m(R)$ is zero for all $m$.
We fix a positive integer $m$.
Let $\bar{\xi}_{k,\mathcal{P},J}$ be the image of $\xi_{k,\mathcal{P},J}$ in $W_m(R)$.
First we suppose $R$ is $p$-torsion free.
Consider the morphism
\[
	\widetilde{\omega}_i:
	W\Lambda_{(R[T_1,\ldots,T_n],\bN^e \oplus \bN^f)/R}^\bullet
	\to
	\Lambda_{(R[T_1,\ldots,T_n],\bN^e \oplus \bN^f)/R,\mathbf{w}_i}^\bullet.
\]
for $0 \le i \le m-1$.
Proposition \ref{prop:calc-phantom-component-sncd} shows that
$\mathbf{w}_i(\xi_{k,\mathcal{P},J})=0$
for $0 \le i \le m-1$ because log $p$-basic elements are linearly independent by Lemma \ref{lem:log-p-basic-diffs-sncd}.
Since we assume that $R$ has no $p$-torsion,
$\bar{\xi}_{k,\mathcal{P},J}=0$ for all $k,
\mathcal{P}, J$.
Hence the proof of independence is completed if $R$ is $p$-torsion free.

We consider the general case.
Take a surjective ring homomorphism $\phi:\widetilde{R}\to R$
where $\widetilde{R}$ is a ring without $p$-torsion.
Set $\mathfrak{a}:=\ker \phi$.
Let $(\widetilde{R}[T],\bN^e \oplus \bN^f)$ be a pre-log ring
whose pre-log structure is given by
$\bN^e \oplus \bN^f \to \widetilde{R}[T],\
\bN^e \ni e_i\mapsto T_i \ (1\le i \le e), \bN^f \ni c_i\mapsto 0 \ (1\le i \le f)$.
We denote by $W\Lambda_{(\mathfrak{a}\widetilde{R}[T],\bN^e \oplus \bN^f)/\widetilde{R}}^\bullet$ the subgroup of $W\Lambda_{(\widetilde{R}[T],\bN^e \oplus \bN^f)/\widetilde{R}}^\bullet$
which consists of convergent sums of log basic Witt differential of $\epsilon(\xi_{k,\mathcal{P},J},k,\mathcal{P},J)$
with $\xi_{k,\mathcal{P},J}\in W(\mathfrak{a})$.
We see $W\Lambda_{(\mathfrak{a}\widetilde{R}[T],\bN^e \oplus \bN^f)/\widetilde{R}}^\bullet$ is a ideal of $W\Lambda_{(\widetilde{R}[T],\bN^e \oplus \bN^f)/\widetilde{R}}^\bullet$
by the first part of the proof and Proposition 2.11 of \cite{LZDRW}.
Let $W_m\Lambda_{(\mathfrak{a}\widetilde{R}[T],\bN^e \oplus \bN^f)/\widetilde{R}}^\bullet$
be the image of $W\Lambda_{(\mathfrak{a}\widetilde{R}[T],\bN^e \oplus \bN^f)/\widetilde{R}}^\bullet$
in $W_m\Lambda_{(\widetilde{R}[T],\bN^e \oplus \bN^f)/\widetilde{R}}^\bullet$.
Define a procomplex $\{ E_m^\bullet \}$ by
\[
	E_m^\bullet:=W_m\Lambda_{(\widetilde{R}[T],\bN^e \oplus \bN^f)/\widetilde{R}}^\bullet
	/W_m\Lambda_{(\mathfrak{a}\widetilde{R}[T],\bN^e \oplus \bN^f)/\widetilde{R}}^\bullet.
\]
Set $E^\bullet:=\varprojlim_m E_m^\bullet$.
Then we have $E_m^0=W_m(R)$ and
\[
	E^\bullet
	\simeq
	W\Lambda_{(\widetilde{R}[T],\bN^e \oplus \bN^f)/\widetilde{R}}^\bullet
	/W\Lambda_{(\mathfrak{a}\widetilde{R}[T],\bN^e \oplus \bN^f)/\widetilde{R}}^\bullet.
\]

Since $W\Lambda_{(\mathfrak{a}\widetilde{R}[T], \bN^e \oplus \bN^f)/\widetilde{R}}^\bullet$ is invariant under $F,V$ and $d$, we see $\{ E_m^\bullet \}$ is an log $F$-$V$-procomplex over $(R[T],\bN^e \oplus \bN^f)/R$. Hence we obtain a morphism
\[
	\{ W_m\Lambda_{(R[T],\bN^e \oplus \bN^f)/R}^\bullet \}
	\to
	\{ E_m^\bullet \}
\]
of log $F$-$V$-procomplexes. Then there is the following commutative diagram
\[
\xymatrix{
	W\Lambda_{(\widetilde{R}[T],\bN^e \oplus \bN^f)/\widetilde{R}}^\bullet \ar@{->>}[d] \ar@{->>}[r]
	&
	W\Lambda_{(R[T],\bN^e \oplus \bN^f)/R}^\bullet \ar[ld]
	\\
	E^\bullet . &
}
\]

By the $p$-torsion free case, any element $\omega$ of $W\Lambda_{(\widetilde{R}[T],\bN^e \oplus \bN^f)/\widetilde{R}}^\bullet$ is uniquely written as a convergent sum as (\ref{log-de-rham-witt-diff-expression}).
The commutativity of the diagram indicated above and the fact that the composite morphism
\[
	W\Lambda_{(\mathfrak{a}\widetilde{R}[T],\bN^e \oplus \bN^f)/\widetilde{R}}^\bullet
	\to
	W\Lambda_{(\widetilde{R}[T],\bN^e \oplus \bN^f)/\widetilde{R}}^\bullet
	\to
	W\Lambda_{(R[T],\bN^e \oplus \bN^f)/R}^\bullet
\]
is zero implies Proposition \ref{prop:expression-basic-witt-sncd} holds for any $R$.
\end{proof}

\begin{cor}
Any element $\omega$ of $W_m\Lambda_{(R[T_1,\ldots,T_n],\bN^e \oplus \bN^f)/R}^\bullet$ can be written as a finite sum
\[
	\omega =
	\sum_{k,\mathcal{P},J}\epsilon_m (\xi_{k,\mathcal{P},J},k,\mathcal{P},J),\
	\xi_{k,\mathcal{P},J}\in {}^{V^{u(k^+)}}W_{m-u(k^+)}(R).
\]
Here $k$ runs over all weights such that $p^{m-1}\cdot k^{+}$ is integral,
$\mathcal{P}$ runs over all partitions and $J$ over all subsets of $[1,f]$.
The coefficients $\xi_{k,\mathcal{P},J}$ are uniquely determined by $\omega$.
\end{cor}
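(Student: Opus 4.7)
The strategy is to reduce to Proposition \ref{prop:expression-basic-witt-sncd}. For existence, I lift $\omega$ via the surjective restriction $W\Lambda^\bullet \twoheadrightarrow W_m\Lambda^\bullet$ (the transition maps $\pi$ are surjective by the inductive construction in \S\ref{subsect:odrwc-construction}, and the inverse-limit map inherits surjectivity because one can lift $\omega$ step by step). Proposition \ref{prop:expression-basic-witt-sncd} then writes the lift as a convergent sum $\sum_{k,\mathcal{P},J}\epsilon(\xi_{k,\mathcal{P},J},k,\mathcal{P},J)$, and the image $\epsilon_m(\xi,k,\mathcal{P},J)$ in $W_m\Lambda^\bullet$ depends only on $\bar\xi\in V^{u(k^+)}W_{m-u(k^+)}(R)$; it vanishes when $\xi\in V^m W(R)$ (since then $\bar\xi=0$) or when $p^{m-1}k^+$ is not integral, both noted in the excerpt. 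The convergence hypothesis forces all but finitely many $\xi_{k,\mathcal{P},J}$ into $V^m W(R)$, so the image collapses to a finite sum over weights $k$ with $p^{m-1}k^+$ integral, as required.

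For uniqueness, first assume $R$ is $p$-torsion-free and suppose $\sum_{k,\mathcal{P},J}\epsilon_m(\bar\xi_{k,\mathcal{P},J},k,\mathcal{P},J)=0$ in $W_m\Lambda^\bullet$. For each $0\le i\le m-1$ the map
\[
\omega_i:=\pi^{m-1-i}\circ F^i\colon W_m\Lambda^\bullet\to\Lambda^\bullet_{(R[T],\bN^e\oplus\bN^f)/R,\mathbf{w}_i}
\]
agrees with the factorization of the phantom map $\widetilde\omega_i$ from \S\ref{subsect:log-phantom-components} through the projection $W\Lambda^\bullet\to W_m\Lambda^\bullet$. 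Proposition \ref{prop:calc-phantom-component-sncd} then expresses $\omega_i(\epsilon_m(\bar\xi,k,\mathcal{P},J))$ as a log $p$-basic element from Lemma \ref{lem:log-p-basic-diffs-sncd}, scaled by a ghost coordinate $\mathbf{w}_i(\xi)$ (or $\mathbf{w}_{i-u(k^+)}(\eta)$ when $\xi={}^{V^{u(k^+)}}\eta$). Applying $\omega_i$ to the relation and invoking linear independence of log $p$-basic elements (Lemma \ref{lem:log-p-basic-diffs-sncd}) makes each corresponding ghost coordinate vanish; letting $i$ range over $0,\ldots,m-1$ kills every ghost coordinate of $\bar\xi_{k,\mathcal{P},J}$, and injectivity of the ghost map on $W_m(R)$ in the $p$-torsion-free case yields $\bar\xi_{k,\mathcal{P},J}=0$. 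The general case reduces to this via a surjection $\widetilde R\twoheadrightarrow R$ with $\widetilde R$ $p$-torsion-free, transferring the vanishing through the auxiliary procomplex $\{E_m^\bullet\}$ built in the second half of the proof of Proposition \ref{prop:expression-basic-witt-sncd}.

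The main obstacle is that Proposition \ref{prop:calc-phantom-component-sncd} is split into three cases ($p^m\cdot k^+$ non-integral; integral with $I_0\neq\emptyset$; integral with $I_0=\emptyset$), and these produce log $p$-basic elements of slightly different shapes under $\omega_i$, so the bookkeeping that matches linear relations among the $\omega_i(\epsilon_m)$'s with the basis relations in Lemma \ref{lem:log-p-basic-diffs-sncd} must be done carefully. Fortunately the outer factors $\prod_{i\in J}d\log c_i\cdot\prod_{i\in I_{-\infty}}d\log T_i$ separate $J$ and $I_{-\infty}$, while the monomial data $T^{p^i k_{I_0}}(p^{-\mathrm{ord}_p(p^ik_{I_1})}dT^{p^ik_{I_1}})\cdots$ separate different $(k,I_0,\ldots,I_l)$; together they determine $(k,\mathcal{P},J)$, so no cross-cancellation can hide a nonzero coefficient.
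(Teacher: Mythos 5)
Your proof is correct, and it fills in exactly the argument the paper implicitly intends: the corollary is stated without proof as an immediate consequence of Proposition \ref{prop:expression-basic-witt-sncd}, and your route — lift through the surjective restriction $W\Lambda^\bullet\twoheadrightarrow W_m\Lambda^\bullet$ for existence, then rerun the phantom-component/ghost-coordinate argument from the proposition's proof directly at the truncated level for uniqueness — is the natural way to supply it. The two points you flag as potential obstacles (the case split in Proposition \ref{prop:calc-phantom-component-sncd}, and the reduction from general $R$ to the $p$-torsion-free case via the auxiliary procomplex $\{E_m^\bullet\}$) are handled correctly; in particular your observation that the data $(J,I_{-\infty})$ together with the monomial shape of the $p$-basic image under $\omega_i$ pins down $(k,\mathcal{P},J)$ uniquely, so that Lemma \ref{lem:log-p-basic-diffs-sncd} forces each ghost coordinate to vanish separately, is exactly the bookkeeping needed. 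One minor remark: when $I_0\neq\emptyset$ and $k^+$ is fractional, $\omega_i$ returns $\mathbf{w}_i(\xi)=p^{u}\mathbf{w}_{i-u}(\eta)$ rather than $\mathbf{w}_{i-u}(\eta)$, so you implicitly divide by $p^{u}$, which is only legitimate because you have already reduced to $R$ $p$-torsion-free — worth stating explicitly, but not a gap.
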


\subsection{Semistable case}
\label{subsect:log-basic-witt-diffs-semistable}
We consider the log $p$-basic elements and the basic Witt differentials in specific cases, which contains the semistable case.

For positive integers $d\le e \le n$ and a nonnegative integer $f$,
we consider the pre-log ring
\begin{align*}
	& (A=R[T_1,\ldots,T_n]/(T_1\cdots T_d),P=\bN^{e}\oplus \bN^f),\\
	& \bN^e \ni e_i\mapsto T_i \in A \ (1\le i \le e),
	\bN^f \ni c_i\mapsto 0 \in A \ (1\le i \le f),
\end{align*}
where $e_i$ (resp. $c_i$) are basis of $\bN^e$ (resp. $\bN^f$), for later discussions in this paper.
The module of (relative) log differential forms $\Lambda_{(A,P)/R}^1$ is isomorphic to a free $A$-module
$\bigoplus_{i=1}^e A d\log T_i
\oplus \bigoplus_{i=e+1}^n A dT_i
\oplus \bigoplus_{i=1}^f A d\log c_i$.
Hence $\Lambda_{(A,P)/R}^\bullet$ has the following elements as a basis of $R$-module:
\[
	T_1^{k_1}\cdots T_n^{k_n}
	\cdot \prod_{i\in G} d\log T_i
	\cdot \prod_{i\in H} d\log T_i
	\cdot \prod_{j\in I} d\log T_i
	\cdot \prod_{i\in J} d\log c_i ,
\]
where $G\subset [1,d], H\subset [d+1,e],
I\subset [e+1,n] \cap \Supp k, J\subset [1,f]$
and $\min_{1\le i \le d}k_i = 0$.
We conclude that the log $p$-basic differentials $\epsilon (k,\mathcal{P},J)$
satisfying $[1,d]\not \subset \Supp k^+$ forms the basis as an $R$-module
by a similar argument to that in Lemma \ref{lem:log-p-basic-diffs-sncd}.

Next we study the basic Witt differentials of $W\Lambda_{(A,P)/R}^\bullet$.

\begin{prop}
Any element in $W\Lambda_{(A,P)/R}^\bullet$ has a unique expression as a convergent sum
\[
	\sum_{k,\mathcal{P},J} \epsilon(\xi_{k,\mathcal{P},J},k,\mathcal{P},J)
\]
of log basic Witt differentials.
Here $k$ runs over all possible weights such that $[1,d]\not\subset \Supp^+ k$,
$\mathcal{P}$ over all partitions of $\Supp k$
and $J$ over all subsets of $[1,f]$.
A convergent sum means that for any given number $m$,
we have $\xi_{k,\mathcal{P},J}\in {}^{V^m}W(R)$ for all but finitely many weights $k$.
\end{prop}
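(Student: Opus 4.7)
The plan is to mirror the proof of Proposition \ref{prop:expression-basic-witt-sncd}, reducing the semistable case to the SNCD case already treated. Let $\mathfrak{a}=(T_1\cdots T_d)\subset R[T]$, so that $A=R[T]/\mathfrak{a}$; the surjection of pre-log rings $(R[T],P)\twoheadrightarrow (A,P)$ induces a surjection $\psi:W\Lambda^\bullet_{(R[T],P)/R}\twoheadrightarrow W\Lambda^\bullet_{(A,P)/R}$ whose kernel, by Proposition \ref{prop:exact-sequence-Kahler-diffs}(2), is (at finite level $m$) generated as a differential graded ideal by $W_m(\mathfrak{a})$, and in the limit is the closure for the standard filtration. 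The claim of the proposition is then equivalent to identifying this kernel with the closed span of log basic Witt differentials $\epsilon(\xi,k,\mathcal{P},J)$ with $[1,d]\subset\Supp k^+$.

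For existence, given $\omega\in W\Lambda^\bullet_{(A,P)/R}$ I would lift to $\tilde\omega$ and expand via Proposition \ref{prop:expression-basic-witt-sncd}. The key point is that each $\epsilon(\xi,k,\mathcal{P},J)$ with $[1,d]\subset\Supp k^+$ lies in $\ker\psi$. The clean way to see this is to exploit the logarithmic identity $dX_i=X_i\,d\log X_i$, valid in $W\Lambda^\bullet$ because $e_i\mapsto T_i$ is part of the pre-log structure: every occurrence of $dX_i$ or $d^VX_i^{\,\cdot}$ with $i\in[1,d]$ appearing inside such a basic Witt differential can, after applying the transformation rules (1)--(3) of \S\ref{subsect:log-basic-witt-diffs-sncd} (together with Leibniz, $pV=VF$, and ${}^V(\omega\cdot{}^F\eta)={}^V\omega\cdot\eta$) to reduce the $V$'s and $d^V$'s on indices in $[1,d]$, be rewritten so that the Teichm\"uller product $X_1\cdots X_d=[T_1\cdots T_d]$ appears as a factor; this factor lies in $W(\mathfrak{a})$ and maps to $0$ in $W(A)$, placing the whole expression in $\ker\psi$. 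It is simplest to do the bookkeeping modulo $\Fil^s$ using Proposition \ref{prop:fil-exact-sequence} and then pass to the limit.

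For uniqueness, I follow the phantom-component strategy of Proposition \ref{prop:expression-basic-witt-sncd}: assume $\sum_{k,\mathcal{P},J}\epsilon(\xi_{k,\mathcal{P},J},k,\mathcal{P},J)=0$ with every $k$ satisfying $[1,d]\not\subset\Supp k^+$. The morphism $\widetilde\omega_m$ has the same explicit description as in Proposition \ref{prop:calc-phantom-component-sncd}, now landing in $\Lambda^\bullet_{(A,P)/R,\mathbf{w}_m}$ with $T^k$ read in $A$. Applying $\widetilde\omega_m$ produces a relation among log $p$-basic differentials of weight $k$ with $[1,d]\not\subset\Supp k$, which are $R$-linearly independent by the basis statement recorded at the beginning of \S\ref{subsect:log-basic-witt-diffs-semistable} (the analogue of Lemma \ref{lem:log-p-basic-diffs-sncd}). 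Hence $\mathbf{w}_m(\xi_{k,\mathcal{P},J})=0$ for all $m$; when $R$ is $p$-torsion free this forces $\xi_{k,\mathcal{P},J}=0$, and the general case is deduced by the lifting argument from the end of Proposition \ref{prop:expression-basic-witt-sncd}, choosing a surjection $\widetilde R\twoheadrightarrow R$ with $\widetilde R$ $p$-torsion free and defining a log $F$-$V$-procomplex as the quotient of $W_m\Lambda^\bullet_{(\widetilde R[T]/(T_1\cdots T_d),P)/\widetilde R}$ by the ideal of log basic Witt differentials whose coefficient lies in $W(\ker(\widetilde R\to R))$.

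The main obstacle is the key lemma underlying existence: because basic Witt differentials are mixtures of $V^{u_j}(\eta_j X^{p^{u_j}k^{(j)}})$ and $d^{V^{u_j}}(\cdots)$ factors, extracting an $X_1\cdots X_d$ factor is not formal and requires reducing to the integral case first via the formulas in \S\ref{subsect:log-basic-witt-diffs-sncd} before invoking $dX_i=X_i\,d\log X_i$. A minor additional point is the verification that the submodule spanned by the excluded log basic Witt differentials really is closed for the $V$-adic topology so that the limit statement is equivalent to the finite-level statements, which follows from the behaviour of the formulas (1) and (2) of \S\ref{subsect:log-basic-witt-diffs-sncd} under $F$ and $V$ and from Proposition \ref{prop:fil-exact-sequence}.
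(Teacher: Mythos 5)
Your proposal is correct and follows essentially the same route as the paper's own proof: both reduce to the SNCD expansion via the presentation $A=R[T]/(T_1\cdots T_d)$, both rest on the observation that a basic Witt differential with $[1,d]\subset\Supp k^+$ dies in $W\Lambda^\bullet_{(A,P)/R}$ (after reducing to integral weights, because $dX^k$ is then divisible by $X^{k|_{[1,d]}}=[T_1\cdots T_d]$, equivalently via $dX_i=X_i\,d\log X_i$), and both establish uniqueness by the phantom-component argument combined with the lift to a $p$-torsion-free $\widetilde R$. The only cosmetic difference is that you phrase the kernel via Proposition \ref{prop:exact-sequence-Kahler-diffs}(2), whereas the paper works directly with the $[1,d]\not\subset\Supp k$ expansion of $W(A)$.
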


\begin{proof}
As proof of Proposition \ref{prop:expression-basic-witt-sncd},
any element $\xi$ of $W(A)$ can be written as the following convergent sum:
\[
	\xi= \sum_{k:\text{weight},[1,d]\not\subset \Supp k} {}^{V^{u(k)}}(\eta_{k} X^{p^{u(k)}k}).
\]
Hence an element of $W\Lambda_{(A,P)/R}^\bullet$ can be written as a convergent sum of the following form
\begin{align*}
	d\log c_{i_1} &\cdots d\log c_{i_s}
	\cdot d\log X_{j_1}\cdots d\log X_{j_l} \cdot \\
	&{}^{V^{u_0}}(\eta_0 X^{p^{u_0}k^{(0)}}) d{}^{V^{u_1}}(\eta_1 X^{p^{u_1}k^{(1)}})\cdots d{}^{V^{u_m}}(\eta_m X^{p^{u_m}k^{(m)}})
\end{align*}
with $1\le i_1<\cdots <i_s \le f, 1\le j_1<\cdots <j_l \le e$,
each $k^{(i)}$ is a weight satisfying $[1,d]\not\subset \Supp k^{(i)}$ for all $i$
and $u_i$ is the least nonnegative integer such that $p^{u_i}k^{(i)}$ is integral.

We show this is equal to zero if $[1,d]\subset \bigcup_{i=1}^m \Supp k^{(i)}$.
We can assume that all $k^{(i)}$ are integral by the proof of Proposition \ref{prop:expression-basic-witt-sncd}.
If $k$ is an integral weight, $dX^k$ is divisible by $X^{k|_{[1,d]}}$.
Hence if $[1,d]\subset \bigcup_{i=1}^m \Supp k^{(i)}$, the element indicated above is zero.

We can prove that any element of $W\Lambda_{(A,P)/R}^\bullet$ can be written as the form indicated in the proposition in the same as Proposition \ref{prop:expression-basic-witt-sncd}
because the actions of $F,V,d$ on log basic Witt differentials do not change the condition $[1,d]\not\subset \Supp^+ k$.

We can also show that this expression is unique by a similar argument to the proof of Proposition \ref{prop:expression-basic-witt-sncd}.
\end{proof}

\begin{cor}
\label{prop:expression-basic-witt-semistable}
Any element $\omega$ of $W_m\Lambda_{(A,P)/R}^\bullet$ can be written as a finite sum
\[
	\omega = \sum_{k,\mathcal{P},J}\epsilon_m(\xi,k,\mathcal{P},J),\
	\xi_{k,\mathcal{P},J}\in {}^{V^{u(k^+)}}W_{m-u(k^+)}(R).
\]
Here $\epsilon_m$ is the image of $\epsilon$ on $W_m\Lambda_{(A,P)/R}^\bullet$,
$k$ runs over all weights such that $[1,d]\not\subset \Supp^+ k$,
$p^{m-1}\cdot k^{+}$ is integral,
$\mathcal{P}$ over all partitions of $\Supp k$ and $J$ over all subsets of $[1,f]$.
The coefficients $\xi_{k,\mathcal{P},J}$ are uniquely determined by $\omega$.
\end{cor}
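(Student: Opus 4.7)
The plan is to deduce this corollary from the preceding Proposition, which describes $W\Lambda_{(A,P)/R}^\bullet$ as convergent sums of log basic Witt differentials. Two elementary observations from the SNCD subsection will drive the reduction to finite sums modulo $m$: that $\epsilon_m(\xi,k,\mathcal{P},J) = 0$ whenever $p^{m-1}\cdot k^+$ is not integral, and that $\epsilon_m(\xi,k,\mathcal{P},J)$ depends only on the residue class of $\xi$ in ${}^{V^{u(k^+)}}W_{m-u(k^+)}(R)$. Both facts transfer verbatim to the semistable case, since the $F$-, $V$-, $d$-formulas on log basic Witt differentials depend only on the shape of the Teichmüller monomials, not on the ambient ring.

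For existence, I would lift $\omega$ to some $\tilde\omega \in W\Lambda_{(A,P)/R}^\bullet$ via the surjection inherent in the inductive construction, and apply the preceding Proposition to obtain a convergent expression $\tilde\omega = \sum_{k,\mathcal{P},J}\epsilon(\xi_{k,\mathcal{P},J}, k, \mathcal{P}, J)$ with $[1,d] \not\subset \Supp^+ k$. Reducing to $W_m\Lambda^\bullet$, summands with $p^{m-1}k^+$ non-integral drop out, and among the remaining summands, convergence of the original sum forces $\xi_{k,\mathcal{P},J} \in {}^{V^m}W(R)$ for all but finitely many indices, so their $\epsilon_m$-images also vanish. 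What remains is a finite sum of the prescribed form, with $\bar\xi_{k,\mathcal{P},J}\in{}^{V^{u(k^+)}}W_{m-u(k^+)}(R)$ automatic from the lift.

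For uniqueness I will imitate the phantom-component argument from the proof of the preceding Proposition, but work directly at level $W_m\Lambda^\bullet$. Assume first that $R$ is $p$-torsion free. For each $0 \le i \le m-1$, the phantom component map $\omega_i = \pi^{m-i} \circ F^i \colon W_m\Lambda_{(A,P)/R}^\bullet \to \Lambda_{(A,P)/R,\mathbf{w}_i}^\bullet$ sends every log basic Witt differential to an explicit scalar multiple of a log $p$-basic element, as described by Proposition \ref{prop:calc-phantom-component-sncd} (whose derivation is purely formal and carries to the semistable case). Given $\sum \epsilon_m(\xi_{k,\mathcal{P},J}, k, \mathcal{P}, J) = 0$, applying each $\omega_i$ and invoking linear independence of the log $p$-basic elements indexed by weights $k$ with $[1,d] \not\subset \Supp k^+$ (the analog of Lemma \ref{lem:log-p-basic-diffs-sncd} established at the start of this subsection) will extract $\mathbf{w}_i(\xi_{k,\mathcal{P},J}) = 0$ for all $i < m$ and all indices. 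Because $R$ is $p$-torsion free this forces $\xi_{k,\mathcal{P},J} = 0$ in $W_m(R)$.

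To reduce the general case to the $p$-torsion free case, I would use the surjection device from the preceding Proposition: pick a surjection $\widetilde R \twoheadrightarrow R$ with $\widetilde R$ $p$-torsion free, and identify $W_m\Lambda_{(A,P)/R}^\bullet$ with the quotient of $W_m\Lambda_{(\widetilde A, P)/\widetilde R}^\bullet$ by the sub-object of finite sums of log basic Witt differentials whose coefficients lift into $W(\mathfrak{a})$, where $\mathfrak{a} = \ker(\widetilde R \twoheadrightarrow R)$. The main obstacle I anticipate is verifying that this sub-object is stable under $F$, $V$, $d$ and multiplication; however, this is the natural analog of the step already carried out in the preceding Proposition's proof, and the crucial fact that $F$, $V$, $d$ preserve the condition $[1,d] \not\subset \Supp k^+$ on log basic Witt differentials was remarked there explicitly.
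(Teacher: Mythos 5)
The paper leaves this as an immediate corollary of the preceding Proposition with no written proof, so your write-up simply fills in what was left to the reader, and it does so correctly: lift to $W\Lambda^\bullet$ and reduce modulo $m$ for existence (the vanishing of $\epsilon_m$ when $p^{m-1}k^+$ is not integral and the dependence on $\bar\xi$ only are the right reductions, and convergence supplies finiteness); phantom components $\omega_i$ for $0\le i\le m-1$ together with linear independence of the log $p$-basic elements with $[1,d]\not\subset\Supp k^+$ for uniqueness over $p$-torsion-free $R$; and the surjection $\widetilde R\twoheadrightarrow R$ to handle the general case. The one worry you flag at the end — verifying stability of the level-$m$ sub-object under $F$, $V$, $d$ — disappears once you define it, as the paper does at the infinite level, as the \emph{image} of $W\Lambda^\bullet_{(\mathfrak{a}\widetilde A,P)/\widetilde R}$ under the projection to level $m$: since $F$, $V$, $d$ all commute with the projections, stability at level $m$ is inherited automatically from the infinite-level stability already established.
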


Set $d\log X := d\log X_1 + \cdots + d\log X_e + d\log c_1 + \cdots + d\log c_f$.

We define an element $\epsilon'(\xi,k,(I_{-\infty}, I_0, \ldots,I_l),J)$
for a log basic Witt differential
$\epsilon(\xi,k,(I_{-\infty}, I_0, \ldots,I_l),J)$ by
\[
\epsilon'=
\left\{
\begin{array}{l}
	\epsilon(\xi,k,(I_{-\infty},I_0,\ldots,I_l),J)
	 \hspace{14em}
	(k_e\neq p^{-\infty}),
	\\
	\left( \prod_{j\in J} d\log c_j \right)
	\cdot \left( \prod_{i\in I_{-\infty},i\neq e} d\log X_i \right)
	\cdot d\log X
	\cdot e(\xi,k^+,(I_0,\ldots,I_l))
	\\
	\hspace{25em}
	(k_e = p^{-\infty}),
\end{array}
\right.
\]
where $e(\xi,k^+,(I_0,\ldots,I_l))$ is the classical basic Witt differential defined in \cite{LZDRW}.
If $k_e= p^{-\infty}$, we see
\begin{align*}
	&\epsilon'(\xi,k,(I_{-\infty},I_0,\ldots,I_l),J)\\
	=& \epsilon(\xi,k,(I_{-\infty},I_0,\ldots,I_l),J)\\
	&+ \left( \prod_{i\in J} d\log c_i \right)
	\left(\prod_{i\in I_{-\infty},i\neq e} d\log X_i \right)
	\left( \sum_{i\in[1,e]\setminus I_{-\infty}} d\log X_i \right)
	e(\xi,k^+,(I_0,\ldots,I_l))\\
	&+ \left( \prod_{i\in J} d\log c_i \right)
	\left(\prod_{i\in I_{-\infty},i\neq e} d\log X_i \right)
	\left( \sum_{i\in[1,f]\setminus J} d\log c_i \right)
	e(\xi,k^+,(I_0,\ldots,I_l))\\
	=& \epsilon(\xi,k,(I_{-\infty},I_0,\ldots,I_l),J)\\
	&+(\text{linear combination of\ } \epsilon(\xi,k,(I_{-\infty},I_0,\ldots,I_l),J)\
	\text{such that}\ k_e\neq p^{-\infty},\\
	&I_{-\infty}\ \text{and}\ J \ \text{vary and}\ \epsilon \ \text{are different from the above}).
\end{align*}
From this we obtain

\begin{prop}
\label{prop:decomposition-of-drw-cpx-semistable}
$W\Lambda_{(A,P)/R}^\bullet$ has a decomposition as $W(R)$-modules:
\[
	W\Lambda_{(A,P)/R}^\bullet = WC_{(A,P)/R}^\bullet \oplus WC_{(A,P)/R}^{'\bullet},
\]
where $WC_{(A,P)/R}^\bullet$ (resp.  $WC_{(A,P)/R}^{'\bullet}$) consists of the elements
which can be written as a convergent sum of the elements of the form $\epsilon'$
such that $k_e\neq p^{-\infty}$ (resp. $k_e =  p^{-\infty}$).
\end{prop}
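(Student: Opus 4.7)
The plan is to show that the set of elements $\{\epsilon'(\xi,k,\mathcal{P},J)\}$, with indices ranging over the same set as for the log basic Witt differentials, is another ``basis'' in the sense that every element of $W\Lambda_{(A,P)/R}^\bullet$ admits a unique convergent-sum expression in the $\epsilon'$'s. Granting this, partitioning the indexing set into those with $k_e \neq p^{-\infty}$ and those with $k_e = p^{-\infty}$ immediately yields the required direct-sum decomposition: $WC^\bullet$ is the submodule of elements whose $\epsilon'$-expansion is supported on the first part, and $WC'^\bullet$ on the second part, while every element lies uniquely in $WC^\bullet + WC'^\bullet$.

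First I would observe the triangular nature of the change of basis from $\{\epsilon\}$ to $\{\epsilon'\}$. By definition, $\epsilon'(\xi,k,\mathcal{P},J) = \epsilon(\xi,k,\mathcal{P},J)$ whenever $k_e \neq p^{-\infty}$. When $k_e = p^{-\infty}$, the displayed computation immediately preceding the proposition rewrites $\epsilon' - \epsilon$ as a finite $\mathbb{Z}$-linear combination of expressions of the form $(\prod d\log c_j)(\prod d\log X_i)(d\log X_{i_0}\text{ or } d\log c_{i_0}) \cdot e(\xi,k^+,(I_0,\ldots,I_l))$, where $i_0 \in [1,e]\setminus I_{-\infty}$ (hence $i_0 \neq e$) or $i_0 \in [1,f]\setminus J$. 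Using the multiplication rules for classical basic Witt differentials from \cite{LZDRW} \S 2.2 (essentially the identities $d\log X_{i_0} \cdot X^{k^+_{I_j}} = X_{i_0}^{k_{i_0}-1}dX_{i_0}\cdot X^{k^+_{I_j}-k_{i_0}e_{i_0}}$ and the case analysis by whether $i_0 \in \Supp k^+$), each such correction expands as a finite sum of log basic Witt differentials $\epsilon(\xi,k'',\mathcal{P}'',J'')$ \emph{with the same coefficient} $\xi$ and with $k''_e \neq p^{-\infty}$ (since $e$ has been removed from $I_{-\infty}$, so $k''_e = 0$).

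This shows the change-of-basis matrix from $\{\epsilon\}$ to $\{\epsilon'\}$, when the index set is split into the two pieces according to $k_e$, is block upper-triangular with identity blocks on the diagonal and entries in the off-diagonal block that involve each fixed $\xi$ only finitely often. Hence it is invertible, and, crucially, the inversion preserves the convergence condition ($\xi \in {}^{V^m}W(R)$ for all but finitely many indices) because each inverse substitution rewrites a term with coefficient $\xi$ using finitely many terms with the same $\xi$. Combining this with the previous proposition (unique convergent-sum expansion in $\epsilon$'s), every $\omega \in W\Lambda_{(A,P)/R}^\bullet$ admits a unique convergent-sum expansion in the $\epsilon'$'s.

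To finish, I would define $WC^\bullet$ (resp.\ $WC'^\bullet$) as the set of elements whose unique $\epsilon'$-expansion is concentrated on indices with $k_e \neq p^{-\infty}$ (resp.\ $k_e = p^{-\infty}$); these are $W(R)$-submodules by the $W(R)$-linearity and uniqueness of the expansion. The decomposition $W\Lambda_{(A,P)/R}^\bullet = WC^\bullet \oplus WC'^\bullet$ is then immediate: any element is the sum of its two $k_e$-parts, and the intersection is zero by uniqueness. The main obstacle is verifying the finiteness and convergence-preserving structure of the correction terms (i.e.\ that $\epsilon' - \epsilon$ really is a finite sum of $\epsilon$'s with the same $\xi$ and with $k_e \neq p^{-\infty}$); this is a bookkeeping exercise using the basic-Witt-differential multiplication rules but must be done carefully so that the triangular inversion preserves the convergence property needed for the expansions to make sense.
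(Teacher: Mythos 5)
Your proposal is correct and follows essentially the same approach as the paper: the displayed identity preceding the proposition (rewriting $\epsilon'$ as $\epsilon$ plus a finite linear combination of $\epsilon$'s with $k_e \neq p^{-\infty}$) is exactly the triangular change of basis you describe, and the proposition is obtained by converting the unique convergent $\epsilon$-expansion from Corollary~\ref{prop:expression-basic-witt-semistable} into an $\epsilon'$-expansion. The ``bookkeeping'' caveat you flag at the end — that the corrections carry the same coefficient $\xi$ (or at least coefficients of no smaller $V$-adic order) so that convergence is preserved — is exactly the level of detail the paper itself leaves implicit, so your argument matches the original.
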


Note that the decomposition we stated above is \textit{not} a decomposition as complexes.

\section{Log Witt lift and Log Frobenius lift}
\label{sect:log-witt-lift-and-log-frobenius-lift}

Let $R$ be a $\mathbb{Z}_{(p)}$-algebra in which $p$ is nilpotent and $(R,P) \to (S,Q)$ a log smooth morphism of pre-log rings.
We define the log version of Witt lifts and Frobenius lifts of \cite{LZDRW} \S 3.1.

\begin{defn}
A log Witt lift of $(S,Q)$ over $(R,P)$ is a system $((S_n,Q_n),\delta_n:(S_n,Q_n) \to W_n(S,Q))_{n\ge 1}$ satisfying the following conditions.

(1) For each $n\ge 1$, $(S_n,Q_n)$ is log smooth over $W_n(R,P)$, and
\[
	W_n(R,P)\otimes_{W_{n+1}(R,P)}(S_{n+1},Q_{n+1})
	\simeq
	(S_n,Q_n),\
	(S_1,Q_1)=(S,Q).
\]

(2) Let $\mathbf{w}_0:W_n(S,Q)\to (S,Q)$ be the morphism
induced by the Witt polynomial $\mathbf{w}_0: W_n(S)\to S$ and $\text{id}_Q$.
For $n> 1$, $\mathbf{w}_0\delta_n$ is the natural map $(S_n,Q_n)\to (S,Q)$
and the following diagram commutes:
\[
\xymatrix{
	(S_{n+1},Q_{n+1}) \ar[r]^{\delta_{n+1}} \ar[d]
	&
	W_{n+1}(S,Q) \ar[d]
	\\
	(S_n,Q_n) \ar[r]^{\delta_n}
	&
	W_n(S,Q).
}
\]
\end{defn}

\begin{defn}
A log Frobenius lift of $(S,Q)$ over $(R,P)$ is a system
\[
	((S_n,Q_n),\phi_n :(S_n,Q_n)
	\to
	(S_{n-1},Q_{n-1}),\
	\delta_n:(S_n,Q_n)
	\to
	W_n(S,Q))_{n\ge 1},
\]
satisfying the following conditions:

(1) $((S_n,Q_n),\delta_n)$ is a log Witt lift of $(S,Q)$ over $(R,P)$.

(2) For $n\ge 1$, $\phi_n$ is compatible with the Frobenius on the log Witt ring $F:W_n(R,P) \to W_{n-1}(R,P)$,
the absolute Frobenius $\text{Frob} : S/pS \to S/pS$ and $\times p : Q \to Q$.

(3) The following diagram commutes:
\[
\xymatrix{
	(S_{n+1},Q_{n+1}) \ar[r]^{\delta_{n+1}} \ar[d]_{\phi_{n+1}}
	&
	W_{n+1}(S,Q) \ar[d]_F
	\\
	(S_n,Q_n) \ar[r]^{\delta_n}
	&
	W_n(S,Q).
}
\]
\end{defn}

We also define log Witt lifts and log Frobenius lifts for a morphism
$f:(X,\mathcal{M}) \to (Y,\mathcal{N})$ of fine log schemes.

\begin{defn}
A log Witt lift of  $(X,\mathcal{M})$ over $(Y,\mathcal{N})$ is a system
$((X_n,\mathcal{M}_n),\Delta_n:W_n(X,\mathcal{N})\to (X_n,\mathcal{N}_n))_{n\ge 1}$ satisfying the following conditions.

(1) For each $n\ge 1$, $(X_n,\mathcal{M}_n)$ is log smooth  over $W_n(Y,\mathcal{N})$, and
\[
	W_n(Y,\mathcal{N})\times_{W_{n+1}(Y,\mathcal{N})}(X_{n+1},\mathcal{M}_{n+1})
	\simeq
	(X_n,\mathcal{M}_n),\
	(X_1,\mathcal{M}_1)=(X,\mathcal{M}).
\]

(2) Let $w_0:(X,\mathcal{M})\to W_n(X,\mathcal{M})$ be the morphism
induced by the Witt polynomial $w_0: X\to W_n(X)$ and $\text{id}_{\mathcal{M}}$.
For $n> 1$, $\Delta_n w_0$ is the natural map
$(X,\mathcal{M})\to (X_n,\mathcal{M}_n)$ and the following diagram commutes:
\[
\xymatrix{
	W_n(X,\mathcal{M}) \ar[r]^{\Delta_n} \ar[d]
	&
	(X_n,\mathcal{M}_n) \ar[d]
	\\
	W_{n+1}(X,\mathcal{M}) \ar[r]^{\Delta_{n+1}}
	&
	(X_{n+1},\mathcal{M}_{n+1}).
}
\]
\end{defn}

\begin{defn}
A log Frobenius lift of $(X,\mathcal{M})$ over $(Y,\mathcal{N})$ is a system
\[
	((X_n,\mathcal{M}_n),\Phi_n :(X_{n-1},\mathcal{M}_{n-1})
	\to
	(X_n,\mathcal{M}_n), \Delta_n:W_n(X,\mathcal{M})
	\to
	(X_n,\mathcal{M}_n))_{n\ge 1},
\]
satisfying the following conditions:

(1) $((X_n,\mathcal{M}_n),\Delta_n)$ is a log Witt lift of
$(X,\mathcal{M})$ over $(Y,\mathcal{N})$.

(2) For $n\ge 1$, $\Phi_n$ is compatible with
the Frobenius on the log Witt scheme $F:W_{n-1}(Y,\mathcal{N})\to W_n(Y,\mathcal{N})$, the absolute Frobenius
$\text{Frob} : X \otimes \mathbb{F}_p \to X \otimes \mathbb{F}_p$
and $\times p : \mathcal{M} \to \mathcal{M}$.

(3) The following diagram commutes:
\[
\xymatrix{
	W_n(X,\mathcal{M}) \ar[r]^{\Delta_n} \ar[d]_F
	&
	(X_n,\mathcal{M}_n) \ar[d]^{\Phi_{n+1}}
	\\
	W_{n+1}(X,\mathcal{M}) \ar[r]^{\Delta_{n+1}}
	&
	(X_{n+1},\mathcal{M}_{n+1}).
}
\]
\end{defn}

\begin{lem}
\label{lem:existance-of-witt-lift}
(1) Let $(R,P)\to (S,Q)$ be a log smooth morphism of pre-log rings.
Then $(S,Q)$ has a log Frobenius lift over $(R,P)$.

(2) Let $(X,\mathcal{M})\to (Y,\mathcal{N})$ be a log smooth morphism of fine log schemes.
Then \'etale locally on $X$, $(X,\mathcal{M})$ has a log Frobenius lift over $(Y,\mathcal{N})$.
\end{lem}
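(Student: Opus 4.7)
The plan is to use the toroidal characterization of log smoothness (Theorem \ref{thm:toroidal-characterization-log-smoothness}) to reduce the existence of a log Frobenius lift to an \'etale deformation problem over a monoid algebra, and then to transport the canonical Frobenius of the Witt ring and the Teichm\"uller lift of the chart through the resulting \'etale lifts. Every log smooth morphism of fine log schemes admits a chart as in Theorem \ref{thm:toroidal-characterization-log-smoothness} \'etale locally, so part (2) reduces to part (1) applied to an \'etale cover; I therefore focus on part (1).

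First I would choose a chart so that the ring map $R \otimes_{\mathbb{Z}[P]} \mathbb{Z}[Q] \to S$ is \'etale in the usual sense, and set $Q_n := Q$ for all $n$. Since the kernel of the surjection $W_n(R) \otimes_{\mathbb{Z}[P]} \mathbb{Z}[Q] \twoheadrightarrow R \otimes_{\mathbb{Z}[P]} \mathbb{Z}[Q]$ is nilpotent (generated by the image of ${}^{V}W_{n-1}(R)$), classical \'etale lifting produces a unique \'etale $W_n(R) \otimes_{\mathbb{Z}[P]} \mathbb{Z}[Q]$-algebra $S_n$ with $S_n \otimes_{W_n(R)} R = S$, together with unique compatible restriction maps $S_{n+1} \to S_n$; by construction $(S_n, Q)$ is log smooth over $W_n(R, P)$. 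Next I would define $\delta_n$ by setting $\delta_n^\flat = \mathrm{id}_Q$ and taking for $\delta_n^\sharp$ the unique \'etale lift of the ring map $W_n(R) \otimes_{\mathbb{Z}[P]} \mathbb{Z}[Q] \to W_n(S)$ built from $W_n(R) \to W_n(S)$ and the Teichm\"uller assignment $q \mapsto [\alpha(q)]$; its compatibility with $\mathbf{w}_0$ and the canonical augmentation to $S$ is what allows the lift to exist. Finally I would define $\phi_n$ by setting $\phi_n^\flat$ to be multiplication by $p$ on $Q$ and taking for $\phi_n^\sharp$ the unique \'etale lift of the ring map $W_{n+1}(R) \otimes_{\mathbb{Z}[P]} \mathbb{Z}[Q] \to W_n(R) \otimes_{\mathbb{Z}[P]} \mathbb{Z}[Q] \to S_n$ built from $F$ on the Witt ring and from the monoid homomorphism $q \mapsto pq$ on $\mathbb{Z}[Q]$.

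What remains is the verification of the compatibility diagrams, which by uniqueness of \'etale lifts reduces to identities over the monoid algebras $W_n(R) \otimes_{\mathbb{Z}[P]} \mathbb{Z}[Q]$. The restriction identities of $F$ and the functoriality of the Teichm\"uller lift secure the squares defining $\delta_n$, and the Witt-vector identity $F([x]) = [x^p]$ together with $\alpha(pq) = \alpha(q)^p$ secures the squares linking $\delta_n$ with $\phi_n$. The subtle point, which I expect to be the main obstacle, is checking that $\phi_n^\sharp$ reduces modulo $p$ to the absolute Frobenius on $S/pS$, which is precisely the input needed to invoke \'etale lifting to produce $\phi_n^\sharp$ and to verify the Frobenius lift axiom. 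This in turn comes down to the observations that $F : W_{n+1}(R) \to W_n(R)$ composed with $\mathbf{w}_0$ equals $\mathbf{w}_1$ (the $p$-th power map modulo $p$) and that $q \mapsto pq$ on $Q$ reduces via $\alpha$ to the $p$-th power on $S/pS$. Once these are established, part (1) follows, and part (2) follows by \'etale-local application of part (1) using charts provided by Theorem \ref{thm:toroidal-characterization-log-smoothness}.
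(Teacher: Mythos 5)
Your proposal is correct and follows essentially the same strategy as the paper: factor the log smooth map through the monoid algebra $T = R \otimes_{\mathbb{Z}[P]} \mathbb{Z}[Q]$, use the explicit Frobenius lift data there (Witt ring Frobenius together with Teichm\"uller lifts of the chart), and transfer to $S$ along the \'etale morphism $T \to S$ via formal \'etaleness against the nilpotent thickenings $W_n(R) \to R$, $W_n(S) \to S$. The paper delegates the transfer step to Langer--Zink's Proposition 3.2 after constructing the lift explicitly on $T$, whereas you perform the \'etale lifting inline to produce $S_n$, $\delta_n$, and $\phi_n$ directly, but the underlying mechanism — nilpotence of ${}^V W_{n-1}(R)$, uniqueness of \'etale lifts, and the identities $\mathbf{w}_0 \circ F = \mathbf{w}_1$ and $F([x]) = [x^p]$ — is identical.
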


\begin{proof}
By the toroidal characterization of the log smoothness of log schemes (Theorem \ref{thm:toroidal-characterization-log-smoothness}), (2) follows from (1). We show (1).

The morphism $(R,P)\to (S,Q)$ has a decomposition
$(R,P)
\to
(R\otimes_{\mathbb{Z}[P]}\mathbb{Z}[Q],Q)
\to
(S,Q)$.
Since $(S,Q)$ is log smooth over $(R,P)$,
the ring map $R \otimes_{\mathbb{Z}[P]} \mathbb{Z}[Q] \to S$ is \'etale.

First we construct a log Frobenius map on $(T:=R\otimes_{\mathbb{Z}[P]}\mathbb{Z}[Q],Q)$ over $(R,P)$.
Let $\alpha : P\to R$ be the structure morphism of the pre-log ring $(R,P)$.
Set $T_n:=W_n(R)\otimes_{\mathbb{Z}[P]}\mathbb{Z}[Q]$
where the structural morphism $\mathbb{Z}[P] \to W_n(R)$ is induced by
$a \in P \to [\alpha(a)]$.
Then $(T_n,Q_n:=Q)$ will be a pre-log ring in the obvious way.
In particular, $(T_n,Q_n)$ is log smooth over $W_n(R,P)$.
We extend $F:W_n(R) \to W_{n-1}(R)$ to a morphism
\[
	\phi_n:
	(T_n, Q_n)
	\to
	(T_{n-1},Q_{n-1}),\
	a\otimes b
	\mapsto
	{}^Fa\otimes b^p,\
	a\in W_n(R),
	b\in Q.
\]
and also define $\delta_n:(T_n,Q_n) \to W_n(T,Q)$ induced by
$T_n \to W_n(T) ; a\in Q \mapsto [1 \otimes a], \text{id}_Q : Q_n = Q \to Q$.
Then $((T_n,Q_n), \phi_n, \delta_n)$ is a log Frobenius lift of $(T,Q)$.

To obtain a log Frobenius lift on $(S,Q)$,
it is suffice to show that if $(S,Q)\to (S',Q)$ is a morphism of pre-log rings
such that the underlying ring map $S\to S'$ is an \'{e}tale morphism
and the underlying monoid map $Q\to Q$ is the identity map
and there a log Frobenius lift $((S_n,Q_n),\phi_n,\delta_n)$ of $(S,Q)$,
there is a unique log Frobenius lift of the form $((S'_n,Q_n),\psi_n,\epsilon_n)$ of $(S',Q)$
and $(S,Q)\to (S',Q)$ lifts to a homomorphism
$((S_n,Q_n),\phi_n,\delta_n) \to ((S'_n,Q_n),\psi_n,\epsilon_n)$.
We can prove this in the same manner as the proof of \cite{LZDRW} Proposition 3.2.
\end{proof}

\section{Comparison morphism}
\label{sect:comparison-morphism}

We construct the comparison morphism between the log crystalline cohomology
and the hypercohomology of the log de Rham-Witt complex.

\subsection{Extension of derivations}
\label{subsect:extension-of-derivation}
In this subsection, we consider the log version of the discussion in \cite{ICrys} 0, \S 3.1.
First we recall the definition of the trivial extension of a quasi-coherent sheaf (\cite{Og} Example 2.1.6).

\begin{defn}
Let $f:X \to Y$ be a morphism of fine log schemes
and $E$ a quasi-coherent sheaf of $\mathcal{O}_X$-modules.

The trivial $Y$-extension of $X$ by $E$ is
the log scheme $T$ defined by $\mathcal{O}_T := \mathcal{O}_X \oplus E$
with $(a,b) (a',b') := (aa',ab'+a'b),$ with $\mathcal{M}_T := \mathcal{M}_X \oplus E$,
and $\alpha_T(m,e) := (\alpha_X(m), \alpha_X(m)e)$
if $m \in \mathcal{M}_X$ and $e \in E$.
The canonical projection $\mathcal{O}_T \to \mathcal{O}_X$
(resp. the canonical map $\mathcal{O}_Y\to \mathcal{O}_X \to \mathcal{O}_T$)
defines a morphism of log schemes $X\to T$ (resp. $T\to Y$).
We also have an evident retraction $T\to X$ over $Y$.
\end{defn}

Let $(Y,\mathcal{N},\mathcal{I},\gamma)$ be a fine log pd-scheme.
Let $i:(X,\mathcal{M})\to (X',\mathcal{M}')$ be a closed immersion of log schemes.
We assume $\gamma$ extends to $X$ and $i$ has a factorization $(X,\mathcal{M})\xrightarrow{j} (Z,\mathcal{L})\xrightarrow{g} (X',\mathcal{M}')$
with $j$ an exact closed immersion and $g$ log \'etale.
We admit this kind of factorizations \'etale locally on $X$ (\cite{KatoLog} (4.10) (1)).

Set $\mathcal{J}:=\ker (\mathcal{O}_Z\to j_*\mathcal{O}_X)$.
Then the log pd-envelope $(D,\mathcal{M}_D, \overline{\mathcal{J}},[\ ])$ of $i$
is the usual pd-envelope $(D, \overline{\mathcal{J}},[\ ])$ of $X$ in $Z$
with log structure $\mathcal{M}_D$ given by the inverse image of $\mathcal{L}$.
Since $g$ is log \'etale, the canonical morphism
$g^*\Lambda_{(X',\mathcal{M}')/(Y,\mathcal{N})}^1
\to
\Lambda_{(Z,\mathcal{L})/(Y,\mathcal{N})}^1$
is an isomorphism (\cite{KatoLog} Proposition (3.12)).

\begin{prop}
The log derivation
$(d,d\log ):(\mathcal{O}_{X'},\mathcal{M}')
\to
\Lambda_{(X',\mathcal{M}')/(Y,\mathcal{N})}^1$
extends uniquely to
\[
	(d',d'\log ):
	(\mathcal{O}_{D},\mathcal{M}_D)
	\to
	\mathcal{O}_{D}\otimes_{\mathcal{O}_{X'}} \Lambda_{(X',\mathcal{M}')/(Y,\mathcal{N})}^1
	\simeq
	\mathcal{O}_{D}\otimes_{\mathcal{O}_{Z}} \Lambda_{(Z,\mathcal{L})/(Y,\mathcal{N})}^1
\]
such that $d'x^{[n]}=x^{[n-1]}\otimes dx$ for all $x\in \mathcal{J}, n\ge 1$
and $d'\log m=1\otimes d\log m$ for all $m\in \mathcal{L}$.
\end{prop}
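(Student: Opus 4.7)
My plan is to construct $(d', d'\log)$ via the universal property of the log pd-envelope $(D,\mathcal{M}_D)$, applied to a suitable trivial extension; uniqueness will drop out of the same universal property. Set $M := \mathcal{O}_D \otimes_{\mathcal{O}_Z}\Lambda^1_{(Z,\mathcal{L})/(Y,\mathcal{N})}$ and form the trivial $(Y,\mathcal{N})$-extension $T$ of $(D,\mathcal{M}_D)$ by the $\mathcal{O}_D$-module $M$ in the sense of the excerpt: thus $\mathcal{O}_T = \mathcal{O}_D \oplus M$ (with $M$ square-zero) and $\mathcal{M}_T = \mathcal{M}_D \oplus M$. Because $M^2 = 0$, the formula $(\bar a,\, \omega)^{[n]} := (\bar a^{[n]},\, \bar a^{[n-1]}\omega)$ extends the divided powers on $\overline{\mathcal{J}}$ to a pd-structure on $\overline{\mathcal{J}}\oplus M \subset \mathcal{O}_T$ that is still compatible with $\gamma$, making $T$ a log pd-scheme over $(Y,\mathcal{N})$.

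Next I would use the universal log derivation on $(Z,\mathcal{L})$ to build a $(Y,\mathcal{N})$-morphism of log schemes $\tau:(Z,\mathcal{L}) \to T$, given on rings by $a\mapsto (\bar a,\, 1\otimes da)$ and on monoids by $\ell\mapsto (\bar\ell,\, 1\otimes d\log\ell)$, where the bars denote images in $\mathcal{O}_D$ and $\mathcal{M}_D$. The Leibniz rule makes the ring map a homomorphism, and the identity $\alpha_Z(\ell)\cdot d\log\ell = d\alpha_Z(\ell)$ ensures compatibility of the monoid map with $\alpha_Z$ and $\alpha_T$. As $\tau$ sends $\mathcal{J}$ into the pd-ideal $\overline{\mathcal{J}}\oplus M$ of $T$, the universal property of the log pd-envelope extends $\tau$ uniquely to a morphism of log pd-schemes $\sigma:(D,\mathcal{M}_D)\to T$ over $(Y,\mathcal{N})$.

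Unwrapping $\sigma$: it consists of a pd-morphism $\psi:\mathcal{O}_D\to \mathcal{O}_D\oplus M$ and a monoid morphism $\psi_{\log}:\mathcal{M}_D\to \mathcal{M}_D\oplus M$, compatible with $\alpha$. Composing each with the projection onto the first factor gives pd- and monoid-morphisms that restrict to the identity on $\mathcal{O}_Z$ and $\mathcal{L}$, and hence are the identity on $\mathcal{O}_D$ and $\mathcal{M}_D$ by the uniqueness clause. Thus $\psi(a)=(a,\, d' a)$ and $\psi_{\log}(m)=(m,\, d'\log m)$ for uniquely determined maps $d'$, $d'\log$, and the pair is forced to be a log pd-derivation by the compatibilities on $\sigma$. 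The pd-condition on $\psi$ gives $\psi(x^{[n]}) = \psi(x)^{[n]} = (x,\, 1\otimes dx)^{[n]} = (x^{[n]},\, x^{[n-1]}\otimes dx)$, i.e.\ $d'(x^{[n]}) = x^{[n-1]}\otimes dx$; the formula $d'\log\ell = 1\otimes d\log\ell$ for $\ell\in\mathcal{L}$ is built into $\tau$. Uniqueness of $(d', d'\log)$ is automatic, since any alternative choice would define a distinct $\sigma$, contradicting the universal property. The one point that deserves care is formulating the universal property of the log pd-envelope precisely in this form and verifying the pd-compatibility of $\overline{\mathcal{J}}\oplus M$ with $\gamma$; both are routine and can be checked étale locally using the given factorization $X \xrightarrow{j} Z \xrightarrow{g} X'$ together with the log-étale identification $g^*\Lambda^1_{(X',\mathcal{M}')/(Y,\mathcal{N})}\simeq \Lambda^1_{(Z,\mathcal{L})/(Y,\mathcal{N})}$.
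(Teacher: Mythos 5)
Your proposal is correct and follows essentially the same route as the paper's proof: form the trivial square-zero extension $T$ of $D$ by (a copy of) $\mathcal{O}_D\otimes\Lambda^1$, equip the augmentation ideal with the obvious pd-structure compatible with $\overline{\mathcal{J}}$, build the pd-morphism $(\mathcal{O}_Z,\mathcal{L})\to(\mathcal{O}_T,\mathcal{M}_T)$ from the universal log derivation on $Z$, invoke the universal property of the log pd-envelope, and read off $d'$ and $d'\log$ as the second components. The only cosmetic difference is that you write the pd-structure on $\overline{\mathcal{J}}\oplus M$ explicitly, whereas the paper first puts the trivial pd-structure on $M$ and then invokes Berthelot's gluing lemma (\cite{B} I 1.6.5) to obtain the same formula.
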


\begin{proof}
Let $E:=\mathcal{O}_{D}\otimes_{\mathcal{O}_{X'}} \Lambda_{(X',\mathcal{M}')/(Y,\mathcal{N})}^1$
and $T=\Spec (\mathcal{O}_D\oplus E, \mathcal{M}_D\oplus E)$
be the trivial $Z$-extension of $D$ by $E$.
We define a pd-structure on a ideal $E\subset \mathcal{O}_T$ by $u^{[n]}=0$ for $n\ge 2$.

Since $\mathcal{O}_T$ is an augmented $\mathcal{O}_D$-algebra and $E$ is an augmented ideal,
there exists a unique pd-structure $\delta$ on
$\overline{\mathcal{J}} \cdot \mathcal{O}_T + E \subset \mathcal{O}_T$
which is compatible with the pd-structures on $\overline{\mathcal{J}}$
and $E$ by (\cite{B} I 1.6.5).
$\delta$ satisfies $\delta_n(x+u)=x^{[n]}+x^{[n-1]}u$ for $x\in \overline{\mathcal{J}}, u \in E$.
By the construction, $\delta$ is compatible with $\gamma$.
Let $\alpha : \mathcal{O}_Z \to \mathcal{O}_T = \mathcal{O}_D \oplus E $
(resp. $\beta : \mathcal{L} \to \mathcal{M}_T$)
be a morphism defined by $\alpha(z)=(z, 1\otimes dz)$
(resp. $\beta(e)=(e,1\otimes d\log e)$.
They define a morphism
$\eta_0 = (\alpha,\beta):(\mathcal{O}_Z,\mathcal{L})\to (\mathcal{O}_T,\mathcal{M}_T)$.
By the universal property of the log pd-envelope,
$\eta_0$ induces an $\mathcal{O}_Y$-pd-morphism
$\eta:(\mathcal{O}_D,\mathcal{M}_D)\to (\mathcal{O}_T,\mathcal{M}_T)$.
We see that this morphism is a section of the canonical projection map
$(\mathcal{O}_T,\mathcal{M}_T)\to (\mathcal{O}_D,\mathcal{M}_D)$.
The morphisms $d':\mathcal{O}_D \xrightarrow{\eta} \mathcal{O}_T \xrightarrow{\text{pr}} E$
and $d'\log: \mathcal{M}_D \xrightarrow{\eta} \mathcal{M}_T \xrightarrow{\text{pr}} E$ define a log derivation
\[
	(d',d'\log ):
	(\mathcal{O}_{D},\mathcal{M}_D)
	\to
	\mathcal{O}_{D}\otimes_{\mathcal{O}_{X'}} \Lambda_{(X',\mathcal{M}')/(Y,\mathcal{N})}^1
	\simeq
	\mathcal{O}_{D}\otimes_{\mathcal{O}_{Z}} \Lambda_{(Z,\mathcal{L})/(Y,\mathcal{N})}^1
\]
 such that $d'x^{[n]}=x^{[n-1]}\otimes dx$ for all $x\in \mathcal{J}$ and $n\ge 1$, and $d'\log m=1\otimes d\log m$ for all $m\in \mathcal{L}$. Uniqueness is easy.
\end{proof}

The log derivation extends to a graded algebra $\mathcal{O}_{D}\otimes_{\mathcal{O}_{X'}} \Lambda_{(X',\mathcal{M}')/(Y,\mathcal{N})}^\bullet$.
We denote by $\breve{\Lambda}_{(D,\mathcal{M}_D)/(Y,\mathcal{N})}^\bullet$
the log pd de Rham complex of $(D,\mathcal{M}_D)$ over $(Y,\mathcal{N})$
with respect to the pd-structure $[\ ]$ on $(D, \overline{\mathcal{J}})$.
The universal property of the log pd de Rham complex induces a map
$\breve{\Lambda}_{(D,\mathcal{M}_D)/(Y,\mathcal{N})}^\bullet \to \mathcal{O}_{D}\otimes_{\mathcal{O}_{X'}} \Lambda_{(X',\mathcal{M}')/(Y,\mathcal{N})}^\bullet$
of $\mathcal{O}_Y$-algebras.
This map is isomorphism by the same proof to \cite{ICrys} Proposition 0.3.1.6.

\subsection{Comparison morphism}
\label{subsect:comparison-morphism}
Let $R$ be a $\mathbb{Z}_{(p)}$-algebra, in which $p$ is nilpotent.
Let $(X,\mathcal{M}) \to \Spec(R,P)$ be a morphism of fine log schemes
and we assume that the pd-structure of $W(R)$ extends to $X$.
We have the natural morphism
\[
	u_m:
	((X,\mathcal{M})/W_m(R,P))_{\crys}^{\text{log}}
	\to
	X_{\et}
\]
from the log crystalline topos to the \'{e}tale topos.
We write the structure sheaf of the log crystalline site $\mathcal{O}_{(X,\mathcal{M})/W_m(R,P)}$ as $\mathcal{O}_m$.

Define a morphism
\[
	\mathbb{R}{u_m}_*\mathcal{O}_m
	\to
	W_m\Lambda_{(X,\mathcal{M})/(R,P)}^\bullet
\]
in the derived category $\text{D}^{+}(X,W_m(R))$ of sheaves of $W_m(R)$-modules on $X_{\text{\'{e}t}}$ as follows:

First, we consider the case that $(X,\mathcal{M})$ has an embedding into a log smooth scheme $(Y,\mathcal{N})$ over $(R,P)$
such that $(Y,\mathcal{N})$ has a log Witt lift $((Y_m,\mathcal{N}_m),\Delta_m)$.
We already know such embedding exists \'{e}tale locally on $X$ by \cite{HK} (2.9.2) and Lemma \ref{lem:existance-of-witt-lift}.
There exists the following commutative diagram:
\[
\xymatrix{
	(X,\mathcal{M}) \ar[r] \ar[d]_{w_0}
	&
	(Y,\mathcal{N}) \ar[d]_{w_0} \ar [r]
	&
	(Y_m,\mathcal{N}_m)
	\\
	W_m(X,\mathcal{M}) \ar[r]
	&
	W_m(Y,\mathcal{N}) \ar[ur]_{\Delta_m}.
}
\]
The left vertical arrow $w_0 : (X,\mathcal{M}) \to W_m(X,\mathcal{M})$ defines
a log pd-thickening relative to the canonical pd-structure on $^V W_m(R)$.

Then the morphism $W_m(X,\mathcal{M})\to (Y_m,\mathcal{N}_m)$ factors through a morphism
\[
	\mu_m :
	W_m(X,\mathcal{M})
	\to
	(\overline{Y}_m,\overline{\mathcal{N}}_m),
\]
where $(\overline{Y}_m,\overline{\mathcal{N}}_m)$ is the log pd-envelope of the closed immersion $(X,\mathcal{M})\to (Y_m,\mathcal{N}_m)$
with respect to the canonical pd-structure on $^V W_m(R)$.
Then we have an isomorphism in $\mathrm{D}^{+}(X,W_m(R))$
\[
	\mathbb{R}u_{m*} \mathcal{O}_m
	\to
	\mathcal{O}_{\overline{Y}_m} \otimes_{\mathcal{O}_{Y_m}} \Lambda_{(Y_m,\mathcal{N}_m)/W_m(R,P)}^\bullet.
\]
Since $X \to \overline{Y}_m$ is a nilimmersion,
we can consider the right hand side as a sheaf on $X_{\et}$.

By the discussion in \S \ref{subsect:extension-of-derivation}, we have an isomorphism
\[
	\breve{\Lambda}_{(\overline{Y}_m,\overline{\mathcal{N}}_m)/W_m(R,P)}^\bullet
	\simeq
	\mathcal{O}_{\overline{Y}_m} \otimes_{\mathcal{O}_{Y_m}}\Lambda_{(Y_m,\mathcal{N}_m)/W_m(R,P)}^\bullet.
\]

We define the comparison morphism as follows:
\[
\xymatrix{
	\mathcal{O}_{\overline{Y}_m} \otimes_{\mathcal{O}_{Y_m}}\Lambda_{(Y_m,\mathcal{N}_m)/W_m(R,P)}^\bullet \ar@{-->}[r] \ar[d]_{\sim}
	&
	W_m\Lambda_{(X,\mathcal{M})/(R,P)}^\bullet
	\\
	\breve{\Lambda}_{(\overline{Y}_m,\overline{\mathcal{N}}_m)/W_m(R,P)}^\bullet \ar[r]^{\mu_m}
	&
	\breve{\Lambda}_{W_m(X,\mathcal{M})/W_m(R,P)}^\bullet. \ar[u]
}
\]
One can show this comparison morphism is independent of embeddings
and Witt lifts using the fibered product argument in \cite{ICrys} II.1.1.

Next, we treat general cases.
Recall the definition of embedding system (\cite{HK} p.237) :
\begin{defn}
Let $f:(X,\mathcal{M}) \to (S,\mathcal{L})$ be a morphism of fine log schemes such that the underlying morphism $X\to S$ is locally of finite type,
an embedding system for $f$ is a pair of simplicial objects $(X^\bullet, \mathcal{M}^\bullet)$ and $(Z^\bullet, \mathcal{N}^\bullet)$ in the category of fine log schemes endowed with morphism
\[
	(X^\bullet, \mathcal{M}^\bullet)
	\to
	(X,\mathcal{M}),
	(X^\bullet, \mathcal{M}^\bullet)
	\to
	(Z^\bullet, \mathcal{N}^\bullet),
	(Z^\bullet, \mathcal{N}^\bullet)
	\to
	(S,\mathcal{L})
\]
satisfying the following conditions (i)-(iv).

(i) The diagram
\[
	\xymatrix{
	(X^\bullet, \mathcal{M}^\bullet) \ar[r] \ar[d]
	&
	(Z^\bullet, \mathcal{N}^\bullet) \ar[d]
	\\
	(X,\mathcal{M}) \ar[r]
	&
	(S,\mathcal{L}) }
\]
is commutative.

(ii) The morphism $X^\bullet\to X$ is a hypercovering for the \'{e}tale topology and $\mathcal{M}^i$ is the inverse image of $\mathcal{M}$ on $X^i$ for each $i\ge 0$.

(iii) Each $(Z^i, \mathcal{N}^i)\to (S,\mathcal{L})$ is log smooth.

(iv)  Each $(X^i, \mathcal{M}^i)\to (Z^i,\mathcal{N}^i)$ is closed immersion.
\end{defn}

Let $\{ X(i) \}_{i\in I}$ be an \'etale covering of $X$ such that each $X(i)$ can be embedded to a log smooth scheme $Y(i)$ which has a log Witt lift $\{ Y_m(i)\}_m$. Set
\begin{align*}
	X(i_1,\ldots,i_r)
	&:=X(i_1)\times_X \cdots \times_X X(i_r),\\
	Y_m(i_1,\ldots,i_r)
	&:= Y_m(i_1)\times_{W_m(R,P)} \cdots \times_{W_m(R,P)} Y_m(i_r).
\end{align*}
Then $X(i_1,\ldots,i_r) \to Y_m(i_1,\ldots,i_r)$ is closed immersion since $X$ is separated.
For $r\in \bN$, let
\begin{align*}
	X^r:=\coprod_{i_1,\ldots,i_r\in I} X(i_1,\ldots,i_r),\
	Y_m^r:=\coprod_{i_1,\ldots,i_r\in I} Y_m(i_1,\ldots,i_r).
\end{align*}
We get an embedding system $X^\bullet \to Y_m^\bullet$.
We denote by $\overline{Y}_m^\bullet$ the log pd-envelope with respect to this closed immersion.
Let $\theta : (X^\bullet)_{\text{\'{e}t}}^\sim \to X_{\text{\'{e}t}}^\sim$ be the natural augmentation morphism.

By the liftable case, we have a morphism
\[
	\mathcal{O}_{\overline{Y}_m^\bullet}\otimes_{\mathcal{O}_{Y_m^\bullet}}\Lambda_{(Y_m^\bullet,\mathcal{N}_m^\bullet)/W_m(R,P)}^\bullet
	\to
	W_m\Lambda_{(X^\bullet,\mathcal{M}^\bullet)/(R,P)}^\bullet.
\]

Applying $\mathbb{R}{u_m}_*$ to both sides, we get the comparison morphism
\[
	\mathbb{R}{u_m}_*\mathcal{O}_m
	\to
	W_m\Lambda_{(X,\mathcal{M})/(R,P)}^\bullet.
\]
This is because the canonical morphism
\[
	\mathbb{R}u_{m*}\mathcal{O}_m
	\to
	\mathbb{R}\theta_* (\mathcal{O}_{\overline{Y}_m^\bullet}\otimes_{\mathcal{O}_{Y_m^\bullet}}\Lambda_{(Y_m^\bullet,\mathcal{N}_m^\bullet)/W_m(R,P)}^\bullet)
\]
is quasi-isomorphism by \cite{HK} Proposition 2.20 and we have a natural isomorphism
\[
	\mathbb{R}\theta_* W_m\Lambda_{(X^\bullet,\mathcal{M}^\bullet)/(R,P)}^\bullet
	\simeq
	W_m\Lambda_{(X,\mathcal{M})/(R,P)}^\bullet
\]
from the \'etale base change property of log de Rham-Witt complexes.

We prove that the comparison morphism is compatible with the Frobenius structure (cf. \cite{LZDRW} Proposition 3.6).
Frobenius morphisms and multiplications by $p$
\[
\xymatrix{
	W_m(\mathcal{O}_X) \ar[r]^{F}
	&
	W_{m-1}(\mathcal{O}_X)
	&
	\mathcal{M} \ar[r]^{\times p}
	&
	\mathcal{M}
	\\
	W_m(R) \ar[r]^{F} \ar[u]
	&
	W_{m-1}(R) \ar[u],
	&
	P \ar[r]^{\times p} \ar[u]
	&
	P \ar[u]
}
\]
defines a map of log de Rham complexes
\[
	\Lambda_{(W_m(X),W_m(\mathcal{M}))/W_m(R,P)}^\bullet
	\to
	\Lambda_{(W_{m-1}(X),W_{m-1}(\mathcal{M}))/W_{m-1}(R,P)}^\bullet
\]
and it factors $\mathbb{F}:W_m\Lambda_{(X,\mathcal{M})/(R,P)}^\bullet \to W_{m-1}\Lambda_{(X,\mathcal{M})/(R,P)}^\bullet$.
We have $\mathbb{F} = p^j F$ on $W_m\Lambda_{(X,\mathcal{M})/(R,P)}^j$
because $d^F \xi=p {}^F d\xi$ for $\xi \in W_m(\mathcal{O}_X)$
and $d\log m^p = pd\log m$ for $m\in W_m(\mathcal{M})$.

Let $(X_0, \mathcal{M}_0) := (X,\mathcal{M})\times \mathbb{F}_p$
and $R_0 := R\otimes \mathbb{F}_p$.
Consider the commutative diagram:
\[
\xymatrix{
	X_0 \ar[r]^{\text{Frob}} \ar[d]
	&
	X_0 \ar[d]
	\\
	\Spec W_{m-1}(R) \ar[r]^{F}
	&
	\Spec W_m(R).
}
\]
It induces a map
\[
	\mathbb{R}\overline{u}_{m*}\mathcal{O}_{(X_0, \mathcal{M}_0)/W_m(R_0,P)}
	\to
	\mathbb{R}\overline{u}_{m-1*}\mathcal{O}_{(X_0, \mathcal{M}_0)/W_{m-1}(R_0,P)},
\]
where $\overline{u}_{m} :
((X_0, \mathcal{M}_0)/W_m(R_0,P))_{\crys}^{\log}
\to
(X_0)_{\et}=X_{\et}$ is the canonical morphism of topoi. We have a canonical isomorphism
\[
	\mathbb{R}\overline{u}_{m*}\mathcal{O}_{(X_0, \mathcal{M}_0)/W_m(R_0,P)}
	\to
	\mathbb{R}u_{m*}\mathcal{O}_{(X,\mathcal{M})/W_m(R,P)}.
\]
So we obtain $\mathbb{F}:
\mathbb{R}u_{m*}\mathcal{O}_m
\to
\mathbb{R}u_{m-1*}\mathcal{O}_{m-1}$.

\begin{prop}
We have a commutative diagram:
\[
\xymatrix{
	\mathbb{R}u_{m*}\mathcal{O}_m \ar[r] \ar[d]_{\mathbb{F}}
	&
	W_m\Lambda_{(X,\mathcal{M})/(R,P)}^\bullet \ar[d]^{\mathbb{F}}
	\\
	\mathbb{R}u_{m-1*}\mathcal{O}_{m-1} \ar[r]
	&
	W_{m-1}\Lambda_{(X,\mathcal{M})/(R,P)}^\bullet.
}
\]
\end{prop}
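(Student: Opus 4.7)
The plan is to reduce étale locally to a setting with a log Frobenius lift and to exhibit both sides of the diagram as pullback along the extension $\overline{\Phi}_m$ of this lift to the log pd-envelope.

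First, the statement is étale local on $X$: both $\mathbb{R}u_{m*}\mathcal{O}_m$ and $W_m\Lambda^\bullet_{(X,\mathcal{M})/(R,P)}$ satisfy étale descent (for the latter, this is Proposition~\ref{prop:etale-base-change-property}). Combining this with Lemma~\ref{lem:existance-of-witt-lift} and the existence of log embeddings étale locally, we may assume a closed immersion $(X,\mathcal{M}) \hookrightarrow (Y,\mathcal{N})$ into a log smooth $(R,P)$-scheme equipped with a log Frobenius lift $((Y_m,\mathcal{N}_m), \Phi_m, \Delta_m)$. Let $(\overline{Y}_m, \overline{\mathcal{N}}_m)$ denote the associated log pd-envelope and let $\mu_m$ be as in \S\ref{subsect:comparison-morphism}. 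Since $\Phi_m$ lifts the absolute Frobenius on $X_0 \subset Y_0$, the universal property of the log pd-envelope extends $\Phi_m$ to a morphism $\overline{\Phi}_m : (\overline{Y}_{m-1}, \overline{\mathcal{N}}_{m-1}) \to (\overline{Y}_m, \overline{\mathcal{N}}_m)$ over $F : W_m(R) \to W_{m-1}(R)$.

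For the de Rham--Witt side, condition (3) in the definition of log Frobenius lift gives $\Phi_m \circ \Delta_m = \Delta_{m-1} \circ F$; passing to the pd-envelopes yields $\overline{\Phi}_m \circ \mu_m = \mu_{m-1} \circ F$. Applying the functor $\breve{\Lambda}^\bullet_{-/W_\bullet(R,P)}$ and using that $\Phi_m$ intertwines $\times p$ on monoids, the resulting square of log pd--de Rham complexes commutes; its right vertical arrow is precisely the map induced by $F$ on $W_\bullet(\mathcal{O}_X)$ and $\times p$ on $\mathcal{M}$, i.e.\ the factorization through which $\mathbb{F}$ on $W_\bullet\Lambda^\bullet$ is defined immediately before the proposition. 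Postcomposing with the universal surjection $\breve{\Lambda}^\bullet \to W_\bullet\Lambda^\bullet_{(X,\mathcal{M})/(R,P)}$ therefore yields the commutative square relating $\overline{\Phi}_m^\ast$ on the local model with $\mathbb{F}$ on $W_\bullet\Lambda^\bullet$.

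For the crystalline side, the local comparison isomorphism $\mathbb{R}u_{m*}\mathcal{O}_m \simeq \mathcal{O}_{\overline{Y}_m} \otimes_{\mathcal{O}_{Y_m}} \Lambda^\bullet_{(Y_m,\mathcal{N}_m)/W_m(R,P)}$ transports $\mathbb{F}$, defined via absolute Frobenius on $X_0$ and $F$ on the base, to pullback along $\overline{\Phi}_m$: this is the logarithmic analogue of the classical Frobenius-descent argument of \cite{LZDRW} Proposition~3.6, namely that any lift of the absolute Frobenius computes the crystalline Frobenius. Combined with the previous step, both columns of the target diagram are realized by $\overline{\Phi}_m^\ast$ on the local model, so the square commutes étale locally, hence everywhere. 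The main obstacle is precisely this last identification: $\mathbb{F}$ on $\mathbb{R}u_{m*}\mathcal{O}_m$ is defined intrinsically via absolute Frobenius on $X_0$ and not through any chosen log Frobenius lift, and verifying that $\overline{\Phi}_m^\ast$ realizes it on the local model is the non-formal input; the rest is functoriality of the log pd--de Rham complex together with the universal property of $W_\bullet\Lambda^\bullet$.
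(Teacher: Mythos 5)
Your proposal is correct and follows essentially the same route as the paper: reduce to the liftable case via the simplicial embedding system (your ``\'etale local'' reduction is the same mechanism, though phrased a bit differently — strictly one reduces by replacing $(X,\mathcal{M})$ with an \'etale hypercover admitting log Frobenius lifts and using cohomological descent together with the \'etale base change property of $W_m\Lambda^\bullet$, rather than literal sheaf-level locality of a derived-category diagram), and then realize both vertical arrows as pullback along the extension of the chosen log Frobenius lift to the log pd-envelopes. The paper's proof simply asserts that $\mathbb{F}$ on $\mathbb{R}u_{m*}\mathcal{O}_m$ is represented by the map induced by $\Phi_m$ and that the properties of log Frobenius lifts make the square commute; you have correctly identified and fleshed out the two ingredients behind that assertion — the extension $\overline{\Phi}_m$ of $\Phi_m$ to the pd-envelope, and condition~(3) of the definition of a log Frobenius lift giving $\overline{\Phi}_m\circ\mu_m=\mu_{m-1}\circ F$ — and you have rightly flagged that the only genuinely non-formal input is the crystalline-side identification of $\overline{\Phi}_m^{\ast}$ with the intrinsically defined $\mathbb{F}$, which is the (logarithmic form of the) standard statement that any Frobenius lift computes crystalline Frobenius.
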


\begin{proof}
By the simplicial method as above, we can assume that $(X,\mathcal{M})$ is embedded in a log smooth scheme $(Y,\mathcal{N})$
which admits a log Frobenius lift $\{(Y_m,\mathcal{N}_m)\}_m$. Let $\Phi_m:(Y_{m-1},\mathcal{N}_{m-1})\to(Y_m,\mathcal{N}_m)$ be the given lift of the absolute Frobenius.
The map $\mathbb{F}:\mathbb{R}u_{m*}\mathcal{O}_m \to\mathbb{R}u_{m-1*}\mathcal{O}_{m-1}$ is represented by the map
\[
	\mathcal{O}_{\overline{Y}_m}\otimes_{\mathcal{O}_{Y_m}}\Lambda_{(Y_m,\mathcal{N}_m)/W_m(R,P)}^\bullet
	\to
	\mathcal{O}_{\overline{Y}_{m-1}}\otimes_{\mathcal{O}_{Y_{m-1}}}\Lambda_{(Y_{m-1},\mathcal{N}_{m-1})/W_{m-1}(R,P)}^\bullet
\]
which is induced by $\Phi_m$.

By the properties of log Frobenius lifts, we have a commutative diagram
\[
\xymatrix{
	\mathcal{O}_{\overline{Y}_m}\otimes_{\mathcal{O}_{Y_m}}\Lambda_{(Y_m,\mathcal{N}_m)/W_m(R,P)}^\bullet \ar[r] \ar[d]^{\mathbb{F}}
	&
	W_m\Lambda_{(X,\mathcal{M})/(R,P)}^\bullet \ar[d]^{\mathbb{F}}
	\\
	\mathcal{O}_{\overline{Y}_{m-1}}\otimes_{\mathcal{O}_{Y_{m-1}}}\Lambda_{(Y_{m-1},\mathcal{N}_{m-1})/W_{m-1}(R,P)}^\bullet \ar[r]
	&
	W_{m-1}\Lambda_{(X,\mathcal{M})/(R,P)}^\bullet
}
\]
and this is identified to the diagram in the proposition.
\end{proof}

\section{Comparison theorem}
\label{sect:comparison}
Let $R$ be a $\mathbb{Z}_{(p)}$-algebra such that $p$ is nilpotent in $R$.

\subsection{NCD case}
Let $Y$ be a log scheme over $S=\Spec R$.
We assume that the structure morphism $Y\to S$ has \'etale locally on $Y$ a decomposition
\[
	Y
	\xrightarrow{u}
	\Spec(A=R[T_1,\ldots,T_n],P=\bN^{e}\oplus \bN^f)
	\to
	\Spec R
\]
with $u$ exact and \'{e}tale (in the usual sense), $1 \le e \le n, f \ge 0$,
and $(A,P)$ is the pre-log ring we discussed in \S \ref{subsect:log-basic-witt-diffs-sncd}.
If $X$ is a smooth scheme and $D$ is a normal crossing divisor on $X$,
the log scheme $(X,D)$ satisfies this condition.

First we consider the log scheme $X:=\Spec(A=R[T_1,\ldots,T_n], P=\bN^{e}\oplus \bN^f)$.
Let $(A_m=W_m(R)[T_1,\ldots,T_n], P)$ be the pre-log ring of the type we discussed in \S \ref{subsect:log-basic-witt-diffs-sncd}.
Then $X_m:=\Spec(A_m, P)$ is a lift of $X$ over $W_m(R)$.
Let $\phi_m:A_{m+1}\to A_m$ be the morphism defined by $F:W_{m+1}(R)\to W_m(R)$ and $T_i\to T_i^p$ for $1 \le i \le n$.
The morphism $\phi_m$ and the multiplication by $p$ morphism $\times p:P \to P$ define a morphism of log schemes $\Phi_m:X_m \to X_{m+1}$.
Let $\delta_m:A_m \to W_m(A)$ be the morphism induced by the canonical morphism $W_m(R)\to W_m(A)$ and $T_i\mapsto [T_i]$ for $1 \le i \le n$.
We denote by $\Delta_m : W_m(X)\to X_m$ the morphism corresponding to $\delta_m$ and the identity morphism on $P$.
The pair $(X_m, \Delta_m)$ defines the comparison morphism
\[
	\Lambda_{X_m/W_m(R)}^\bullet
	\to
	W_m\Lambda_{X/R}^\bullet.
\]
If $f=0$, the family $(X_m, \Phi_m, \Delta_m)_m$ is a log Frobenius lift of $X$
and the comparison morphism coincides with the morphism induced by
the log Witt lift $(X_m, \Delta_m)_m$ in \S \ref{sect:comparison-morphism}.

\begin{thm}
\label{thm:ncd-comarison-general}
The comparison morphism
\[
	\Lambda_{X_m/W_m(R)}^\bullet
	\to
	W_m\Lambda_{X/R}^\bullet
\]
is a quasi-isomorphism. This morphism is functorial.
\end{thm}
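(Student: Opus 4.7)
The strategy follows Langer--Zink's approach in the non-logarithmic case (\cite{LZDRW} Theorem 3.5), using the explicit bases from Section \ref{subsect:log-basic-witt-diffs-sncd} on both sides together with a weight decomposition. Under the Witt lift $\Delta_m$, the generator $T_i$ maps to its Teichm\"uller lift $X_i = [T_i]$, so a typical basis element
\[
T^{k^\star}\cdot \prod_{i\in L} d\log T_i \cdot \prod_{j \in I'} dT_j \cdot \prod_{h \in J} d\log c_h
\]
of $\Lambda^\bullet_{X_m/W_m(R)}$ (with $k^\star:[1,n]\to \mathbb{Z}_{\ge 0}$, $L\subset[1,e]\setminus\Supp k^\star$, $I'\subset[e+1,n]$, $J\subset[1,f]$) is sent to an explicit integral log basic Witt differential in $W_m\Lambda^\bullet$, determining the comparison morphism completely on generators.

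I would next introduce matching weight decompositions. On the LHS, the log de Rham differential preserves the weight $w := k^\star + \mathbf{1}_{I'\cap[e+1,n]} \in \mathbb{Z}_{\ge 0}^n$, yielding a subcomplex decomposition $\Lambda^\bullet_{X_m/W_m(R)} = \bigoplus_{w, J, L} \Lambda^\bullet_{w, J, L}$. On the RHS, the $d$-formulas immediately before Proposition \ref{prop:calc-phantom-component-sncd} show that $(k, I_{-\infty}, J)$ is preserved by $d$ (only the ordered partition $(I_0,\ldots,I_l)$ of $\Supp k^+$ changes), giving $W_m\Lambda^\bullet = \bigoplus_{k, I_{-\infty}, J} W_m\Lambda^\bullet_{k, I_{-\infty}, J}$. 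Matching is via $w = p^{u(k^+)} k^+$ and $L = I_{-\infty}$, which is compatible with the comparison morphism by Proposition \ref{prop:calc-phantom-component-sncd}. For each $(w, J, L)$, this reduces the proof to computing and comparing the cohomology of $\Lambda^\bullet_{w,J,L}$ with the cohomology of $\bigoplus_{k: p^{u(k^+)}k^+ = w} W_m\Lambda^\bullet_{k, L, J}$.

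For $w = 0$ the differential is trivial on both sides and both reduce to $W_m(R)\cdot \prod_{i\in L}d\log T_i\cdot\prod_{h\in J}d\log c_h$, with the comparison being the obvious isomorphism. For $w\neq 0$, the LHS is acyclic by the usual Poincar\'e-lemma calculation; on the RHS, one must show that the subcomplex for each weight $k$ with $p^{u(k^+)}k^+ = w$ is acyclic. This last point is the main obstacle: it is an ``ordered partition Koszul'' computation requiring a careful combinatorial argument that exploits both the explicit formula for $d$ on log basic Witt differentials (which inserts an empty $I_0$-block) and the extra $p^{-t}$ factor appearing for integral $k^+$. Once this acyclicity is established for all $u(k^+)\geq 0$ with $p^{u(k^+)}k^+ = w \neq 0$, the cohomologies match degree by degree, establishing the quasi-isomorphism. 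Functoriality is automatic from the functoriality of the construction of $\Delta_m$ and the universal property of $W_m\Lambda^\bullet$.
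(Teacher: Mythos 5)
Your weight decomposition on both sides is correct, and matching by $w$, $L=I_{-\infty}$, $J$ is the right idea, but the reduction you carry out after the decomposition does not work: the claim that ``for $w\neq 0$, the LHS is acyclic by the usual Poincar\'e-lemma calculation'' is false over $W_m(R)$ with $p$ nilpotent. Consider $e=f=0$, $n=1$, $R=\mathbb{F}_p$, $m=1$, and $w=p$: the weight-$p$ subcomplex of $\Omega^\bullet_{\mathbb{F}_p[T]/\mathbb{F}_p}$ is $\mathbb{F}_p\,T^p\xrightarrow{\;d\;}\mathbb{F}_p\,T^{p-1}dT$ with $d(T^p)=pT^{p-1}dT=0$, so both $H^0$ and $H^1$ are $\mathbb{F}_p$. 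The Poincar\'e lemma requires $w$ (or at least some component of it) to be a unit, which fails precisely when $p\mid w$; this failure is the whole reason for introducing de Rham--Witt complexes in the first place. For the same reason, the direct summand of $W_m\Lambda^\bullet$ corresponding to the integral weight ($k^+=w$, $u(k^+)=0$) cannot be acyclic either, so your intended ``both sides are acyclic for $w\neq 0$'' step collapses on both sides simultaneously and the argument would compute the wrong answer (cohomology concentrated in degree $0$).

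What actually happens, and what the paper exploits, is stronger and more elementary: the comparison morphism sends each log $p$-basic differential to the corresponding \emph{integral} log basic Witt differential, and these are bases on each side by Lemma \ref{lem:log-p-basic-diffs-sncd} and Corollary 3.4. Hence the comparison map is an isomorphism of complexes from $\Lambda^\bullet_{X_m/W_m(R)}$ onto the direct summand $W_m\Lambda^{\mathrm{int},\bullet}_{(A,P)/R}$ --- no cohomology computation is needed for this part at all, including for $w=p$-divisible weights where neither side is acyclic. The only acyclicity that must be verified is for the \emph{fractional} summand $W_m\Lambda^{\mathrm{frac},\bullet}_{(A,P)/R}$ (i.e.\ the weight pieces with $u(k^+)\geq 1$), which does follow from the explicit formulas for $d$ on log basic Witt differentials (the contraction sending $(I_{-\infty},\emptyset,I_0,\ldots,I_l)$ to $(I_{-\infty},I_0,\ldots,I_l)$ is a null-homotopy when $k^+$ is not integral, because there is no extra $p^{-t}$ factor obstructing it). You have the right pieces --- the explicit bases and the weight splitting --- but you should identify the integral part with the left-hand side as complexes rather than comparing cohomologies, and restrict the acyclicity argument to the fractional part only.
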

\begin{proof}

We prove that the canonical comparison morphism of complexes
\[
	\Lambda_{(A_m,P)/W_m(R)}^\bullet
	\to
	W_m\Lambda_{(A,P)/R}^\bullet
\]
is a quasi-isomorphism.
We have a decomposition of complexes
\[
	W_m\Lambda_{(A,P)/R}^\bullet
	=W_m\Lambda_{(A,P)/R}^{\text{int},\bullet}\oplus W_m\Lambda_{(A,P)/R}^{\text{frac},\bullet},
\]
where $W_m\Lambda_{(A,P)/R}^{\text{int},\bullet}$
(resp. $W_m\Lambda_{(A,P)/R}^{\text{frac},\bullet})$
is the integral part (resp. the fractional part).
By the formula of derivation on log basic Witt differentials given in \S \ref{sect:basicwittdiffs},
we see that $W_m\Lambda_{(A,P)/R}^{\text{frac},\bullet}$ is acyclic.
The comparison morphism maps $\Lambda_{(A_m,P)/W_m(R)}^\bullet$ isomorphically to the complex $W_m\Lambda_{(A,P)/R}^{\text{int},\bullet}$
because the comparison map sends the log $p$-basic differential
\[
	\left(\prod _{i\in J} d\log c_i\right)
	\cdot
	\left(\prod _{i\in I_{-\infty}} d\log T_i\right)
	\cdot T^{k_{I_0}}(p^{-\text{ord}_p{k_{I_1}}}dT^{k_{I_1}})\cdots (p^{-\text{ord}_p{k_{I_l}}}dT^{k_{I_l}})
\]
to the following log basic Witt differential:
\[
	\left(\prod _{i\in J} d\log c_i\right)
	\cdot \left(\prod _{i\in I_{-\infty}} d\log X_i\right)
	\cdot X^{k_{I_0}}({}^{F^{-t(I_1)}}dX^{p^{t(I_1)}k_{I_1}}) \cdots ({}^{F^{-t(I_l)}}dX^{p^{t(I_l)}k_{I_l}}).
\]
\end{proof}

\begin{thm}
\label{thm:comparison-theorem-NCD}
Let $Y$ be a smooth scheme over $R$ and $D$ be a normal crossing divisor of $Y$. Then the canonical homomorphism
\[
	\mathbb{R}u_{(Y,D)/W_m(R)*}\mathcal{O}_{(Y,D)/W_m(R)}
	\to
	W_m\Lambda_{(Y,D)/R}^\bullet
\]
is an isomorphism in $D^+(Y,W_m(R))$.
Moreover, if $R$ is Noetherian and $Y$ is proper over $R$, we have a canonical isomorphism
\[
	H_{\logcrys}^* ((Y,D)/W(R))
	\to
	\mathbb{H}_{\et}^*(Y,W\Lambda_{(Y,D)/R}^\bullet).
\]
\end{thm}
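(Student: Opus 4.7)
The plan is to reduce assertion (1) to the explicit local situation handled by Theorem \ref{thm:ncd-comarison-general}, and then pass from (1) to (2) via a derived inverse-limit argument.

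For (1), whether the comparison morphism is a quasi-isomorphism can be checked étale locally on $Y$. By the defining property of a normal crossing divisor (Definition \ref{defn:semistable}(1)), étale locally on $Y$ there is an étale morphism to $\Spec R[T_1,\ldots,T_n]$ under which $D$ is pulled back from $V(T_1\cdots T_d)$, so the local model of $(Y,D)$ is of the form $\Spec(S,\bN^d)$ for $S$ étale over $R[T_1,\ldots,T_n]$. Using the étale base change property of log de Rham-Witt complexes (Proposition \ref{prop:etale-base-change-property}) together with the corresponding base change for log crystalline cohomology, I would further reduce to the case $S=R[T_1,\ldots,T_n]$, which is exactly the local model $X$ of \S\ref{subsect:log-basic-witt-diffs-sncd} with $e=d$ and $f=0$. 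In this case, the explicit log smooth lift $X_m=\Spec(W_m(R)[T_1,\ldots,T_n],\bN^d)$ is a log Witt lift of $X$, and the defining ideal of the strict closed immersion $X\hookrightarrow X_m$ is generated by $VW_m(R)$, which already carries compatible divided powers; consequently the log pd-envelope of the immersion coincides with $X_m$ itself. The general construction of the comparison morphism in \S\ref{subsect:comparison-morphism} therefore specialises, in this local model, to the explicit map $\Lambda^\bullet_{X_m/W_m(R)}\to W_m\Lambda^\bullet_{X/R}$ of Theorem \ref{thm:ncd-comarison-general}, which has already been shown to be a quasi-isomorphism.

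For (2), I would apply $\mathbb{R}\Gamma_{\et}(Y,-)$ to the isomorphism of (1) and combine with the identification of $\mathbb{R}u_{(Y,D)/W_m(R)*}\mathcal{O}_{(Y,D)/W_m(R)}$ with the derived push-forward of the structure sheaf on the log crystalline site to obtain, compatibly in $m$, an isomorphism
\[
\mathbb{R}\Gamma_{\logcrys}((Y,D)/W_m(R)) \xrightarrow{\sim} \mathbb{R}\Gamma_{\et}(Y,W_m\Lambda^\bullet_{(Y,D)/R}).
\]
Taking $\mathbb{R}\varprojlim_m$, on the left the log version of Proposition \ref{prop:derivedlimit} (valid under the properness and log smoothness assumptions on $(Y,D)$ over $R$) recovers $\mathbb{R}\Gamma_{\logcrys}((Y,D)/W(R))$. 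On the right, the standard-filtration exact sequence (Proposition \ref{prop:fil-exact-sequence}) ensures that the pro-system $\{W_m\Lambda^\bullet_{(Y,D)/R}\}$ has surjective transition maps, hence satisfies Mittag-Leffler; together with the log analog of \cite{LZDRW} Proposition 1.13 this yields $\mathbb{R}\varprojlim_m \mathbb{R}\Gamma_{\et}(Y,W_m\Lambda^\bullet_{(Y,D)/R}) \simeq \mathbb{R}\Gamma_{\et}(Y,W\Lambda^\bullet_{(Y,D)/R})$. Passing to cohomology produces the desired isomorphism.

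The only genuine technical point I foresee is verifying that the abstract comparison morphism constructed in \S\ref{subsect:comparison-morphism} really does identify with the explicit map of Theorem \ref{thm:ncd-comarison-general} in the local model, and more generally that the étale reduction is compatible with the comparison morphism on both sides (in particular that the lift $X_m$ pulls back correctly under étale maps to produce a log Witt lift of the étale neighbourhood). The essential content—the local quasi-isomorphism itself—is already absorbed into Theorem \ref{thm:ncd-comarison-general} via the theory of log basic Witt differentials from \S\ref{sect:basicwittdiffs}, so no further deep combinatorial input is required; what remains is bookkeeping about embedding systems, pd-envelopes, and derived limits.
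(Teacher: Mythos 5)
Your proposal is correct and follows essentially the same route as the paper: reduce (1) to the explicit local model $\Spec(R[T_1,\ldots,T_n],\bN^d)$ using \'etale base change (the paper cites the analogous reduction from \cite{LZDRW} Theorem 3.5), observe that the canonical Witt lift has trivial pd-envelope so the abstract comparison morphism of \S\ref{subsect:comparison-morphism} becomes the explicit map of Theorem \ref{thm:ncd-comarison-general}, and then obtain (2) by taking $\mathbb{R}\varprojlim_m$ as in Theorem \ref{thm:infinitecomparison}, using Proposition \ref{prop:derivedlimit} on the crystalline side and the Mittag--Leffler/surjective-transition argument (via \cite{LZDRW} Proposition 1.13) on the de Rham--Witt side. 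Your remark that the strict closed immersion $X\hookrightarrow X_m$ has ideal generated by ${}^VW_m(R)$, already pd, so that the log pd-envelope equals $X_m$ itself, is precisely the bookkeeping point the paper elides; everything else is a direct match.
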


\begin{proof}
Using the similar method of \cite{LZDRW} Theorem 3.5,
we may assume that $(Y,D)=\Spec(A=R[T_1,\ldots,T_n],P=\bN^{e})$,
where the log structure is given by $e_i\mapsto T_i$.
There is the canonical log Frobenius lift $(\Spec(W_m(R)[T_1,\ldots,T_n],P), \Phi_m, \Delta_m)_m$ of $(Y,D)$.
Since the pre-log ring $(A,P)$ is log smooth over $R$,
the comparison morphism of \S \ref{subsect:log-basic-witt-diffs-sncd} becomes the map
$\Lambda_{(A_m,P)/W_m(R)}^\bullet
	\to
	W_m\Lambda_{(A,P)/R}^\bullet$.
Hence the first claim follows from Theorem \ref{thm:ncd-comarison-general}.
The proof of the second claim is similar to that of Theorem \ref{thm:infinitecomparison}.
\end{proof}

\subsection{Semistable case}
\label{subsect:comparison-semistable}
We prove the comparison theorem for semistable log schemes.
Let $(Y,\mathcal{M})$ be a log scheme over $S=\Spec (R,\bN)$ of the following type:

\'Etale locally on $Y$, the structure morphism $Y\to S$ has a decomposition
\[
	Y
	\xrightarrow{u}
	\Spec(A=R[T_1,\ldots,T_n]/(T_1\cdots T_d),P=\bN^{e}\oplus \bN^f) \xrightarrow{\delta}
	S
\]
with $u$ exact and \'{e}tale (in the usual sense),
$1\le d \le e \le n, f \ge 0$, $(A,P)$ is the pre-log ring we discussed in \S \ref{subsect:log-basic-witt-diffs-semistable},
and $\delta$ is induced by the diagonal map $\bN \to \bN^e\oplus \bN^f$.
Obviously, semistable log schemes over $S$ (Definition \ref{defn:semistable} (2)) satisfy this condition. Set
\[
	W_m\widetilde{\Lambda}^\bullet
	= W_m\Lambda_{Y/(R,\{*\})}^\bullet,
	W_m\Lambda^\bullet
	=W_m\Lambda_{Y/(R,\bN)}^\bullet.
\]
Let $t_m \in W_m(\mathcal{M})$ be the image of the base of $\bN$ under the morphism
$\bN\to \mathcal{M} \to W_m(\mathcal{M})$ and $\theta_m=d\log t_m\in W_m\widetilde{\Lambda}^1$.

\begin{lem}
\label{lem:exactness}
We have the following exact sequence:
\begin{align*}
	0
	\to
	W_m(\mathcal{O}_Y) = W_m\widetilde{\Lambda}^0
	\xrightarrow{\wedge \theta_m}
	W_m\widetilde{\Lambda}^1
	\xrightarrow{\wedge \theta_m}
	W_m\widetilde{\Lambda}^2
	\xrightarrow{\wedge \theta_m}
	\cdots.
\end{align*}
\end{lem}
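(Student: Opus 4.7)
My plan is to reduce to an affine chart and prove exactness by induction on $m$ using the standard filtration.

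By the \'etale base change property of the log de Rham-Witt complex (Proposition \ref{prop:etale-base-change-property} and Proposition-Definition \ref{prop-def:chart-welldefined}), one may assume $Y = \Spec(A, P)$ with $A = R[T_1, \ldots, T_n]/(T_1\cdots T_d)$ and $P = \bN^e \oplus \bN^f$ as in Section \ref{subsect:log-basic-witt-diffs-semistable}, so that $\theta_m = \sum_{i=1}^d d\log[T_i]$ and in particular $\theta_m \wedge \theta_m = 0$. For the base case $m = 1$, the log de Rham complex $W_1\widetilde{\Lambda}^\bullet = \Lambda_{(A,P)/R}^\bullet$ is the exterior algebra over $A$ on $\{d\log T_1, \ldots, d\log T_e,\, dT_{e+1}, \ldots, dT_n,\, d\log c_1, \ldots, d\log c_f\}$; replacing $d\log T_1$ by $\theta_1$ in this basis presents the complex as the standard Koszul complex with respect to a basis element, hence exact.

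For the inductive step $m \geq 2$, I would use the short exact sequence of Proposition \ref{prop:fil-exact-sequence},
\[
	0 \to \Fil^{m-1}W_m\widetilde{\Lambda}^\bullet \to W_m\widetilde{\Lambda}^\bullet \xrightarrow{\pi} W_{m-1}\widetilde{\Lambda}^\bullet \to 0.
\]
The procomplex relations $^F d\log[T_i] = d\log[T_i]$ (so $F\theta_m = \theta_{m-1}$), $d\theta_m = 0$, and $FV = p$ yield the compatibilities
\[
	\theta_m \wedge V^{m-1}\alpha = V^{m-1}(\theta_1 \wedge \alpha),
	\qquad
	\theta_m \wedge dV^{m-1}\alpha = dV^{m-1}(\theta_1 \wedge \alpha),
\]
so $\wedge\theta_m$ preserves the filtration and induces $\wedge\theta_{m-1}$ on $W_{m-1}\widetilde{\Lambda}^\bullet$. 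Combining the induction hypothesis with the long exact sequence of cohomology of the displayed SES, the statement reduces to exactness of $(\Fil^{m-1}W_m\widetilde{\Lambda}^\bullet, \wedge\theta_m)$; the injectivity of $\wedge\theta_m$ in degree $0$ follows by a similar diagram chase, using that $\wedge\theta_1$ is injective on $W_1\widetilde{\Lambda}^0$.

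The main obstacle is this last reduction. Since $\Fil^{m-1}W_m\widetilde{\Lambda}^i = V^{m-1}W_1\widetilde{\Lambda}^i + dV^{m-1}W_1\widetilde{\Lambda}^{i-1}$ is a sum of images of $V^{m-1}$ and $dV^{m-1}$ with possibly nontrivial intersection, one cannot naively split it into two copies of the base case and feed them into the $m=1$ argument. The plan is to exploit the uniqueness of the basic Witt differential expression (corollary following Proposition \ref{prop:expression-basic-witt-semistable}) to describe $\Fil^{m-1}W_m\widetilde{\Lambda}^\bullet$ as the span of those $\epsilon_m(\xi, k, \mathcal{P}, J)$ with $\xi$ in the top piece of the Verschiebung filtration, and to transport the $m=1$ Koszul argument (change of basis $d\log T_1 \rightsquigarrow \theta_1$) through $V^{m-1}$ and $dV^{m-1}$ using the semistable constraint $[1,d] \not\subset \Supp^+ k$, so that the $\wedge\theta_m$-exactness of $\Fil^{m-1}$ is deduced directly from that of $\wedge\theta_1$ on $W_1\widetilde{\Lambda}^\bullet$ without having to resolve the overlap explicitly.
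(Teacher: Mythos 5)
Your approach differs from the paper's: you propose an induction on $m$ via the standard filtration, whereas the paper gives a one-step, induction-free proof. The paper reduces to the affine chart $\Spec(A, P) = \Spec(R[T_1,\ldots,T_n]/(T_1\cdots T_d), \bN^e\oplus\bN^f)$ exactly as you do, and then invokes Proposition \ref{prop:decomposition-of-drw-cpx-semistable}: rewriting the log basic Witt differentials in the normal form $\epsilon'$ yields a decomposition $W_m\widetilde{\Lambda}^\bullet = WC^\bullet\oplus WC'^\bullet$, where $WC'^\bullet$ is exactly the span of those $\epsilon'$ carrying $\theta_m$ as a wedge factor. This produces a contracting homotopy $c:(a,\,b\wedge\theta_m)\mapsto b$ with $c\circ(\wedge\theta_m) + (\wedge\theta_m)\circ c = \mathrm{id}$, so the complex is acyclic at each finite level $m$ at once, with no long exact sequence and no base case.

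Concerning your draft: the $m=1$ base case is fine, though note that in this chart $\theta_m = \sum_{i=1}^{e} d\log[T_i] + \sum_{j=1}^{f} d\log c_j$ (since $\delta$ is induced by the diagonal $\bN\to\bN^e\oplus\bN^f$); your formula $\sum_{i=1}^d d\log[T_i]$ is only the special case $d=e$, $f=0$, although the change-of-basis Koszul argument works either way. Your compatibility formulas for $\wedge\theta_m$ against $V^{m-1}$ and $dV^{m-1}$ are also correct. But the gap you yourself flag is the entire content of the lemma. The subgroup $\Fil^{m-1}W_m\widetilde{\Lambda}^\bullet = V^{m-1}W_1\widetilde{\Lambda}^\bullet + dV^{m-1}W_1\widetilde{\Lambda}^{\bullet-1}$ is not the image of $W_1\widetilde{\Lambda}^\bullet$ under a single operation (the two summands overlap), so the $m=1$ homotopy does not transport through $V^{m-1}$ and $dV^{m-1}$ in any routine way. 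Moreover, the moment you start describing $\Fil^{m-1}$ through basic Witt differentials you are in effect rederiving the $\epsilon'$-splitting of Proposition \ref{prop:decomposition-of-drw-cpx-semistable}, now burdened with the extra task of intersecting it against the standard filtration. Since the fine structure of basic Witt differentials is unavoidable, the inductive scaffolding buys you nothing: build the contracting homotopy directly on $W_m\widetilde{\Lambda}^\bullet$ as the paper does.
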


\begin{proof}
It is easy to see that this is a chain complex.
Since the question is local and using the \'{e}tale base change property,
we can assume that
\[
	Y=\Spec (A=R[T_1,\ldots,T_n]/(T_1\cdots T_d),P=\bN^{e}\oplus \bN^f).
\]
By Proposition \ref{prop:decomposition-of-drw-cpx-semistable} there exists a decomposition
\[
	W_m\Lambda_{(A,P)/R}^\bullet
	= WC_{(A,P)/R}^\bullet \oplus WC_{(A,P)/R}^{'\bullet}.
\]
Define $c: W_m\Lambda_{(A,P)/R}^\bullet \to W_m\Lambda_{(A,P)/R}^{\bullet-1}$ by $(a,b\wedge \theta_m)\mapsto b$.
It is easy to see $(\wedge \theta_m)\circ c+c\circ (\wedge \theta_m)=\text{id}$.
\end{proof}

\begin{lem}
\label{lem:comparelog}
The canonical morphism $W_m\widetilde{\Lambda}^\bullet \to W_m\Lambda^\bullet$ induces an isomorphism
\[
	W_m\widetilde{\Lambda}^\bullet/(W_m\widetilde{\Lambda}^{\bullet-1}\wedge \theta_m)
	\simeq
	W_m\Lambda^\bullet.
\]
we have an exact sequence:
\[
	0
	\to
	W_m\Lambda^{\bullet-1}
	\xrightarrow{\wedge \theta_m}
	W_m\widetilde{\Lambda}^\bullet
	\to
	W_m\Lambda^\bullet
	\to
	0.
\]
\end{lem}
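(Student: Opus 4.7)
\bigskip

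The plan is to read the quotient description directly off Proposition \ref{prop:exact-sequence-Kahler-diffs}(1) and then use the Koszul-type statement of Lemma \ref{lem:exactness} to promote the resulting right-exact sequence to a short exact sequence. The content of the lemma really consists of two parts: an algebraic identification of the relative complex as a quotient, and an injectivity statement that needs the explicit computation already carried out in Lemma \ref{lem:exactness}.

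First I would apply Proposition \ref{prop:exact-sequence-Kahler-diffs}(1) to the factorization of fine log schemes $Y \to \Spec(R,\bN) \to \Spec(R,\{*\})$. This yields the right-exact sequence
\[
W_m\Lambda^1_{(R,\bN)/(R,\{*\})} \otimes_{W_m(R)} W_m\widetilde{\Lambda}^{\bullet-1}
\to W_m\widetilde{\Lambda}^\bullet \to W_m\Lambda^\bullet \to 0.
\]
Tracing the proof of Proposition \ref{prop:exact-sequence-Kahler-diffs}(1), the kernel of the second map is the differential graded ideal $I_m^\bullet\subset W_m\widetilde{\Lambda}^\bullet$ generated by $ds$ for $s\in W_m(R)$ and $d\log m$ for $m\in\bN$. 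Since $d$ is $W_m(R)$-linear, each $ds$ vanishes; each $d\log m$ is an $\bN$-multiple of $\theta_m$, so the ideal is generated in degree $1$ by $\theta_m$ and hence $I_m^q = W_m\widetilde{\Lambda}^{q-1}\wedge\theta_m$. This gives the first asserted isomorphism.

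Next I would define the map $\wedge \theta_m : W_m\Lambda^{\bullet-1} \to W_m\widetilde{\Lambda}^\bullet$ by choosing any lift $\tilde{\omega}\in W_m\widetilde{\Lambda}^{\bullet-1}$ of $\omega$ and setting $\omega\wedge\theta_m := \tilde{\omega}\wedge\theta_m$. Any two such lifts differ by an element of $W_m\widetilde{\Lambda}^{\bullet-2}\wedge\theta_m$ by the first part, and that difference is killed after multiplication by $\theta_m$ because $\theta_m\wedge\theta_m=0$; hence the map is well defined. Its image is exactly $W_m\widetilde{\Lambda}^{\bullet-1}\wedge\theta_m$, matching the kernel of the surjection to $W_m\Lambda^\bullet$. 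Injectivity is the only nontrivial claim left: if $\tilde{\omega}\wedge\theta_m = 0$ in $W_m\widetilde{\Lambda}^\bullet$, then by Lemma \ref{lem:exactness} one has $\tilde{\omega} = \beta\wedge\theta_m$ for some $\beta\in W_m\widetilde{\Lambda}^{\bullet-2}$, so $\tilde{\omega}$ lies in $W_m\widetilde{\Lambda}^{\bullet-2}\wedge\theta_m$ and therefore projects to $0$ in $W_m\Lambda^{\bullet-1}$ by the first part applied one degree below.

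I expect the main subtlety to be the bookkeeping in the first step, namely verifying that the ideal produced by the proof of Proposition \ref{prop:exact-sequence-Kahler-diffs}(1) really collapses to the principal ideal generated by $\theta_m$. Once this identification is in place, the rest is formal: injectivity is equivalent to the claim that the kernel of $\wedge\theta_m$ on $W_m\widetilde{\Lambda}^{\bullet-1}$ is contained in the image of $\wedge\theta_m$, which is precisely what Lemma \ref{lem:exactness} supplies. The global (sheaf-theoretic) statement follows because both exact sequences are compatible with the \'etale localization used to construct the complexes.
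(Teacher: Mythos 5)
Your argument is correct, and for the first isomorphism it takes a genuinely different route from the paper. The paper re-runs the universal-property argument from scratch: it checks directly that the quotient $W_m\widetilde{\Lambda}^\bullet/(W_m\widetilde{\Lambda}^{\bullet-1}\wedge\theta_m)$ is a log $F$-$V$-procomplex over $(Y,\mathcal{M})/(R,\bN)$, shows that the surjection from $\breve{\Lambda}^\bullet_{W_m(Y,\mathcal{M})/W_m(R,\{*\})}$ factors through $\breve{\Lambda}^\bullet_{W_m(Y,\mathcal{M})/W_m(R,\bN)}$, and then appeals to uniqueness of the initial object. You instead cite Proposition \ref{prop:exact-sequence-Kahler-diffs}(1) for the factorization $Y\to\Spec(R,\bN)\to\Spec(R,\{*\})$ and then identify the kernel ideal $I_m^\bullet$ concretely: since the derivation is $W_m(R)$-linear, the generators $ds$ with $s\in W_m(R)$ vanish, and since $\bN$ is free on one generator mapping to $t_m$, the generators $d\log q$ collapse to $\bN$-multiples of $\theta_m$, giving $I_m^\bullet = W_m\widetilde{\Lambda}^{\bullet-1}\wedge\theta_m$. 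This is a legitimate shortcut: the universal-property work has already been done once in Proposition \ref{prop:exact-sequence-Kahler-diffs}(1), so you avoid repeating it, at the cost of a small amount of bookkeeping to compute the ideal. For the short exact sequence, your treatment (well-definedness of $\wedge\theta_m$ on $W_m\Lambda^{\bullet-1}$ via lifting, injectivity from the Koszul exactness in Lemma \ref{lem:exactness}, identification of image with kernel) is the same as the paper's intended argument, just spelled out more explicitly. One minor remark: Proposition \ref{prop:exact-sequence-Kahler-diffs}(1) as stated is a right-exact sequence and already asserts that the kernel of $W_m\widetilde{\Lambda}^\bullet\to W_m\Lambda^\bullet$ is the image of the first map, so you do not strictly need to ``trace the proof''; but you do need the explicit description of that image as the ideal generated by $ds$ and $d\log q$, which the proof supplies, so your phrasing is appropriate.
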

\begin{proof}
It can be easily seen that the surjective morphism
$W_m\widetilde{\Lambda}^\bullet \to W_m\Lambda^\bullet$ factors
$W_m\widetilde{\Lambda}^\bullet/(W_m\widetilde{\Lambda}^{\bullet-1}\wedge \theta_m)$
and $\{ W_m\widetilde{\Lambda}^\bullet/(W_m\widetilde{\Lambda}^{\bullet-1}\wedge \theta_m) \}$ is
a log $F$-$V$-procomplex over $(Y,\mathcal{M}_Y)/(R,\bN)$.
This implies the canonical surjective map
\[
	\Lambda_{W_m(Y,\mathcal{M}_Y)/W_m(R,\{*\})}^\bullet
	\to
	W_m\widetilde{\Lambda}^\bullet/(W_m\widetilde{\Lambda}^{\bullet-1}\wedge \theta_m)
\]
factors $\Lambda_{W_m(Y,\mathcal{M}_Y)/W_m(R,\bN)}^\bullet$.

Let $\{ E_m^\bullet \}$ be any log $F$-$V$-procomplex over $(Y,\mathcal{M})/(R,\bN)$.
There is a morphism $\{ W_m\widetilde{\Lambda}^\bullet/(W_m\widetilde{\Lambda}^{\bullet-1}\wedge \theta_m)\} \to \{ E_m^\bullet \}$ of log $F$-$V$-procomplexes obtained by the composition
\[
	\{ W_m\widetilde{\Lambda}^\bullet/(W_m\widetilde{\Lambda}^{\bullet-1}\wedge \theta_m)\}
	\to
	\{ W_m\Lambda^\bullet \}
	\to
	\{ E_m^\bullet \},
\]
where the second arrow is induced by the universal property of $\{ W_m\Lambda^\bullet \}$.
Moreover, it is unique morphism that fits into the following diagram
\[
\xymatrix{
	\{ \Lambda_{W_m(Y,\mathcal{M})/W_m(R,\{*\})}^\bullet \} \ar[r] \ar[rd]
	&
	\{ W_m\widetilde{\Lambda}^\bullet/(W_m\widetilde{\Lambda}^{\bullet-1}\wedge \theta_m) \}  \ar[d]
	\\
	&
	\{ E_m^\bullet \}
}
\]
because the top arrow is surjective.
Hence we proved $\{ W_m\widetilde{\Lambda}^\bullet/(W_m\widetilde{\Lambda}^{\bullet-1}\wedge \theta_m) \}$ has the universal property
and $\{ W_m\widetilde{\Lambda}^\bullet/(W_m\widetilde{\Lambda}^{\bullet-1}\wedge \theta_m)\} \to \{ W_m\Lambda^\bullet \}$ is an isomorphism.
The second claim follows from the isomorphism and Lemma \ref{lem:exactness}.
\end{proof}

Let $X=X_{d,e,n,f}:=\Spec (R[T_1,\ldots,T_n]/(T_1\cdots T_d),\bN^{e}\oplus \bN^f)$ be
the log scheme corresponding to the pre-log ring of \S \ref{subsect:log-basic-witt-diffs-semistable}
over $S=\Spec (R,\bN)$ for $1\le d \le e \le n, f\ge 0$.
Consider the closed subschemes $Z_1:=V(T_1\cdots T_{d-1})$ and $Z_2:=V(T_d)$
and $Z=Z_1\cap Z_2$ of $X$ endowed with the inverse image log structure of $X$.
We find $Z_1\simeq X_{d-1,e,n,f},Z_2\simeq X_{1,e,n,f}, Z\simeq X_{d-1,e-1,n-1,f+1}$.
If $\mathcal{I}_i:=\ker (\mathcal{O}_X\to \mathcal{O}_{Z_i})\ (i=1,2)$,
we see $\mathcal{I}_1+\mathcal{I}_2=\ker (\mathcal{O}_X\to \mathcal{O}_{Z})$ and $\mathcal{I}_1 \cap \mathcal{I}_2=0$.
Then we get the following exact sequence of $\mathcal{O}_X$-modules:
\[
	0
	\to
	\mathcal{O}_X\to \mathcal{O}_X/\mathcal{I}_1 \oplus \mathcal{O}_X/\mathcal{I}_2
	\to
	\mathcal{O}_X/(\mathcal{I}_1 + \mathcal{I}_2)
	\to
	0.
\]
The log differential sheaf $\Lambda_{X/S}^1$ is a free $\mathcal{O}_X$-module
because $\Lambda_{X/S}^1$ is the quotient of
\[
	\bigoplus_{i=1}^e \mathcal{O}_X d\log T_i
	\oplus \bigoplus_{i=e+1}^n \mathcal{O}_X dT_i
	\oplus \bigoplus_{i=1}^f \mathcal{O}_X d\log c_i
\]
divided by the submodule generated by $d\log T_1 +\cdots +d\log T_e + d\log c_1 +\cdots +d\log c_f$.
Hence we obtain the exact sequence
\begin{align*}
	0
	\to
	\Lambda_{X/S}^\bullet
	\to
	(\mathcal{O}_X/\mathcal{I}_1\otimes_{\mathcal{O}_X}\Lambda_{X/S}^\bullet) \oplus &(\mathcal{O}_X/\mathcal{I}_2\otimes_{\mathcal{O}_X}\Lambda_{X/S}^\bullet)
	\\
	&\to \mathcal{O}_X/(\mathcal{I}_1 + \mathcal{I}_2)\otimes_{\mathcal{O}_X}\Lambda_{X/S}^\bullet
	\to
	0.
\end{align*}

Since closed immersions $Z_i\hookrightarrow X\ (i=1,2)$
and $Z\hookrightarrow  X$ are exact closed immersions defined by
a coherent sheaf of ideals of $\mathcal{M}_X$,
canonical morphisms $\mathcal{O}_X/\mathcal{I}_i\otimes_{\mathcal{O}_X}\Lambda_{X/S}^\bullet \to \Lambda_{Z_i/S}^\bullet \ (i=1,2)$
and $\mathcal{O}_X/(\mathcal{I}_1+\mathcal{I}_2)\otimes_{\mathcal{O}_X}\Lambda_{X/S}^\bullet \to \Lambda_{Z/S}^\bullet$ are isomorphisms (Chapter IV, Corollary 2.3.3 of \cite{Og}).
So we obtain the exact sequence
\[
	0
	\to
	\Lambda_{X/S}^\bullet
	\to
	\Lambda_{Z_1/S}^\bullet \oplus \Lambda_{Z_2/S}^\bullet
	\to
	\Lambda_{Z/S}^\bullet
	\to
	0.
\]

We prove the existence of Mayer-Vietoris exact sequences for the de Rham-Witt complex in semistable cases.
We write $W_m\Lambda_{X/(R,\{*\})}^\bullet$ as $W_m\widetilde{\Lambda}_{X/S}^\bullet$, and so on.
Define $\widetilde{\mathcal{K}}_i:=W_m(\mathcal{I}_i)W_m\widetilde{\Lambda}_{X/S}^\bullet + dW_m(\mathcal{I}_i)W_m\widetilde{\Lambda}_{X/S}^{\bullet-1}\subset W_m\widetilde{\Lambda}_{X/S}^\bullet \ (i=1,2).$
From the fact $W_m(\mathcal{I}_1)+W_m(\mathcal{I}_2)=W_m(\mathcal{I}_1+\mathcal{I}_2)$, $\widetilde{\mathcal{K}}_1+\widetilde{\mathcal{K}}_2$ is equal to
\[
	W_m(\mathcal{I}_1+\mathcal{I}_2)W_m\widetilde{\Lambda}_{X/S}^\bullet + d W_m(\mathcal{I}_1+\mathcal{I}_2)W_m\widetilde{\Lambda}_{X/S}^{\bullet-1}.
\]
Then from Proposition \ref{prop:exact-sequence-Kahler-diffs} (2)
we get $W_m\widetilde{\Lambda}_{Z_i/S}^\bullet \simeq W_m\widetilde{\Lambda}_{X/S}^\bullet/\widetilde{\mathcal{K}}_i \ (i=1,2)$
and $W_m\widetilde{\Lambda}_{Z/S}^\bullet \simeq W_m\widetilde{\Lambda}_{X/S}^\bullet/(\widetilde{\mathcal{K}}_1+\widetilde{\mathcal{K}}_2)$.

\begin{lem}
\label{lem:exacttriviallog}
The following sequence is exact:
\[
	0
	\to
	W_m\widetilde{\Lambda}_{X/S}^\bullet
	\to
	W_m\widetilde{\Lambda}_{Z_1/S}^\bullet \oplus W_m\widetilde{\Lambda}_{Z_2/S}^\bullet
	\to
	W_m\widetilde{\Lambda}_{Z/S}^\bullet
	\to
	0.
\]
\end{lem}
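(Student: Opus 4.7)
The plan is to use the unique log basic Witt differential expansion (Corollary~\ref{prop:expression-basic-witt-semistable}) to reduce the exactness of the sequence to the single identity $\widetilde{\mathcal{K}}_1 \cap \widetilde{\mathcal{K}}_2 = 0$ inside $M := W_m\widetilde{\Lambda}^\bullet_{X/S}$, and then to prove that identity by decomposing $M$ according to the weights of the basic Witt differentials. Since the statement is \'etale local on $Y$ and all three de Rham--Witt sheaves satisfy \'etale base change, I may assume $X = \Spec(A,P)$ with $A = R[T_1,\ldots,T_n]/(T_1\cdots T_d)$ and $P = \bN^e \oplus \bN^f$. The identifications $W_m\widetilde{\Lambda}^\bullet_{Z_i/S} \simeq M/\widetilde{\mathcal{K}}_i$ and $W_m\widetilde{\Lambda}^\bullet_{Z/S} \simeq M/(\widetilde{\mathcal{K}}_1+\widetilde{\mathcal{K}}_2)$ recalled above via Proposition~\ref{prop:exact-sequence-Kahler-diffs} transform the target sequence into the abstract Mayer--Vietoris triangle
\[
0 \to M \to M/\widetilde{\mathcal{K}}_1 \oplus M/\widetilde{\mathcal{K}}_2 \to M/(\widetilde{\mathcal{K}}_1+\widetilde{\mathcal{K}}_2) \to 0,
\]
with the diagonal map first and the signed difference second; exactness in the middle and on the right is formal, and exactness on the left is equivalent to $\widetilde{\mathcal{K}}_1 \cap \widetilde{\mathcal{K}}_2 = 0$.

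To establish this vanishing I stratify the weights $k$ appearing in the basic Witt differential expansion of elements of $M$ (those with $[1,d] \not\subset \Supp^+ k$) into three mutually disjoint types: (A) $[1,d-1] \subset \Supp^+ k$ and $d \notin \Supp^+ k$; (B) $[1,d-1] \not\subset \Supp^+ k$ and $d \notin \Supp^+ k$; (C) $d \in \Supp^+ k$, which forces $[1,d-1] \not\subset \Supp^+ k$. Uniqueness in Corollary~\ref{prop:expression-basic-witt-semistable} yields a direct sum decomposition $M = WC_A^\bullet \oplus WC_B^\bullet \oplus WC_C^\bullet$ as $W_m(R)$-modules, where $WC_\star^\bullet$ is spanned by the log basic Witt differentials of the corresponding type. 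The crucial claim is that $\widetilde{\mathcal{K}}_1 = WC_A^\bullet$ and $\widetilde{\mathcal{K}}_2 = WC_C^\bullet$; since types A and C are disjoint (one has $d \notin \Supp^+ k$, the other $d \in \Supp^+ k$) this immediately gives $\widetilde{\mathcal{K}}_1 \cap \widetilde{\mathcal{K}}_2 = 0$. To prove the first identification I analyze the canonical surjection $\rho_1: M \to W_m\widetilde{\Lambda}^\bullet_{Z_1/S}$, whose kernel is $\widetilde{\mathcal{K}}_1$. Weights of types B and C are precisely those allowed for $Z_1 \simeq X_{d-1,e,n,f}$ (apply Corollary~\ref{prop:expression-basic-witt-semistable} with $d-1$ in place of $d$), and $\rho_1$ sends the corresponding log basic Witt differentials of $X$ to the analogous basis elements of $Z_1$; uniqueness of the expansion on $Z_1$ then forces $\rho_1$ to be injective on $WC_B^\bullet \oplus WC_C^\bullet$. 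Conversely, for a log basic Witt differential of type A, the inclusion $[1,d-1] \subset \Supp^+ k \subset [1,e]$ lets one rewrite each $dX_i$ factor with $i \in [1,d-1]$ as $X_i \cdot d\log T_i$ (using $dX_i = X_i \, d\log T_i$ in the log setting), thereby extracting a Teichm\"uller factor $[T_1 \cdots T_{d-1}]$, which is annihilated by $A \to A/\mathcal{I}_1$; hence $WC_A^\bullet \subset \ker \rho_1$. Combining gives $\widetilde{\mathcal{K}}_1 = \ker \rho_1 = WC_A^\bullet$, and the argument for $\widetilde{\mathcal{K}}_2 = WC_C^\bullet$ is symmetric with $[T_d]$ playing the role of $[T_1\cdots T_{d-1}]$.

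The main technical point, which I anticipate as the principal obstacle, is the rigorous extraction of the Teichm\"uller factor $[T_1 \cdots T_{d-1}]$ from a type-A log basic Witt differential with \emph{fractional} weight and nontrivial Verschiebung shift. This is to be handled using the explicit formulas for $F$, $V$, and $d$ on log basic Witt differentials from \S\ref{subsect:log-basic-witt-diffs-semistable} together with the shift identity ${}^V(\omega \cdot {}^F\eta) = {}^V\omega \cdot \eta$: one first brings the offending powers inside the outermost Verschiebung, applies the log differentiation rule $dX_i = X_i \cdot d\log T_i$ for the $i \in [1,d-1]$ falling into some $I_j$ with $j \geq 1$, and then reassembles the result so that a factor $[T_1 \cdots T_{d-1}]$ sits inside every summand, ensuring it vanishes modulo $\mathcal{I}_1$.
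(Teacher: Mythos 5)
Your proof is correct and takes essentially the same route as the paper's: reduce to $\widetilde{\mathcal{K}}_1\cap\widetilde{\mathcal{K}}_2=0$, expand in log basic Witt differentials via Corollary~\ref{prop:expression-basic-witt-semistable}, and observe that every allowable weight (i.e.\ with $[1,d]\not\subset\Supp^+k$) is already detected on $Z_1$ (when $[1,d-1]\not\subset\Supp^+k$, your types B and C) or on $Z_2$ (when $d\notin\Supp^+k$, your types A and B), so that $\omega$ restricting to zero on both forces all coefficients to vanish. Two minor remarks: the ``principal obstacle'' you anticipate---rigorously extracting the Teichm\"uller factor $[T_1\cdots T_{d-1}]$ from a type-A log basic Witt differential with fractional weight---is already contained in the proof of the proposition preceding Corollary~\ref{prop:expression-basic-witt-semistable} applied to $Z_1\simeq X_{d-1,e,n,f}$ (reduce to the integral case and use that $dX^{k}$ is divisible by $X^{k|_{[1,d-1]}}$), so nothing new needs to be established there; and the inclusion $\Supp^+k\subset[1,e]$ you wrote down is false in general (the support of a weight may extend into $[e+1,n]$), but this is harmless since you only use $[1,d-1]\subset\Supp^+k$ and $[1,d-1]\subset[1,e]$.
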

\begin{proof}
Since the sequence is identified to
\[
	0
	\to
	W_m\widetilde{\Lambda}_{X/S}^\bullet
	\to
	W_m\widetilde{\Lambda}_{X/S}^\bullet/\widetilde{\mathcal{K}}_1 \oplus W_m\widetilde{\Lambda}_{X/S}^\bullet/\widetilde{\mathcal{K}}_2
	\to
	W_m\widetilde{\Lambda}_{X/S}^\bullet/(\widetilde{\mathcal{K}}_1 + \widetilde{\mathcal{K}}_2)
	\to
	0,
\]
it suffices to show that the morphism $W_m\widetilde{\Lambda}_{X/S}^\bullet \to W_m\widetilde{\Lambda}_{Z_1/S}^\bullet\oplus W_m\widetilde{\Lambda}_{Z_2/S}^\bullet$ is injective.
Let $\omega \in W_m\widetilde{\Lambda}_{X/S}^\bullet$ be an element of the kernel of this morphism.
By Corollary \ref{prop:expression-basic-witt-semistable},
we see $\omega$ is uniquely written as a finite sum of log basic Witt differentials $\sum_{k,\mathcal{P},J}\epsilon_m(\xi_{k,\mathcal{P},J},k,\mathcal{P},J)$,
$\xi_{k,\mathcal{P},J}\in {}^{V^{u(k^+)}}W_{m-u(k^+)}(R)$
where $k$ runs through all weights such that $[1,d]\not\subset \Supp^+ k,$ $p^{m-1}\cdot k^+$ is integral
and $J$ runs through all subsets of $[1,f]$.
The image of $\omega$ in $W_m\widetilde{\Lambda}_{Z_1/S}^\bullet$ is the sum of log basic Witt differentials $\epsilon_m(\xi_{k,\mathcal{P},J},k,\mathcal{P},J)$
of $W_m\widetilde{\Lambda}_{Z_1/S}^\bullet$
such that $[1,d-1]\not\subset \Supp^+ k$.
Similarly, the image in $W_m\widetilde{\Lambda}_{Z_2/S}^\bullet$
is the sum of log basic Witt differentials $\epsilon_m(\xi_{k,\mathcal{P},J},k,\mathcal{P},J)$
of $W_m\widetilde{\Lambda}_{Z_2/S}^\bullet$ such that $d\notin \Supp^+ k$.
If we apply Corollary \ref{prop:expression-basic-witt-semistable} again to $Z_1$ (resp. $Z_2$),  we get $\xi_{k,\mathcal{P},J}=0$ for $k$ that satisfies $[1,d-1]\not\subset \Supp^+ k$ (resp. $d\notin \Supp^+ k$). We conclude $\omega=0$.
\end{proof}

\begin{prop}
The following sequence is exact:
\[
	0
	\to
	W_m\Lambda_{X/S}^\bullet
	\to
	W_m\Lambda_{Z_1/S}^\bullet\oplus W_m\Lambda_{Z_2/S}^\bullet
	\to
	W_m\Lambda_{Z/S}^\bullet
	\to
	0.
\]
\end{prop}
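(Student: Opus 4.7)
The plan is to reduce the claim to the analogous statement for the relative log de Rham-Witt complex with trivial log structure on the base, namely Lemma \ref{lem:exacttriviallog}, by using the comparison between the two flavours provided by Lemma \ref{lem:comparelog}.

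For each $Y\in \{X, Z_1, Z_2, Z\}$, Lemma \ref{lem:comparelog} furnishes a short exact sequence of complexes of sheaves
\[
0 \to W_m\Lambda_{Y/S}^{\bullet-1} \xrightarrow{\wedge \theta_m} W_m\widetilde{\Lambda}_{Y/S}^{\bullet} \to W_m\Lambda_{Y/S}^{\bullet} \to 0.
\]
Assembling these four sequences for the tuple $(X,\, Z_1\sqcup Z_2,\, Z)$ and, for each fixed de Rham-Witt degree $p$, extracting the three-term horizontal Mayer-Vietoris row
\[
\mathrm{MV}^p_\Lambda := \bigl[\, W_m\Lambda_{X/S}^p \to W_m\Lambda_{Z_1/S}^p\oplus W_m\Lambda_{Z_2/S}^p \to W_m\Lambda_{Z/S}^p \,\bigr]
\]
(and similarly $\mathrm{MV}^p_{\widetilde{\Lambda}}$ for the tilde-version), I obtain a short exact sequence of three-term complexes
\[
0 \to \mathrm{MV}^{p-1}_\Lambda \to \mathrm{MV}^p_{\widetilde{\Lambda}} \to \mathrm{MV}^p_\Lambda \to 0.
\]

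Next, I would prove $\mathrm{MV}^p_\Lambda$ is exact (i.e.\ has vanishing cohomology in all three positions) for every $p\ge 0$ by induction on $p$. The base case $p=0$ is immediate: $W_m\Lambda_{Y/S}^{-1}=0$ forces $W_m\widetilde{\Lambda}_{Y/S}^{0} \to W_m\Lambda_{Y/S}^{0}$ to be an isomorphism for each $Y$, so $\mathrm{MV}^0_\Lambda \cong \mathrm{MV}^0_{\widetilde{\Lambda}}$, which is exact by Lemma \ref{lem:exacttriviallog}. For the induction step, the long exact cohomology sequence attached to the short exact sequence of complexes above reads
\[
\cdots \to H^n(\mathrm{MV}^p_{\widetilde{\Lambda}}) \to H^n(\mathrm{MV}^p_\Lambda) \to H^{n+1}(\mathrm{MV}^{p-1}_\Lambda) \to \cdots .
\]
By Lemma \ref{lem:exacttriviallog} the left term vanishes, and by the inductive hypothesis the right term vanishes, whence $H^n(\mathrm{MV}^p_\Lambda)=0$ for all $n$. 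This is exactly the desired Mayer-Vietoris exactness in de Rham-Witt degree $p$.

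The only non-routine point is to verify that the four vertical short exact sequences from Lemma \ref{lem:comparelog} glue into a short exact sequence of horizontal Mayer-Vietoris complexes. This amounts to checking that $\wedge \theta_m$ commutes with the restriction maps induced by $Z_i\hookrightarrow X$ and $Z\hookrightarrow Z_i$, which is immediate from the fact that $\theta_m = d\log t_m$ is pulled back from the base $S$ and is therefore preserved by every restriction morphism. With that compatibility in hand, the induction outlined above goes through without further difficulty.
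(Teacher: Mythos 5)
Your proof is correct and follows essentially the same route as the paper: Lemma~\ref{lem:comparelog} supplies, degree by degree, a short exact sequence relating $W_m\Lambda^{p-1}$, $W_m\widetilde{\Lambda}^{p}$, and $W_m\Lambda^{p}$; combined with the Mayer--Vietoris exactness for the trivial-log version (Lemma~\ref{lem:exacttriviallog}) and the degree-zero Witt-vector case, the result follows by induction on $p$. The paper phrases the inductive step via the nine lemma applied to a $3\times 3$ diagram, whereas you use the long exact cohomology sequence of a short exact sequence of three-term complexes, but these are the same argument.
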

\begin{proof}
We prove by induction on the degree. Since we have an exact sequence
\[
	0
	\to
	W_m(\mathcal{O}_X)
	\to
	W_m(\mathcal{O}_X/\mathcal{I}_1) \oplus W_m(\mathcal{O}_X/\mathcal{I}_2)
	\to
	W_m(\mathcal{O}_X/(\mathcal{I}_1 + \mathcal{I}_2))
	\to
	0,
\]
the sequence is exact on degree zero.
From Lemma \ref{lem:comparelog} and Lemma \ref{lem:exacttriviallog}, we get the following exact commutative diagram
\[
\xymatrixcolsep{1pc}
\xymatrix{
	&
	&
	0 \ar[d]
	&
	&
	\\
	0 \ar[r]
	&
	W_m\Lambda_X^{i-1} \ar[r]^{\wedge \theta_m} \ar[d]
	&
	W_m\widetilde{\Lambda}_X^i \ar[r] \ar[d]
	&
	W_m\Lambda_X^i \ar[r] \ar[d]
	&
	0
	\\
	0 \ar[r]
	&
	W_m\Lambda_{Z_1}^{i-1}\oplus W_m\Lambda_{Z_2}^{i-1} \ar @/^/ @{{ }{ }}[r]^{\wedge \theta_m \oplus \wedge \theta_m} \ar[r] \ar[d]
	&
	W_m\widetilde{\Lambda}_{Z_1}^i \oplus W_m\widetilde{\Lambda}_{Z_2}^i \ar[r] \ar[d]
	&
	W_m\Lambda_{Z_1}^i \oplus W_m\Lambda_{Z_2}^i \ar[r] \ar[d]
	&
	0
	\\
	0 \ar[r]
	&
	W_m\Lambda_Z^{i-1} \ar[r]^{\wedge \theta_m}
	&
	W_m\widetilde{\Lambda}_Z^i \ar[r] \ar[d]
	&
	W_m\Lambda_Z^i \ar[r]
	&
	0.
	\\
	&
	&
	0
	&
	& \\
}
\]
Using this diagram and nine-lemma, the proposition follows by induction.
\end{proof}

For $X=\Spec(A=R[T_1,\ldots,T_n]/(T_1\cdots T_d),P=\bN^{e}\oplus \bN^f)$, we set $X_m$ by
\[
	X_m:=\Spec (A_m:=W_m(R)[T_1,\ldots,T_n]/(T_1\cdots T_d), P).
\]
There are morphisms of log schemes $\Phi_m:X_m \to X_{m+1}$
and $\Delta_m : W_m(X)\to X_m$ as the case of $\Spec(R[T_1,\ldots,T_n], \bN^{e}\oplus \bN^f)$.
The pair $(X_m, \Delta_m)$ defines the comparison morphism
\[
	\Lambda_{X_m/S_m}^\bullet
	\to
	W_m\Lambda_{X/R}^\bullet.
\]
If $d=e$ and $f=0$, the family $(X_m, \Phi_m, \Delta_m)_m$ is a log Frobenius lift of $X$
and the comparison morphism coincides with the morphism induced by
the log Witt lift $(X_m, \Delta_m)_m$ in \S \ref{sect:comparison-morphism}.

\begin{lem}
\label{lem:semistabledequalto1}
Assume $d=1$.
Let $X_m=\Spec (A_m=W_m(R)[T_1,\ldots,T_n]/(T_1), P=\bN^{e}\oplus \bN^f)$ be the canonical lift of $X$ over $S_m=\Spec (W_m(R),\bN)$.
Then the comparison morphism
\[
	\Lambda_{X_m/S_m}^\bullet
	\to
	W_m\Lambda_{X/S}^\bullet
\]
is a quasi-isomorphism.
\end{lem}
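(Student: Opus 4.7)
The plan is to reduce the lemma to the NCD comparison theorem (Theorem \ref{thm:ncd-comarison-general}) by absorbing the base log structure into the fiber. Since $d = 1$, the structural map $\delta\colon \bN \to P = \bN^e \oplus \bN^f$ sends the generator of $\bN$ to $e_1$, which in turn maps to $T_1 = 0 \in A$. In the log-derivation compatibility axiom, any $\partial_m\colon P \to E_m^1$ in a log $F$-$V$-procomplex over $(A,P)/(R,\bN)$ must annihilate $e_1$, and therefore factors through the monoid quotient $P' := P/\langle e_1\rangle \cong \bN^{e-1} \oplus \bN^f$. Consequently, log $F$-$V$-procomplexes over $(A,P)/(R,\bN)$ are canonically the same as log $F$-$V$-procomplexes over $(A,P')/(R,\{*\})$, and by the universal property (Proposition \ref{prop:universal-property}) the corresponding universal objects coincide:
\[
W_m\Lambda^\bullet_{X/S} = W_m\Lambda^\bullet_{(A,P)/(R,\bN)} \cong W_m\Lambda^\bullet_{(A,P')/(R,\{*\})}.
\]
The same reasoning at the level of log K\"ahler differentials, together with the fact that $d\log e_1$ is a free generator of $\Lambda^1_{(A_m,P)/W_m(R)}$ arising as the pullback of the base log differential, yields the analogous identification
\[
\Lambda^\bullet_{X_m/S_m} = \Lambda^\bullet_{(A_m,P)/W_m(R,\bN)} \cong \Lambda^\bullet_{(A_m,P')/W_m(R)}.
\]

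After re-indexing the variables $T_2,\ldots,T_n$, the pre-log ring $(A,P') = (R[T_2,\ldots,T_n], \bN^{e-1}\oplus \bN^f)$---with the $\bN^{e-1}$-part sending generators to $T_2,\ldots,T_e$ and the $\bN^f$-part sending generators to $0$---is exactly an NCD-type pre-log ring of the form studied in \S \ref{subsect:log-basic-witt-diffs-sncd}, with canonical lift $(A_m,P')$ matching the lift used in Theorem \ref{thm:ncd-comarison-general}. The key remaining task is to check that the comparison morphism $\Lambda^\bullet_{X_m/S_m} \to W_m\Lambda^\bullet_{X/S}$ of the lemma coincides, under the two identifications above, with the NCD comparison morphism $\Lambda^\bullet_{(A_m,P')/W_m(R)} \to W_m\Lambda^\bullet_{(A,P')/R}$. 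Once this is verified, the lemma follows immediately from Theorem \ref{thm:ncd-comarison-general}.

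The main obstacle I anticipate is precisely this last compatibility check. It amounts to tracing through the construction in \S \ref{subsect:comparison-morphism}---where the comparison morphism is built using the log pd-envelope of the embedding coming from the log Witt lift---and verifying that both viewpoints on the log structure produce the same map. This should follow from the fact that the two presentations $(A,P)/(R,\bN)$ and $(A,P')/(R,\{*\})$ yield the same associated log structure on $X_m$, so that their log pd-envelopes, and hence the resulting log de Rham complexes and their Frobenius maps to the de Rham-Witt complex, agree canonically.
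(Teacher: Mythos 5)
The overall strategy --- reduce to the NCD comparison theorem by absorbing the base log structure --- is the right one and is indeed what the paper does, but your specific mechanism for carrying it out contains a genuine error that traces back to a misreading of the base structure map.

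You write that $\delta\colon \bN \to P = \bN^e\oplus\bN^f$ sends the generator of $\bN$ to $e_1$, and deduce from the log-derivation axiom that $\partial_m(e_1)=0$. Neither claim is correct. In the paper's setup, $\delta$ is the \emph{diagonal} map, so $\delta(1) = (1,\ldots,1;1,\ldots,1) = \sum_i e_i + \sum_j c_j$, and the base-compatibility condition of a log derivation gives
\[
\partial_m(e_1) + \cdots + \partial_m(e_e) + \partial_m(c_1) + \cdots + \partial_m(c_f) = 0,
\]
which determines $\partial_m(e_1)$ in terms of the other generators but does \emph{not} force it to vanish. The other log-derivation axiom $D(\beta(q)) = \beta(q)\,\delta(q)$ also gives no constraint on $\partial_m(e_1)$, since $\beta(e_1) = [T_1] = 0$ in $W_m(A)$ makes the condition read $D(0) = 0\cdot\partial_m(e_1)$, which is vacuous. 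Consequently $\partial_m$ does \emph{not} factor through the quotient $P/\langle e_1\rangle$, and the claimed canonical identification of log $F$-$V$-procomplexes (and of log K\"ahler differentials) as stated is not justified --- indeed, if $\partial_m(e_1)=0$ did hold in addition to the diagonal relation, one would get the spurious constraint $\sum_{i\ge 2}\partial_m(e_i) + \sum_j\partial_m(c_j) = 0$, which is not present on the $P'$-side.

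The underlying conclusion that $W_m\Lambda^\bullet_{(A,P)/(R,\bN)} \cong W_m\Lambda^\bullet_{(A,P')/(R,\{*\})}$ is actually true, but the correct reason is that the diagonal relation allows one to \emph{solve} for $\partial_m(e_1)$ (and for $d\log e_1$) in terms of the remaining generators, yielding an equivalence of categories via restriction-to-$P'$/extension-by-the-diagonal-relation, not via a monoid quotient. The paper implements this rigorously in a slightly different way: it introduces an auxiliary chart $Q = \bN\oplus\bN^{e-1}\oplus\bN^f$ in which the image of the base generator is a \emph{unit} of $A$; for that chart the log-derivation axiom really does force the image of the base generator to die, so the quotient argument applies cleanly, and the comparison back to the chart $P$ is handled via the isomorphism $Q^{\gp} \to (\bN^e\oplus\bN^f)^{\gp}$. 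You would need to either switch to such a chart or replace your ``factors through $P/\langle e_1\rangle$'' step by the solve-for-$\partial_m(e_1)$ argument for your proof to go through.
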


\begin{proof}
First, we consider two log structures on a ring $A=R[T_1,\ldots,T_n]/(T_1)$ over $(R,\bN)$.
The one is defined by
\begin{align*}
	\bN^{e} \oplus \bN^f \to A :
	\bN^{e} \ni e_1
	\mapsto 0,\
	e_i \mapsto T_i \ (i \neq 1),\
	\bN^{f} \ni c_i \mapsto 0
\end{align*}
and the diagonal morphism $\bN \to \bN^{e} \oplus \bN^f$.

The other one is given by a diagram
\[
\xymatrix{
	(1,0,0)
	&
	Q:=\bN\oplus \bN^{e-1}\oplus \bN^f \ar[r]
	&
	A
	\\
	1\ar@{|->}[u]
	&
	\bN \ar[r] \ar[u]
	&
	R, \ar[u]
}
\]
where the upper horizontal morphism is induced by
\[
	(0,e_i,0)
	\mapsto
	T_i,\
	(1,0,0)
	\mapsto
	1,\
	(0,0,c_i)
	\mapsto
	0.
\]

The morphism of monoids $Q \to \bN^{e} \oplus \bN^f$ defined by
\begin{align*}
	(1,0,0)
	&
	\mapsto
	(1,(1,\ldots,1),(1,\ldots,1)),
	\\
	(0,e_i,0)
	&
	\mapsto
	(0,e_i,0),\
	1\le i\le e-1,
	\\
	(0,0,c_i)
	&
	\mapsto
	(0,0,c_i),\
	1\le i \le f
\end{align*}
gives a map $(A,Q)\to (A,\bN^{e}\oplus \bN^f)$ of pre-log rings over $(R,\bN)$.

There is also another log structure on $A$ over $(R,\{*\})$,
which is given by $\bN^{e-1}\oplus \bN^f\to A$
that sends $e_i$ to $T_i$ for $1 \le i\le e-1.$ We get a diagram
\[
\xymatrix{
	(A,\bN^{e-1}\oplus \bN^f) \ar[r]
	&
	(A,Q) \ar[r]
	&
	(A,\bN^{e}\oplus \bN^f)
	\\
	(R,\{*\})\ar[u]\ar[r]
	&
	(R,\bN) \ar@{=}[r] \ar[u]
	&
	(R,\bN). \ar[u]
}
\]
We also have a similar diagram for $A_m=W_m(R)[T_1,\ldots,T_n]/(T_1)$.

They induce a diagram:
\[
\xymatrix{
	\Lambda_{(A_m,\bN^{e-1}\oplus \bN^f)/(W_m(R),\{*\})}^\bullet \ar[r]^-{\alpha_1} \ar[d]
	&
	\Lambda_{(A_m,Q)/(W_m(R),\bN)}^\bullet \ar[r]^-{\alpha_2} \ar[d]
	&
	\Lambda_{(A_m,\bN^{e}\oplus \bN^f)/(W_m(R),\bN)}^\bullet \ar[d]
	\\
	W_m\Lambda_{(A,\bN^{e-1}\oplus \bN^f)/(R,\{*\})}^\bullet \ar[r]^-{\beta_1}
	&
	W_m\Lambda_{(A,Q)/(R,\bN)}^\bullet \ar[r]^-{\beta_2}
	&
	W_m\Lambda_{(A,\bN^{e}\oplus \bN^f)/(R,\bN)}^\bullet.
}
\]
It is easy to see that $\alpha_1$ is an isomorphism. $\alpha_2$ is also an isomorphism
because the canonical morphism $Q^{\gp}\to \mathbb{Z}^e \oplus \mathbb{Z}^f$ induced by $Q\to \bN^{e}\oplus \bN^f$ is an isomorphism.
We also have isomorphisms
\[
	\Lambda_{(W_m(A),\bN^{e-1}\oplus \bN^f)/(W_m(R),\{*\})}^\bullet
	\simeq
	\Lambda_{(W_m(A),Q)/(W_m(R),\bN)}^\bullet
	\simeq
	\Lambda_{(W_m(A),\bN^{e}\oplus \bN^f)/(W_m(R),\bN)}^\bullet
\]
by the same reason.

By the construction of the log de Rham-Witt complexes,
$\beta_1$ and $\beta_2$ are also isomorphisms.
Hence we only have to show
\[
	\Lambda_{(A_m,\bN^{e-1}\oplus \bN^f)/(W_m(R),\{*\})}^\bullet
	\to
	W_m\Lambda_{(A,\bN^{e-1}\oplus \bN^f)/(R,\{*\})}^\bullet
\]
is a quasi-isomorphism, but this is Theorem \ref{thm:comparison-theorem-NCD}.
\end{proof}

\begin{thm}
\label{thm:semistable-comparison-affine}
Let $X_m=\Spec (A_m=W_m(R)[T_1,\ldots,T_n]/(T_1\cdots T_d), P=\bN^{e}\oplus \bN^f)$ be the canonical lift of $X$ over $S_m=\Spec (W_m(R), \bN)$.
Then the comparison morphism
\[
	\Lambda_{X_m/S_m}^\bullet
	\to
	W_m\Lambda_{X/S}^\bullet
\]
is a quasi-isomorphism.
\end{thm}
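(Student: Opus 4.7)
The plan is to proceed by induction on $d$, with base case $d=1$ supplied by Lemma \ref{lem:semistabledequalto1}. For the inductive step, assume $d \ge 2$ and that the comparison is a quasi-isomorphism for every pre-log ring of the form $X_{d',e',n',f'}$ with $d' < d$. I will exploit the Mayer--Vietoris decomposition $X = Z_1 \cup Z_2$ introduced just before Lemma \ref{lem:exacttriviallog}, where $Z_1 = V(T_1\cdots T_{d-1})$, $Z_2 = V(T_d)$, and $Z = Z_1 \cap Z_2$ correspond (after relabeling) to $X_{d-1,e,n,f}$, $X_{1,e,n,f}$, and $X_{d-1,e-1,n-1,f+1}$ respectively. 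In each case the first parameter is $\le d-1$, so by the inductive hypothesis (or by the base case when $d' = 1$) the comparison maps for these three schemes are quasi-isomorphisms.

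Next I need the analog of the Mayer--Vietoris short exact sequence on the lift side:
\[
0 \to \Lambda_{X_m/S_m}^\bullet \to \Lambda_{Z_{1,m}/S_m}^\bullet \oplus \Lambda_{Z_{2,m}/S_m}^\bullet \to \Lambda_{Z_m/S_m}^\bullet \to 0,
\]
where $Z_{1,m}$, $Z_{2,m}$, $Z_m$ denote the canonical lifts of $Z_1$, $Z_2$, $Z$ defined by the same equations inside $X_m$. The argument is formally identical to the one given in the discussion preceding Lemma \ref{lem:exacttriviallog}: the sheaf $\Lambda_{X_m/S_m}^1$ is a free $\mathcal{O}_{X_m}$-module modulo the single diagonal relation coming from $\bN \to \bN^e \oplus \bN^f$, hence $\Lambda_{X_m/S_m}^q$ is free for every $q$; the ideals $\mathcal{I}_1, \mathcal{I}_2 \subset \mathcal{O}_{X_m}$ defining $Z_{1,m}, Z_{2,m}$ satisfy $\mathcal{I}_1 \cap \mathcal{I}_2 = 0$; and since the $Z_{i,m} \hookrightarrow X_m$ and $Z_m \hookrightarrow X_m$ are exact closed immersions, Chapter IV, Corollary 2.3.3 of \cite{Og} provides the identification $\mathcal{O}_{X_m}/\mathcal{I}_i \otimes_{\mathcal{O}_{X_m}} \Lambda_{X_m/S_m}^\bullet \xrightarrow{\sim} \Lambda_{Z_{i,m}/S_m}^\bullet$, and similarly for $Z_m$.

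Combining this sequence with the Mayer--Vietoris sequence for the log de Rham--Witt complex established in the proposition just before the theorem, and invoking the naturality of the comparison morphism of \S\ref{subsect:comparison-morphism} with respect to exact closed immersions, I obtain a morphism of short exact sequences of complexes of sheaves of $W_m(R)$-modules. By the inductive hypothesis the comparison maps attached to $Z_{1,m}$, $Z_{2,m}$, $Z_m$ are quasi-isomorphisms, so the five lemma applied to the induced long exact cohomology sequences forces the comparison map for $X_m$ to be one as well. The main technical obstacle I anticipate is verifying that these squares genuinely commute, i.e., that the comparison construction respects the restriction maps attached to these closed subschemes; this should reduce to functoriality of the comparison construction once one observes that the canonical lifts $Z_{i,m}, Z_m$ sit inside $X_m$ as exact closed subschemes cut out by the same ideals that define $Z_i, Z$ on the special fiber, so that the required compatible choices of embeddings and Witt lifts are automatic.
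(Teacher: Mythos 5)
Your proposal takes exactly the approach the paper uses: reduce to $d=1$ by induction via the Mayer--Vietoris exact sequences for $Z_1$, $Z_2$, $Z$ on both the lift side and the de Rham--Witt side, use compatibility of the comparison morphism with these sequences plus the five lemma, and invoke Lemma \ref{lem:semistabledequalto1} as the base case. The paper states this more tersely (one diagram together with the assertion that the comparison morphisms are compatible with Mayer--Vietoris), but the content and logical structure are the same as what you wrote out.
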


\begin{proof}
The comparison morphisms are compatible with the Mayer-Vietoris sequence, i.e., the following diagram commutes:
\[
\xymatrix{
	0 \ar[r]
	&
	\Lambda_{X_m/S_m}^\bullet  \ar[r] \ar[d]
	&
	\Lambda_{(Z_1)_m/S_m}^\bullet \oplus \Lambda_{(Z_2)_m/S_m}^\bullet \ar[r] \ar[d]
	&
	\Lambda_{Z_m/S_m}^\bullet \ar[r] \ar[d]
	&
	0
	\\
	0 \ar[r]
	&
	W_m\Lambda_{X/S}^\bullet \ar[r]
	&
	W_m\Lambda_{Z_1/S}^\bullet \oplus W_m\Lambda_{Z_2/S}^\bullet \ar[r]
	&
	W_m\Lambda_{Z/S}^\bullet \ar[r]
	&
	0.
}
\]
Consider the long exact sequences of hypercohomology and using descending induction,
it suffices to show when $d=1$ and it follows from Lemma \ref{lem:semistabledequalto1}.
\end{proof}

\begin{thm}
\label{thm:semistable}
Let $(Y,\mathcal{M})$ be a semistable log scheme over the pre-log ring $(R,\bN)$.
Then the canonical homomorphism
\[
	\mathbb{R}u_{(Y,\mathcal{M})/W_m(R,\bN)*}\mathcal{O}_{(Y,\mathcal{M})/W_m(R,\bN)}
	\to
	W_m\Lambda_{(Y,\mathcal{M})/(R,\bN)}^\bullet
\]
is an isomorphism in $D^+(Y, W_m(R))$.
Moreover, if $R$ is Noetherian and $Y$ is proper over $R$, we have a canonical isomorphism
\[
	H_{\logcrys}^* ((Y,\mathcal{M})/W(R,\mathbb{N}))
	\to
	\mathbb{H}_{\et}^*(Y,W\Lambda_{(Y,\mathcal{M})/(R,\mathbb{N})}^\bullet).
\]
\end{thm}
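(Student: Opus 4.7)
The plan is to mirror the proof of Theorem \ref{thm:comparison-theorem-NCD}: first establish the derived quasi-isomorphism \'etale locally on $Y$ using the explicit affine comparison of Theorem \ref{thm:semistable-comparison-affine}, then pass to the inverse limit in $m$ to obtain the integral comparison in the proper case, in the same manner as Theorem \ref{thm:infinitecomparison}.

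For the first claim, since the assertion is \'etale local on $Y$ and the log de Rham-Witt complex satisfies \'etale base change by Proposition \ref{prop:etale-base-change-property}, we may assume $Y$ admits an exact \'etale morphism to $X=\Spec(R[T_1,\ldots,T_n]/(T_1\cdots T_d),\bN^e\oplus\bN^f)$ over $\Spec(R,\bN)$. The canonical lift $X_m$ over $W_m(R,\bN)$ together with the Teichm\"uller map $\Delta_m\colon W_m(X)\to X_m$ provides a log Witt lift of $X$ in the sense of \S\ref{sect:log-witt-lift-and-log-frobenius-lift}. Since $X\hookrightarrow X_m$ is an exact closed immersion and $X_m$ is log smooth over $W_m(R,\bN)$, the log pd-envelope coincides with $X_m$ itself, so $\mathbb{R}u_{m*}\mathcal{O}_m\simeq \Lambda_{X_m/W_m(R,\bN)}^\bullet$; under this identification the canonical comparison map of \S\ref{subsect:comparison-morphism} is precisely the morphism $\Lambda_{X_m/W_m(R,\bN)}^\bullet\to W_m\Lambda_{X/(R,\bN)}^\bullet$ of Theorem \ref{thm:semistable-comparison-affine}, which is a quasi-isomorphism. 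Independence of the choice of chart and functoriality under \'etale localisation follow from the general construction in \S\ref{subsect:comparison-morphism}, so the local quasi-isomorphisms glue to the global derived isomorphism on $(Y,\mathcal{M})$.

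For the second claim, we follow the proof of Theorem \ref{thm:infinitecomparison}. Using the simplicial embedding-system method and Lemma \ref{lem:existance-of-witt-lift}, one finds \'etale locally on $Y$ a closed immersion into a log smooth $W_m(R,\bN)$-scheme admitting a log Frobenius lift, producing comparison maps at each level $m$. These are compatible with the restriction maps because the log Frobenius lift is compatible with $F\colon W_{m+1}(R,\bN)\to W_m(R,\bN)$, yielding an isomorphism
\[
\mathbb{R}\Gamma_{\logcrys}(Y/W_*(R,\bN))\xrightarrow{\sim} \mathbb{R}\Gamma_{\et}(Y,W_*\Lambda_{(Y,\mathcal{M})/(R,\bN)}^\bullet)
\]
in $D(\bN,(W_m(R)))$. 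Applying $\mathbb{R}\varprojlim_m$ and invoking the log variant of Proposition \ref{prop:derivedlimit} on the crystalline side (valid since $Y$ is proper log smooth integral over $(R,\bN)$) together with the log analogue of \cite{LZDRW} Proposition 1.13 on the Witt side gives the desired integral isomorphism.

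The main obstacle lies in two places. First, one must check carefully that under the identification $\mathbb{R}u_{m*}\mathcal{O}_m\simeq\Lambda_{X_m/W_m(R,\bN)}^\bullet$, the map produced by the general construction of \S\ref{subsect:comparison-morphism} coincides with the explicit one in Theorem \ref{thm:semistable-comparison-affine} (so that the affine comparison can actually be invoked), and that it is compatible with the Mayer-Vietoris type descent so it glues. Second, to perform the $\mathbb{R}\varprojlim$ argument on the Witt side one needs the Mittag-Leffler property for $\{W_m\Lambda_{(Y,\mathcal{M})/(R,\bN)}^\bullet\}_m$; this follows from the standard filtration exact sequence of Proposition \ref{prop:fil-exact-sequence} together with the explicit description by log basic Witt differentials in Corollary \ref{prop:expression-basic-witt-semistable}, which shows that each graded piece is a quasi-coherent sheaf of finite presentation \'etale locally.
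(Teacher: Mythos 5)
Your proof is correct and takes essentially the same approach as the paper; the paper's entire proof is ``This follows from Theorem \ref{thm:semistable-comparison-affine} by the similar proof to that of Theorem \ref{thm:comparison-theorem-NCD}'', and you have correctly unpacked what that means. The two points you flag as potential obstacles are exactly what the paper leaves implicit: that the log pd-envelope of the exact closed immersion $X \hookrightarrow X_m$ is $X_m$ itself (because its ideal is $({}^V W_m(R))A_m$, already a pd-ideal by flatness of $A_m$ over $W_m(R)$), so the comparison map of \S\ref{subsect:comparison-morphism} is the explicit map of Theorem \ref{thm:semistable-comparison-affine}; and that the $\mathbb{R}\varprojlim$ step uses surjectivity of the restriction maps, as in Theorem \ref{thm:infinitecomparison}.
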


\begin{proof}
This follows from Theorem \ref{thm:semistable-comparison-affine}
by the similar proof to that of Theorem \ref{thm:comparison-theorem-NCD}.
\end{proof}

\section{Weight spectral sequence and its degeneration for semistable schemes}
\label{sect:ssq-semistable}
In this section, we define the $p$-adic Steenbrink complex for proper strictly semistable log schemes.
First we recall some facts about topology of log structures.

\subsection{Topology of log structure}
\label{subsect:topology-of-log-structures}

We recall some facts about the topology of log structures
(\cite{Sh} \S 1.1, \cite{OlLogStack} Appendix).

\begin{defn}
(\cite{Sh} Definition 1.1.1)
A fine log scheme $(X,\mathcal{M})$ is said to be of Zariski type
if there exists an open covering $X=\bigcup_i X_i$ with respect to the Zariski topology
such that each $(X_i,\mathcal{M}|_{X_i})$ admits a chart.
\end{defn}

\begin{rem}
If $X$ is a smooth scheme with simple normal crossing divisor $D$,
the log scheme $(X, D)$ is a fine log scheme of Zariski type.
A strictly semistable log scheme is also a fine log scheme of Zariski type.
\end{rem}

Let $X$ be a scheme and $\tau : X_{\et}\to X_{\Zar}$ be the canonical morphism of topoi.
For a log structure $(\mathcal{M},\alpha)$ on $X$,
the log structure $(\tau_*\mathcal{M},\tau_*\alpha)$ with respect to the Zariski topology on $X$ is defined by
\[
	\tau_*\alpha:
	\tau_*\mathcal{M}
	\xrightarrow{\tau_* \alpha}
	\tau_*\mathcal{O}_{X_{\et}}
	=\mathcal{O}_{X_{\Zar}}.
\]
Conversely, for a log structure $(\mathcal{M}',\alpha')$ with respect to the Zariski topology, we define the log structure $(\tau^*\mathcal{M}',\tau^*\alpha')$ on $X$ as the associated log structure to the pre-log structure
\[
	\tau^{-1}\mathcal{M}'
	\xrightarrow{\tau^{-1} \alpha'}
	\tau^{-1}\mathcal{O}_{X_{\Zar}}
	\to
	\mathcal{O}_{X_{\et}}.
\]

The pair $(\tau_*,\tau^*)$ induces an equivalence of categories
\[
	\left(
	\begin{array}{c}
	\text{Fine log schemes} \\
	\text{of Zariski type}
	\end{array}
	\right)
	\simeq
	\left(
	\begin{array}{c}
	\text{Fine log schemes with} \\
	\text{respect to the Zariski topology}
	\end{array}
	\right)
\]
(\cite{Sh} Corollary 1.1.11, \cite{OlLogStack} Theorem A.1).

\begin{rem}
Let $f:(X,\mathcal{M})\to (Y,\mathcal{N})$ be a morphism of fine log schemes with respect to the Zariski topology.
Then for $m\ge 1$, there is a quasi-coherent sheaf $W_m \underline{\Lambda}_{(X,\mathcal{M})/(Y,\mathcal{N})}^\bullet$ on $X_{\Zar}$ that satisfies the following condition:
If there is a commutative diagram
\[
\xymatrix{
	U=\Spec\ S' \ar[r] \ar@{^{(}->}[d]
	&
	V=\Spec\ R' \ar@{^{(}->}[d]
	\\
	X \ar[r]^{f}
	& Y,
}
\]
where vertical arrows are open immersions and there is a chart $(Q \to \mathcal{M}|_{U}, P \to \mathcal{N}|_{V}, P \to Q)$ of the morphism $(U,\mathcal{M}|_U)\to (V,\mathcal{N}|_V)$.
Then we have a canonical isomorphism
 \[
	\Gamma (U,W_m \underline{\Lambda}_{(X,\mathcal{M})/(Y,\mathcal{N})}^\bullet)
	\simeq
	W_m\Lambda_{(S',Q)/(R',P)}^\bullet.
\]
This follows from the similar argument to that of Proposition-Definition \ref{prop-def:chart-welldefined}.

We see that $W_m \Lambda_{(X,\mathcal{M})/(Y,\mathcal{N})}^\bullet$ is equal to the complex of \'etale sheaves defined by
\[
	U
	\mapsto
	\Gamma(U,(W_m(\pi))^*(W_m \underline{\Lambda}_{(X,\mathcal{M})/(Y,\mathcal{N})}^\bullet))
\]
for any object $\pi:U\to X$ in $X_{\et}$ by Proposition \ref{prop:etale-base-change-property}.
Hence
\[
	\mathbb{H}_{\Zar}^q (X,W_m \underline{\Lambda}_{(X,\mathcal{M})/(Y,\mathcal{N})}^\bullet)
	\simeq
	\mathbb{H}_{\et}^q (X,W_m \Lambda_{(X,\mathcal{M})/(Y,\mathcal{N})}^\bullet)
\]
by \cite{Fu} Proposition 5.7.5.
\end{rem}

In \S \ref{sect:ssq-semistable}, we consider strictly semistable log schemes
as fine log schemes with respect to the Zariski topology.
By abuse of notation, we write $W_m \Lambda_{(X,\mathcal{M})/(Y,\mathcal{N})}^\bullet$ for the log de Rham-Witt complex with respect to the Zariski topology.
We use the setting of \S \ref{subsect:comparison-semistable}.

\subsection{Poincar\'e residue map}
\label{section:Poincare}
Let $Y$ be a proper strictly semistable log scheme over $S=\Spec (R,\bN)$
such that $R$ is Noetherian
and $Y_1, \ldots ,Y_d$ its irreducible components.
For a subset $J = \{\alpha_1,\ldots, \alpha_j\}$ of $[1,d]$,
we set $Y_J:=Y_{\alpha_1}\cap \cdots \cap Y_{\alpha_j}$.
We give a filtration $P_j$ of $W_m\widetilde{\Lambda}^\bullet := W_m\widetilde{\Lambda}^\bullet_{Y/S}$ by
\[
	P_j W_m\widetilde{\Lambda}^i:=
	\text{image}(W_m\widetilde{\Lambda}^j \otimes_{W_m(\mathcal{O}_{\mathring{Y}})} W_m\Omega_{\mathring{Y}/R}^{i-j}
	\to
	W_m\widetilde{\Lambda}^i),
\]
where $W_m\Omega_{\mathring{Y}/R}^\bullet$ denotes the (classical) de Rham-Witt complex defined in \cite{LZDRW}.
We first define a map
\[
	\widetilde{\rho}_J :
	W_m\Omega_{\mathring{Y}/R}^{\bullet-j}
	\to
	Gr_jW_m\widetilde{\Lambda}^\bullet
	:=P_jW_m\widetilde{\Lambda}^\bullet/P_{j-1}W_m\widetilde{\Lambda}^\bullet
\]
by $\omega
\mapsto
\omega \wedge d\log[T_{\alpha_1}]\wedge \cdots \wedge d\log[T_{\alpha_j}] $,
where $T_1,\ldots, T_n$ are local coordinates of $Y$ such that each $Y_i$ corresponds to $T_i=0$.
One can show this map is independent of the choice of local coordinates by a similar proof to \cite{De} (3.5) .
Let $I_J$ be the ideal of $\mathcal{O}_{\mathring{Y}}$ corresponding to the closed immersion $i_J : Y_J \hookrightarrow Y$.
We would like to show that $\widetilde{\rho}_J$ factors through $i_{J*}W_m\Omega_{\mathring{Y_J}/R}^{\bullet-j}$.
For this, it suffices to show that
$\widetilde{\rho}_J(a\omega) = \widetilde{\rho}_J(da\wedge\omega) = 0$ for any $a \in W_m(I_J)$
and $\omega \in W_m\Omega_{\mathring{Y}/R}^{\bullet-j}$.
Any $a \in W_m(I_J)$ can be written as a finite sum:
\[
	a=\sum_{i=0}^{m-1} {}^{V^i}[c_1^{(i)}T_{\alpha_1}+\cdots +c_j^{(i)}T_{\alpha_j}],
\]
where $c_l^{(i)}\in \mathcal{O}_{\mathring{Y}}$.
Hence we can assume $a={}^{V^i}[c_1T_{\alpha_1}+\cdots +c_jT_{\alpha_j}]$. By \cite{DLZOWV} Proposition 2.23, we have an expression
\[
	[c_1T_{\alpha_1}+\cdots +c_jT_{\alpha_j}]
	=\sum_{k:\text{weight}, |k|=1}
	\beta_k [T_{\alpha_1}]^{k_1}\cdots [T_{\alpha_j}]^{k_j}, \beta_k \in {}^{V^{u(k)}} W_m(R),
\]
where $k:[1,j]\to \mathbb{Z}_{\ge 0}[1/p]$ runs through weights such that $|k|=k_1+\cdots+k_j=1$.
For a weight $k$, let $u(k)$ denote the least nonnegative integer such that $p^{u(k)}k$ is integral.
If $\beta = {}^{V^{u(k)}}\eta$,
the expression $\beta [T_{\alpha_1}]^{k_1}\cdots [T_{\alpha_j}]^{k_j}$ means
\[
{}^{V^{u(k)}}(\eta [T_{\alpha_1}]^{p^{u(k)}k_1} \cdots [T_{\alpha_j}]^{p^{u(k)}k_j}).
\]
Note that $p^{u(k)}k_1,\ldots,p^{u(k)}k_j \in \mathbb{Z}_{\ge 0}$.

Without loss of generality, we may assume that $a={}^{V^t}(\eta [T_{\alpha_1}]^{l_1}\cdots [T_{\alpha_j}]^{l_j})$ with $\eta \in W_m(R) ,t, l_1,\ldots,l_j$ nonnegative integers such that at least one of $l_1,\ldots,l_j$ is positive.
We have
\begin{align*}
	&({}^{V^t}(\eta [T_{\alpha_1}]^{l_1}\cdots [T_{\alpha_j}]^{l_j})\cdot \omega)\wedge d\log [T_{\alpha_1}]\wedge \cdots \wedge d\log [T_{\alpha_j}]\\
	&={}^{V^t}(\eta [T_{\alpha_1}]^{l_1}\cdots [T_{\alpha_j}]^{l_j}\cdot {}^{F^t}\omega \wedge d\log [T_{\alpha_1}]\wedge \cdots \wedge d\log [T_{\alpha_j}])\\
	&\equiv 0 \ \text{modulo}\ P_{j-1}W_m\widetilde{\Lambda}^\bullet
\end{align*}
since at least one of $l_1,\ldots,l_j$ is positive. Hence we see $\widetilde{\rho}_J(a \omega)=0$. Similarly we see $\widetilde{\rho}_J(da\wedge \omega)=0$ because
\begin{align*}
	&(d{}^{V^t}(\eta [T_{\alpha_1}]^{l_1}\cdots [T_{\alpha_j}]^{l_j})\wedge \omega)\wedge d\log [T_{\alpha_1}]\wedge \cdots \wedge d\log [T_{\alpha_j}]\\
	&=d{}^{V^t}(\eta [T_{\alpha_1}]^{l_1}\cdots [T_{\alpha_j}]^{l_j}\wedge {}^{F^t}\omega \wedge d\log [T_{\alpha_1}]\wedge \cdots \wedge d\log [T_{\alpha_j}])\\
	&-^{V^t}(\eta [T_{\alpha_1}]^{l_1}\cdots [T_{\alpha_j}]^{l_j}\wedge {}^{F^t}d\omega \wedge d\log
	[T_{\alpha_1}] \cdots \wedge d\log [T_{\alpha_j}])\\
	&\equiv 0 \ \text{modulo}\ P_{j-1}W_m\widetilde{\Lambda}^\bullet.
\end{align*}
Hence $\widetilde{\rho}_J$ induces the map
$
	\rho_J:i_{J*}W_m\Omega_{\mathring{Y_J}/R}^{\bullet-j}
	\to
	Gr_j W_m\widetilde{\Lambda}^\bullet.
$

Let $Y^{(j)}$ be $\coprod_{|J|=j} Y_J$ and $i^{(j)}:Y^{(j)} \to Y$ the canonical map.
From the collection of maps $\{\rho_J\}_{|J|=j}$ we obtain a map
$i_*^{(j)}W_m\Omega_{\mathring{Y}^{(j)}/R}^{\bullet-j}\to Gr_j W_m\widetilde{\Lambda}^\bullet$.
We sometimes drop $i_*^{(j)}$ when there is no risk of confusion.
\begin{lem}
\label{lem:resisom}
$i_*^{(j)}W_m\Omega_{\mathring{Y}^{(j)}/R}^{\bullet-j}\to Gr_j W_m\widetilde{\Lambda}^\bullet$ is an isomorphism.
We call the inverse isomorphism of this map Poincar\'e residue isomorphism $\Res:Gr_j W_m\widetilde{\Lambda}^\bullet \simeq i_*^{(j)}W_m\Omega_{\mathring{Y}^{(j)}/R}^{\bullet-j}$.
\end{lem}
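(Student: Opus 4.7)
The plan is to reduce to the local affine case and use the explicit basis by log basic Witt differentials established in \S\ref{subsect:log-basic-witt-diffs-semistable}. Since the claim is Zariski-local on $Y$ (and $Y$ is a fine log scheme of Zariski type, as recalled in \S\ref{subsect:topology-of-log-structures}), I would work on an open chart of the form $Y = \Spec(A,P)$ with $A = R[T_1,\ldots,T_n]/(T_1 \cdots T_d)$ and $P = \bN^d$, so that each $Y_J$ for $J = \{\alpha_1,\ldots,\alpha_j\} \subset [1,d]$ of cardinality $j$ is identified with $\Spec R[T_i : i \notin J]$, a polynomial ring over $R$ (in particular smooth), for which the classical de Rham--Witt complex of \cite{LZDRW} has its own basic Witt differential description.

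By Corollary \ref{prop:expression-basic-witt-semistable}, every section of $W_m\widetilde{\Lambda}^i$ is uniquely a finite $W_m(R)$-linear combination of log basic Witt differentials $\epsilon_m(\xi,k,(I_{-\infty},I_0,\ldots,I_l))$ with $I_{-\infty} \subset [1,d]$ and $[1,d] \not\subset \Supp^+ k$, and the integer $|I_{-\infty}|$ counts the number of $d\log X_\alpha$ factors coming from the log structure. Using the factorization
\[
\epsilon_m(\xi,k,(I_{-\infty},I_0,\ldots,I_l)) = \Bigl(\prod_{\alpha \in I_{-\infty}} d\log X_\alpha\Bigr)\cdot e(\xi,k^+,(I_0,\ldots,I_l)),
\]
together with the explicit multiplication and differentiation formulas recalled in \S\ref{subsect:log-basic-witt-diffs-semistable}, I would show that $P_j W_m\widetilde{\Lambda}^i$ is precisely the $W_m(R)$-span of those log basic Witt differentials with $|I_{-\infty}| \le j$, so that $Gr_j W_m\widetilde{\Lambda}^i$ is spanned by those with $|I_{-\infty}|=j$, indexed by a choice of $J = I_{-\infty} \subset [1,d]$ of size $j$ and the remaining data $(\xi,k,(I_0,\ldots,I_l))$.

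For each such $J$ with $j \ge 1$, the constraint $[1,d] \not\subset \Supp^+ k$ becomes automatic, since $J \subset I_{-\infty}$ forces $k_\alpha = p^{-\infty} \notin \Supp^+ k$ for every $\alpha \in J$; moreover the classical factor $e(\xi,k^+,(I_0,\ldots,I_l))$ has $k^+$ vanishing on $J$, so it involves only the variables $T_i$ for $i \notin J$ and is a genuine classical basic Witt differential of $W_m\Omega^{i-j}_{\mathring{Y}_J/R}$. Conversely, every classical basic Witt differential on $Y_J$ lifts to a unique log basic Witt differential with $I_{-\infty} = J$ by extending the weight by $p^{-\infty}$ on $J$ and premultiplying by $\prod_{\alpha \in J} d\log X_\alpha$. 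This is exactly what $\rho_J$ does (up to sign), so summing over $J$ of size $j$ matches the basic Witt differential expansions on $\coprod_{|J|=j} Y_J$ with those of $Gr_j W_m\widetilde{\Lambda}^\bullet$ term by term, yielding the desired isomorphism and producing the inverse $\Res$.

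The main obstacle is the identification $P_j W_m\widetilde{\Lambda}^i = \bigoplus_{|I_{-\infty}|\le j}$(basic Witt differentials): the inclusion $\supset$ is immediate from the factorization above, but the inclusion $\subset$ requires checking that any product $\omega \cdot \eta$ with $\omega \in W_m\widetilde{\Lambda}^j$ and $\eta \in W_m\Omega^{i-j}_{\mathring{Y}/R}$, once re-expanded in log basic Witt differentials, cannot produce pieces of log degree strictly greater than $j$. This is a combinatorial consequence of the multiplication and differentiation rules on basic Witt differentials (no extra $d\log X_\alpha$ with $\alpha \in [1,d]$ can be manufactured by multiplying a classical basic Witt differential by another one), but it is where the bookkeeping is most delicate; the independence of local coordinates for $\widetilde{\rho}_J$ already established in \S\ref{section:Poincare} then globalizes the isomorphism.
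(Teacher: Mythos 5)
Your approach matches the paper's: reduce to the local affine chart, identify $Gr_j W_m\widetilde{\Lambda}^\bullet$ with the span of log basic Witt differentials having exactly $j$ log poles (via Corollary \ref{prop:expression-basic-witt-semistable}), and match them term-by-term with the classical basic Witt differentials on the polynomial rings $\mathring{Y}_J$ from \cite{LZDRW}. The paper's proof is terser and does not spell out the identification of $P_j$ with the span of terms having $|I_{-\infty}|\le j$ that you correctly flag as the crux, but the argument is the same.
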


\begin{proof}
Without loss of generality, we can assume $S=\Spec R$ and
\[
	Y=\Spec(R[T_1,\ldots, T_n]/(T_1\cdots T_d),\bN^d).
\]
 In this case, we find
\[
	\mathring{Y}_J=\Spec (R[T_1,\ldots,\widehat{T_{\alpha_1}},\ldots,\widehat{T_{\alpha_j}},\ldots,T_n])
\]
is the spectrum of a polynomial ring.
On the other hand, an element of $Gr_jW_m\widetilde{\Lambda}^\bullet$ has a unique expression as a sum of basic Witt differentials with $|I_{-\infty}|=j$.
We already know the basic Witt differentials on the left hand side (\cite{LZDRW} \S 2.2).
Comparing the basic Witt differential on both sides, the claim follows.
\end{proof}

\subsection{Weight spectral sequence}
We are ready to construct the weight filtration of a strictly semistable log scheme.
Put $W_mA^{ij}:=W_m\widetilde{\Lambda}^{i+j+1}/P_jW_m\widetilde{\Lambda}^{i+j+1}$.

\begin{lem}
There exists the following exact sequences:
\[
	0
	\to
	W_m\Lambda^i
	\xrightarrow{\theta_m\wedge}
	W_m A^{i0}
	\xrightarrow{(-1)^i\theta_m\wedge}
	W_m A^{i1}
	\xrightarrow{(-1)^i\theta_m\wedge}
	\cdots.
\]
\end{lem}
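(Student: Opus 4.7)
The plan is to verify that all arrows in the displayed sequence are well-defined, then to translate the claimed exactness into the acyclicity of an auxiliary subcomplex of the Koszul-type complex from Lemma \ref{lem:exactness}, which would in turn follow via the Poincar\'e residue isomorphism and a Mayer--Vietoris argument on the associated graded.

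\textbf{Step 1 (well-definedness).} The two structural facts I would use are $\theta_m \wedge \theta_m = 0$ (graded commutativity, since $\theta_m$ has odd degree) and $\theta_m \wedge \colon P_j W_m\widetilde{\Lambda}^k \to P_{j+1} W_m\widetilde{\Lambda}^{k+1}$ (wedging by $\theta_m$ contributes an extra log factor to any product $\eta \cdot \omega'$ with $\eta \in W_m\widetilde{\Lambda}^j$, $\omega' \in W_m\Omega^{k-j}_{\mathring{Y}/R}$). Together with the identification $W_m\Lambda^i \cong W_m\widetilde{\Lambda}^i/(W_m\widetilde{\Lambda}^{i-1} \wedge \theta_m)$ from Lemma \ref{lem:comparelog}, these give: the map $\omega \mapsto \theta_m \wedge \tilde{\omega}$ descends to a well-defined $W_m\Lambda^i \to W_m A^{i0}$; $\theta_m \wedge$ induces the subsequent arrows $W_m A^{ij} \to W_m A^{i,j+1}$ (the sign $(-1)^i$ being only a sign convention); and successive compositions vanish.

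\textbf{Step 2 (reduction to acyclicity of a subcomplex).} For $j \ge 0$ I would form the short exact sequence of $\theta_m \wedge$-complexes
\[
0 \longrightarrow P_j W_m\widetilde{\Lambda}^{i+1+j} \longrightarrow W_m\widetilde{\Lambda}^{i+1+j} \longrightarrow W_m A^{ij} \longrightarrow 0.
\]
The middle complex is the shift (by $i+1$) of the Koszul-type complex of Lemma \ref{lem:exactness}; combining that lemma with Lemma \ref{lem:comparelog} shows that its cohomology is concentrated in degree $0$ with $H^0 \cong W_m\Lambda^i$ (via the very map $\omega \mapsto \theta_m \wedge \tilde{\omega}$) and vanishes in higher degrees. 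The resulting long exact sequence of cohomology translates the exactness claimed in the present lemma into the single statement that the subcomplex $A^\bullet := P_\bullet W_m\widetilde{\Lambda}^{i+1+\bullet}$ is acyclic in every degree.

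\textbf{Step 3 (acyclicity of $A^\bullet$).} I would filter $A^\bullet$ by $F^s A^j := P_{j-s} W_m\widetilde{\Lambda}^{i+1+j}$, a bounded decreasing filtration preserved by $\theta_m \wedge$. The Poincar\'e residue isomorphism of Lemma \ref{lem:resisom}, together with the explicit description of $\theta_m \wedge$ on graded pieces worked out in \S \ref{section:Poincare}, identifies the $E_0$-page of the associated spectral sequence, for each fixed $a \ge 0$, with the \v{C}ech-type complex
\[
W_m\Omega^{i+1+a}_{\mathring{Y}/R} \to W_m\Omega^{i+1+a}_{\mathring{Y}^{(1)}/R} \to W_m\Omega^{i+1+a}_{\mathring{Y}^{(2)}/R} \to \cdots
\]
whose differential is the signed sum of restrictions to deeper strata. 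Exactness of each such complex is a Mayer--Vietoris statement for the classical de Rham--Witt of the strictly semistable $\mathring{Y}$ along its irreducible components; since the filtration is bounded, the spectral sequence converges strongly and yields $H^*(A^\bullet) = 0$.

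\textbf{Main obstacle.} The real work lies in Step 3: $\mathring{Y}$ is singular, so the \v{C}ech-type acyclicity does not follow from the smooth Mayer--Vietoris, and the cleanest route I see is a local computation using the uniqueness of the basic-Witt-differential expansion from Corollary \ref{prop:expression-basic-witt-semistable}. Tracking multi-index by multi-index how $d\log[T_\alpha] \wedge$ relates basic Witt differentials living on $\mathring{Y}^{(t)}$ to those on $\mathring{Y}^{(t+1)}$, one verifies that an element with vanishing restrictions to all deeper strata lifts from the previous level of the \v{C}ech complex.
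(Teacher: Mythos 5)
Your Steps 1 and 2 are fine, but Step 3 has a genuine gap. You translate acyclicity of $A^\bullet := P_\bullet W_m\widetilde{\Lambda}^{i+1+\bullet}$ (via the filtration $F^sA^j = P_{j-s}W_m\widetilde{\Lambda}^{i+1+j}$) into exactness of \v{C}ech-type complexes $W_m\Omega^{i+1+a}_{\mathring Y^{(0)}/R}\to W_m\Omega^{i+1+a}_{\mathring Y^{(1)}/R}\to\cdots$, calling this a ``Mayer--Vietoris statement''. But the Mayer--Vietoris sequences the paper establishes (Lemma~\ref{lem:exacttriviallog} and the proposition after it) are for the \emph{log} complexes $W_m\widetilde{\Lambda}^\bullet$ and $W_m\Lambda^\bullet$, not for the classical $W_m\Omega^\bullet$ of the singular underlying scheme $\mathring Y$; the exactness you need for $W_m\Omega^\bullet_{\mathring Y^{(t)}/R}$ is not available and would require its own argument. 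Worse, identifying the $t=0$ term $Gr_0W_m\widetilde{\Lambda}^{i+1+a}=P_0W_m\widetilde{\Lambda}^{i+1+a}$ with $W_m\Omega^{i+1+a}_{\mathring Y/R}$ is the $j=0$ case of Lemma~\ref{lem:resisom}, whose proof relies on $\mathring Y_J$ being a polynomial ring---false for $J=\emptyset$. So the $E_0$-page is not literally the \v{C}ech complex you wrote down. You flag the singularity as the ``main obstacle'' and propose a basic-Witt-differential computation; that is the right instinct, but as sketched the key exactness remains unproved.

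Given your Step 2, there is a shorter route, and it matches the paper's own argument. The paper observes that the lemma is equivalent to exactness of
\[
W_m\widetilde{\Lambda}^{i-1}\xrightarrow{\theta_m\wedge}W_m\widetilde{\Lambda}^{i}\xrightarrow{(-1)^i\theta_m\wedge}W_m\widetilde{\Lambda}^{i+1}/P_0W_m\widetilde{\Lambda}^{i+1}\xrightarrow{(-1)^i\theta_m\wedge}W_m\widetilde{\Lambda}^{i+2}/P_1W_m\widetilde{\Lambda}^{i+2}\to\cdots
\]
and proves this ``by a similar argument to Lemma~\ref{lem:exactness}'': locally, the contracting homotopy $c$ built from the $WC\oplus WC'$-decomposition of Proposition~\ref{prop:decomposition-of-drw-cpx-semistable} sends $P_j$ into $P_{j-1}$ (in $\omega=a+b\wedge\theta_m$, each $\epsilon'$-component of $b$ has exactly one fewer $d\log$-factor than the corresponding component of $b\wedge\theta_m$), so $c$ descends to the $P_j$-quotients and furnishes the null-homotopy there. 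In your framework this is the same observation phrased differently: $c$ restricts to a contracting homotopy for $A^\bullet$, giving $H^*(A^\bullet)=0$ at once and making the filtration spectral sequence unnecessary. Both the paper's route and yours ultimately rest on the same local control by basic Witt differentials; the spectral-sequence reformulation is not wrong, but it replaces one local computation by a more delicate one whose exactness you have not yet justified.
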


\begin{proof}
It suffice to show the exactness of the following sequence
(cf. \cite{Mo} Proposition 3.15):
\begin{align*}
	W_m\widetilde{\Lambda}^{i-1}
	\xrightarrow{\theta_m\wedge}
	W_m\widetilde{\Lambda}^i
	\xrightarrow{(-1)^i\theta_m\wedge}
	& W_m\widetilde{\Lambda}^{i+1}/P_0 W_m\widetilde{\Lambda}^{i+1}\\
	& \xrightarrow{(-1)^i\theta_m\wedge}
	W_m\widetilde{\Lambda}^{i+2}/P_1 W_m\widetilde{\Lambda}^{i+2}
	\xrightarrow{(-1)^i\theta_m\wedge} \cdots.
\end{align*}
We deduce the exactness of this complex by a similar argument to that in Lemma \ref{lem:exactness}.
\end{proof}

We consider $W_mA^{\bullet \bullet}$ as a double complex by
\[
\xymatrixcolsep{4pc}
\xymatrix{
	W_mA^{i,j+1}
	&
	\\
	W_mA^{i,j} \ar[u]^{(-1)^i\theta_m\wedge} \ar[r]^-{(-1)^{j+1}d}
	&
	W_mA^{i+1,j}
}
\]
(cf. \cite{Nak} (2.2.1;$\star$)).
Consider a simple complex
\[
	W_mA^{i\bullet}:=
	(\cdots
	\xrightarrow{(-1)^i\theta_m\wedge}
	W_m \widetilde{\Lambda}^{i+j+1}/P_j W_m\widetilde{\Lambda}^{i+j+1}
	\xrightarrow{(-1)^i\theta_m\wedge}
	\cdots)_{j\ge 0},
\]
and define a weight filtration on this complex by
\begin{align*}
	&P_kW_mA^{i\bullet}:=\\
	&(\cdots
	\xrightarrow{(-1)^i\theta_m\wedge}
	(P_{2j+k+1}+P_j)(W_m\widetilde{\Lambda}^{i+j+1})
	/P_j W_m\widetilde{\Lambda}^{i+j+1}
	\xrightarrow{(-1)^i\theta_m\wedge}
	\cdots)_{j\ge 0}.
\end{align*}
If we ignore the compatibility with the Frobenius, we obtain an isomorphism
\begin{align*}
	Gr_k W_mA^{*\bullet}
	&= \bigoplus_{j\ge \text{max}\{-k,0\}} Gr_{2j+k+1} W_m\widetilde{\Lambda}^{*+j+1} \{-j\}_\bullet \\
	&\xrightarrow[\Res]{\sim}
	\bigoplus_{j\ge \text{max}\{-k,0\}} W_m\Omega_{\mathring{Y}^{(2j+k+1)}/R}^* \{-j-k\}_* \{ -j \}_\bullet,
\end{align*}
where, for $n \in \mathbb{Z}, \{n\}_\bullet$ (resp. $\{n\}_*$) denotes the shift
of the complex with respect to $\bullet$ (resp. $*$) by $n$ with the signature of
differentials unchanged.
Hence we get a spectral sequence:
\begin{align*}
	E_1^{-k,h+k}=
	\bigoplus_{j\ge \text{max}\{-k,0\}} \mathbb{H}_{\Zar}^{h-2j-k} (\mathring{Y}^{(2j+k+1)},&W_m\Omega_{\mathring{Y}^{(2j+k+1)}/R}^*)\\
	&\Rightarrow \mathbb{H}_{\Zar}^h (\mathring{Y}, W_m\Lambda_{Y/(R,\bN)}^*).
\end{align*}

We would like to construct a spectral sequence also for the non-truncated de Rham-Witt cohomology.
The canonical projection map
$\pi:W_{m+1}\widetilde{\Lambda}^\bullet \to W_m\widetilde{\Lambda}^\bullet$ satisfies
$\pi(P_jW_{m+1}\widetilde{\Lambda}^\bullet)\subset P_jW_m\widetilde{\Lambda}^\bullet$.
Then $\pi$ induces the map
\[
	\pi :W_{m+1}A^{ij} \to W_mA^{ij}
\]
and there exist two commutative diagrams
\[
\xymatrix{
	W_{m+1}A^{i,j+1} \ar[r]^{\pi}
	&
	W_mA^{i,j+1}
	&
	W_{m+1}A^{ij} \ar[r]^{\pi}\ar[d]_{(-1)^{j+1}d}
	&
	W_mA^{ij} \ar[d]^{(-1)^{j+1}d}
	\\
	W_{m+1}A^{ij} \ar[u]^{(-1)^i\theta_{m+1}\wedge} \ar[r]^{\pi}
	&
	W_mA^{ij}, \ar[u]_{(-1)^i\theta_m\wedge}
	&
	W_{m+1}A^{i+1,j} \ar[r]^{\pi}
	&
	W_mA^{i+1,j}.
}
\]
Therefore we get a morphism of double complexes
\[
	\pi : W_{m+1}A^{\bullet \bullet}
	\to
	W_m A^{\bullet \bullet}.
\]
For any nonnegative integer $k$, the projection morphism
$\pi : P_k W_{m+1}\widetilde{\Lambda}^\bullet \to P_k W_m\widetilde{\Lambda}^\bullet$
is surjective by definition.
We know $W_m \widetilde{\Lambda}^i$ is a coherent sheaf of $W_m(\mathcal{O}_{\mathring{Y}})$-module
and there is an exact sequence
\[
	0
	\to
	P_{k-1} W_m\widetilde{\Lambda}^i
	\to
	P_k W_m\widetilde{\Lambda}^i
	\xrightarrow{\Res}
	W_m\Omega_{\mathring{Y}^{(k)}/R}^{i-k}
	\to
	0.
\]
From this one sees that $P_k W_m\widetilde{\Lambda}^i$ is a quasi-coherent sheaf of $W_m(\mathcal{O}_{\mathring{Y}})$-modules for each $k$ by induction.
Moreover, there exists the following commutative diagram with exact rows:
\[
\xymatrix{
	0 \ar[r]
	&
	P_{k-1} W_{m+1}\widetilde{\Lambda}^i \ar[d]^{\pi} \ar[r]
	&
	P_k W_{m+1}\widetilde{\Lambda}^i \ar[d]^{\pi} \ar[r]^-{\Res}
	&
	W_{m+1}\Omega_{\mathring{Y}^{(k)}/R}^{i-k} \ar[d]^{\pi} \ar[r]
	&
	0
	\\
	0 \ar[r]
	&
	P_{k-1} W_m\widetilde{\Lambda}^i \ar[r]
	&
	P_k W_m\widetilde{\Lambda}^i \ar[r]^-{\Res}
	&
	W_m\Omega_{\mathring{Y}^{(k)}/R}^{i-k} \ar[r]
	&
	0.
}
\]
This exact sequence and the fact that $\{P_kW_m\widetilde{\Lambda}^\bullet\}_m$ satisfy the Mittag-Leffler condition show that the sequence
\[
	0
	\to
	P_k W\widetilde{\Lambda}^\bullet
	\to
	P_{k+1} W\widetilde{\Lambda}^\bullet
	\xrightarrow{\Res}
	W\Omega_{\mathring{Y}^{(k+1)}/R}^\bullet\{-k-1\}
	\to
	0
\]
is exact. The weight spectral sequence (we ignore Frobenius action)
\[
	E_1^{-k,h+k}=
	\bigoplus_{j\ge \text{max}\{-k,0\}}
	\mathbb{H}_{\Zar}^{h-2j-k} (\mathring{Y}^{(2j+k+1)},W\Omega_{\mathring{Y}^{(2j+k+1)}/R}^*)
	\Rightarrow
	\mathbb{H}_{\Zar}^h (\mathring{Y},W\Lambda_{Y/(R,\bN)}^*)
\]
is deduced from this exact sequence.

\subsection{Frobenius compatibility}
In this subsection we discuss the Frobenius compatibility of the spectral sequence we constructed in the last subsection.
We assume that $p$ is nilpotent in $R$.

\begin{lem}
\label{lem:Gys1}
(cf. \cite{Mo} Proposition 4.12, \cite{Nak} (10.1.16))

Let $j$ be a nonnegative integer.
For $1\le q \le j+1$, $\iota^{(q)}:Y^{(j+1)}\hookrightarrow Y^{(j)}$ denotes different closed immersions,
and $\rho_m^{(q)} : i_*^{(j)} W_m\Omega_{\mathring{Y}^{(j)}/R}^\bullet \to i_*^{(j+1)} W_m\Omega_{\mathring{Y}^{(j+1)}/R}^\bullet$ be a morphism induced by $\iota^{(q)}$.
We set $\rho_m := \sum_{q=1}^{j+1}(-1)^{q+1} \rho_m^{(q)}$.
Then there is the following commutative diagram:
\[
\xymatrixcolsep{4pc}
\xymatrix{
	Gr_j W_m \widetilde{\Lambda}^\bullet \ar[r]^-{\theta_m \wedge} \ar[d]^{\Res}_\simeq
	&
	Gr_{j+1} W_m \widetilde{\Lambda}^{\bullet+1} \ar[d]^{\Res}_\simeq
	\\
	W_m\Omega_{\mathring{Y}^{(j)}/R}^{\bullet-j} \ar[r]^-{(-1)^{\bullet-j} \rho_m}
	&
	W_m\Omega_{\mathring{Y}^{(j+1)}/R}^{\bullet-j}.
}
\]
\end{lem}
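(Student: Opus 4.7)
The statement is local on $Y$, and the claimed diagram is a statement about sheaves, so by the strict semistability assumption we may work in a chart where
\[
	Y = \Spec(R[T_1,\ldots,T_n]/(T_1\cdots T_{d'}),\bN^{d'})
\]
with the structure morphism induced by the diagonal $\bN\to\bN^{d'}$. In this chart the element $t_m\in W_m(\mathcal{M})$ is the image of the generator of $\bN$, whose image in $W_m(\mathcal{O}_Y)$ is $[T_1]\cdots [T_{d'}]$; hence
\[
	\theta_m = d\log t_m = d\log[T_1] + \cdots + d\log[T_{d'}].
\]
This is the key computation that turns the global $\theta_m\wedge$ into an explicit finite sum.

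The next step is to compute on a local representative. Fix $J=\{\alpha_1<\cdots<\alpha_j\}\subset[1,d']$. By Lemma \ref{lem:resisom} and its proof, a class in the summand of $Gr_j W_m\widetilde{\Lambda}^\bullet$ supported on $Y_J$ is represented (modulo $P_{j-1}$) uniquely by an element of the form
\[
	\omega\wedge d\log[T_{\alpha_1}]\wedge\cdots\wedge d\log[T_{\alpha_j}],\qquad \omega\in W_m\Omega^{\bullet-j}_{\mathring{Y}_J/R}.
\]
Applying $\theta_m\wedge$ gives $\sum_{i=1}^{d'} d\log[T_i]\wedge\omega\wedge d\log[T_{\alpha_1}]\wedge\cdots\wedge d\log[T_{\alpha_j}]$. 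The terms with $i\in J$ vanish because of the repeated $d\log[T_i]$, so only the indices $i\notin J$ contribute; these are precisely the indices such that $Y_{J\cup\{i\}}$ is a component of $Y^{(j+1)}$.

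For each such $i$, write $J':=J\cup\{i\}=\{\beta_1<\cdots<\beta_{j+1}\}$ and let $q\in\{1,\ldots,j+1\}$ be the position of $i$ in $J'$, so that $i=\beta_q$. Moving $d\log[T_i]$ past $\omega$ (of degree $\bullet-j$) produces a sign $(-1)^{\bullet-j}$, and reordering $d\log[T_{\beta_q}],d\log[T_{\alpha_1}],\ldots,d\log[T_{\alpha_j}]$ into the standard order $d\log[T_{\beta_1}]\wedge\cdots\wedge d\log[T_{\beta_{j+1}}]$ produces a further sign $(-1)^{q-1}$. Thus on the component $Y_{J'}$ the image under $\theta_m\wedge$ equals
\[
	(-1)^{\bullet-j}(-1)^{q-1}\,\omega|_{Y_{J'}}\wedge d\log[T_{\beta_1}]\wedge\cdots\wedge d\log[T_{\beta_{j+1}}],
\]
whose Poincar\'e residue is $(-1)^{\bullet-j}(-1)^{q+1}\rho^{(q)}_m(\omega)$ in $W_m\Omega^{\bullet-j}_{\mathring{Y}_{J'}/R}$, where the restriction along $Y_{J'}\hookrightarrow Y_J$ corresponds to the $q$-th closed immersion. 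Summing over all $i\notin J$ (equivalently, over $q=1,\ldots,j+1$) yields $(-1)^{\bullet-j}\rho_m(\omega)$, which is the commutativity asserted by the lemma.

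The calculation is essentially bookkeeping; the main obstacle is the careful tracking of signs and the verification that the identification between sections on $Y_J$ and basic-Witt-differential representatives is compatible with the local chart computation independently of the choice of coordinates $T_1,\ldots,T_{d'}$. The former is handled by the standard Koszul sign conventions fixed in the paper (Conventions of \cite{Nak}); the latter follows, as in Lemma \ref{lem:resisom}, from the fact that $\theta_m$ and the class $d\log[T_{\alpha_1}]\wedge\cdots\wedge d\log[T_{\alpha_j}]$ in $Gr_jW_m\widetilde{\Lambda}^\bullet$ are intrinsically defined, so the computation above patches to a global equality of morphisms of sheaves.
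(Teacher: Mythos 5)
Your proof is correct and is essentially the same as the paper's: both reduce to a local chart where $\theta_m$ is an explicit sum $\sum_i d\log[T_i]$, apply it to a residue representative $\omega\wedge d\log[T_{\alpha_1}]\wedge\cdots\wedge d\log[T_{\alpha_j}]$, and track the two Koszul signs $(-1)^{\bullet-j}$ (from moving $d\log[T_i]$ past $\omega$) and $(-1)^{q-1}$ (from reordering). The only cosmetic difference is that the paper fixes a $(j+1)$-subset $J$ and enumerates $J_q=J\setminus\{\alpha_q\}$, while you fix a $j$-subset and enumerate $J'=J\cup\{i\}$; the resulting sign $(-1)^{\bullet-j+q-1}$ agrees.
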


\begin{proof}
Since we can check the commutativity locally,
we may work on $Y_J$ for some $J =\{\alpha_1,\ldots,\alpha_{j+1} \}$.
For $1\le q\le j+1$, let $J_q=\{\alpha_1,\ldots,\widehat{\alpha_q},\ldots,\alpha_{j+1} \}$.
The claim follows from the commutativity of the following diagram:
\[
\xymatrixcolsep{4pc}
\xymatrix{
	Gr_j W_m \widetilde{\Lambda}^\bullet \ar[r]^{\theta_m \wedge}
	&
	Gr_{j+1} W_m \widetilde{\Lambda}^{\bullet+1}
	\\
	W_m\Omega_{\mathring{Y}_{J_q}/R}^{\bullet-j} \ar[r]^{(-1)^{\bullet-j+q-1}\rho_m^{(q)}}\ar[u]
	&
	W_m\Omega_{\mathring{Y}_J/R}^{\bullet-j}, \ar[u]
}
\]
which we can check directly from definitions.
\end{proof}

We describe the Frobenius on torsion $p$-adic Steenbrink complexes.
We assume that $p^nR=0$.
We mention that for an integer $k \ge n$ and nonnegative integer $j$,
the multiplication $p^k:W_{m+1}\Lambda^j\to W_{m+1}\Lambda^j$ factors $\underline{p}^k:W_m\Lambda^j\to W_{m+1}\Lambda^j$
since $p^n$ annihilates
\begin{align*}
	\ker (W_{m+1}\Lambda^j\to W_m\Lambda^j) &= \Fil^m W_{m+1}\Lambda^j\\
	&= {}^{V^m}\Lambda_{Y/R}^j + d^{V^m}\Lambda_{Y/R}^{j-1}
\end{align*}
(See Proposition \ref{prop:fil-exact-sequence}).

\begin{thm}
(cf. \cite{Nak} Proposition 9.8)

Let $m$, $k$ be two positive integers and $j$ a nonnegative integer.

(1) $\underline{p}^{n-1+k}F:W_m\Lambda^j\to W_m\Lambda^j$ is a unique morphism
which makes the following diagram commutative:
\[
\xymatrix{
	W_{m+1}\Lambda^j \ar[d]_{p^{n-1+k}F} \ar[r]^{\pi}
	&
	W_m\Lambda^j \ar@{-->}[ld]^{\underline{p}^{n-1+k}F}
	\\
	W_m\Lambda^j
	&
}
\]
Furthermore, $\underline{p}^{n-1+k}F$ is compatible with $d$ and $\pi$,
i.e., The following two diagrams commute:
\[
\xymatrix{
	W_m\Lambda^j \ar[r]^d \ar[d]_{\underline{p}^{n-1+k} F}
	&
	W_m\Lambda^{j+1} \ar[d]^{\underline{p}^{n+k}F}
	&
	W_{m+1}\Lambda^j \ar[r]^\pi \ar[d]_{\underline{p}^{n-1+k}F}
	&
	W_m\Lambda^j \ar[d]^{\underline{p}^{n-1+k} F
	}\\
	W_m\Lambda^j \ar[r]^d
	&
	W_m\Lambda^{j+1},
	&
	W_{m+1}\Lambda^j \ar[r]^\pi
	&
	W_m\Lambda^j.
}
\]
If $\tilde{F}: W_m\Lambda^k\to W_m\Lambda^k$ is the morphism induced by
the absolute Frobenius morphism on $\mathring{Y}$,
the morphism $\underline{p}^{n-1+k}F: W_m\Lambda^k\to W_m\Lambda^k$ is equal to $p^{n-1}\tilde{F}$.

(2) There is a unique morphism $\widetilde{\Phi}^{(j)}_m:W_mA^{0j}\to W_mA^{0j}$
which makes the following diagram commutative:
\[
\xymatrix{
	W_{m+1}A^{0j} \ar[d]_F \ar[r]^{\pi}
	&
	W_mA^{0j}. \ar@{-->}[ld]^{\widetilde{\Phi}^{(j)}_m}
	\\
	W_mA^{0j}
	&
}
\]
Furthermore, $\widetilde{\Phi}^{(j)}_m$ is compatible with $\theta_m \wedge$ and $\pi$:
\[
\xymatrix{
W_mA^{0,j+1} \ar[r]^{\widetilde{\Phi}^{(j+1)}_m} & W_mA^{0,j+1} & W_{m+1}A^{0j} \ar[r]^{\widetilde{\Phi}^{(j)}_{m+1}}\ar[d]_{\pi} & W_{m+1}A^{0j} \ar[d]^{\pi}\\
W_mA^{0j} \ar[u]^{\theta_m\wedge} \ar[r]^{\widetilde{\Phi}^{(j)}_m} & W_mA^{0j}, \ar[u]_{\theta_m\wedge} & W_mA^{0j} \ar[r]^{\widetilde{\Phi}^{(j)}_m} & W_mA^{0j}.
}
\]
(3) $\underline{p}^{n-1+k}F:W_m\widetilde{\Lambda}^j\to W_m\widetilde{\Lambda}^j$ is the unique morphism which makes the following diagram commutative:
\[
\xymatrix{
	W_{m+1}\widetilde{\Lambda}^j \ar[d]_{p^{n-1+k}F} \ar[r]^{\pi}
	&
	W_m\widetilde{\Lambda}^j. \ar@{-->}[ld]^{\underline{p}^{n-1+k}F}
	\\
	W_m\widetilde{\Lambda}^j
	&
}
\]
Furthermore, $\underline{p}^{n-1+k}F$ is compatible with $d, (\theta_m \wedge), \pi$ and $\underline{p}^{n-1+k}F$ on $W_m\Lambda^j$.
In other words, the following diagrams commute:
\[
\xymatrix{
	W_m\widetilde{\Lambda}^j \ar[r]^d \ar[d]_{\underline{p}^{n-1+k} F}
	&
	W_m\widetilde{\Lambda}^{j+1} \ar[d]^{\underline{p}^{n+k}F}
	&
	W_m\widetilde{\Lambda}^{j+1} \ar[r]^{\underline{p}^{n-1+k} F}
	&
	W_m\widetilde{\Lambda}^{j+1}
	\\
	W_m\widetilde{\Lambda}^j \ar[r]^d
	&
	W_m\widetilde{\Lambda}^{j+1},
	&
	W_m\widetilde{\Lambda}^j  \ar[r]^{\underline{p}^{n-1+k} F} \ar[u]^{\theta_m\wedge}
	&
	W_m\widetilde{\Lambda}^j, \ar[u]_{\theta_m\wedge}
	\\%
	W_m\widetilde{\Lambda}^{j} \ar[r]^{\underline{p}^{n-1+k} F} \ar[d]_\pi
	&
	W_m\widetilde{\Lambda}^{j+1} \ar[d]^\pi
	&
	W_m\widetilde{\Lambda}^{j+1} \ar[r]^{\underline{p}^{n-1+k} F}
	&
	W_m\widetilde{\Lambda}^{j+1}
	\\
	W_m\widetilde{\Lambda}^j  \ar[r]^{\underline{p}^{n-1+k} F}
	&
	W_m\widetilde{\Lambda}^j,
	&
	W_m\Lambda^j \ar[r]^{\underline{p}^{n-1+k} F} \ar[u]^{\theta_m\wedge}
	&
	W_m\Lambda^{j}, \ar[u]_{\theta_m\wedge}.
}
\]
(4) The morphism $\underline{p}^{n-1+k} F$ on $W_m\widetilde{\Lambda}^j$ preserves the weight filtration $P$ on $W_m\widetilde{\Lambda}^j$.
For an integer $i\ge 1$, $\underline{p}^{n-1+i}F: W_m\widetilde{\Lambda}^{i+\bullet+1}\to W_m\widetilde{\Lambda}^{i+\bullet+1}$ induces an endomorphism
\[
\underline{p}^{n-1+i}F: W_mA^{i\bullet}\to W_mA^{i\bullet}
\]
of complexes.

(5) Let $i$ be a positive integer. The following diagrams commute:
\[
\xymatrix{
	W_{m+1}A^{ij} \ar[d]_{p^{n-1+i}F} \ar[r]^{\pi}
	&
	W_mA^{ij} \ar[ld]^{\underline{p}^{n-1+i}F}
	&
	W_mA^{ij} \ar[r]^{(-1)^{j+1}d} \ar[d]_{\underline{p}^{n-1+i}F}
	&
	W_mA^{i+1,j} \ar[d]^{\underline{p}^{n+i}F}
	\\
	W_mA^{ij}
	&
	&
	W_mA^{ij} \ar[r]^{(-1)^{j+1}d}
	& W_mA^{i+1,j},
	\\
	W_mA^{i,j+1} \ar[r]^{\underline{p}^{n-1+i}F}
	&
	W_mA^{i,j+1}
	&
	W_{m+1}A^{ij} \ar[r]^{\underline{p}^{n-1+i}F}\ar[d]_{\pi}
	&
	W_{m+1}A^{ij} \ar[d]^{\pi}
	\\
	W_mA^{ij} \ar[u]^{(-1)^i\theta_m\wedge} \ar[r]^{\underline{p}^{n-1+i}F}
	&
	W_mA^{ij}, \ar[u]_{(-1)^i\theta_m\wedge}
	&
	W_mA^{ij} \ar[r]^{\underline{p}^{n-1+i}F}
	&
	W_mA^{ij}.
}
\]
(6) The following diagram is commutative:
\[
\xymatrixcolsep{4pc}
\xymatrix{
	 W_mA^{0j} \ar[r]^{(-1)^{j+1}d} \ar[d]_{p^{n-1}\widetilde{\Phi}_m^{(j)}}
	 &
	 W_mA^{1j} \ar[d]^{\underline{p}^nF}
	 \\
	 W_mA^{0j} \ar[r]^{(-1)^{j+1}d}
	 &
	 W_mA^{1j}.
}
\]
(7) The following diagram is commutative:
\[
\xymatrix{
	W_m\widetilde{\Lambda}^j \ar[r]^{\underline{p}^{n-1+k}F} \ar[d]
	&
	W_m\widetilde{\Lambda}^j \ar[d]
	\\
	W_m\Lambda^j = W_m\widetilde{\Lambda}^j/(\theta_m\wedge W_m\widetilde{\Lambda}^{j-1}) \ar[r]^{\underline{p}^{n-1+k}F}
	&
	W_m\Lambda^j = W_m\widetilde{\Lambda}^j/(\theta_m\wedge W_m\widetilde{\Lambda}^{j-1}).
}
\]
\end{thm}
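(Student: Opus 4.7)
The plan is to prove (1)--(7) in that order, with the following unifying observation at the heart of the argument: since $\Lambda^\bullet_{Y/R}$ is an $R$-module and $p^nR=0$, together with the commutation $pV^k=V^kp$ and $p\,dV^k = dV^kp$, one obtains $p^n\cdot\Fil^{m-1}W_m\Lambda^j = 0$ and similarly $p^n\cdot\Fil^{m-1}W_m\widetilde\Lambda^j=0$. Combined with the inclusion $F(\Fil^m W_{m+1})\subset \Fil^{m-1}W_m$ from $(*)$, this gives $p^nF(\ker\pi)=0$ for $\pi:W_{m+1}\to W_m$.

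For part (1), this observation directly yields the factorization: $p^{n-1+k}F = p^{k-1}\cdot p^nF$ vanishes on $\ker\pi=\Fil^mW_{m+1}\Lambda^j$ for $k\geq 1$, and by surjectivity of $\pi$ it factors uniquely as $\underline{p}^{n-1+k}F$. Compatibility with $d$ follows from the identity $dF=pFd$, which passes to the factored maps and contributes the shift of exponent from $n-1+k$ to $n+k$. Compatibility with $\pi$ is immediate from $\pi F=F\pi$. The identification with $p^{n-1}\widetilde F$ in degree $k$ uses the classical relation $p^kF=\widetilde F$ on $W_m\Lambda^k$, a consequence of the construction of $F$ on the de Rham--Witt complex in terms of Teichm\"uller representatives of local liftings of Frobenius. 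Part (3) is proved identically for $W_m\widetilde\Lambda^\bullet$; the additional compatibilities with $\theta_m\wedge$ and with the projection $W_m\widetilde\Lambda\twoheadrightarrow W_m\Lambda$ follow from $F\theta_m=\theta_m$ (the axiom ${}^F\partial_{m+1}(q)=\partial_m(q)$) and from the $F$-equivariance of that projection, respectively.

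For part (2), one first checks that $F$ preserves the weight filtration $P$: since $F\,d\log=d\log$ and $F$ is additive and multiplicative, $F(P_kW_{m+1}\widetilde\Lambda)\subset P_kW_m\widetilde\Lambda$, so $F$ induces a map $W_{m+1}A^{0j}\to W_mA^{0j}$. To see that it factors through $\pi$, one combines the Poincar\'e residue isomorphism $\mathrm{Gr}_{j+1}W_m\widetilde\Lambda^{j+1}\xrightarrow\sim i^{(j+1)}_*W_m(\mathcal{O}_{Y^{(j+1)}})$ with the explicit computation of $F$ on elements of log-degree $j+1$, reducing the claim to a factorization statement on the residue side; the assumption $p^nR=0$ enters again to kill the residual contribution of $\Fil^m\cap P_{j+1}$ modulo $P_j$. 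The compatibilities with $\theta_m\wedge$ and $\pi$ are then inherited from those of $F$.

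Parts (4)--(7) are formal consequences of (1)--(3). Part (4) follows since $F$ preserves $P$, so $\underline p^{n-1+i}F$ descends to $W_mA^{i\bullet}$ by (3); part (5) is the translation of (3) to this quotient, with the various diagrams commuting because they are images of the corresponding diagrams on $W_m\widetilde\Lambda$. Part (6) asserts a commutativity between two different descents of Frobenius, namely $p^{n-1}\widetilde\Phi^{(j)}_m$ on $A^{0j}$ and $\underline p^nF$ on $A^{1j}$, intertwined by the horizontal differential; this follows from the relation $dF=pFd$ after comparison of exponents, using the definitions from (1), (2). Part (7) is the $F$-equivariance of the projection $W_m\widetilde\Lambda\twoheadrightarrow W_m\Lambda$ lifted to the factored maps. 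The main obstacle I anticipate is the existence claim in part (2): here the factorization holds \emph{without} any power of $p$ (in contrast to (1), (3), (5)), and this requires a careful analysis of the residue map together with the interplay between the standard filtration $\Fil^\bullet$ and the weight filtration $P_\bullet$ in the semistable setting.
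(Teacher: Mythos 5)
Your overall architecture matches the paper's: (1) and (3) rest on factoring $p^{n-1+k}F$ through $\pi$ together with the relation $dF=pFd$, (2) is reduced via the Poincar\'e residue isomorphism to a statement about Witt vectors of $\mathcal{O}_{\mathring{Y}^{(j+1)}}$, and (4)--(7) are formal. For (1) your route (show $p^{n-1+k}F$ kills $\ker\pi=\Fil^{m}W_{m+1}\Lambda^j$ directly, via $F(\Fil^m)\subset\Fil^{m-1}$ and $p^n\cdot\Fil^{m-1}=0$) is a harmless repackaging of the paper's, which simply composes the already-constructed $\underline{p}^{n-1+k}$ with $F$ and invokes uniqueness from surjectivity of $\pi$; the two factored maps agree by uniqueness.

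The gap is in part (2), exactly where you flagged the danger. After transporting through $\Res:W_mA^{0j}\xrightarrow{\ \sim\ }W_m(\mathcal{O}_{\mathring{Y}^{(j+1)}})$, what must be shown is that the Witt-vector Frobenius $F:W_{m+1}(\mathcal{O}_{\mathring{Y}^{(j+1)}})\to W_m(\mathcal{O}_{\mathring{Y}^{(j+1)}})$ factors through the restriction $\pi$. The kernel of $\pi$ is ${}^{V^m}\mathcal{O}_{\mathring{Y}^{(j+1)}}$, and $F({}^{V^m}a)=p\cdot{}^{V^{m-1}}a={}^{V^{m-1}}(pa)$; since ${}^{V^{m-1}}$ is injective, this vanishes if and only if $pa=0$. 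Thus the factorization is equivalent to $p\,\mathcal{O}_{\mathring{Y}^{(j+1)}}=0$, and your appeal to $p^nR=0$ (``kill the residual contribution of $\Fil^m\cap P_{j+1}$ modulo $P_j$'') does not deliver it when $n\ge 2$: concretely, $F({}^{V^m}(d\log X_1\cdots d\log X_{j+1}))={}^{V^{m-1}}(p)\cdot d\log X_1\cdots d\log X_{j+1}$, whose residue ${}^{V^{m-1}}(p)$ survives in $W_m(\mathcal{O}_{\mathring{Y}^{(j+1)}})$ as soon as $p\neq 0$ in $R$. What the paper actually invokes is the classical Witt-vector identity $F=\Phi_m^{(j)}\circ\pi$ with $\Phi_m^{(j)}=W_m(\mathrm{Frob})$ the functorial lift of absolute Frobenius, valid precisely because $\mathcal{O}_{\mathring{Y}^{(j+1)}}$ is an $\mathbb{F}_p$-algebra. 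That structural fact, not the annihilation bound $p^nR=0$, is the substance of (2); the bound $p^nR=0$ is what licenses (1), (3), (5), (6), where $F$ comes multiplied by a genuine power of $p$, and there is no analogous power in (2).
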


\begin{proof}
(1) Uniqueness follows from the surjectivity of $\pi$.
Since $p^{n-1+k}=\underline{p}^{n-1+k}\circ \pi$ and $d^F\omega = p^F d\omega$, the diagrams commute.
We obtain the compatibility with projections by the compatibility of $\pi$ and $\underline{p}^{n-1+k}$, $\pi$ and $F$.

(2) The Poincar\'e residue morphism gives an isomorphism
\[
	\Res: W_mA^{0j}
	\to
	W_m(\mathcal{O}_{\mathring{Y}^{(j+1)}})
\]
and there is the Frobenius morphism
\[
	\Phi_m^{(j)} :
	W_m(\mathcal{O}_{\mathring{Y}^{(j+1)}})
	\to
	W_m(\mathcal{O}_{\mathring{Y}^{(j+1)}}).
\]
Define
\[
\widetilde{\Phi}_m^{(j)}:=
	\Res^{-1} \circ \Phi_m^{(j)} \circ \Res : W_mA^{0j}
	\to
	W_mA^{0j}.
\]
The commutativity of the first diagram is deduced from following commutative diagram:
\[
\xymatrix{
	W_{m+1}A^{0j} \ar[dd]_F \ar[r]^{\pi} \ar[rd]_{\Res}^{\sim}
	&
	W_mA^{0j} \ar[r]_{\Res}^\sim
	&
	W_m(\mathcal{O}_{\mathring{Y}^{(j+1)}}) \ar[dd]^{\Phi_m^{(j)}}
	\\
	&
	W_{m+1}(\mathcal{O}_{\mathring{Y}^{(j+1)}}) \ar[ur]_\pi \ar[rd]_F
	&
	\\
	W_mA^{0j} \ar[rr]^\sim_\Res
	&
	&
	W_m(\mathcal{O}_{\mathring{Y}^{(j+1)}}).
}
\]
The commutativity of second diagram follows from Lemma \ref{lem:Gys1} and the commutativity of $\rho_m$ and $\Phi_m^{(j)}$. Third case is trivial.

(3)  The proof of (3) is the same as that of (1).

(4)(5) Trivial.

(6) Since we know $\pi$ is surjective and $\pi$ commutes with $d$, this follows from the following commutative diagram:
\[
\xymatrix{
	W_{m+1}A^{0j} \ar[r]^{d} \ar[d]_{F}
	&
	W_{m+1}A^{1j} \ar[d]^{pF}
	\\
	W_{m}A^{0j} \ar[r]^{d}
	&
	W_{m}A^{1j}.
}
\]

(7) Trivial.
\end{proof}

\begin{thm} (cf. \cite{Nak} Theorem 9.9)
There exists a unique endomorphism
$\widetilde{\Phi}_m^{(n;\bullet *)} : W_m A^{\bullet *} \to W_m A^{\bullet *}$
of double complexes which makes the following diagram commutative:
\[
\xymatrix{
	W_{m+1} A^{\bullet *} \ar[d]_{p^{n-1+\bullet} F} \ar[r]^{\pi}
	&
	W_m A^{\bullet *}. \ar@{-->}[ld]^{\widetilde{\Phi}_m^{(n;\bullet *)}}
	\\
	W_m A^{\bullet *}
	&
}
\]
The endomorphism $\widetilde{\Phi}_m^{(n;\bullet *)}$ defines an endomorphism
$\widetilde{\Psi}_m^{(n)}:W_m A^\bullet \to W_m A^\bullet$
and there is the following commutative diagram:
\[
\xymatrix{
	W_m A^\bullet \ar[r]^{\widetilde{\Psi}_m^{(n)}}
	&
	W_m A^\bullet
	\\
	W_m \Lambda^\bullet \ar[r]^{p^{n-1}\Psi_m} \ar[u]^{\theta_m \wedge}
	& W_m \Lambda^\bullet \ar[u]_{\theta_m \wedge},
}
\]
where $\Psi_m$ is the endomorphism induced by the absolute Frobenius.

The Poincar\'e residue isomorphism $\Res$ induces an isomorphism
\[
\Res: Gr_k W_m A^\bullet \simeq \bigoplus_{j\ge\max \{-k,0\}}(W_m\Omega_{\mathring{Y}^{(2j+k+1)}/R}^\bullet,(-1)^{j+1}d)\{-2j-k\}
\]
which makes the following diagram commutative:
\[
\xymatrix{
	Gr_k W_m A^\bullet \ar[r]_-\sim^-\Res \ar[d]_{\widetilde{\Psi}_m^{(n)}}
	&
	\bigoplus_{j\ge\max \{-k,0\}}(W_m\Omega_{\mathring{Y}^{(2j+k+1)}/R}^\bullet,(-1)^{j+1}d)\{-2j-k\} \ar[d]^{p^{n-1+j+k}\Psi_m}
	\\
	Gr_k W_m A^\bullet \ar[r]_-\sim^-\Res
	&
	\bigoplus_{j\ge\max \{-k,0\}}(W_m\Omega_{\mathring{Y}^{(2j+k+1)}/R}^\bullet,(-1)^{j+1}d)\{-2j-k\}.
}
\]
\end{thm}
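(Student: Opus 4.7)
The plan is to build $\widetilde{\Phi}_m^{(n;\bullet *)}$ bidegree by bidegree using the endomorphisms supplied by the previous theorem, verify that they assemble into a morphism of double complexes, and then chase the Poincar\'e residue through everything to extract the commutative diagram with the graded pieces.

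First, on each $W_m A^{ij}$ I would set
\[
	\widetilde{\Phi}_m^{(n;i,j)} := \begin{cases} p^{n-1}\widetilde{\Phi}_m^{(j)} & (i=0), \\ \underline{p}^{n-1+i}F & (i\ge 1), \end{cases}
\]
using parts (2) and (4) of the previous theorem. Uniqueness of each piece, once the commutation with $\pi$ is imposed, follows from the surjectivity of $\pi$, which also gives uniqueness of the global $\widetilde{\Phi}_m^{(n;\bullet *)}$. Compatibility with the vertical map $(-1)^i\theta_m\wedge$ then comes from part (2) for $i=0$ and part (5) for $i\ge 1$; compatibility with the horizontal map $(-1)^{j+1}d$ inside the columns $i\ge 1$ is part (5); and the transition from the $i=0$ column to the $i=1$ column is exactly part (6). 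Taking total complexes gives $\widetilde{\Psi}_m^{(n)}$. The commutative square with $p^{n-1}\Psi_m$ along $\theta_m\wedge$ is checked directly: for $i\ge 1$ combine part (3) (which gives $\underline{p}^{n-1+i}F\circ(\theta_m\wedge)=(\theta_m\wedge)\circ \underline{p}^{n-1+i}F$) with part (1) (which identifies $\underline{p}^{n-1+i}F$ on $W_m\Lambda^i$ with $p^{n-1}\Psi_m$); the case $i=0$ is handled by lifting both sides along $\pi$ and using the identity $F(\theta_{m+1})=\theta_{m+1}$ (coming from $^Fd\log m=d\log m$) together with the definition of $\widetilde{\Phi}_m^{(0)}$.

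The main work, and the main obstacle, is the Poincar\'e residue compatibility on graded pieces. The isomorphism of Lemma \ref{lem:resisom} acts on a log basic Witt differential $\epsilon(\xi,k,(I_{-\infty},I_0,\ldots,I_l),J)$ with $|I_{-\infty}|=2j+k+1$ by deleting the factor $\prod_{i\in I_{-\infty}} d\log X_i$, producing a classical basic Witt differential on $\mathring Y^{(2j+k+1)}$. The formulas for $F$ on log basic Witt differentials in \S\ref{subsect:log-basic-witt-diffs-sncd} are literal extensions of those in \cite{LZDRW} \S 2.2, so $\Res$ intertwines $F$ with $F$, and hence $\underline{p}^{n-1+i}F$ with $\underline{p}^{n-1+i}F$. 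Applying part (1) of the previous theorem to the smooth scheme $\mathring Y^{(2j+k+1)}$ in degree $l := i-j-k$ gives $\underline{p}^{n-1+l}F = p^{n-1}\Psi_m$, whence on $W_m\Omega^{i-j-k}_{\mathring Y^{(2j+k+1)}/R}$
\[
	\underline{p}^{n-1+i}F = p^{(n-1+i)-(n-1+l)}\underline{p}^{n-1+l}F = p^{j+k}\cdot p^{n-1}\Psi_m = p^{n-1+j+k}\Psi_m,
\]
as required for $i\ge 1$. The case $i=0$ contributes only when $k=-j$ (since $Gr_{2j+k+1}W_m\widetilde{\Lambda}^{j+1}$ vanishes for $2j+k+1>j+1$, i.e.\ $k>-j$), and there $\Res$ carries $p^{n-1}\widetilde{\Phi}_m^{(j)}$ to $p^{n-1}\Phi_m^{(j)}=p^{n-1}\Psi_m$ by construction of $\widetilde{\Phi}_m^{(j)}$, matching $p^{n-1+j+k}\Psi_m|_{k=-j}$.

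The delicate point in the last paragraph is that the degree shift from $i+j+1$ (inside $W_m\widetilde{\Lambda}$) to $i-j-k$ (after $\Res$) has to be tracked against the explicit power $p^{n-1+i}$ in the definition of $\widetilde{\Phi}_m^{(n;i,j)}$; the fact that these conspire to give the clean exponent $p^{n-1+j+k}$, uniformly in $i$, is what makes the Frobenius structure on the $p$-adic Steenbrink complex well defined and relies crucially on the identity $\underline{p}^{n-1+l}F = p^{n-1}\Psi_m$ for the de Rham-Witt complex of a smooth scheme in arbitrary degree $l$, applied to $\mathring Y^{(2j+k+1)}$.
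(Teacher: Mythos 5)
Your proof is correct and takes essentially the same route as the paper's: define $\widetilde{\Phi}_m^{(n;ij)}$ piecewise as $p^{n-1}\widetilde{\Phi}_m^{(j)}$ in the $i=0$ column and $\underline{p}^{n-1+i}F$ elsewhere, derive uniqueness from surjectivity of $\pi$, and read off the residue compatibility by tracking how the exponent $n-1+i$ balances against the degree drop $j+k$ under $\Res$; the paper's own proof is terser and simply displays the key auxiliary square (with $p^iF$ on the left and $p^{j+k}(p^{i-j-k}F)$ on the right) before passing along $\pi$. You supply more of the bookkeeping the paper leaves implicit — in particular the verification that the piecewise formula assembles into a morphism of double complexes (via parts (2), (5), (6) of the preceding theorem), the separate check of the $\theta_m\wedge$-square, and the observation that the $i=0$ row only contributes at $k=-j$, where it matches $p^{n-1+j+k}\Psi_m$ because the shift vanishes. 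The one point where your references are looser than the underlying logic requires: the $\theta_m\wedge$-compatibility for $i\ge 1$ is stated in the previous theorem on $W_m\widetilde\Lambda^{\bullet}$, and one also needs part (4) to descend it to the quotient $W_mA^{i0}$; but this is exactly the kind of detail the paper itself glosses over, so the two proofs are in agreement.
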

\begin{proof}
Define $\widetilde{\Phi}_m^{(n;\bullet *)}$ by
\[
	\widetilde{\Phi}_m^{(n;\bullet *)}
	=\left\{
	\begin{array}{ll}
		p^{n-1}\widetilde{\Phi}_m^{(*)}
		&
		(\bullet = 0),
		\\
		\underline{p}^{n-1+*}F
		&
		(\bullet \neq 0).
	\end{array}
	\right.
\]
The commutativity of the second diagram follows from the following commutative diagram:
\[
\xymatrix{
	Gr_k W_{m+1}A^{i\bullet} \ar[r]_-\sim^-\Res \ar[d]_{p^iF}
	&
	\bigoplus_{j\ge \text{max}\{-k,0\}}
	W_{m+1}\Omega_{\mathring{Y}^{(2j+k+1)}/R}^{i-j-k} \{-j\} \ar[d]^{p^{j+k}(p^{i-j-k}F)}
	\\
	Gr_k W_m A^{i\bullet} \ar[r]_-\sim^-\Res
	&
	\bigoplus_{j\ge \text{max}\{-k,0\}}
	W_m\Omega_{\mathring{Y}^{(2j+k+1)}/R}^{i-j-k} \{-j\},
}
\]
which immediately follows from the definition of $\Res$.
\end{proof}

By the comparison theorem \ref{thm:semistable}, we obtain the following theorem:

\begin{thm}
\label{thm:weight-spectral-sequence}
(1)
There exists the spectral sequences:
\begin{align*}
	E_1^{-k,h+k}=
	\bigoplus_{j\ge\max \{-k,0\}}
	H_{\crys}^{h-2j-k} (\mathring{Y}^{(2j+k+1)}&/W_m(R))(-j-k)\\
	&\Rightarrow
	H^h_{\logcrys} (Y/W_m(R,\bN)).
\end{align*}

(2)
Set
\renewcommand{\arraystretch}{1.5}
\[
\begin{array}{ll}
	W\Lambda^\bullet := \varprojlim_m W_m\Lambda^\bullet,
	&
	WA^\bullet := \varprojlim_m W_m A^\bullet,
	\\
	\Psi^{(n)} := \varprojlim_m \Psi_m^{(n)} : W\Lambda^\bullet \to W\Lambda^\bullet,
	&
	\widetilde{\Phi}^{(n;\bullet *)} := \varprojlim_m \widetilde{\Phi}_m^{(n;\bullet *)} : WA^{\bullet *}\to WA^{\bullet *}
\end{array}
\]
\renewcommand{\arraystretch}{1}
Then there exists the spectral sequence:
\begin{align*}
E_1^{-k,h+k}=
	\bigoplus_{j\ge\max \{-k,0\}}H_{\crys}^{h-2j-k}(\mathring{Y}^{(2j+k+1)}&/W(R))(-j-k)\\
	&\Rightarrow H^h_{\logcrys}(Y/W(R,\bN)).
\end{align*}
We call this spectral sequence the $p$-adic weight spectral sequence.
\end{thm}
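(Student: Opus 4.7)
The plan is to assemble the theorem from pieces already established: the weight filtration on the de Rham--Witt Steenbrink complex, the Poincar\'e residue isomorphism, the Frobenius compatibility computation, and the comparison theorems for smooth schemes and semistable log schemes.

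For part (1), I would first note that Lemma \ref{lem:exactness} exhibits $W_m\Lambda^\bullet$ as quasi-isomorphic to the simple complex $W_mA^\bullet$ associated with the double complex $W_mA^{\bullet*}$, so via Theorem \ref{thm:semistable} we may compute $H^*_{\logcrys}(Y/W_m(R,\bN))$ as $\mathbb{H}_{\Zar}^*(\mathring Y, W_mA^\bullet)$. Next I would take the weight filtration $P_k W_mA^\bullet$ induced from the filtration $P_kW_mA^{i\bullet}$ introduced above and consider the resulting spectral sequence, whose graded pieces were computed (ignoring Frobenius) as
\[
Gr_k W_mA^{*\bullet}\;\xrightarrow[\Res]{\sim}\;\bigoplus_{j\ge\max\{-k,0\}} W_m\Omega_{\mathring Y^{(2j+k+1)}/R}^*\{-j-k\}_*\{-j\}_\bullet.
\]
Applying the classical comparison theorem \ref{thm:infinitecomparison} (finite level) to each smooth proper scheme $\mathring Y^{(2j+k+1)}$ identifies the $E_1$-term with $\bigoplus_{j\ge\max\{-k,0\}} H_{\crys}^{h-2j-k}(\mathring Y^{(2j+k+1)}/W_m(R))$.

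To upgrade this to the stated $E_1$-term with Tate twists, I would use the Frobenius compatibility already obtained: the endomorphism $\widetilde\Psi_m^{(n)}$ of $W_mA^\bullet$ corresponds, under $\Res$, to $p^{n-1+j+k}\Psi_m$ on the $(j,k)$-summand, which under the convention for Tate twists fixed in the Notations is precisely the action on $H_{\crys}^{h-2j-k}(\mathring Y^{(2j+k+1)}/W_m(R))(-j-k)$. This identifies the $E_1$-page as a Frobenius module with the right-hand side of the statement, and the abutment is $H^h_{\logcrys}(Y/W_m(R,\bN))$ by Theorem \ref{thm:semistable}.

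For part (2), I would pass to the inverse limit over $m$. The restriction maps $\pi:W_{m+1}\widetilde\Lambda^\bullet\to W_m\widetilde\Lambda^\bullet$ are surjective and preserve $P_\bullet$, and the quotients $Gr_k$ are identified with $W_m\Omega_{\mathring Y^{(k)}/R}^{\bullet-k}$, for which the pro-system satisfies Mittag-Leffler by the corresponding result in \cite{LZDRW} Proposition 1.13. Hence $\mathbb{R}\varprojlim$ applied to the filtered pro-complex $(W_mA^\bullet,P_\bullet)$ produces $(WA^\bullet,P_\bullet)$, and the associated spectral sequence specializes to the one in the statement after applying Proposition \ref{prop:derivedlimit} to identify $\varprojlim_m H_{\crys}^*(\mathring Y^{(q)}/W_m(R))$ with $H_{\crys}^*(\mathring Y^{(q)}/W(R))$ and the infinite-level form of Theorem \ref{thm:semistable} for the abutment; the Frobenius compatibility of part (1) survives termwise because $\widetilde\Phi^{(n;\bullet*)}$ was defined as $\varprojlim_m\widetilde\Phi_m^{(n;\bullet*)}$.

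The main technical obstacle is the inverse-limit step: one must verify that the $E_1$-differentials and the filtration steps both behave well under $\mathbb{R}\varprojlim$, i.e., that all relevant pro-systems of $W_m(R)$-modules satisfy Mittag-Leffler so that no higher derived inverse limits intervene. This is handled by exploiting the locally free structure of each graded piece $W_m\Omega_{\mathring Y^{(q)}/R}^\bullet$ and the perfectness statement of Proposition \ref{prop:derivedlimit}, which ensures the crystalline cohomology groups $H^*_{\crys}(\mathring Y^{(q)}/W_m(R))$ form a Mittag-Leffler system.
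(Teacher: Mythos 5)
Your proposal follows essentially the same route as the paper, which proves Theorem~\ref{thm:weight-spectral-sequence} in a single line after constructing the Steenbrink double complex, the weight filtration, the Poincar\'e residue isomorphism, and the Frobenius compatibility in the preceding subsections, and then citing the comparison theorem. A few citations are slightly off but the argument is sound: the quasi-isomorphism $W_m\Lambda^\bullet\simeq W_mA^\bullet$ comes from the unnamed lemma at the start of the ``Weight spectral sequence'' subsection (which itself invokes Lemma~\ref{lem:exactness}), not from Lemma~\ref{lem:exactness} directly; Theorem~\ref{thm:infinitecomparison} \emph{is} the infinite-level statement (the finite-level comparison for smooth $\mathring Y^{(q)}$ over $W_m(R)$ is Langer--Zink's theorem, while the paper's Theorem~\ref{thm:semistable} supplies both levels for the abutment); and the Mittag--Leffler input for part (2) comes not from \cite{LZDRW}~Proposition~1.13 but simply from the observation in the paper that $\pi\colon P_kW_{m+1}\widetilde\Lambda^\bullet\to P_kW_m\widetilde\Lambda^\bullet$ is surjective by the very definition of $P_k$ (your phrasing ``surjective and preserves $P_\bullet$'' gives $\pi(P_k)\subset P_k$ but should be sharpened to $\pi(P_kW_{m+1}\widetilde\Lambda^\bullet)=P_kW_m\widetilde\Lambda^\bullet$). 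None of these affect the validity of the argument.
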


\subsection{Gysin map}
In this subsection we describe Gysin maps defined on the de Rham complexes,
the de Rham-Witt complexes and the crystalline cohomology, and their relation.

Let $X$ be a smooth scheme over a scheme $S$ and $D$ be a smooth divisor of $X/S$.
The Gysin map of the de Rham complexes
$G_{D/X}^{\dR}:\Omega_{D/S}^\bullet \{ -1\} \to \Omega_{X/S}^\bullet [1]$
in the derived category of sheaves on $X$ is equal to the boundary morphism of the exact sequence (cf. \cite{Mo} \S 4.1)
\[
	0
	\to
	\Omega_{X/S}^\bullet
	\to
	\Lambda_{(X,D)/S}^\bullet
	\xrightarrow{\Res}
	\Omega_{D/S}^\bullet \{-1\}
	\to
	0.
\]

Next, We recall the Gysin map of the crystalline cohomology.
Note that the Gysin map in the crystalline cohomology is originally defined by
Berthelot \cite{B}, but the construction in \cite{NS} is convenient for our purpose.
Let $(S,\mathcal{I},\gamma)$ be a pd-scheme such that $p$ is nilpotent in $S$.
Set $S_0:=\Spec (\mathcal{O}_S/\mathcal{I})$.
Let $X$ be a smooth scheme over $S_0$ and $D$ a smooth divisor on $X$ over $S_0$.
We denote by $a$ the natural closed immersion $D\hookrightarrow X$ .
Let $a_{\Zar}:(D_{\Zar},\mathcal{O}_D)\to (X_{\Zar},\mathcal{O}_X)$
(resp. $a_{\crys}:((D/S)_{\crys},\mathcal{O}_{D/S})\to ((X/S)_{\crys},\mathcal{O}_{X/S})$)
be the induced morphism of Zariski ringed topoi (resp. crystalline ringed topoi).
The Gysin map of the crystalline cohomology is defined as follows.

Choose an open covering $X=\bigcup_{i\in I_0}X^{i_0}$ such that
there exist a smooth scheme $Y^{i_0}$ with smooth divisor $Z^{i_0}$ on $Y^{i_0}$ over $S$ and a cartesian diagram
\[
\xymatrix{
	X^{i_0} \ar[r]
	&
	Y^{i_0}
	\\
	D|_{X^{i_0}} \ar[r] \ar[u]
	&
	Z^{i_0}. \ar[u]
}
\]

Fix a total order on $I_0$ and let $I$ be a category
whose objects are $\underline{i} = (i_0,\ldots,i_r)'$s $(i_0 < i_1 < \ldots < i_r, r\in \mathbb{Z}_{\ge 0})$.
Set $\{\underline{i} \}:=\{i_0,\ldots,i_r \}$.
For two objects $\underline{i},\underline{i}'\in I$, a morphism from $\underline{i}'$ to $\underline{i}$ is the inclusion $\{\underline{i}' \}\hookrightarrow \{\underline{i} \}$.
By abuse of notation, we sometimes write simply $\underline{i}$ instead of $\{\underline{i} \}$.

Set $D^{i_0}:=D|_{X^{i_0}}$.
For an object $\underline{i} = (i_0,\ldots,i_r)$, we set $X^{\underline{i}}:=\bigcap_{s=1}^r X^{i_s},D^{\underline{i}}:=\bigcap_{s=1}^r D^{i_s}$.
Then $(X^\bullet, D^\bullet)$ is a diagram of log schemes, i.e., a contravariant functor
\[
	I^{\text{op}}
	\to
	\text{LogSch}
\]
over $(X,D)$.
By \cite{NS} (2.4.0.2), there exists a closed immersion $(X^\bullet,D^\bullet)\hookrightarrow (Y^\bullet,Z^\bullet)$ to a diagram of smooth schemes with smooth divisor over $S$.
Let $a^\bullet: D^\bullet \hookrightarrow X^\bullet$ and $b^\bullet: Z^\bullet \hookrightarrow Y^\bullet$ be diagrams of the natural closed immersions.
By using Poincar\'e residue isomorphism, there is the following exact sequence
\[
	0
	\to
	\Omega_{Y^\bullet/S}^\bullet
	\to
	\Lambda_{(Y^\bullet,Z^\bullet)/S}^\bullet
	\xrightarrow{\Res}
	b_{\Zar *}^\bullet \Omega_{Z^\bullet/S}^\bullet \{-1\}
	\to
	0.
\]

Let $L_{X^\bullet/S}$ (resp. $L_{D^\bullet/S}$) be the linearization functor (\cite{BO} Construction 6.9)
with respect to the diagram $X^\bullet \hookrightarrow Y^\bullet$
(resp. $D^\bullet \hookrightarrow Z^\bullet)$
of closed immersions of schemes.
Let $L_{(X^\bullet, D^\bullet)/S}$ be the log linearization functor (\cite{NS} \S 2.2)
with respect to the diagram $(X^\bullet,D^\bullet)\to (Y^\bullet, Z^\bullet)$ of closed immersions of log schemes.
Let $Q_{X/S}:(X/S)_{\text{Rcrys}} \to (X/S)_{\crys},Q_{X^\bullet/S}:(X^\bullet/S)_{\text{Rcrys}} \to (X^\bullet/S)_{\crys}$ be the natural morphisms from the restricted crystalline topos to the crystalline topos (\cite{B} IV 2.1). Then we have morphisms
\begin{align*}
	Q_{X/S}&:((X/S)_{\text{Rcrys}},Q_{X/S}^*\mathcal{O}_{X/S})
	\to
	((X/S)_{\crys},\mathcal{O}_{X/S})
	\\
	Q_{X^\bullet/S}&:((X^\bullet/S)_{\text{Rcrys}},Q_{X^\bullet/S}^*\mathcal{O}_{X^\bullet/S})
	\to
	((X^\bullet/S)_{\crys},\mathcal{O}_{X^\bullet/S})
\end{align*}
of ringed topoi (\cite{B} IV (2.1.1)).

The following diagram is commutative by \cite{NS} Corollary 2.2.12:

\[
\xymatrix{
	\left( \frac{\mathcal{O}_{\overline{Z}^\bullet}\text{-modules}}{\text{HPD differential operators}} \right) \ar[d]_{L_{D^\bullet/S}} \ar[r]^{b_{\Zar *}^\bullet}
	&
	\left(\frac{\mathcal{O}_{\overline{Y}^\bullet}\text{-modules}}{\text{HPD differential operators}} \right) \ar[d]_{L_{X^\bullet/S}}
	\\
	(\text{Crystals of } \mathcal{O}_{D^\bullet/S}\text{-modules}) \ar[r]^{a_{\crys *}^\bullet}
	&
	(\text{Crystals of } \mathcal{O}_{X^\bullet/S}\text{-modules}),
}
\]
where $\overline{Z}^\bullet$ (resp. $\overline{Y}^\bullet$) is the pd-envelope of the closed immersion $D^\bullet\hookrightarrow Z^\bullet$
(resp. $X^\bullet\hookrightarrow Y^\bullet$) over $(S,I,\gamma)$.
Hence we have the following exact sequence:
\begin{align*}
	0
	\to
	Q_{X^\bullet/S}^*L_{X^\bullet/S} (\Omega_{Y^\bullet/S}^\bullet)
	\to
	Q_{X^\bullet/S}^*L_{X^\bullet/S} & (\Lambda_{(Y^\bullet,Z^\bullet)/S}^\bullet)
	\to
	\\
	& Q_{X^\bullet/S}^* a_{\crys *}^\bullet L_{D^\bullet/S} (\Omega_{Z^\bullet/S}^\bullet)\{-1\} \to 0
\end{align*}

Let $\theta_{X/S,\crys}:(X^\bullet/S)_{\crys}\to (X/S)_{\crys}$ and $\theta_{X/S,\Rcrys}:(X^\bullet/S)_{\Rcrys}\to (X/S)_{\Rcrys}$ be natural augmentation morphisms of topoi. Similarly, we have augmentation morphisms
\[
	\theta_{D/S,\crys},
	\theta_{D/S,\Rcrys},
	\theta_{(X,D)/S,\crys},
	\theta_{(X,D)/S,\Rcrys}.
\]
By \cite{NS} Proposition 1.6.4, we have the equality of functors $Q_{X/S}^* \mathbb{R}\theta_{X/S,\crys *}\simeq \mathbb{R}\theta_{X/S,\Rcrys *}Q_{X^\bullet/S}^*$.
There is an isomorphism $\mathbb{R}a_{\crys *}\mathbb{R}\theta_{D/S,\crys *} = \mathbb{R}\theta_{X/S,\crys *} \mathbb{R}a^\bullet_{\crys *}$ by \cite{NS} (1.6.0.13).
Since $a$ and $a^\bullet$ are closed immersions, we see
\[
	\mathbb{R}a_{\crys *}=a_{\crys *},\
	\mathbb{R}a_{\crys *}^\bullet=a_{\crys *}^\bullet
\]
(\cite{B} III Corollaire 2.3.2).
So we have the following triangle
\begin{align*}
	Q_{X/S}^* \mathbb{R}\theta_{X/S,\crys *}L_{X^\bullet/S}(\Omega_{Y^\bullet/S}^\bullet )
	&\to
	Q_{X/S}^*
	\mathbb{R}\theta_{X/S,\crys *}L_{X^\bullet/S}(\Lambda_{(Y^\bullet,Z^\bullet)/S}^\bullet)\\
	&\to
	Q_{X/S}^*a_{\crys *}\mathbb{R}\theta_{D/S,\crys *} L_{D^\bullet/S}(\Omega_{Z^\bullet/S}^\bullet)\{-1\}
	\xrightarrow{+}.
\end{align*}

Let $\epsilon:(X,D)\to X$ and $\epsilon^\bullet:(X^\bullet,D^\bullet)\to X^\bullet$ be the canonical morphisms of log schemes.
By the cohomological descent (\cite{NS} Lemma 1.5.1),
we have the natural isomorphisms (in derived categories)
\begin{align*}
	\mathcal{O}_{X/S}
	&\simeq
	\mathbb{R}\theta_{X/S,\crys *}\mathcal{O}_{X^\bullet/S},
	\\
	\mathcal{O}_{(X,D)/S}
	&\simeq
	\mathbb{R}\theta_{(X,D)/S,\crys *}\mathcal{O}_{(X^\bullet,D^\bullet)/S},
	\\
	\mathcal{O}_{D/S}
	&\simeq
	\mathbb{R}\theta_{D/S,\crys *}\mathcal{O}_{D^\bullet/S}.
\end{align*}
By \cite{NS} Proposition 2.2.7, we have isomorphisms
\begin{align*}
	\mathcal{O}_{X^\bullet/S}
	&\simeq
	L_{X^\bullet/S}(\Omega_{Y^\bullet/S}^\bullet),
	\\
	\mathcal{O}_{(X^\bullet,D^\bullet)/S}
	&\simeq
	L_{(X^\bullet,D^\bullet)/S}(\Lambda_{(Y^\bullet,Z^\bullet)/S}^\bullet),
	\\
	\mathcal{O}_{D^\bullet/S}
	&\simeq
	L_{D^\bullet/S}(\Omega_{D^\bullet/S}^\bullet).
\end{align*}
We also have isomorphisms
\begin{align*}
	\mathbb{R}\theta_{X/S,\crys *}L_{X^\bullet/S} (\Lambda_{(Y^\bullet,Z^\bullet)/S}^\bullet)
	&\simeq
	\mathbb{R}\theta_{X/S,\crys *}\mathbb{R}\epsilon_*^\bullet L_{(X^\bullet,D^\bullet)/S} (\Lambda_{(Y^\bullet,Z^\bullet)/S}^\bullet)
	\\
	&\simeq
	\mathbb{R}\epsilon_* \mathbb{R}\theta_{(X,D)/S,\crys *} L_{(X^\bullet,D^\bullet)/S} (\Lambda_{(Y^\bullet,Z^\bullet)/S}^\bullet).
\end{align*}

Hence we have the following triangle
\begin{align*}
	Q_{X/S}^* (\mathcal{O}_{X/S})
	\to
	Q_{X/S}^* \mathbb{R}\epsilon_* (\mathcal{O}_{(X,D)/S})
	\to
	Q_{X/S}^*a_{\crys *}(\mathcal{O}_{D/S})\{-1\}\xrightarrow{+}.
\end{align*}

From this triangle, we have the following boundary morphism
\begin{align*}
	G_{D/X}^{\crys}:Q_{X/S}^* a_{\crys *}(\mathcal{O}_{D/S})\{ -1 \}
	\to
	Q_{X/S}^*(\mathcal{O}_{X/S})[1]
\end{align*}
in $D^+(Q_{X/S}^*(\mathcal{O}_{X/S}))$.
Applying the global section functor, we obtain a morphism
\[
	G_{D/X}^{\crys}:
	\mathbb{R}\Gamma ((X/S)_{\Rcrys}, Q_{X/S}^* a_{\crys *}(\mathcal{O}_{D/S}))\{-1\}
	\to
	\mathbb{R}\Gamma ((X/S)_{\Rcrys}, Q_{X/S}^*(\mathcal{O}_{X/S}))[1].
\]
By \cite{B} V Proposition 1.3.1. (1.3.3), the left hand is identified to
\begin{align*}
	\mathbb{R}\Gamma ((X/S)_{\Rcrys}, Q_{X/S}^* a_{\crys *}(\mathcal{O}_{D/S}))\{ -1 \}
	&\simeq
	\mathbb{R}\Gamma ((X/S)_{\crys}, a_{\crys *}(\mathcal{O}_{D/S}))\{ -1 \} \\
	&\simeq
	\mathbb{R}\Gamma((D/S)_{\crys}, \mathcal{O}_{D/S})\{ -1 \}
\end{align*}
and the right hand is identified to
\begin{align*}
	\mathbb{R}\Gamma ((X/S)_{\Rcrys}, Q_{X/S}^*(\mathcal{O}_{X/S}))[1]
	\simeq
	\mathbb{R}\Gamma((X/S)_{\crys}, \mathcal{O}_{X/S})[1].
\end{align*}

Therefore we have the Gysin map
\[
	G_{D/X}^{\crys} :
	\mathbb{R}\Gamma((D/S)_{\crys}, \mathcal{O}_{D/S})\{-1\}
	\to
	\mathbb{R}\Gamma((X/S)_{\crys}, \mathcal{O}_{X/S})[1].
\]
The Gysin map $G_{D/X}^{\crys}$ is independent of
the choice of the open covering $X=\bigcup_{i\in I_0}X^{i_0}$
and the diagram of embeddings
$(X^\bullet,D^\bullet)\hookrightarrow (Y^\bullet,Z^\bullet)$ (\cite{NS} Proposition 2.8.2).

We define the Gysin map of the de Rham-Witt complex
\[
	G_{D/X,m}^{\dRW} : W_m\Omega_{D/S}^\bullet\{-1\}
	\to
	W_m\Omega_{X/S}^\bullet [1]
\]
by the boundary morphism of the exact sequence
\[
	0
	\to
	W_m\Omega_{X/S}^\bullet
	\to
	W_m\Lambda_{(X,D)/S}^\bullet
	\xrightarrow{\Res}
	W_m\Omega_{D/S}^\bullet \{-1\}
	\to
	0.
\]
Since restriction maps are surjective, we also have the exact sequence:
\[
	0
	\to
	W\Omega_{X/S}^\bullet
	\to
	W\Lambda_{(X,D)/S}^\bullet
	\xrightarrow{\Res}
	W\Omega_{D/S}^\bullet \{-1\}
	\to
	0.
\]
Similarly we define $G_{D/X}^{\dRW}:W\Omega_{D/S}^\bullet \{-1\}\to W\Omega_{X/S}^\bullet [1]$.

We consider the compatibility of these Gysin maps.
Let $S$ be a scheme over $\mathbb{Z}_{(p)}$ in which $p$ is nilpotent,
$X$ a smooth scheme over $S$ and $D$ a smooth divisor of $X$ over $S$.
We imitate the method in \cite{NS}  \S 2.4 to make a simplicial log Frobenius lift.

Take an affine open covering $X=\bigcup_{i\in I_0}X^i$ of $X$ such that
there exists an \'{e}tale morphism $X^i\to \mathbb{A}_S^{k_i}$,
and $D^i:=D\cap X^i = \emptyset$ or $D^i$ is defined by the image of
$T_1\in \mathcal{O}_{\mathbb{A}_S^{k_i}}$ in $\mathcal{O}_{X^i}$.
Then each $X^i$ (resp. $D^i$) has a canonical Frobenius lift
(in the sense of \cite{LZDRW} \S 3.1) $\{X^i_m\}_m$ (resp. $\{D^i_m\}_m$)
and there is a morphism $\{D^i_m\}_m\to \{X^i_m\}_m$ of systems which is compatible with the structure of Frobenius lifts.
For $\underline{i}=( i_0,\ldots,i_r) \in I$,
we set $X^{\underline{i}}:=\bigcap_{\alpha = 0}^r X^{i_\alpha}$
and $D^{\underline{i}}:=\bigcap_{\alpha = 0}^r D^{i_\alpha}$.
Let $X^{(i_\alpha,\underline{i})}_m$ be the open subscheme of $X^{i_\alpha}_m$ defined by
the image of $X^{\underline{i}} \to X_m^{i_\alpha}$.
It is easy to see that the induced morphism
$X^{\underline{i}} \to X^{(i_\alpha,\underline{i})}_m$ is a closed immersion.
Set $D^{(i_\alpha,\underline{i})}_m:=D^{i_\alpha}_m\cap X^{(i_\alpha,\underline{i})}_m$ and $X^{\prime \underline{i}}_m:=\times_{W_m(S),\alpha = 0}^r X^{(i_\alpha,\underline{i})}_m$.
The closed immersion $X^{\underline{i}} \hookrightarrow X^{(i_\alpha,\underline{i})}_m$ induce
a closed diagonal immersion $X^{\underline{i}} \hookrightarrow X^{\prime \underline{i}}_m$.
We denote by $b:X^{\prime \prime \underline{i}}_m \to X^{\prime \underline{i}}_m$ the blow up of
$X^{\prime \underline{i}}_m$ along $D^{\prime \underline{i}}_m:=\times_{W_m(S),\alpha = 0}^r D^{(i_\alpha,\underline{i})}_m$.
We consider the complement $X^{\underline{i}}_m$ of the strict transform of
\[
	\bigcup_{\beta = 0}^r
	(X^{(i_0,\underline{i})}_m\times \cdots \times X^{(i_{\beta-1},\underline{i})}_m\times D^{(i_{\beta},\underline{i})}_m\times X^{(i_{\beta+1},\underline{i})}_m\times \cdots \times X^{(i_r,\underline{i})}_m)
\]
in $X^{\prime \prime \underline{i}}_m$ ,where fibered products $\times$ mean $\times_{W_m(S)}$, fibered products over $W_m(S)$.
Let $D^{\underline{i}}_m=X^{\underline{i}}_m \cap b^{-1}(D^{\prime \underline{i}}_m)$ be the exceptional divisor on $X^{\underline{i}}_m$.
Then $D^{\underline{i}}_m$ is a smooth divisor on $X^{\underline{i}}_m$ by \cite{NS} Theorem 2.4.2.
Considering the strict transform of the image of $X^{\underline{i}}$ of the diagonal embedding in $X^{\prime \underline{i}}_m$,
we have a closed immersion $X^{\underline{i}}\hookrightarrow X^{\underline{i}}_m$.
Moreover, we have $D^{\underline{i}}_m \times_{X^{\underline{i}}_m} X^{\underline{i}}\simeq D^{\underline{i}}$.

We interpret \cite{NS} Theorem 2.4.2 in our situation. We consider the case $D^{(i_\alpha,\underline{i})}_m \neq \emptyset$ for all $0\le \alpha \le r$.
Then the closed immersion $D^{(i_\alpha,\underline{i})}_m\hookrightarrow X^{(i_\alpha,\underline{i})}_m$ is defined by a global section $x^{(i_\alpha,\underline{i})}_m$ of $X^{(i_\alpha,\underline{i})}_m$, which corresponds to the image of $T_1$ of $\mathbb{A}_{W_m(S)}^{k_{i_\alpha}}$ under the map
$X^{(i_\alpha,\underline{i})}_m\to X^{i_\alpha}_m\to \mathbb{A}_{W_m(S)}^{k_{i_\alpha}}$.
Let $\mathcal{A}^{\underline{i}}_m :=\boxtimes_{\alpha =0}^r \mathcal{O}_{X^{(i_\alpha,\underline{i})}_m}$ be the structure sheaf of $X^{\prime \underline{i}}_m$.
Then $X^{\underline{i}}_m$ is the spectrum over $W_m(S)$ of the following sheaf of algebras
\[
	\mathcal{B}^{\underline{i}}_{m,\alpha} :=
	\mathcal{A}^{\underline{i}}_m [u_{m, 1}^{\pm 1}, \ldots, u_{m,  r}^{\pm 1}]
	/(x^{(i_\alpha,\underline{i})}_m-u_{m,\alpha} x^{(i_0,\underline{i})}_m \mid 1 \le \alpha \le r),
\]
where $u_{m,\alpha}$ are independent indeterminants and $D^{\underline{i}}_m$ corresponds to the equation $x^{(i_0,\underline{i})}_m=0$. $\{X^{\underline{i}}_m\}_m$ has a natural structure of Frobenius lift.
In fact, the natural morphism $X^{i_\alpha}_{m-1}\to X^{i_\alpha}_{m}$ induces $X^{(i_\alpha,\underline{i})}_{m-1}\to X^{(i_\alpha,\underline{i})}_{m}$ and it maps $x^{(i_\alpha,\underline{i})}_{m}$ to $x^{(i_\alpha,\underline{i})}_{m-1}$.
Hence we obtain a natural map $\mathcal{B}^{\underline{i}}_{m} \to \mathcal{B}^{\underline{i}}_{m-1}$ and it satisfies $W_{m-1}(\mathcal{O}_S)\otimes_{W_{m}(\mathcal{O}_S)}\mathcal{B}^{\underline{i}}_{m} \simeq \mathcal{B}^{\underline{i}}_{m-1}$.
Since the following diagram commutes
\[
\xymatrix{
	X^{i_\alpha}_{m-1} \ar[d]_{\Phi_{m}^{i_\alpha}}
	&
	\ar[l] X^{i_\alpha}_{1}=X^{i_\alpha}
	&
	\ar[l] X^{i_\alpha}/pX^{i_\alpha} \ar[d]^{\text{Frob}}
	\\
	X^{i_\alpha}_{m}
	&
	\ar[l] X^{i_\alpha}_{1}=X^{i_\alpha}
	&
	\ar[l] X^{i_\alpha}/pX^{i_\alpha}
}
\]
and the absolute Frobenius map $\text{Frob}$ and horizontal arrows are homeomorphisms, $\Phi_{m}^{i_\alpha}$ is also a homeomorphism.
It induces a map $\Phi_{m}^{(i_\alpha,\underline{i})}:X^{(i_\alpha,\underline{i})}_{m-1}\to X^{(i_\alpha,\underline{i})}_{m}$.
The family $\{\Phi_{m}^{(i_\alpha,\underline{i})}\}_\alpha$ defines a map $\mathcal{A}^{\underline{i}}_m\to \mathcal{A}^{\underline{i}}_{m-1}$.
Since it maps $x^{(i_\alpha,\underline{i})}_m$ to $\left (x^{(i_\alpha,\underline{i})}_{m-1} \right)^p$,
we can extend this map to $\mathcal{B}^{\underline{i}}_m\to \mathcal{B}^{\underline{i}}_{m-1}$ by $u_{m,\alpha} \mapsto u_{m-1,\alpha}^p$.

For $\Delta_m$, we use the following commutative diagram:
\[
\xymatrix{
	X^{\underline{i}} \ar[d] \ar[r]_-{\mathbf{w}_0} \ar@/^1pc/[rr]
	&
	W_m(X^{\underline{i}})\ar[d]
	&
	X^{(i_\alpha,\underline{i})}_m \ar[d]
	\\
	X^{i_\alpha}\ar[r]^-{\mathbf{w}_0}
	&
	W_m(X^{i_\alpha})\ar[r]^{\Delta_m}
	&
	X^{i_\alpha}_m.
}
\]

Since $X^{\underline{i}} \to X^{i_\alpha}$ is an open immersion, the morphism $W(X^{\underline{i}}) \to W(X^{i_\alpha})$ is also an open immersion.
We also know $X^{\underline{i}}$ and $W_m(X^{\underline{i}})$ have the same underlying topological space. Since $X^{(i_\alpha,\underline{i})}_m \to X^{i_\alpha}_m$ is also open,
there is a unique map $\Delta_m:W_m(X^{\underline{i}})\to X^{(i_\alpha,\underline{i})}_m$ which makes the diagram commutative.
These maps define $\Delta_m:W_m(X^{\underline{i}})\to X^{\prime \underline{i}}_m$.
We can define $\Delta_m:W_m(X^{\underline{i}})\to X^{\underline{i}}_m$
using this map and sending $u_{m,\alpha}$ to $[u_{1,\alpha}]$.

We have the following cartesian diagram:
\[
\xymatrix{
	\ar @{} [dr] |{\Box}
	X^\bullet \ar[r]
	&
	X_m^\bullet
	\\
	D^\bullet \ar[r] \ar[u]
	&
	D_m^\bullet \ar[u]
}
\]

Let $\overline{X}_m^\bullet$
(resp. $\overline{D}_m^\bullet$)
be the pd-envelope of the closed immersion $X^\bullet \hookrightarrow X_m^\bullet$
(resp. $D^\bullet \hookrightarrow D_m^\bullet$).
By \cite{NS} Lemma 2.2.16 (2), the natural morphism $\overline{D}_m^\bullet \to \overline{X}_m^\bullet \times_{X_m^\bullet} D_m^\bullet$ is an isomorphism.

Let $a_{\Zar}:D_{\Zar}\to X_{\Zar}$ be the canonical morphism of Zariski topoi and $\theta_{X,\Zar}:X_{\Zar}^\bullet \to X_{\Zar}, \theta_{D,\Zar}:D_{\Zar}^\bullet \to D_{\Zar}$ be the augmentation morphism.
Then the following commutative diagram shows the compatibility of Gysin maps:
\[
\xymatrix{
	\mathbb{R}a_{\Zar *} \mathbb{R}u_{D/W_m(S)*}\mathcal{O}_{D/W_m(S)}\{-1\} \ar[d]_\sim \ar[r]^{G_{D/X}^{\crys}}
	&
	\mathbb{R}u_{X/W_m(S)*}\mathcal{O}_{X/W_m(S)}[1] \ar[d]^\sim
	\\
	\mathbb{R}a_{\Zar *}\mathbb{R}\bar{u}_{D/W_m(S)*}Q_{D/W_m(S)}^*\mathcal{O}_{D/W_m(S)}\{-1\} \ar[d]_\sim \ar[r]
	&
	\mathbb{R}\bar{u}_{X/W_m(S)*}Q_{X/W_m}^*\mathcal{O}_{X/W_m}[1] \ar[d]^\sim
	\\
	\mathbb{R}a_{\Zar *} \mathbb{R} \theta_{D,\Zar *}
	(\Omega_{\overline{D}_m^\bullet/W_m(S)}^\bullet) \{-1\}
	\ar[r]^-{G_{\overline{D}_m^\bullet/\overline{X}_m^\bullet}^{\dR}}
	\ar[d]_\sim
	&
	\mathbb{R}\theta_{X,\Zar *}
	(\Omega_{\overline{X}_m^\bullet/W_m(S)}^\bullet)[1] \ar[d]^\sim
	\\
	\mathbb{R}a_{\Zar *} \mathbb{R}\theta_{D,\Zar *} (W_m\Omega_{D^\bullet/S}^\bullet) \{-1\} \ar[r]^{G_{D^\bullet/X^\bullet}^{\dRW}}
	&
	\mathbb{R}\theta_{X,\Zar *} (W_m\Omega_{X^\bullet/S}^\bullet)[1],
}
\]
where
$
\Omega_{\overline{D}_m^\bullet/W_m(S)}^\bullet
:=
\mathcal{O}_{\overline{D}_m^\bullet} \otimes_{\mathcal{O}_{D_m^\bullet}} \Omega_{D_m^\bullet/W_m(S)}^\bullet,
\Omega_{\overline{X}_m^\bullet/W_m(S)}^\bullet
:=
\mathcal{O}_{\overline{X}_m^\bullet} \otimes_{\mathcal{O}_{X_m^\bullet}} \Omega_{X_m^\bullet/W_m(S)}^\bullet
$.

Finally, we consider the relation between the boundary map of $E_1$-term of the $p$-adic weight spectral sequence and Gysin maps.
Let $Y$ be a stricly semistable log scheme over $S=\Spec (R,\bN)$.
We use the convension of \S \ref{subsect:comparison-semistable} and \S \ref{section:Poincare}.
Let $G_{q}^{\dRW} : W_m\Omega_{\mathring{Y}^{(j+1)}/R}^\bullet \{-1\}
\to W_m\Omega_{\mathring{Y}^{(j)}/R}^\bullet [1]$ be
the Gysin map corresponding to different immersions $\iota^{(q)}:\mathring{Y}^{(j+1)}\to \mathring{Y}^{(j)}$.
We set $G^{\dRW}:=\sum_{q=1}^{j+1}(-1)^{q+1}G_{q}^{\dRW}$,
and let $d^1$ be the boundary morphism of the exact sequence
\[
	0
	\to
	Gr_jW_m\widetilde{\Lambda}^\bullet
	\to
	(P_{j+1}/P_{j-1})W_m\widetilde{\Lambda}^\bullet
	\to
	Gr_jW_m\widetilde{\Lambda}^\bullet
	\to
	0.
\]

\begin{prop}
\label{Gys2} (cf. \cite{Mo} Proposition 4.11)
The following diagram is commutative:
\[
\xymatrixcolsep{4pc}
\xymatrix{
	Gr_{j+1}W_m\widetilde{\Lambda}^\bullet \ar[r]^{d^1} \ar[d]_{\Res}
	&
	Gr_j W_m\widetilde{\Lambda}^\bullet [1] \ar[d]^{\Res [1]}
	\\
	W_m\Omega_{\mathring{Y}^{(j+1)}/R}^\bullet \{-j-1\} \ar[r]^{G^{\dRW}\{-j\}}
	&
	W_m\Omega_{\mathring{Y}^{(j)}/R}^\bullet [1]\{-j\}.
}
\]
\end{prop}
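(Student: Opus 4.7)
The plan is to work Zariski-locally and reduce, via the \'etale base change of the log de Rham--Witt complex (Proposition \ref{prop:etale-base-change-property}), to the local model $Y = \Spec(R[T_1, \ldots, T_n]/(T_1 \cdots T_d), \bN^d)$. For each subset $J = \{\alpha_1, \ldots, \alpha_j\} \subset [1, d]$, the stratum $\mathring{Y}_J$ is smooth (locally a polynomial ring in $n - j$ variables) and $D_J := \bigcup_{\beta \in [1, d] \setminus J} Y_{J \cup \{\beta\}}$ is a simple normal crossing divisor on $\mathring{Y}_J$. The first step is to verify, using the basis of log basic Witt differentials (Corollary \ref{prop:expression-basic-witt-semistable}), that wedging with $d\log[T_{\alpha_1}] \cdots d\log[T_{\alpha_j}]$ yields an isomorphism of complexes
\[
	W_m \Lambda^{\bullet - j}_{(\mathring{Y}_J, D_J) / R} \xrightarrow{\sim} \bigl((P_{j+1}/P_{j-1})\, W_m \widetilde{\Lambda}^\bullet\bigr)_J,
\]
where the subscript $J$ denotes the local direct summand generated by those log basic Witt differentials whose log interval $I_{-\infty}$ contains $J$. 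Compatibility with the differential is automatic from $d(d\log[T_{\alpha_s}]) = 0$, and the bijection is natural with respect to $F$, $V$, and the filtration $P_\bullet$.

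Under this identification, combined with the Poincar\'e residue isomorphisms, the short exact sequence of the statement becomes, summed over $|J| = j$, the sequence
\[
	0 \to W_m \Omega^{\bullet - j}_{\mathring{Y}_J / R} \to W_m \Lambda^{\bullet - j}_{(\mathring{Y}_J, D_J) / R} \xrightarrow{\Res} \bigoplus_{\beta \in [1, d] \setminus J} W_m \Omega^{\bullet - j - 1}_{Y_{J \cup \{\beta\}} / R}\{-1\} \to 0,
\]
whose connecting morphism, by the definition of $G^{\dRW}$ given earlier in this section, is the Gysin map $G^{\dRW}_{D_J / \mathring{Y}_J}$ of the SNC divisor $D_J \hookrightarrow \mathring{Y}_J$. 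Since $D_J$ is a disjoint union of smooth components, this Gysin map decomposes as $\sum_\beta G^{\dRW}_{Y_{J \cup \{\beta\}} / \mathring{Y}_J}$. Reindexing by $J' := J \cup \{\beta\}$ of size $j + 1$, and moving the wedge factor $d\log[T_\beta]$ into its correct position $q$ within the totally ordered set $J' = \{\alpha_1 < \cdots < \alpha_{j+1}\}$, introduces the sign $(-1)^{q+1}$ and produces precisely the alternating sum $\sum_{q=1}^{j+1}(-1)^{q+1} G^{\dRW}_q$ of the statement.

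The main technical obstacle is verifying the local isomorphism displayed above. Although the bijection between log basic Witt differentials on $(\mathring{Y}_J, D_J)$ and those on $Y$ with log interval containing $J$ is combinatorially straightforward (send a weight $k'$ on $\mathring{Y}_J$ to the weight $k$ on $Y$ obtained by setting $k_{\alpha_s} = p^{-\infty}$), confirming it respects the differential and interacts correctly with the filtration $P_k$ requires a careful check using the explicit formulas for $d$, $F$, $V$ on basic Witt differentials from \S \ref{sect:basicwittdiffs}. The sign bookkeeping to recover $(-1)^{q+1}$ is elementary but needs care. Finally, globalising from local charts follows from the functoriality of the log de Rham--Witt complex (Proposition-Definition \ref{prop-def:chart-welldefined}).
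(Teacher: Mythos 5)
Your overall strategy — construct auxiliary short exact sequences mapping to the sequence $0 \to Gr_j \to P_{j+1}/P_{j-1} \to Gr_{j+1} \to 0$, and identify the connecting morphism — is the right one and is also the paper's route. However, the key local claim in your first step is incorrect as stated, and this propagates into the rest of the argument.

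The problem is the assertion that wedging with $d\log[T_{\alpha_1}]\cdots d\log[T_{\alpha_j}]$ gives an \emph{isomorphism} from $W_m\Lambda^{\bullet-j}_{(\mathring{Y}_J, D_J)/R}$ (for $|J|=j$) onto a summand of $(P_{j+1}/P_{j-1})W_m\widetilde{\Lambda}^\bullet$. A log basic Witt differential on $(\mathring{Y}_J,D_J)$ with two or more log poles (i.e. $|I_{-\infty}|\ge 2$ relative to $D_J$) gets sent, after wedging, to a class with $|I_{-\infty}|\ge j+2$, which is zero in $P_{j+1}/P_{j-1}$. So the displayed map is merely a surjection with nontrivial kernel. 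Correspondingly, your second displayed sequence
\[
0 \to W_m\Omega^{\bullet-j}_{\mathring{Y}_J/R} \to W_m\Lambda^{\bullet-j}_{(\mathring{Y}_J,D_J)/R} \xrightarrow{\Res} \bigoplus_\beta W_m\Omega^{\bullet-j-1}_{Y_{J\cup\{\beta\}}/R}\{-1\} \to 0
\]
is \emph{not} exact in the middle when $D_J$ has more than one component: the kernel of $\Res$ there is $P_1 W_m\Lambda^{\bullet-j}_{(\mathring{Y}_J,D_J)/R}$, which strictly contains $W_m\Omega^{\bullet-j}_{\mathring{Y}_J/R}$. As written there is no well-defined connecting morphism to invoke. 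A further issue: you describe $D_J$ as a ``disjoint union of smooth components'' — it is not; its components $Y_{J\cup\{\beta\}}$ intersect. What is disjoint is the normalization $\coprod_\beta Y_{J\cup\{\beta\}}$. Finally, the paper defines $G^{\dRW}_{D/X}$ only for a \emph{smooth} divisor $D$, so ``the Gysin map of the SNC divisor $D_J$'' is not among the available tools; you cannot then ``decompose'' it by components, because the map itself has not been defined.

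The fix that keeps your viewpoint is to replace $W_m\Lambda^{\bullet-j}_{(\mathring{Y}_J,D_J)/R}$ by its first filtered piece $P_1$, giving a genuine short exact sequence, and then observe directly that its boundary morphism restricted to each $\beta$-summand of $Gr_1$ is the smooth-divisor Gysin map $G^{\dRW}_{Y_{J\cup\{\beta\}}/\mathring{Y}_J}$. Alternatively, and more economically (this is the paper's actual proof), fix instead a set $J$ of size $j+1$ and an index $q$, so that $J_q$ has size $j$ and $Y_J\hookrightarrow Y_{J_q}$ is a \emph{single smooth divisor}. Then the short exact sequence $0\to W_m\Omega^\bullet_{\mathring{Y}_{J_q}}\{-j\}\to W_m\Lambda^\bullet_{(\mathring{Y}_{J_q},\mathring{Y}_J)}\{-j\}\to W_m\Omega^\bullet_{\mathring{Y}_J}\{-j-1\}\to 0$ is exact on the nose, has connecting morphism $\pm G^{\dRW}_q$ by definition, and maps compatibly into $0\to Gr_j\to P_{j+1}/P_{j-1}\to Gr_{j+1}\to 0$ by extending the residue isomorphism. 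The sign $(-1)^{q+1}$ is then absorbed into the residue map of the auxiliary sequence rather than extracted by a reordering argument, which is cleaner. The two viewpoints are dual — you fix a target summand in $Gr_j$, the paper fixes a source summand in $Gr_{j+1}$ — but the paper's choice avoids the need for any truncation because only one log pole ever appears.
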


\begin{proof}
Let $J=\{\alpha_1,\ldots,\alpha_{j+1}\}$ be a subset of $[1,d]$
and $J_q=\{\alpha_1,\ldots,\widehat{\alpha_q},\ldots,\alpha_{j+1} \}$.
The residue morphism $W_m\Omega_{\mathring{Y}_{J_q}/R}^{\bullet}\{-j\} \to Gr_jW_m\widetilde{\Lambda}^\bullet$ naturally extends to a morphism
$W_m\Lambda_{(\mathring{Y}_{J_q}, \mathring{Y}_J)/R}^{\bullet} \{-j\} \to (P_{j+1}/P_{j-1})W_m\widetilde{\Lambda}^\bullet$.
The commutativity follows from the following commutative diagram with exact rows:
\[
\xymatrix{
	0 \ar[r]
	&
	W_m\Omega_{\mathring{Y}_{J_q}}^\bullet \{-j\} \ar[d] \ar[r]
	&
	W_m\Lambda_{(\mathring{Y}_{J_q}, \mathring{Y}_J)}^{\bullet} \{-j\} \ar[d] \ar[r]^{(-1)^{q+1} \Res}
	&
	W_m\Omega_{\mathring{Y}_J}^\bullet \{-j-1\} \ar[r] \ar[d]
	&
	0
	\\
	0 \ar[r]
	&
	Gr_jW_m\widetilde{\Lambda}^\bullet \ar[r]
	&
	(P_{j+1}/P_{j-1})W_m\widetilde{\Lambda}^\bullet \ar[r]
	&
	Gr_{j+1}W_m\widetilde{\Lambda}^\bullet \ar[r]
	&
	0.
}
\]
\end{proof}

\begin{prop}
\label{prop:boudarymorphism} (cf. \cite{Nak} Theorem 10.1)

Let $\star$ be a positive integer or nothing.
Under the residue isomorphism,
\[
	d^1:\mathbb{H}_{\Zar}^h (\mathring{Y}, Gr_{k}W_\star A^\bullet)
	\to
	\mathbb{H}_{\Zar}^h (\mathring{Y}, Gr_{k-1}W_\star A^\bullet)
\]
is identified with the following morphism:
\begin{align*}
	&\sum_{j\ge \max \{-k,0 \}}
	[(-1)^j G^{\crys}\{-2j-k+1\}+(-1)^{j+k}\rho_\star\{-2j-k\}]:\\
	&\bigoplus\limits_{j\ge \max \{-k,0 \}}
	H_{\crys}^{h-2j-k} (\mathring{Y}^{(2j+k+1)}/W_\star(R))(-j-k)
	\to
	\\
	&\bigoplus\limits_{j\ge \max \{-k+1,0 \}}
	H_{\crys}^{h-2j-k+2} (\mathring{Y}^{(2j+k)}/W_\star(R))(-j-k+1),
\end{align*}
where $\rho_\star$ is the morphism defined in Lemma \ref{lem:Gys1}.
\end{prop}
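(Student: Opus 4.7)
The plan is to compute $d^1$ via the standard boundary recipe for the spectral sequence of the filtered complex $(W_\star A^\bullet, P)$. Given a cocycle $\bar{x} \in Gr_k W_\star A^h$ representing a class in $\mathbb{H}^h(\mathring{Y}, Gr_k W_\star A^\bullet)$, I would lift to $\tilde{x} \in P_k W_\star A^h$, apply the total differential $D = d_h + d_v$, and read off the class of $D\tilde{x}$ in $Gr_{k-1} W_\star A^{\bullet+1}$.

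The next step is to analyze the filtration behavior of $d_h$ and $d_v$ separately. Since $P_\ell W_\star A^{i,j}$ is the image of $P_{2j+\ell+1} W_\star \widetilde{\Lambda}^{i+j+1}$, and $d$ preserves $P_\bullet$ while $\theta_m\wedge$ raises $P_\ell$ by one, it follows that $d_h = (-1)^{j+1} d$ preserves $P_k$ on the total complex, whereas $d_v = (-1)^i \theta_m \wedge$ sends $P_k$ into $P_{k-1}$. The cocycle condition forces $d_h \tilde{x}$ into $P_{k-1}$ as well, so $D\tilde{x} \in P_{k-1}$. Placing $\tilde{x}$ at bi-degree $(h-j,j)$, the term $d_h\tilde{x}$ sits at $(h-j+1,j)$ and hence in the $j'=j$ summand of the decomposition of $Gr_{k-1}$, corresponding under residue to $\mathring{Y}^{(2j+k)}$; meanwhile, $d_v\tilde{x}$ sits at $(h-j,j+1)$ and hence in the $j'=j+1$ summand, corresponding to $\mathring{Y}^{(2j+k+2)}$. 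This already pins down the shape of the two contributions in the stated formula.

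For the $d_h$ contribution, I would observe that for fixed $j$, the pertinent boundary is precisely that of the short exact sequence
\[
	0 \to Gr_{2j+k} W_\star\widetilde{\Lambda}^\bullet \to (P_{2j+k+1}/P_{2j+k-1}) W_\star\widetilde{\Lambda}^\bullet \to Gr_{2j+k+1} W_\star\widetilde{\Lambda}^\bullet \to 0,
\]
applied to the residue image of $\bar{x}$, up to the sign $(-1)^{j+1}$ coming from $d_h$. By Proposition \ref{Gys2}, this boundary is identified under residue with the de Rham--Witt Gysin map $G^{\dRW}$, and the compatibility diagram between $G^{\dRW}$ and $G^{\crys}$ established at the end of \S \ref{subsect:comparison-semistable}, together with the comparison theorem \ref{thm:semistable}, translates it into $G^{\crys}$ on crystalline cohomology. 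For the $d_v$ contribution, Lemma \ref{lem:Gys1} identifies $\theta_m \wedge$ on $Gr_{2j+k+1} W_\star\widetilde{\Lambda}^{h+1}$ with $(-1)^{h-2j-k}\rho_\star$ under residue; combined with the sign $(-1)^{h-j}$ of $d_v$ at bi-degree $(h-j,j)$, this produces the $(-1)^{j+k}\rho_\star$ term.

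The main obstacle is careful sign bookkeeping. One must reconcile the signs $(-1)^{j+1}$ and $(-1)^i$ from the double complex structure with the explicit sign $(-1)^{\bullet - j}$ appearing in Lemma \ref{lem:Gys1} and with the implicit signs from Proposition \ref{Gys2} (involving the shift $[1]$ on the target), while the various shifts $\{-j\}_\bullet$, $\{-j-k\}_*$, $\{-2j-k\}$ in the residue decomposition must be passed through without introducing spurious signs. The structural identification of each contribution is automatic from the spectral sequence recipe, but securing the precise $(-1)^j$ and $(-1)^{j+k}$ in the stated formula requires a meticulous sign audit.
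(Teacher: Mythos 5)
Your proposal takes essentially the same approach as the paper, which defers to Nakkajima's Theorem 10.1 and invokes precisely the two ingredients you identify — Lemma \ref{lem:Gys1} for the $\theta_m\wedge$ contribution and Proposition \ref{Gys2} for the horizontal boundary. The only small issue is a misattribution: the compatibility diagram between $G^{\dRW}$ and $G^{\crys}$ is established at the end of the Gysin map subsection of \S\ref{sect:ssq-semistable}, not in \S\ref{subsect:comparison-semistable}.
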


\begin{proof}
We can copy the proof of \cite{Nak} Theorem 10.1 using Proposition \ref{lem:Gys1} and Proposition \ref{Gys2}.
\end{proof}

\subsection{Degeneration of weight spectral sequence}

In this section, we prove that the weight spectral sequence degenerates up to torsion
if the base scheme is a spectrum of a (not necessarily perfect) field using the method of \cite{Nak}.
Let $Y$ be a proper strictly semistable log scheme over a field $k$ of characteristic $p > 0$.

Let $s=(\Spec k, \bN\oplus k^*)$ be a log point with structure morphism defined by
$\bN \oplus k^* \ni (a,u) \mapsto 0$ for $a \neq 0$ and $(0,u) \mapsto u$.
By \cite{Na} Lemma 2.2,
there is a subring $A_1$ of $k$ which is finitely generated over $\mathbb{F}_p$
and a proper strictly semistable log scheme $\mathfrak{Y}$ over $s_1=(\Spec A_1,\bN\oplus A_1^*)$
with the structure morphism defined by $\bN \oplus A_1^* \ni (a,u) \mapsto 0$
for $a \neq 0$ and $(0,u) \mapsto u$ such that $\mathfrak{Y} \times_{s_1} s=Y$.
We can assume $A_1$ is smooth over $\mathbb{F}_p$.
Lift $A_1$ to a $p$-adically complete formally smooth algebra $A$ over $W(\mathbb{F}_p)=\mathbb{Z}_p$.
Let $S:=(\text{Spf}A,\bN\oplus A^*)$ be a $p$-adically log formal scheme over $\text{Spf}(\mathbb{Z}_p, \mathbb{Z}_p^*)$
such that the log structure of $S$ is induced by $\bN\oplus A^*\ni (a,u)\mapsto 0$ for $a\neq 0$
and $(0,u)\mapsto u$. $S$ has the pd-ideal $p\mathcal{O}_S$
and it defines the exact closed immersion $s_1 \hookrightarrow S$.
For an affine log formal open subscheme $T$ of $S$,
let $T_1 := T \otimes_{\mathbb{Z}_p} \mathbb{F}_p$ be its reduction.
We fix a lift of Frobenius $F_T : T \to T$  of $T_1$.
Set $\mathfrak{Y}_{T_1} := \mathfrak{Y} \times_{S_1} T_1$.
If $t$ is a closed point of $T_1$, set
$\mathfrak{Y}_t := \mathfrak{Y}_{T_1} \times_{T_1} t$.
In this situation, the canonical inclusion $A_1 \hookrightarrow k$ factors
$\mathcal{O}_{T_1}$. Let $\mathcal{O}_T\to W(k_t)$ (resp. $\mathcal{O}_T\to W(k))$
be the composition of the map $\mathcal{O}_T \to W(\mathcal{O}_{T_1})$ from
\cite{ICrys} (0.1.3.20) with the natural surjection $W(\mathcal{O}_{T_1}) \to W(k_t)$
(resp. the natural inclusion $W(\mathcal{O}_{T_1}) \hookrightarrow W(k)$).
We consider $W(k_t)$ and $W(k)$ as $\mathcal{O}_T$-algebra via these maps.

\begin{prop}
\label{prop:crysnoncanobc}
(cf. \cite{Nak} Proposition 3.2)
There exists an affine log formal open subscheme $T$ of $S$
such that the canonical morphism
\[
	H_{\logcrys}^h (\mathfrak{Y}_{T_1}/T) \otimes_{\mathcal{O}_T} W(k)
	\to
	H_{\logcrys}^h (Y/W(k,\bN))
\]
is an isomorphism.
\end{prop}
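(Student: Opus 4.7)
The plan is to adapt the proof of \cite{Nak} Proposition 3.2 to the logarithmic setting. The map $\mathcal{O}_T \to W(k)$ factors as $\mathcal{O}_T \to W(\mathcal{O}_{T_1}) \hookrightarrow W(k)$, with the first arrow (a pd-morphism) built from the Frobenius lift $F_T$ and the second induced by the inclusion $\mathcal{O}_{T_1} \hookrightarrow k$. I will establish the base-change isomorphism in the derived category at each truncation level $W_m$, apply $\mathbb{R}\varprojlim_m$, and then pass to ordinary cohomology by shrinking $T$ so that the cohomology groups become free.

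For each integer $m \ge 1$, two successive applications of the log crystalline base change theorem (\cite{KatoLog} Theorem (6.10)) give
\[
\mathbb{R}\Gamma_{\logcrys}(\mathfrak{Y}_{T_1}/T) \otimes^{\mathbb{L}}_{\mathcal{O}_T} W_m(k) \simeq \mathbb{R}\Gamma_{\logcrys}(Y/W_m(k,\bN)),
\]
applied in turn along the pd-thickening $\mathcal{O}_T \to W_m(\mathcal{O}_{T_1})$ of $\mathcal{O}_{T_1}$ and along $W_m(\mathcal{O}_{T_1}) \to W_m(k)$ (induced by $\mathcal{O}_{T_1} \hookrightarrow k$, with $Y = \mathfrak{Y}_{T_1} \times_{T_1} \Spec k$). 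Taking $\mathbb{R}\varprojlim_m$ and using Proposition \ref{prop:derivedlimit} on the right hand side yields, provided $\mathbb{R}\varprojlim_m$ commutes with $-\otimes^{\mathbb{L}}_{\mathcal{O}_T} W_m(k)$ on the left,
\[
\mathbb{R}\Gamma_{\logcrys}(\mathfrak{Y}_{T_1}/T) \otimes^{\mathbb{L}}_{\mathcal{O}_T} W(k) \simeq \mathbb{R}\Gamma_{\logcrys}(Y/W(k,\bN)).
\]

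I would then shrink $T$ to a nonempty affine formal open of $S$ over which every nonzero $H^h_{\logcrys}(\mathfrak{Y}_{T_1}/T)$ is a finite free $\mathcal{O}_T$-module. Perfectness (from the log version of Proposition \ref{prop:derivedlimit}) ensures each $H^h$ is finitely generated over the noetherian $\mathbb{Z}_p$-flat ring $A$; generic freeness produces a dense open of $\Spec A$ on which all of these finitely many groups are simultaneously free, and the corresponding formal open of $S$ still dominates the generic point over which $\Spec k$ sits (since $A_1 \hookrightarrow k$ is injective). On such a $T$, the derived tensor product agrees with the underived one at the level of each $H^h$, so the displayed derived isomorphism specializes to the stated one.

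The main obstacle will be rigorously justifying the commutation $\mathbb{R}\varprojlim_m \circ (-\otimes^{\mathbb{L}}_{\mathcal{O}_T} W_m(k)) \simeq (-) \otimes^{\mathbb{L}}_{\mathcal{O}_T} W(k)$. After possibly further shrinking $T$, represent the perfect complex $\mathbb{R}\Gamma_{\logcrys}(\mathfrak{Y}_{T_1}/T)$ by a bounded complex $P^\bullet$ of finite free $\mathcal{O}_T$-modules; then each $P^i \otimes_{\mathcal{O}_T} W_m(k) \cong W_m(k)^{n_i}$, the transition maps are surjective so $\mathbb{R}^1\varprojlim$ vanishes termwise, and $\varprojlim_m W_m(k)^{n_i} = W(k)^{n_i} = P^i \otimes_{\mathcal{O}_T} W(k)$. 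This identifies the derived limit with the ordinary tensor by $W(k)$ and closes the argument.
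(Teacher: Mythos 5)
Your first two steps line up with the paper: the base change identity $\mathbb{R}\Gamma_{\logcrys}(\mathfrak{Y}_{T_1}/T_n)\otimes^{\mathbb{L}}_{\mathcal{O}_{T_n}}W_n(k)\simeq\mathbb{R}\Gamma_{\logcrys}(Y/W_n(k,\bN))$ (which the paper does in one application of \cite{KatoLog} (6.10) rather than two, but that is cosmetic), the representation by a strictly perfect complex $P^\bullet$, and the Mittag-Leffler argument identifying $\mathbb{R}\varprojlim_n(P^\bullet\otimes_{\mathcal{O}_T}W_n(k))$ with $P^\bullet\otimes_{\mathcal{O}_T}W(k)$. Your last paragraph is exactly the paper's argument for this derived-level identification.

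The gap is in the step from the derived isomorphism to the one on cohomology. You propose to shrink $T$ until each $H^h_{\logcrys}(\mathfrak{Y}_{T_1}/T)$ becomes a finite free $\mathcal{O}_T$-module, invoking generic freeness over the domain $A=\mathcal{O}_T$. But generic freeness produces a nonzero $g\in A$ with $(H^h)_g$ free, and for this to shrink $T$ as an affine \emph{formal} open of $S$ one needs the image $\bar g\in A_1=A/pA$ to be nonzero. Nothing guarantees that. If $H^h$ has $p$-power torsion whose support contains the generic point of $V(p)\subset\Spec A$ (i.e.\ if $H^h\otimes_A A_{pA}$ has nonzero $p$-torsion), then every $g$ with $(H^h)_g$ free lies in $pA$, so $D(\bar g)$ is empty and there is no nonempty formal open over which $H^h$ is free. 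Crystalline cohomology does have $p$-torsion in general, so this failure mode is real, not hypothetical. Your sentence ``the corresponding formal open of $S$ still dominates the generic point over which $\Spec k$ sits'' glides over precisely this point: the freeness locus is an open of $\Spec A$, not of $\Spec A_1$, and these are very different things.

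What the proposition actually requires is strictly weaker: the vanishing of $\mathrm{Tor}_1^{\mathcal{O}_T}(L,W(k))$ for $L=H^j(P^\bullet)$ and $L=\mathrm{Im}(d^j)$, for all $j$. That can hold even when $H^j$ is not free --- for instance $L=\mathcal{O}_T/p\mathcal{O}_T$ has $\mathrm{Tor}_1^{\mathcal{O}_T}(L,W(k))\cong W(k)[p]=0$ because $W(k)$ is $p$-torsion free, yet $L$ cannot be made free by any formal-open shrinking. This is why the paper invokes \cite{Nak} Lemma 3.1, which produces the $\mathrm{Tor}_1$-vanishing after shrinking $T$ directly, using the structure of the map $\mathcal{O}_T\to W(\mathcal{O}_{T_1})\hookrightarrow W(k)$ rather than generic freeness of cohomology modules. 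You should replace the generic-freeness step by a proof or citation of that Tor-vanishing statement; as written, the argument does not close.
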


\begin{proof}
For an affine log formal open subscheme $T$ of $S$, we find
\[
	\mathbb{R}\Gamma_{\logcrys}(\mathfrak{Y}_{T_1}/T_n) \otimes_{\mathcal{O}_{T_n}}^{\mathbb{L}} W_n(k)
	\simeq
	\mathbb{R}\Gamma_{\logcrys} (Y/W_n(k,\bN))
\]
by the base change theorem (\cite{KatoLog} (6.10)).

Let $(P^\bullet,d^\bullet)$ be a strictly perfect complex (Definition \ref {defn:strictly-perfect-cpx} (2))
which represents $\mathbb{R}\Gamma_{\logcrys}(\mathfrak{Y}_{T_1}/T)$. Then
\begin{align*}
	\mathbb{R}\Gamma_{\logcrys}(\mathfrak{Y}_{T_1}/T_n)
	\otimes_{\mathcal{O}_{T_n}}^{\mathbb{L}}W_n(k)
	&\simeq
	\mathbb{R}\Gamma_{\logcrys}(\mathfrak{Y}_{T_1}/T)
	\otimes_{\mathcal{O}_{T}}^{\mathbb{L}} \mathcal{O}_{T_n}
	\otimes_{\mathcal{O}_{T_n}}^{\mathbb{L}} W_n(k)\\
	&\simeq P^\bullet \otimes_{\mathcal{O}_{T}} W_n(k).
\end{align*}
Since $P^\bullet \otimes_{\mathcal{O}_{T}} W_n(k)$ satisfies the Mittag-Leffler condition
\begin{align*}
	\mathbb{R}\Gamma_{\logcrys}(Y/W(k,\bN))
	&= \mathbb{R}\varprojlim_n \mathbb{R}\Gamma_{\logcrys}(Y/W_n(k,\bN))\\
	& \simeq \mathbb{R}\varprojlim_n (\mathbb{R}\Gamma_{\logcrys}(\mathfrak{Y}_{T_1}/T_n)\otimes_{\mathcal{O}_{T_n}}^{\mathbb{L}}W_n(k))\\
	& \simeq \mathbb{R}\varprojlim_n (P^\bullet \otimes_{\mathcal{O}_T}W_n(k))\\
	& \simeq \varprojlim_n (P^\bullet \otimes_{\mathcal{O}_T} W_n(k))\\
	& \simeq P^\bullet \otimes_{\mathcal{O}_T} W(k).
\end{align*}
By \cite{Nak} Lemma 3.1, we can suppose $\text{Tor}_1^{\mathcal{O}_T}(L|_T,W(k))=0$
for $L=H^j(P^\bullet)$ and $\text{Im}(d^j)$ for any $j$ by shrinking $T$ if necessary.
Then we get
\begin{align*}
	H_{\logcrys}^h (\mathfrak{Y}_{T_1}/T) \otimes_{\mathcal{O}_T} W(k)
	&= H^h (P^\bullet) \otimes_{\mathcal{O}_T} W(k) \\
	&= H^h (P^\bullet \otimes_{\mathcal{O}_T} W(k))\\
	&= H_{\logcrys}^h (Y/W(k,\bN)).
\end{align*}
\end{proof}

\begin{thm}
\label{thm:e2degeneration}
(\cite{Nak} Proposition 3.5, Theorem 3.6)
Let $Y$ be a semistable log scheme over any field of characteristic $p>0$
and $K$ be the fraction field of $W(k)$.
The $p$-adic weight spectral sequence (Theorem \ref{thm:weight-spectral-sequence} (2))
degenerates at $E_2$ after tensoring with $K$.
\end{thm}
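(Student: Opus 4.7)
The plan is to follow Nakkajima's specialization method from \cite{Nak} \S 3: spread $Y$ out over a $p$-adic lift of a smooth $\mathbb{F}_p$-algebra, reduce the $E_2$-degeneration to the fiber at closed points (whose residue fields are finite), and invoke Mokrane's argument combined with crystalline Katz--Messing purity in that case. The first step is to apply Proposition \ref{prop:crysnoncanobc}: choose a smooth finitely generated $\mathbb{F}_p$-subalgebra $A_1 \subset k$, a proper strictly semistable $\mathfrak{Y}$ over $s_1 = (\Spec A_1, \bN\oplus A_1^*)$ with $\mathfrak{Y}\times_{s_1} s = Y$, and a formally smooth $p$-adically complete lift $S = (\mathrm{Spf}\, A, \bN \oplus A^*)$. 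After shrinking to an affine open $T \subset S$, the isomorphism of the proposition holds for all $h$, and, applying the same lemma to each smooth closed stratum $\mathring{\mathfrak{Y}}^{(r)}$ of $\mathfrak{Y}$ and shrinking further, one may ensure analogous base change isomorphisms for each $H^n_{\crys}(\mathring{\mathfrak{Y}}^{(r)}_{T_1}/T)$ both along $\mathcal{O}_T \to W(k)$ and along $\mathcal{O}_T \to W(k_t)$ for every closed point $t$ of $T_1$. Running the construction of Section \ref{sect:ssq-semistable} over the relative base $T$ yields a relative $p$-adic weight spectral sequence
\[
E_1^{-k,h+k}(T) = \bigoplus_{j \geq \max\{-k,0\}} H^{h-2j-k}_{\crys}(\mathring{\mathfrak{Y}}^{(2j+k+1)}_{T_1}/T)(-j-k) \Rightarrow H^h_{\logcrys}(\mathfrak{Y}_{T_1}/T)
\]
compatible with all these base changes.

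Second, for each closed point $t \in T_1$, the residue field $k_t$ is finite, $\mathfrak{Y}_t := \mathfrak{Y}_{T_1} \times_{T_1} t$ is a proper strictly semistable log scheme over the log point $(\Spec k_t, \bN \oplus k_t^*)$, and the base-changed spectral sequence is precisely the $p$-adic weight spectral sequence for $\mathfrak{Y}_t/W(k_t,\bN)$. Crystalline Katz--Messing purity, applied to the smooth proper $\mathring{\mathfrak{Y}}_t^{(2j+k+1)}/k_t$, shows that the corresponding summand of $E_1 \otimes K_t$ is Frobenius-pure of weight $h+k$ (after accounting for the Tate twist $(-j-k)$). Since the differential $d_r$ for $r \geq 2$ commutes with Frobenius while strictly shifting Frobenius-weight by $r-1 > 0$, it must vanish on each $K_t$-fiber; this is Mokrane's argument (\cite{Mo}).

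The main obstacle will be to deduce $d_r\otimes K = 0$ for $r \geq 2$ in the spectral sequence for $Y$ from vanishing on every $K_t$-fiber. Following \cite{Nak} Theorem 3.6, after a final shrinking of $T$ I would represent each relevant cohomology group by a strictly perfect complex of $\mathcal{O}_T$-modules so that $d_r$ becomes an honest $\mathcal{O}_T$-linear map between finitely generated modules, and argue that since the closed points of $T_1$ are Jacobson-dense in $\Spec(\mathcal{O}_T)$ (as $A_1$ is a finitely generated $\mathbb{F}_p$-algebra), vanishing at all $K_t$-fibers forces $d_r \otimes \mathcal{O}_T[1/p] = 0$; base-changing along $\mathcal{O}_T \to W(k)$ and inverting $p$ then yields the desired $E_2$-degeneration over $K$. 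The delicate point is that the $K_t$ are characteristic-zero residue fields of $\mathcal{O}_T$ at maximal ideals lying above $p$, so the density argument must be applied carefully at the level of the generic fiber $\Spec(\mathcal{O}_T[1/p])$ rather than naively modulo $p$; Nakkajima handles this by a direct linear-algebra argument on the strictly perfect representatives, and the same reasoning should transport to the present logarithmic setting.
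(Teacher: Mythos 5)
Your overall strategy — spread $Y$ out via Proposition~\ref{prop:crysnoncanobc} to a family $\mathfrak{Y}$ over $T$, reduce to closed fibers $\mathfrak{Y}_t$ over finite fields, and settle the finite-field case by Mokrane's purity argument (the paper cites \cite{CL} Th\'eor\`eme~1.2 for this) — matches the paper. The divergence is in the final step, and it matters.

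The paper finishes by a pure dimension count. After a further shrinking of $T$ and applying Deligne's specialization lemma (\cite{IReport} (3.10)), both the $E_2$-term $G^{-k,h+k}$ and the abutment $H^h_{\logcrys}(\mathfrak{Y}_{T_1}/T)$ become constant modules $M^{-k,h+k}\otimes_{\mathbb{Z}_p}\mathcal{O}_T$ and $M\otimes_{\mathbb{Z}_p}\mathcal{O}_T$. Together with base change for Gysin maps (\cite{B} VI Theorem 4.3.12) to handle $d_1$, this gives
\[
\dim_K\bigl(E_2^{-k,h+k}(Y/W(k,\bN))\otimes K\bigr)
=\dim_{\mathbb{Q}_p}\bigl(M^{-k,h+k}\otimes\mathbb{Q}_p\bigr)
=\dim_{K_t}\bigl(E_2^{-k,h+k}(\mathfrak{Y}_t)\otimes K_t\bigr),
\]
and analogously for the abutment. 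Degeneration at $E_2$ is equivalent to $\sum_k\dim E_2^{-k,h+k}=\dim H^h$, which holds over $K_t$ by purity, hence over $K$. Note this needs compatibility of $E_1$, $d_1$ and the abutment with base change, but never of the higher pages $E_r$, $r\geq 3$.

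Your proposed density argument on $\Spec\mathcal{O}_T[1/p]$ is where the gap is. First, the specializations $\mathcal{O}_T\to W(k_t)\hookrightarrow K_t$ are \emph{not} localizations at maximal ideals of $\mathcal{O}_T[1/p]$ — the ideals $\ker(\mathcal{O}_T\to W(k_t))$ are non-maximal primes, and ``Jacobson density in the generic fiber'' does not apply in the form you invoke. The correct substitute is that $\bigcap_t\ker(\mathcal{O}_T\to W(k_t))=0$, which follows from the $p$-adic separatedness of $\mathcal{O}_T$ together with the Jacobson property of the smooth $\mathbb{F}_p$-algebra $A_1$ (if $f\in\mathcal{O}_T$ dies in every $W(k_t)$ then $\bar f\in A_1$ vanishes at every closed point, so $f\in p\mathcal{O}_T$; since each $W(k_t)$ is $p$-torsion free one iterates to get $f\in\bigcap_n p^n\mathcal{O}_T=0$). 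Second, even granting this, to conclude $d_r=0$ for $r\geq 3$ you would need the higher pages $E_r$ to be represented over $\mathcal{O}_T$ by finitely generated modules whose formation commutes with the base changes $\mathcal{O}_T\to W(k_t)$ and $\mathcal{O}_T\to W(k)$; this requires an extra inductive shrinking-and-flatness argument that you have not supplied. The paper's dimension count sidesteps both issues at once, which is why it is the approach actually used in \cite{Nak} Theorem~3.6 as well.
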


\begin{proof}
By \cite{Nak} Corollary 3.4 and Proposition \ref{prop:crysnoncanobc}, there exists an affine log formal scheme $T$ of $S$ such that for any closed point $t\in T$ the canonical morphisms
\begin{align*}
	H_{\logcrys}^h(\mathfrak{Y}_{T_1}/T)\otimes_{\mathcal{O}_T} W(k_t)&\to H_{\logcrys}^h(\mathfrak{Y}_t/W(k_t,\bN)),\\
	H_{\logcrys}^h(\mathfrak{Y}_{T_1}/T)\otimes_{\mathcal{O}_T} W(k)&\to H_{\logcrys}^h(Y/W(k,\bN))
\end{align*}
are isomorphisms.
By Deligne's remark (\cite{IReport} (3.10)),
we can assume there exists a finitely generated $\mathbb{Z}_p$-module $M$
such that $H_{\logcrys}^h(\mathfrak{Y}_{T_1}/T) \simeq M\otimes_{\mathbb{Z}_p}\mathcal{O}_T$ by shrinking $T$ if necessary.

By Corollary 3.4 of \cite{Nak} and Proposition \ref{prop:crysnoncanobc},
there exists isomorphisms
\begin{align*}
	H_{\crys}^h (\mathring{Y}^{(j)}/W(k))
	&\simeq
	H_{\crys}^h (\mathring{\mathfrak{Y}}_{T_1}^{(j)}/\mathring{T}) \otimes_{\mathcal{O}_T} W(k),\\
	H_{\crys}^h (\mathring{\mathfrak{Y}}_t^{(j)}/W(k_t))
	&\simeq
	H_{\crys}^h (\mathring{\mathfrak{Y}}_{T_1}^{(j)}/\mathring{T}) \otimes_{\mathcal{O}_T} W(k_t)
\end{align*}
for all $j$ and for all closed points $t$ of $T$ by shrinking $T$ if necessary. Set
\[
	F^{-k,h+k}:=
	\bigoplus_{j\ge \max \{-k,0\}} H_{\crys}^{h-2j-k} (\mathring{\mathfrak{Y}}_{T_1}^{(2j+k+1)}/\mathring{T})
\]
and
\[
	G^{-k,h+k}:=
	\ker (F^{-k,h+k} \to F^{-k+1,h+k})/
	\text{image}(F^{-k-1,h+k} \to F^{-k,h+k}),
\]
where the morphisms $F^{-k,h+k} \to F^{-k+1,h+k}$ and $F^{-k-1,h+k} \to F^{-k,h+k}$ are
the sums of the induced morphisms of closed immersions
and Gysin maps as in Proposition \ref{prop:boudarymorphism}.

By the base change of Gysin maps of crystalline cohomology (\cite{B} VI Theorem 4.3.12)
and by \cite{Nak} Lemma 3.1, we obtain
\begin{align*}
	E_2^{-k,h+k} (Y/W(k,\bN))
	&= G^{-k,h+k} \otimes_{\mathcal{O}_T} W(k)\\
	E_2^{-k,h+k}(\mathfrak{Y}_t/W(k_t,\bN))
	&= G^{-k,h+k} \otimes_{\mathcal{O}_T} W(k_t)
\end{align*}
for all $k,h$ by shrinking $T$ if necessary.
Using Deligne's remark, we can assume that there exists a finitely generated $\mathbb{Z}_p$-module $M^{-k,h+k}$
such that $M^{-k,h+k}\otimes_{\mathbb{Z}_p}\mathcal{O}_T\simeq G^{-k,h+k}$.
Let $K_t$ be the fraction field of $W(k_t)$. We have
\begin{align*}
	\dim_{K} (E_2^{-k,h+k} (Y/W(k,\bN)) \otimes_{W(k)} K)
	&= \dim_{\mathbb{F}_p} (M^{-k,h+k} \otimes_{\mathbb{Z}_p} \mathbb{F}_p)\\
	&= \dim_{K_t}(E_2^{-k,h+k} (\mathfrak{Y}_t/W(k_t,\bN)) \otimes_{W(k_t)} K_t),\\
	\dim_{K} (H_{\logcrys}^h (Y/W(k,\bN)) \otimes_{W(k)} K)
	&= \dim_{\mathbb{F}_p}(M \otimes_{\mathbb{Z}_p} \mathbb{F}_p)\\
	&= \dim_{K_t} (H_{\logcrys}^h (\mathfrak{Y}_t/W(k_t,\bN)) \otimes_{W(k_t)} K_t).
\end{align*}
By the purity of the weight of the crystalline cohomology (\cite{CL} Th\'eor\`eme 1.2),
this theorem is true for $\mathfrak{Y}_t/W(k_t,\bN)$
because $t$ is the spectrum of a finite field.
By the above calculation of dimensions,
we see this theorem is true for any field of characteristic $p>0$.
\end{proof}

\section{Weight spectral sequence and its degeneration for open smooth varieties}
\label{sect:ssq-open-smooth}

Let $R$ be a Noetherian $\mathbb{Z}_{(p)}$-algebra in which $p$ is nilpotent.
Let $X$ be a proper smooth scheme over $R$ and $D$ an SNCD on $X$ over $R$.
We consider the log scheme $(X, D)$ with respect to the Zariski topology.
By Theorem \ref{thm:comparison-theorem-NCD},
we have a canonical isomorphism
\[
	H_{\logcrys}^h ((X,D)/W(R))
	\simeq
	\mathbb{H}_{\Zar}^h (X,W\Lambda_{(X,D)/R}^\bullet).
\]
Let $D_1,\ldots ,D_d$ be the irreducible components of $X$.
For a subset $J = \{\alpha_1,\ldots, \alpha_j\}$ of $[1,d]$,
let $D_J$ be $D_{\alpha_1}\cap \cdots \cap D_{\alpha_j}$.
We set $D^{(j)}$ by $\coprod_{|J|=j}D_J$ for all nonnegative number $j$.
Then we can show that the canonical morphism
\[
	W\Omega_{D^{(j)}/R}^{\bullet-j}(-j)
	\to
	Gr_j W\Lambda_{(X,D)/R}^\bullet
\]
is an isomorphism as \S \ref{section:Poincare}.
We also call the map $Gr_j W\Lambda_{(X,D)/R}^\bullet \xrightarrow{\sim} W\Omega_{D^{(j)}/R}^{\bullet-j}(-j)$ the Poincar\'e residue map.
Using this, we obtain the following spectral sequence
\[
	E_{1}^{-k,h+k}
	=
	H_{\crys}^{h-k} (D^{(k)}/W(R))(-k)
	\Rightarrow
	H_{\logcrys}^h ((X,D)/W(R)),
\]
which we also call the $p$-adic weight spectral sequence.

\begin{thm}
When $R=k$ is a field, the $p$-adic weight spectral sequence degenerates at $E_2$  after tensoring with the fraction field of $W(k)$.
\end{thm}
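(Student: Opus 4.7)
The plan is to adapt the specialization argument of Theorem~\ref{thm:e2degeneration} to the SNCD setting, the only essential change being that the weight filtration now arises from the normal crossing divisor rather than from the log structure of the base. Using \cite{Na} Lemma~2.2 applied to $(X,D)$ and to each intersection $D^{(j)}$, one obtains a smooth finitely generated $\mathbb{F}_p$-subalgebra $A_1\subset k$, a proper smooth $\mathfrak{X}\to\Spec A_1$, and an SNCD $\mathfrak{D}\subset\mathfrak{X}$ such that $(\mathfrak{X},\mathfrak{D})\times_{A_1} k=(X,D)$ and each $\mathfrak{D}^{(j)}$ is smooth over $A_1$. Lifting $A_1$ to a $p$-adically complete formally smooth $\mathbb{Z}_p$-algebra $A$ and choosing a Frobenius lift on $\text{Spf}\,A$, I would then argue as in Proposition~\ref{prop:crysnoncanobc}: after shrinking to an affine formal open $T\subset\text{Spf}\,A$, the canonical maps
\[
H^h_{\logcrys}((\mathfrak{X}_{T_1},\mathfrak{D}_{T_1})/T)\otimes_{\mathcal{O}_T} W(k)\xrightarrow{\sim} H^h_{\logcrys}((X,D)/W(k))
\]
and the analogous maps at closed points $t$ of $T_1$ and for the crystalline cohomology of each stratum $\mathfrak{D}^{(j)}_{T_1}$ become isomorphisms.

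Next, I would verify that the $p$-adic weight spectral sequence for $((X,D)/W(k))$ is obtained by base change from the corresponding one over $T$, using that both the Poincar\'e residue isomorphism and the Gysin-type $d_1$ (the SNCD analogue of Proposition~\ref{prop:boudarymorphism}) are compatible with base change. Combining the isomorphisms above with Deligne's remark \cite{IReport} (3.10) and \cite{Nak} Lemma~3.1, after further shrinking $T$ there exist finitely generated $\mathbb{Z}_p$-modules $M^h$ and $M^{-k,h+k}$ giving
\[
\dim_{K}\bigl(E_2^{-k,h+k}((X,D)/W(k))\otimes K\bigr)=\dim_{\mathbb{F}_p}(M^{-k,h+k}\otimes\mathbb{F}_p)=\dim_{K_t}\bigl(E_2^{-k,h+k}((\mathfrak{X}_t,\mathfrak{D}_t)/W(k_t))\otimes K_t\bigr),
\]
together with the same chain of equalities for the abutments $H^h_{\logcrys}$, where $K$ and $K_t$ denote the fraction fields of $W(k)$ and $W(k_t)$. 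Thus it suffices to prove $E_2$-degeneration at every closed point $t$.

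For such $t$, the stratum $\mathfrak{D}^{(k)}_t$ is smooth and proper over the finite field $k_t$, so by Chiarellotto--Le Stum \cite{CL} Th\'eor\`eme~1.2 the group $H^{h-k}_{\crys}(\mathfrak{D}^{(k)}_t/W(k_t))\otimes K_t$ is pure of weight $h-k$; the Tate twist $(-k)$ then makes $E_1^{-k,h+k}\otimes K_t$ pure of weight $h+k$. A differential $d_r$ with $r\ge 2$ would send a weight-$(h+k)$ class into $E_r^{-k+r,h+k-r+1}\otimes K_t$, which is a subquotient of $H^{h-k+r+1}_{\crys}(\mathfrak{D}^{(k-r)}_t/W(k_t))(r-k)\otimes K_t$ and therefore pure of weight $h+k-r+1\neq h+k$; Frobenius-equivariance then forces $d_r=0$. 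The dimension comparison above propagates this degeneration back to the original field $k$.

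The main obstacle will be verifying that the SNCD weight spectral sequence carries a Frobenius action compatible with the Tate twists appearing in the $E_1$-page, since the pointwise application of Chiarellotto--Le Stum requires the Poincar\'e residue isomorphism $\mathrm{Gr}_j W\Lambda^\bullet_{(X,D)/R}\simeq W\Omega^{\bullet-j}_{D^{(j)}/R}(-j)$ to intertwine the induced Frobenius with $p^j$ times the absolute Frobenius on the right-hand side. Once the SNCD analogues of the detailed Frobenius-compatibility statements of \S\ref{sect:ssq-semistable} are written out, the specialization/purity argument above transfers without essential change.
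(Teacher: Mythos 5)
Your proof is correct and follows essentially the same specialization strategy as the paper's one-line proof (``same as Theorem~\ref{thm:e2degeneration}''), namely: descend $(X,D)$ to a smooth finitely generated $\mathbb{F}_p$-algebra $A_1\subset k$ via \cite{Na} Lemma~2.2, lift to a formal scheme over $\mathbb{Z}_p$, invoke the base-change isomorphisms of Proposition~\ref{prop:crysnoncanobc} together with Deligne's remark and \cite{Nak} Lemma~3.1 to reduce the dimension counts to closed points $t$ with finite residue field, and conclude by the purity theorem of Chiarellotto--Le Stum. You are also right that the only technical point deferred to the SNCD analogue of the Frobenius-compatibility discussion of \S\ref{sect:ssq-semistable} is that $\Res$ intertwines the Frobenius on $\mathrm{Gr}_j W\Lambda^\bullet_{(X,D)/R}$ with $p^j$ times the absolute Frobenius on $W\Omega^{\bullet-j}_{D^{(j)}/R}$, which the paper leaves implicit as well.
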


\begin{proof}
The proof is the same as that of Theorem \ref{thm:e2degeneration} (cf. \cite{Nak} Theorem 5.2).
\end{proof}

\section{Overconvergent log de Rham-Witt complex in SNCD case}
\label{sect:overconv}

In this section, we extend the overconvergent de Rham-Witt complex of \cite{DLZODRW} to
log schemes associated to schemes with simple normal crossing divisor over a perfect field.
In this section, we work on the Zariski topology when we consider log structures and log de Rham-Witt complexes (See \S \ref{subsect:topology-of-log-structures}).

\subsection{Overconvergent log de Rham-Witt complex}

Let $k$ be a perfect field of positive characteristic $p$
and $K = W(k)[1/p]$ its fraction field.
Let $A = k[T_1,\ldots,T_n]$ be a polynomial ring.
We consider the pre-log ring $(A,\bN^d),\bN^d\ni e_i\mapsto T_i \in A$ for $d\le n$.
Recall that an element $\omega$ of $W\Lambda_{(A,\bN^d)/k}^\bullet$
is uniquely written as a convergent sum (Proposition \ref{prop:expression-basic-witt-sncd})
\[
	\omega = \sum_{k,\mathcal{P}} \epsilon (\xi_{k,\mathcal{P}},k,\mathcal{P}).
\]
In this section, we only consider the case that $J$ is empty.
Therefore we write $\epsilon (\xi_{k,\mathcal{P}},k,\mathcal{P})$
for $\epsilon (\xi_{k,\mathcal{P}, \emptyset},k,\mathcal{P}, \emptyset)$.

For a positive real number $\epsilon$ we define the Gauss norm $\gamma_\epsilon$ by
\[
	\gamma_\epsilon(\omega):=
	\inf_{k,\mathcal{P}} \{ \ord_p \xi_{k,\mathcal{P}} - \epsilon |k^+| \}
\]
where $|k^+|=(k^+)_1+\ldots + (k^+)_n$.
This is equal to
$\inf_{k,\mathcal{P}} \{ \ord_V \xi_{k,\mathcal{P}} - \epsilon |k^+| \}$
(see \cite{DLZODRW} (0.3)) because $\ord_p \xi = \ord_V \xi$ for $\xi \in W(k)$.

If $\gamma_\epsilon(\omega)>-\infty$, we say that $\omega$ has radius of convergence $\epsilon$.
We call $\omega$ overconvergent if there is an $\epsilon > 0$
such that $\omega$ has radius of convergence $\epsilon$. We find
\[
	\gamma_\epsilon (\omega_1+\omega_2)
	\ge
	\min(\gamma_\epsilon(\omega_1),\gamma_\epsilon(\omega_2))
\]
and overconvergent elements form
a sub differential algebra $W^\dagger\Lambda_{(A,\bN^d)/k}$ of $W\Lambda_{(A,\bN^d)/k}$
(cf. \cite{DLZODRW} pp. 200).

\begin{prop}
\label{prop:indep}
(cf. \cite{DLZODRW} Proposition 0.7, Proposition 0.9)

Let $\phi : (k[S_1,\ldots,S_n],\bN^d) \to (k[T_1,\ldots,T_m],\bN^{d'})$
be a morphism of pre-log rings over $k$. The map
\[
	\phi_* :
	W\Lambda_{(k[S_1,\ldots,S_n],\bN^d)/k}^\bullet
	\to
	W\Lambda_{(k[T_1,\ldots,T_m],\bN^{d'})/k}^\bullet
\]
induces
\[
	\phi^\dagger_* :
	W^\dagger\Lambda_{(k[S_1,\ldots,S_n],\bN^d)/k}^\bullet
	\to
	W^\dagger\Lambda_{(k[T_1,\ldots,T_m],\bN^{d'})/k}^\bullet.
\]
Moreover, $\phi^\dagger_*$ is surjective
when both $k[S_1,\ldots,S_n] \to k[T_1,\ldots,T_m]$ and $\bN^d \to \bN^{d'}$ are surjective.
\end{prop}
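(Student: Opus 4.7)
The plan is to bootstrap from the non-logarithmic analogues \cite{DLZODRW} Propositions 0.7 and 0.9, treating the log data as an extra, easily controlled layer. For the first assertion, I would show that $\phi_*(\epsilon(\xi,k,\mathcal{P}))$ is an overconvergent sum of log basic Witt differentials in the target. Decompose $\phi$ into its ring component $S_i \mapsto \phi(S_i)$ and its monoid component $\phi^\flat:\bN^d \to \bN^{d'}$; compatibility with the pre-log structures forces $\phi(S_i)$ to be a monomial in $T_1,\ldots,T_{d'}$ for $i\le d$ and permits arbitrary polynomials for $i>d$. Let $D := \max_i \deg \phi(S_i)$. Under $\phi_*$, each Teichm\"uller factor $[S_i]^{k_i}$ expands into a convergent sum of Witt monomials in the target whose weights $k'$ satisfy $|k'^+| \le D|k^+|$, with coefficients in $W(k)$ whose $\ord_p$ is controlled---this is precisely the content of \cite{DLZODRW} Proposition 0.7, which yields $\gamma_{\epsilon/D}(\phi_*\omega) \ge \gamma_\epsilon(\omega)$ up to an additive constant. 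The $d\log$ factors $d\log X_i$ for $i\le d$ map linearly to the finite sum $\sum_j \phi^\flat(e_i)_j\, d\log X'_j$, contributing no growth in weight, while $dX_i$ factors for $i>d$ are expanded via the Leibniz rule into sums of log basic Witt differentials with a bounded weight increase. Assembling these observations gives the overconvergence of $\phi_*(\omega)$.

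For the surjectivity statement, I would first reduce to a normalized form of $\phi$. A surjective map of free commutative monoids $\bN^d \twoheadrightarrow \bN^{d'}$ must send each basis element either to $0$ or to a basis element (since basis elements of $\bN^{d'}$ are precisely the indecomposables), and must hit each basis element. After relabeling we may assume $\phi^\flat(e_j)=e'_j$ for $j\le d'$, which by compatibility forces $\phi(S_j)=T_j$ for $j\le d'$. Using surjectivity of the underlying ring map, I can then choose a polynomial section $\sigma:k[T_1,\ldots,T_m]\to k[S_1,\ldots,S_n]$ of $\phi$ with $\sigma(T_j)=S_j$ for $j\le d'$. Defining $\sigma^\flat:\bN^{d'}\to\bN^d$ by $e'_j \mapsto e_j$ makes $\sigma$ a morphism of pre-log rings over $k$, and the first part of the proposition applied to $\sigma$ yields a map
\[
\sigma^\dagger_*: W^\dagger\Lambda^\bullet_{(k[T_1,\ldots,T_m],\bN^{d'})/k} \to W^\dagger\Lambda^\bullet_{(k[S_1,\ldots,S_n],\bN^{d})/k}
\]
which is a section of $\phi^\dagger_*$, proving surjectivity.

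The main technical obstacle will be the careful bookkeeping of the Gauss-norm estimate after expanding $\phi_*(\epsilon(\xi,k,\mathcal{P}))$ in terms of log basic Witt differentials of the target. The formulas in \S\ref{subsect:log-basic-witt-diffs-sncd} show that $F$, $V$, and $d$ can introduce factors such as $p^{-t}$ depending on the partition $\mathcal{P}$, and one must verify these factors remain controlled when reorganizing the output as a sum over partitions of the \emph{target} weights. I expect this to reduce, after separating the $I_{-\infty}$ (log) part from the finite part of the partition, to the already-verified analogue for Witt differentials without log structure in \cite{DLZODRW}, since the $I_{-\infty}$ part is transported linearly through $\phi^\flat$ and contributes no $p$-adic denominators to the expansion.
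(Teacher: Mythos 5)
Your treatment of the first assertion lands on the same strategy as the paper's proof, namely separating each summand $\epsilon(\xi,k,\mathcal{P})$ into its $d\log$ (that is, $I_{-\infty}$) factor and a non-log basic Witt differential, and then appealing to the non-logarithmic result of \cite{DLZODRW}. The paper, however, organizes this more efficiently: it groups the whole sum $\omega$ into finitely many pieces $\omega_J = (\prod_{i\in J} d\log X_i)\cdot\bar\omega_J$ indexed by $J\subset[1,d]$, observes that each $\bar\omega_J$ lies in the classical $W^\dagger\Omega^\bullet$, applies \cite{DLZODRW} Proposition 0.9 to conclude $\phi(\bar\omega_J)$ is overconvergent, and then simply invokes the fact that $W^\dagger\Lambda^\bullet$ is a subring of $W\Lambda^\bullet$ to conclude $\phi(\omega_J)=\phi(\prod d\log X_i)\cdot\phi(\bar\omega_J)$ is overconvergent. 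This entirely sidesteps the Gauss-norm bookkeeping that you identify (correctly) as the main technical obstacle but do not carry out; you should be aware that the ring property is the device that lets you avoid it.

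The surjectivity argument contains a genuine error. You claim that a surjective homomorphism $\bN^d\twoheadrightarrow\bN^{d'}$ of free commutative monoids must send each basis element either to $0$ or to a basis element, and you use this to normalize $\phi^\flat(e_j)=e'_j$ after relabeling. This is false: the map $\bN^2\to\bN$ given by $e_1\mapsto 1$, $e_2\mapsto 2$ is surjective but sends $e_2$ to a decomposable element which is neither $0$ nor a generator. (The parenthetical justification via indecomposables reasons in the wrong direction; surjectivity gives no control over where individual generators of the source land.) The conclusion is nevertheless salvageable without the normalization: by freeness of $\bN^{d'}$, any choice of preimages $m_j\in\bN^d$ with $\phi^\flat(m_j)=e'_j$ defines a monoid section $\sigma^\flat$, and one then sets $\sigma(T_j):=\alpha(m_j)$ for $j\le d'$ (which automatically satisfies $\phi\sigma(T_j)=T_j$ by compatibility of $\phi$ with the pre-log structures) and chooses arbitrary preimages of $T_j$ for $j>d'$. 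This is what the paper's one-line assertion of a pre-log ring section $\psi$ amounts to, and it requires no normalization of $\phi^\flat$.
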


\begin{proof}
Let $\omega=\sum_{k,\mathcal{P}}\epsilon (\xi_{k,\mathcal{P}},k,\mathcal{P})$
be any element of $W^\dagger\Lambda_{(k[S_1,\ldots,S_n],\bN^d)/k}^\bullet$.
Since $\omega$ is overconvergent, there are $\epsilon > 0$ and $C\in \mathbb{R}$
such that $\ord_p \xi_{k,\mathcal{P}}-\epsilon |k^+| \ge C$ for all $k$ and $\mathcal{P}$.
For any subset $J$ of $[1,d]$,
we set $\omega_J:=\sum_{k,\mathcal{P}, I_{-\infty}=J}\epsilon (\xi_{k,\mathcal{P}},k,\mathcal{P})$.
Then we see that $\omega_J$ can be written as a form
\[
	\omega_J=
	\left( \prod_{i\in J}d\log X_i \right)
	\cdot \sum_{k',\mathcal{P}'} e(\zeta_{k',\mathcal{P}'}^J\ ,k',\mathcal{P}').
\]
Here $X_i := [T_i]$, $k' : [1,n]\setminus J \to \mathbb{Z}_{\ge 0}[1/p]$ runs over all  weights without log poles,
$\mathcal{P}'$ runs over all partitions of $\Supp k'$,
$\zeta_{k',\mathcal{P}'}^J\in {}^{V^{u(k')}}W(k)$
and $e(\zeta_{k',\mathcal{P}'}^J\ ,k',\mathcal{P}')$ is a basic Witt differential (in the sense of \cite{LZDRW}).
We see that $\omega=\sum_{J\subset [1,d]}\omega_J$
and that all coefficients $\zeta_{k',\mathcal{P}'}^J$ satisfy
$\ord_p \zeta_{k',\mathcal{P}'}^J-\epsilon |k'| \ge C$.
Hence we obtain $\bar{\omega}_{J} := \sum_{k',\mathcal{P}'} e(\zeta_{k',\mathcal{P}'}^J\ ,k',\mathcal{P}') \in W^\dagger \Omega_{k[S_1,\ldots,S_n]/k}^\bullet$.

By \cite{DLZODRW} Proposition 0.9,
we obtain $\phi(\bar{\omega}_{J}) \in W^\dagger \Omega_{k[T_1,\ldots,T_m]/k}^\bullet$.
We see that
\[
	\phi(\omega_{J}) =
	\phi \left( \prod_{i\in J}d\log X_i\right) \phi(\bar{\omega}_{J})
	\in
	W^\dagger\Lambda_{(k[T_1,\ldots,T_m],\bN^{d'})/k}^\bullet
\]
because $\phi(\prod_{i\in J}d\log X_i) \in W^\dagger\Lambda_{(k[T_1,\ldots,T_m],\bN^{d'})/k}^\bullet$
and $W^\dagger\Lambda_{(k[T_1,\ldots,T_m],\bN^{d'})/k}^\bullet$ is a ring.
This shows $\phi(\omega)\in W^\dagger\Lambda_{(k[T_1,\ldots,T_m],\bN^{d'})/k}^\bullet$.

We prove the last statement. If $\phi$ is surjective, we can construct a map
\[
	\psi:
	(k[T_1,\ldots,T_m],\bN^{d'})
	\to
	(k[S_1,\ldots,S_n],\bN^d)
\]
of pre-log rings such that $\phi \circ \psi = \text{id}$.
Then for any $\eta \in W^\dagger\Lambda_{(k[S_1,\ldots,S_n],\bN^d)/k}^\bullet$,
the element $\psi_*(\eta)$ belongs to $W^\dagger\Lambda_{(k[T_1,\ldots,T_m],\bN^{d'})/k}^\bullet$ and it satisfies $\phi_*\psi_*(\eta)=\eta$.
\end{proof}

Let $(B,P,\alpha)$ be a pre-log ring such that $B$ is a finitely generated $k$-algebra.
Then we can find a commutative diagram
\begin{align}
\label{diag:logringpres}
\xymatrix{
	\bN^d \ar[r] \ar[d]
	&
	A=k[T_1,\ldots,T_n] \ar[d]
	\\
	P \ar[r]^\alpha
	&
	B,
}
\end{align}
where the top morphism is given by $e_i\to T_i$ and the both vertical morphisms are surjective.
It induces a map between log de Rham-Witt complexes
$\lambda : W\Lambda_{(A,\bN^d)/k}^\bullet \to W\Lambda_{(B,P)/k}^\bullet$.

\begin{defn}
We define $W^\dagger \Lambda_{(B,P)/k}^\bullet$ as the image of $W^\dagger \Lambda_{(A,\bN^d)/k}^\bullet$ under the map $\lambda$.
We call $W^\dagger \Lambda_{(B,P)/k}^\bullet$
the overconvergent log de Rham-Witt complex for the pre-log ring $(B, P)$ over $k$.
\end{defn}

By Proposition \ref{prop:indep} (cf. \cite{DLZODRW} Definition 1.1),
this definition is independent of the choice of
the above diagram (\ref{diag:logringpres})
and the correspondence $(B,P)\mapsto W^\dagger \Lambda_{(B,P)/k}^\bullet$ is functorial.
Our definition of the overconvergent log de Rham-Witt complex is
an extension of the overconvergent de Rham-Witt complex of Davis-Langer-Zink,
i.e., $W^\dagger \Lambda_{(B,\{*\})/k}^\bullet \simeq W^\dagger \Omega_{B/k}^\bullet$.

\subsection{Comparison with log Monsky-Washnitzer cohomology}
\label{comparison-log-MW}

Let $k$ be a perfect field of char $p > 0$.
We consider a finitely generated, smooth algebra $\widetilde{B}$ over Witt ring $W(k)$ and $\mathfrak{X}:=\Spec \widetilde{B}$.
We assume there are (global) coordinates $\widetilde{t}_1,\ldots,\widetilde{t}_n$ of $\mathfrak{X}$,
i.e., the morphism $\mathfrak{X}\to \mathbb{A}_{W(k)}^n$ defined by $\widetilde{t}_1,\ldots,\widetilde{t}_n$ is \'etale.
Let $B=\widetilde{B}\otimes_{W(k)}k$ be the reduction of $\widetilde{B}$ to $k$
and $t_1,\ldots,t_n$ be images of $\widetilde{t}_1,\ldots,\widetilde{t}_n$ in $B$, and $X=\Spec B$.
We denote by $\mathcal{D}$ the divisor of $\mathfrak{X}$
which is defined by the equation $\widetilde{t}_1\cdots \widetilde{t}_d=0$ and $D$ its reduction to $X$.
Let $\widetilde{B}^\dagger$ be the weak completion of $\widetilde{B}$
with respect to $(p)\subset W(k)$ (in sense of \cite{MW} Definition 1.1).
Let $(B,\bN^d)$ (resp. $(\widetilde{B},\bN^d), (\widetilde{B}^\dagger,\bN^d)$)
be the pre-log ring defined by $e_i\mapsto t_i$ (resp. $e_i\mapsto \widetilde{t}_i$).

\begin{defn}
\label{def:Frob-endmorphism}
(cf. \cite{Tsu} \S3)
An endomorphism $\phi$ of $\widetilde{B}^\dagger$ is called Frobenius
if the following three conditions are satisfied:

(1) $\phi$ is compatible with the Frobenius map $F$ on $W(k)$,

(2) Its reduction to $B\simeq \widetilde{B}^\dagger/p\widetilde{B}^\dagger$
coincides with the absolute Frobenius on $B$,

(3) $\phi$ satisfies the relation $\phi(\widetilde{t}_i)=\widetilde{t}_i^p\cdot u_i, u_i\in 1+p\widetilde{B}^\dagger$ for $1\le i\le d$.
\end{defn}

By \cite{Chi} Lemma 3.3.1,
there exists a Frobenius endomorphism on $\widetilde{B}^\dagger$ in this situation.
Tsuzuki defined the logarithmic Monsky-Washnitzer cohomology $H_{\logMW}^*((X,D)/K)$
and proved that it depends only on $X$ and $D$ (\cite{Tsu} (3.3), Proposition 3.3.1).

To prove the comparison theorem between the logarithmic overconvergent de Rham-Witt cohomology and the logarithmic Monsky-Washnitzer cohomology,
we have to extend the overconvergent Witt lift of \cite{DLZODRW} \S 3.

By \cite{DLZODRW} Proposition 3.2, the map $t_\phi:\widetilde{B}^\dagger \to W(B)$ defined in \cite{ICrys} (0.1.3.20) has the image in $W^\dagger (B)$.
The map $s_\phi: \widetilde{B}^\dagger \to W(\widetilde{B}^\dagger)$ defined in \cite{ICrys} (0.1.3.16) maps $\widetilde{t}_i$ to
the unique element whose ghost components are $(\widetilde{t}_i,\phi(\widetilde{t}_i),\phi^2(\widetilde{t}_i),\ldots \ )$.
It easily follows by induction that $\phi^j(\widetilde{t}_i)$ is written as a form
$\widetilde{t}_i^{p^j}\cdot \beta_{i,j}$ where $\beta_{i,j}\in \widetilde{B}^\dagger$. Then
\[
	(\widetilde{t}_i,\phi(\widetilde{t}_i),\phi^2(\widetilde{t}_i),\ldots \ )
	=(\widetilde{t}_i,\widetilde{t}_i^p,\widetilde{t}_i^{p^2},\ldots \ )
	\cdot (1,\beta_{i,1},\beta_{i,2},\ldots \ ).
\]
Since $\beta_{i,j+1}=u_i^{p^j}\phi(\beta_{i,j})$ and $u_i^{p^j} \equiv 1 \mod p^j$,
we have $\beta_{i,j+1} \equiv \phi(\beta_{i,j}) \mod p^j$.
Using \cite{HBig} Lemma 1.1 with $S=\{p^m ; m \ge 1\}$,
we find there is a unique element $\nu_\phi(\widetilde{t}_i)$ of $W(\widetilde{B}^\dagger)$
whose ghost components are
\[
	(1,\beta_{i,1},\beta_{i,2},\ldots \ ).
\]
Let $\lambda_\phi(\widetilde{t}_i)\in W(B)$ be the image of $\nu_\phi(\widetilde{t}_i)$ in $W(B)$ via the projection map.

Take a presentation from a polynomial algebra $\widetilde{A}=W(k)[\widetilde{T}_1,\ldots,\widetilde{T}_N]\to \widetilde{B}$
such that $\widetilde{T}_i$ is mapped to $\widetilde{t}_i$ for $1\le i\le n$
and lift the Frobenius $\phi$ on $\widetilde{B}^\dagger$ to a Frobenius $F$ on $\widetilde{A}^\dagger$.

Following the notation used in \cite{DLZODRW} Proposition 3.1,
we define a pseudovaluation (cf. \cite{DLZOWV} Definition 1.4) $\mu_\epsilon$ on $\widetilde{A}^\dagger$ by
\[
	\mu_\epsilon \left(\sum_{k\in \bN^N} c_k \widetilde{T}_1^{k_1} \cdots \widetilde{T}_N^{k_N}\right)
	=\inf_{k,c_k\neq 0} \{ \ord_p c_k -\epsilon|k| \}, c_k\in W(k), |k|=k_1+\cdots +k_N.
\]
and define $W^\dagger(\widetilde{A}^\dagger)\subset W(\widetilde{A}^\dagger)$ by
\[
	W^\dagger(\widetilde{A}^\dagger):=
	\{ (a_0,a_1,\ldots \ )\in W(\widetilde{A}^\dagger) \mid \exists \epsilon > 0, \exists C\in \mathbb{R}, m+\mu_{\epsilon/p^m}(a_m)\ge C \ \text{for all}\ m \}.
\]

\begin{lem}
\label{lem:daggercalc}
$\nu_F(\widetilde{T}_i)\in W^\dagger (\widetilde{A}^\dagger)$.
\end{lem}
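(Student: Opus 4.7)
The plan is to verify directly that the Witt coordinates of $\nu_F(\widetilde{T}_i)$ satisfy the overconvergence condition defining $W^\dagger(\widetilde{A}^\dagger)$. First, I would unwind the recursion $\beta_{i,j+1} = u_i^{p^j} F(\beta_{i,j})$ (which follows from $F^{j+1}(\widetilde{T}_i) = (\widetilde{T}_i^p u_i)^{p^j} F(\beta_{i,j})$), starting from $\beta_{i,0} = 1$, to obtain the closed formula
\[
\beta_{i,m} = \prod_{j=0}^{m-1} F^j(u_i)^{p^{m-1-j}}.
\]
Writing $u_i = 1 + p v_i$ with $v_i \in \widetilde{A}^\dagger$ and using $(1+pa)^p \in 1 + p^2\widetilde{A}^\dagger$, it follows inductively that $u_i^{p^m} \in 1 + p^{m+1}\widetilde{A}^\dagger$, and hence that $\beta_{i,m+1} \equiv F(\beta_{i,m}) \pmod{p^{m+1}}$; this is exactly the Dwork-type congruence needed to guarantee that the inductive inversion
\[
p^m a_m = \beta_{i,m} - \sum_{j=0}^{m-1} p^j a_j^{p^{m-j}}
\]
of the ghost relations $w_m(a_0,\ldots,a_m) = \beta_{i,m}$ yields Witt coordinates $a_m \in \widetilde{A}^\dagger$ rather than merely in $\widetilde{A}^\dagger[1/p]$.

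The heart of the argument is then a uniform estimate of the form $m + \mu_{\epsilon/p^m}(a_m) \ge C$. The crucial input is that, because $F$ is a Frobenius lift, $F(\widetilde{T}_i) = \widetilde{T}_i^p + p g_i$ with each $g_i$ overconvergent, so there exist $\epsilon_0 > 0$ and $C_F \in \mathbb{R}$ with $\mu_{\epsilon/p}(F(a)) \ge \mu_\epsilon(a) - C_F$ for all $a \in \widetilde{A}^\dagger$ and all $0 < \epsilon \le \epsilon_0$. Applying this to the closed form for $\beta_{i,m}$ and to each step of the recursion for $a_m$, and keeping track of the extra factors of $p$ appearing in $\beta_{i,m} - \sum_{j<m} p^j a_j^{p^{m-j}}$, one obtains a bound on $\mu_{\epsilon/p^m}(a_m)$ whose deviation from a fixed constant grows at most linearly in $m$; the $+m$ term in the defining inequality for $W^\dagger(\widetilde{A}^\dagger)$ then absorbs this linear growth after one further shrinking of $\epsilon$.

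The main obstacle is precisely the bookkeeping in this iterated estimate: a naive count of losses under iterated $F$ gives only $\mu_{\epsilon/p^m}(F^m(a)) \ge \mu_\epsilon(a) - m C_F$ at a rescaled radius, and one must balance these accumulated losses against both the gain from the $p^{m+1}$-divisibility of $\beta_{i,m+1} - F(\beta_{i,m})$ and the rescaling $\epsilon \mapsto \epsilon/p^m$ in the definition of $W^\dagger(\widetilde{A}^\dagger)$. This is the logarithmic analogue of the main technical estimate behind \cite{DLZODRW} Proposition 3.2; the factorization $s_F(\widetilde{T}_i) = [\widetilde{T}_i] \cdot \nu_F(\widetilde{T}_i)$ is precisely what allows one to peel off the Teichm\"uller part $[\widetilde{T}_i]$, thereby reducing overconvergence of the logarithmic term $d\log s_F(\widetilde{T}_i)$ to the assertion of the lemma.
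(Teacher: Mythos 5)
Your plan takes a genuinely different route: you attack the Witt coordinates $a_m$ of $\nu_F(\widetilde T_i)$ directly via the ghost-to-Witt recursion $p^m a_m = \beta_{i,m} - \sum_{j<m} p^j a_j^{p^{m-j}}$, whereas the paper never estimates $a_m$ at all. Instead, the paper bounds the \emph{ghost} components: by multiplicativity of the Gauss pseudovaluation, $\mu_{\epsilon/p^j}\bigl(F^j(\widetilde T_i)\bigr) = \mu_{\epsilon/p^j}\bigl(\widetilde T_i^{p^j}\bigr) + \mu_{\epsilon/p^j}(\alpha_{i,j}) = -\epsilon + \mu_{\epsilon/p^j}(\alpha_{i,j})$, and the DLZ estimate $\mu_{\epsilon/p^j}(F^j(\widetilde T_i)) \ge -\epsilon$ yields the clean bound $\mu_{\epsilon/p^j}(\alpha_{i,j}) \ge 0$ by exact cancellation. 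Then the paper invokes the ghost/Witt equivalence established in the proof of \cite{DLZODRW} Proposition 3.1 (of which your recursion is precisely the engine) to translate that ghost bound into the defining inequality for $W^\dagger(\widetilde A^\dagger)$. In other words, you propose to re-derive by hand the equivalence that the paper cites as a black box.

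Two points about your write-up keep it from being a proof as it stands. First, the estimate you state for the Frobenius is too weak in a way that undermines your plan: you write $\mu_{\epsilon/p}(F(a)) \ge \mu_\epsilon(a) - C_F$ and then worry about a linear-in-$m$ accumulation $-mC_F$ under iteration. But the argument behind DLZ Proposition 3.1 shows that for $\epsilon$ small enough the inequality $\mu_{\epsilon/p}(F(a)) \ge \mu_\epsilon(a)$ holds with no loss at all (check it on monomials using $F(\widetilde T_i) = \widetilde T_i^p u_i$, $\mu_{\epsilon/p}(u_i) \ge 0$, and multiplicativity); this losslessness is exactly what makes the paper's one-line cancellation work, and it is also what your plan needs. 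Without noticing this, your ``absorb the linear growth with the $+m$ term'' step has no justification — the $+m$ in the definition of $W^\dagger$ is already committed to cancelling the $p^m$ appearing in the Witt recursion, and cannot be double-spent. Second, you correctly identify ``the bookkeeping in this iterated estimate'' as the heart of the matter, but then defer it; that bookkeeping \emph{is} the content of the proof of DLZ Proposition 3.1, which is precisely what the paper cites. So what you have is a plausible plan pointing at the right ingredients (the product formula for $\beta_{i,m}$, the congruence $\beta_{i,m+1} \equiv F(\beta_{i,m}) \bmod p^{m+1}$, the overconvergence of $F$), but with the central estimate neither correctly stated nor carried out. A small additional remark: you write $F(\widetilde T_i) = \widetilde T_i^p + pg_i$, but for the decomposition $F^j(\widetilde T_i) = \widetilde T_i^{p^j}\beta_{i,j}$ one needs the stronger form $F(\widetilde T_i) = \widetilde T_i^p u_i$ with $u_i \in 1 + p\widetilde A^\dagger$ from Definition \ref{def:Frob-endmorphism}(3), i.e. $\widetilde T_i^p \mid pg_i$; you use this in your first paragraph but quietly drop it in the second.
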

\begin{proof}
We find  $\mu_\epsilon(\widetilde{T}_i)=-\epsilon$ by definition. By the argument of \cite{DLZODRW} Proposition 3.1, we have $\mu_{\epsilon/p^j}(F^j (\widetilde{T}_i))\ge -\epsilon.$
Let $\alpha_{i,j}:=\mathbf{w}_j(\nu_F(\widetilde{T}_i))\in \widetilde{A}^\dagger$.
Then $F^j(\widetilde{T}_i)=\widetilde{T}_i^{p^j} \cdot \alpha_{i,j}$ and we find $\mu_{\epsilon/p^j}(F^j(\widetilde{T}_i))=\mu_{\epsilon/p^j}(\alpha_{i,j})-(\epsilon / p^j)\cdot p^j = \mu_{\epsilon/p^j}(\alpha_{i,j}) - \epsilon$.
Hence we get $\mu_{\epsilon/p^j}(\mathbf{w}_j(\nu_F(\widetilde{T}_i)))=\mu_{\epsilon/p^j}(\alpha_{i,j})\ge 0$.
By the proof of \cite{DLZODRW} Proposition 3.1,
it is equivalent to that $j+\mu_{\epsilon/p^j}(\nu_F(\widetilde{T}_i))\ge 0$.
This means $\nu_F(\widetilde{T}_i)\in W^\dagger (\widetilde{A}^\dagger)$.
\end{proof}

\begin{lem}
\label{lem:daggermaps}
(cf. \cite{DLZODRW} Proposition 3.1)
The projection map $\text{pr}:W(\widetilde{A}^\dagger)\to W(A)$ induces a map $W^\dagger(\widetilde{A}^\dagger)\to W^\dagger(A)$.
\end{lem}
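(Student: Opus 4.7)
The strategy is to reduce to an essentially verbatim repetition of \cite{DLZODRW} Proposition 3.1: the logarithmic structure plays no role at the level of Witt vectors, and the setup $\widetilde{A}^\dagger \twoheadrightarrow A$ is exactly that of the reference. The central mechanism is the monotonicity of the Gauss-type pseudovaluation $\mu_\epsilon$ under reduction modulo $p$, since reducing a series $\sum_\alpha c_\alpha \widetilde{T}^\alpha \in \widetilde{A}^\dagger$ modulo $p$ only deletes terms (those with $\ord_p c_\alpha \ge 1$) and never introduces new ones.

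First I would make explicit the pseudovaluation description of $W^\dagger(A)$ for $A = k[T_1,\ldots,T_N]$: those $(b_0, b_1, \ldots) \in W(A)$ admitting constants $\epsilon > 0$ and $C \in \mathbb{R}$ with $m + \mu_{\epsilon/p^m}(b_m) \ge C$ for all $m$, where on $A$ one sets $\mu_\epsilon(\sum \bar{c}_\alpha T^\alpha) = \inf_{\bar{c}_\alpha \ne 0}(-\epsilon|\alpha|)$. The equivalence of this description with the Gauss-norm description via basic Witt differentials in degree zero (the one used to build $W^\dagger\Lambda^\bullet_{(B,P)/k}$ in this section, via the isomorphism $W^\dagger\Lambda^0_{(A,\{*\})/k}\simeq W^\dagger\Omega^0_{A/k}$) is the content of \cite{DLZODRW} Proposition 3.1, which I would invoke rather than reprove. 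Granting this, for any $a \in \widetilde{A}^\dagger$ a direct term-by-term check gives $\mu_\epsilon(\bar{a}) \ge \mu_\epsilon(a)$: the only terms surviving the reduction mod $p$ have $\ord_p c_\alpha = 0$, so their contribution $-\epsilon|\alpha| = \ord_p c_\alpha - \epsilon|\alpha|$ is automatically bounded below by $\mu_\epsilon(a)$.

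Applying this inequality componentwise to any $(a_0, a_1, \ldots) \in W^\dagger(\widetilde{A}^\dagger)$ with $m + \mu_{\epsilon/p^m}(a_m) \ge C$ yields $m + \mu_{\epsilon/p^m}(\bar{a}_m) \ge C$ for every $m$, which is precisely the defining condition for $\text{pr}(a)=(\bar{a}_0,\bar{a}_1,\ldots) \in W^\dagger(A)$. The only nontrivial ingredient is the identification of the two descriptions of $W^\dagger(A)$, which is imported from \cite{DLZODRW}; the remainder is a one-line monotonicity argument, and no logarithmic input is required.
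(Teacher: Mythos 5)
Your proof is correct and follows the same route as the paper's: reduce to the pseudovaluation formula $\gamma_\epsilon(\alpha)=\inf_m\{m+\mu_{\epsilon/p^m}(a_m)\}$ for the Gauss norm on $W(A)$ (the paper cites \cite{DLZOWV} Proposition 2.18 and (2.2) for this, where you point to \cite{DLZODRW} Proposition 3.1, but these are the same device), then observe that reduction modulo $p$ can only raise $\mu_\epsilon$. The one point where your write-up is actually more careful than the paper's is that you spell out the monotonicity inequality $\mu_\epsilon(\bar a)\ge\mu_\epsilon(a)$; the paper compresses this into the word ``Hence,'' so making it explicit is a small improvement rather than a deviation.
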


\begin{proof}
We define a pseudovaluation $\mu_\epsilon$ on $A$ by
\[
	\mu_\epsilon \left(\sum_{k\in \bN^N} c_k T_1^{k_1} \cdots T_N^{k_N}\right)
	=\min_{k,c_k\neq 0} \{ -\epsilon|k| \},\ c_k\in k,\ |k|=k_1+\cdots +k_N.
\]
For $k:[1,N]\to \mathbb{Z}\left[\frac{1}{p}\right]_{\ge 0}$ and
$\xi={}^{V^{u(k)}}\eta \in {}^{V^{u(k)}}W(k)$,
we set $\xi X^k := {}^{V^{u(k)}}(\eta X^{p^{u(k)}k})$.
Any element $\alpha=(a_0,a_1,a_2,\ldots)$ of $W(A)$ has a unique expression
\[
	\alpha=\sum_k \xi_k X^k,\ k:[1,N]\to \mathbb{Z}\left[\frac{1}{p}\right]_{\ge 0}, \xi_k \in {}^{V^{u(k)}}W(k),
\]
where $u(k)$ is the denominator of $k$.
The Gauss norm $\gamma_\epsilon$ on $W(A)$ is defined by
\[
	\gamma_\epsilon(\alpha)=
	\inf_k \{ \ord_p \xi_k - \epsilon|k| \}.
\]
By \cite{DLZOWV} Proposition 2.18 and (2.2),
$\gamma_\epsilon (\alpha) = \inf_m \{ m + \mu_{\epsilon/p^m}(a_m) \}.$
Hence the projection map $\text{pr}$ maps any element of $W^\dagger(\widetilde{A}^\dagger)$ to an element of $W^\dagger(A)$.
\end{proof}

\begin{lem}
If $(1,a_1,a_2,\ldots \ )$ is in $W^\dagger(\widetilde{A}^\dagger)$,
$(a_1,a_2,\ldots \ )$ is in $W^\dagger(\widetilde{A}^\dagger)$.
\end{lem}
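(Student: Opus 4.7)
The plan is to unfold the definition of $W^\dagger(\widetilde{A}^\dagger)$ from the displayed formula preceding Lemma~\ref{lem:daggercalc} and re-index. By hypothesis there exist $\epsilon>0$ and $C\in\mathbb{R}$ such that
\[
	m + \mu_{\epsilon/p^m}(a_m) \ge C \quad \text{for all } m\ge 0.
\]
To verify that $(a_1,a_2,\ldots)\in W^\dagger(\widetilde{A}^\dagger)$, I need to produce $\epsilon'>0$ and $C'\in\mathbb{R}$ such that $m+\mu_{\epsilon'/p^m}(a_{m+1})\ge C'$ for all $m\ge 0$.

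The natural choice is $\epsilon' := \epsilon/p$, so that $\epsilon'/p^m = \epsilon/p^{m+1}$ and the two pseudovaluations become identical. Applying the original inequality at index $m+1$ then gives
\[
	m + \mu_{\epsilon'/p^m}(a_{m+1})
	= m + \mu_{\epsilon/p^{m+1}}(a_{m+1})
	= \bigl((m+1) + \mu_{\epsilon/p^{m+1}}(a_{m+1})\bigr) - 1
	\ge C-1,
\]
so $C' := C-1$ works. This concludes the argument.

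There is no real obstacle here: once the definition is expanded, the statement amounts to observing that shifting the Witt components by one position is absorbed by replacing the radius parameter $\epsilon$ with $\epsilon/p$ (which is still positive) and decrementing the constant $C$ by $1$. The hypothesis that the zeroth component is $1$ plays no role in the proof itself; its relevance is only to the context in which the lemma will be applied, namely to the element $\nu_F(\widetilde{T}_i)$ whose zeroth ghost component is $1$ (from Lemma~\ref{lem:daggercalc}).
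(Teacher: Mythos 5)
Your proof is correct and follows the same route as the paper: both expand the definition of $W^\dagger(\widetilde{A}^\dagger)$, replace the radius parameter $\epsilon$ by $\epsilon/p$ so that the index shift $m \mapsto m+1$ is absorbed into the pseudovaluation subscript, and lower the constant by $1$. The two arguments differ only in the notation used for the primed versus unprimed constants.
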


\begin{proof}
Since $(1,a_1,a_2,\ldots \ ) \in W^\dagger(\widetilde{A}^\dagger)$,
there exist $\epsilon' > 0$ and $C'\in \mathbb{R}$ such that
$m+\mu_{\epsilon'/p^m}(a_m)\ge C'$ for all $m \ge 1$.
By setting $\epsilon := \epsilon'/p$ and $C := C'-1$, we find an inequality $(m-1)+\mu_{\epsilon/p^{m-1}}(a_m)\ge C$ for all $m \ge 1$
which shows $(a_1,a_2,\ldots \ ) \in W^\dagger(\widetilde{A}^\dagger)$.
\end{proof}

We obtain
\[
	W^\dagger (\widetilde{A}^\dagger)\cap (1+{}^V W(\widetilde{A}^\dagger)) \subset 1+{}^V W^\dagger (\widetilde{A}^\dagger).
\]
By this argument and by Lemma \ref{lem:daggercalc}, we have $\nu_F(\widetilde{T}_i) \in 1+{}^V W^\dagger(\widetilde{A}^\dagger)$.
The element $\lambda_F(\widetilde{T}_i)$ belongs to $1+{}^V W^\dagger(A)$ by Lemma \ref{lem:daggermaps}.
Using the functoriality, $\lambda_\phi(\widetilde{t}_i)$ belongs to $1+{}^V W^\dagger(B)$.

As a result, we obtain the following diagram (cf. \cite{ICrys} (1.3.18)):
\begin{align}
\label{diag:logwittlift}
\xymatrixcolsep{3pc}
\xymatrix{
	\bN^d \ar[r]^-{(\text{id},\lambda_\phi)} \ar[d]
	&
	\bN^d\oplus (1+{}^V W^\dagger(B)) \ar[d]
	\\
	\widetilde{B}^\dagger \ar[r]^{t_\phi}
	&
	W^\dagger(B),
}
\end{align}
where the right vertical arrow is induced by $\bN^d\to W^\dagger(B);e_i\mapsto [t_i]$ and the natural inclusion $1 + {}^V W^\dagger(B) \hookrightarrow W^\dagger(B)$.
One sees $t_\phi(\widetilde{t}_i)=[t_i]\cdot \lambda_\phi (\widetilde{t}_i)$.

Now we can construct a map from the differential complex with logarithmic poles $\Omega_{\widetilde{B}^\dagger/W(k)}^\bullet(\mathcal{D})
:=\widetilde{B}^\dagger\otimes _{\widetilde{B}}\Omega_{\widetilde{B}/W(k)}^\bullet(\mathcal{D})
=\widetilde{B}^\dagger\otimes _{\widetilde{B}}\Lambda_{(\widetilde{B},\bN^d)/W(k)}^\bullet$
defined in \cite{Tsu} \S 3 to the logarithmic overconvergent de Rham-Witt complex $W^\dagger \Omega_{B/k}^\bullet(D):=W^\dagger \Lambda_{(B,\bN^d)/k}^\bullet$:
\[
	\sigma:
	\Omega_{\widetilde{B}^\dagger/W(k)}^\bullet(\mathcal{D})
	\to
	W^\dagger \Omega_{B/k}^\bullet(D),
\]
which is induced by the diagram (\ref{diag:logwittlift}). We see $\sigma(d\log \widetilde{t}_i)= d\log [t_i] + d\lambda_\phi(\widetilde{t}_i)/\lambda_\phi(\widetilde{t}_i)$ and
$\sigma(d\widetilde{t}_i)= d t_\phi(\widetilde{t}_i)$. Note that $\lambda_\phi(\widetilde{t}_i)\in 1 + {}^V W^\dagger(B)\subset W^\dagger(B)^\times$.

We define a filtration $\{ P_j \}$ on the de Rham complex and the de Rham-Witt complex by
\begin{align*}
	P_j(\Omega_{\widetilde{B}/W(k)}^i(\mathcal{D})):
	&=\text{image}(\Omega_{\widetilde{B}/W(k)}^j(\mathcal{D})\otimes_{\widetilde{B}} \Omega_{\widetilde{B}/W(k)}^{i-j}\to \Omega_{\widetilde{B}/W(k)}^i(\mathcal{D})),\\
	P_j(\Omega_{\widetilde{B}^\dagger/W(k)}^i(\mathcal{D})):
	&=\text{image}(\Omega_{\widetilde{B}^\dagger/W(k)}^j(\mathcal{D})\otimes_{\widetilde{B}^\dagger} \Omega_{\widetilde{B}^\dagger/W(k)}^{i-j}\to \Omega_{\widetilde{B}^\dagger/W(k)}^i(\mathcal{D})),\\
	P_j(W \Omega_{B/k}^i(D)):
	&=\text{image}(W \Omega_{B/k}^j(D)\otimes_{W(B)} W \Omega_{B/k}^{i-j}\to W \Omega_{B/k}^i(D)),\\
	P_j(W^\dagger \Omega_{B/k}^i(D)):
	&=P_j(W \Omega_{B/k}^i(D))\cap W^\dagger \Omega_{B/k}^i(D).
\end{align*}
Since
\[
	\text{image}(W^\dagger \Omega_{B/k}^j(D)\otimes_{W^\dagger(B)}W^\dagger \Omega_{B/k}^{i-j}
	\to
	W^\dagger \Omega_{B/k}^i(D))\subset P_j(W^\dagger \Omega_{B/k}^i(D)),
\]
$\sigma$ induces
\[
	\sigma :
	P_j\Omega_{\widetilde{B}^\dagger/W(k)}^\bullet(\mathcal{D})
	\to
	P_j W^\dagger \Omega_{B/k}^\bullet(D).
\]
The canonical morphism $Gr_jW^\dagger \Omega_{B/k}^\bullet (D)\to Gr_jW \Omega_{B/k}^\bullet (D)$ is injective.
For a subset $J=\{\alpha_1,\ldots,\alpha_{j}\}$ of $[1,d]$,
put $\widetilde{B}_J:=\widetilde{B}/(\widetilde{t}_{\alpha_1},\ldots, \widetilde{t}_{\alpha_j})$.

\begin{lem}
There are residue isomorphisms of de Rham complexes:
\begin{align*}
	\Res &: Gr_j(\Omega_{\widetilde{B}/W(k)}^\bullet(\mathcal{D}))
	\to
	\bigoplus_{|J|=j}\Omega_{\widetilde{B}_J/W(k)}^{\bullet -j},
	\\
	\Res &: Gr_j(\Omega_{\widetilde{B}^\dagger/W(k)}^\bullet(\mathcal{D}))
	\to
	\bigoplus_{|J|=j}\Omega_{\widetilde{B}^\dagger_J/W(k)}^{\bullet -j}.
\end{align*}
\end{lem}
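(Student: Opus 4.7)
The plan is to construct these residue maps explicitly using the étale coordinates $\widetilde{t}_1,\ldots,\widetilde{t}_n$ and verify that the resulting associated graded pieces decompose into the required direct sum.

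First I would treat the classical (non-completed) case. Since $\widetilde{t}_1,\ldots,\widetilde{t}_n$ form étale coordinates for $\mathfrak{X}=\Spec\widetilde{B}$ over $W(k)$, the sheaf $\Omega^1_{\widetilde{B}/W(k)}$ is a free $\widetilde{B}$-module with basis $d\widetilde{t}_1,\ldots,d\widetilde{t}_n$, and consequently $\Omega^1_{\widetilde{B}/W(k)}(\mathcal{D})=\Lambda^1_{(\widetilde{B},\bN^d)/W(k)}$ is a free $\widetilde{B}$-module with basis $d\log\widetilde{t}_1,\ldots,d\log\widetilde{t}_d,d\widetilde{t}_{d+1},\ldots,d\widetilde{t}_n$. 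Thus $\Omega^i_{\widetilde{B}/W(k)}(\mathcal{D})$ decomposes as a direct sum indexed by pairs $(J,L)$ with $J\subset[1,d]$ and $L\subset [1,n]\setminus J$ satisfying $|J|+|L|=i$, where the summand is $\widetilde{B}\cdot(\bigwedge_{\alpha\in J}d\log\widetilde{t}_\alpha)\wedge(\bigwedge_{\ell\in L}d\widetilde{t}_\ell)$. The filtration $P_j$ matches exactly with the summands satisfying $|J|\le j$, and therefore $Gr_j$ is the sum of summands with $|J|=j$.

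For $J=\{\alpha_1<\cdots<\alpha_j\}$, define the residue map $\Res_J$ on $P_j\Omega^i_{\widetilde{B}/W(k)}(\mathcal{D})$ by reading off the coefficient of $d\log\widetilde{t}_{\alpha_1}\wedge\cdots\wedge d\log\widetilde{t}_{\alpha_j}$ in the above basis; the remaining factors of the form $d\widetilde{t}_\ell$ with $\ell\in[1,n]\setminus J$ assemble into an element of $\Omega^{i-j}_{\widetilde{B}/W(k)}$. I then need to check that $\Res_J$ factors through $\widetilde{B}_J$: this is because, modulo $P_{j-1}$, any occurrence of $\widetilde{t}_{\alpha_s}\cdot d\log\widetilde{t}_{\alpha_s}=d\widetilde{t}_{\alpha_s}$ forces the form into $P_{j-1}$, so coefficients may be reduced modulo the ideal $(\widetilde{t}_{\alpha_1},\ldots,\widetilde{t}_{\alpha_j})$. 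Conversely, the map $\omega\mapsto\omega\wedge d\log\widetilde{t}_{\alpha_1}\wedge\cdots\wedge d\log\widetilde{t}_{\alpha_j}$ from $\Omega^{\bullet-j}_{\widetilde{B}_J/W(k)}$ into $Gr_j$ supplies a section; a direct comparison of bases shows the two maps are mutually inverse. Summing over $|J|=j$ yields the first residue isomorphism.

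For the weak completion case, I would observe that $\widetilde{B}^\dagger$ is flat over $\widetilde{B}$ and that étale coordinates remain étale coordinates after weak completion, so $\Omega^1_{\widetilde{B}^\dagger/W(k)}$ is a free $\widetilde{B}^\dagger$-module with basis $d\widetilde{t}_1,\ldots,d\widetilde{t}_n$ (cf.\ \cite{MW}); equivalently, $\Omega^\bullet_{\widetilde{B}^\dagger/W(k)}(\mathcal{D})\simeq\widetilde{B}^\dagger\otimes_{\widetilde{B}}\Omega^\bullet_{\widetilde{B}/W(k)}(\mathcal{D})$. Since the filtration $P_\bullet$ is compatible with this tensor product and since $\widetilde{B}^\dagger_J=\widetilde{B}^\dagger/(\widetilde{t}_{\alpha_1},\ldots,\widetilde{t}_{\alpha_j})$ is the weak completion of $\widetilde{B}_J$, the residue map constructed in the first case extends by $\widetilde{B}^\dagger$-linearity, and the same basis argument gives the second isomorphism.

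The main technical point, and the only nontrivial step, is verifying that the coordinate description passes to the weak completion, i.e.\ that $\Omega^1_{\widetilde{B}^\dagger/W(k)}$ is really free on $d\widetilde{t}_1,\ldots,d\widetilde{t}_n$ and that $\widetilde{B}^\dagger_J$ coincides with the weak completion of $\widetilde{B}_J$; once this is granted, everything reduces formally to the calculation carried out in the classical case.
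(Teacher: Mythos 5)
Your overall strategy parallels the paper's quite closely. For the weakly completed case you use exactly the same mechanism as the paper: flatness of $\widetilde{B}\to\widetilde{B}^\dagger$ together with the identification $\widetilde{B}^\dagger\otimes_{\widetilde{B}}\widetilde{B}_J\simeq\widetilde{B}_J^\dagger$, which transfers the classical residue isomorphism to the dagger ring by base change. For the classical case the paper simply cites Deligne (\cite{De} II Proposition~3.6), whereas you attempt the coordinate computation from scratch.

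There is, however, a real gap in that computation. The decomposition you propose, indexed by pairs $(J,L)$ with $J\subset[1,d]$ and $L\subset[1,n]\setminus J$, is not a direct sum: for $\alpha\in[1,d]\setminus J$ one has $d\widetilde{t}_\alpha=\widetilde{t}_\alpha\,d\log\widetilde{t}_\alpha$, so those summands overlap the log ones. Using the honest basis (with $J\subset[1,d]$ and $L\subset[d+1,n]$), the claim that ``$P_j$ matches exactly with the summands satisfying $|J|\le j$'' is false. For instance, with $d=n=2$ and $j=1$, the form
\[
d\log\widetilde{t}_1\wedge d\widetilde{t}_2 \;=\; \widetilde{t}_2\,d\log\widetilde{t}_1\wedge d\log\widetilde{t}_2
\]
lies in $P_1\Omega^2(\mathcal{D})$ but sits entirely in the $|J|=2$ summand; and its residue $\Res_{\{1\}}$ is the class of $d\widetilde{t}_2$ in $\Omega^1_{\widetilde{B}/\widetilde{t}_1}$, which your ``read off the coefficient of $d\log\widetilde{t}_J$'' rule would miss, since the $|J|=1$ coefficients vanish identically. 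In general the $|J'|>j$ strata are needed in order for the residue to surject onto $\Omega^{\bullet-j}_{\widetilde{B}_J}$, whose basis includes $d\widetilde{t}_\ell$ for $\ell\in[1,d]\setminus J$. The filtration $P_j$ is cut out by divisibility conditions on the coefficients, not by restricting which basis monomials appear, so the residue map is genuinely more subtle than reading off a coefficient in a single fixed basis. The statement is of course true, and a careful coordinate argument in the spirit of Lemma~\ref{lem:log-p-basic-diffs-sncd} also works, but your sketch as written does not establish it; the completion step, which is essentially identical to the paper's, then inherits the gap.
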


\begin{proof}
The first claim is \cite{De} II Proposition 3.6.
Since $\widetilde{B}\to \widetilde{B}^\dagger$ is flat (\cite{Me} Proposition 3)
and $\widetilde{B}^\dagger \otimes_{\widetilde{B}}\widetilde{B}_J \simeq \widetilde{B}_J^\dagger$, one sees
\[
	\widetilde{B}^\dagger \otimes_{\widetilde{B}} Gr_j(\Omega_{\widetilde{B}/W(k)}^\bullet(\mathcal{D}))
	\simeq
	Gr_j(\Omega_{\widetilde{B}^\dagger/W(k)}^\bullet(\mathcal{D}))
\]
and therefore
\begin{align*}
	\bigoplus_{|J|=j}\Omega_{\widetilde{B}_J^\dagger/W(k)}^{\bullet-j}
	&\simeq \widetilde{B}^\dagger \otimes_{\widetilde{B}}\left(\bigoplus_{|J|=j} \Omega_{\widetilde{B}_J/W(k)}^{\bullet-j}\right)\\
	&\simeq \widetilde{B}^\dagger \otimes_{\widetilde{B}} (Gr_j(\Omega_{\widetilde{B}/W(k)}^\bullet(\mathcal{D}))\\
	&\simeq Gr_j(\Omega_{\widetilde{B}^\dagger/W(k)}^\bullet(\mathcal{D})).
\end{align*}
\end{proof}

We prove the de Rham-Witt version.
Note that we have an isomorphism
\[
	\Res : Gr_j W \Omega_{B/k}^\bullet(D)
	\simeq
	\bigoplus_{|J|=j}W \Omega_{B_J/k}^{\bullet-j}
\]
by a similar proof to that of Lemma \ref{lem:resisom}.

\begin{lem}
\label{lem:residue-number-of-log-poles}
Let $j$ be an integer such that $0 \le j \le d$.
Let
\[
\xymatrix{
	\bN^d \ar[r] \ar@{=}[d]
	&
	A=k[T_1,\ldots,T_N] \ar@{->>}[d]^{q}
	\\
	\bN^d \ar[r]
	&
	B
}
\]
be a presentation of the pre-log ring $(B,\bN^d)$.
We denote by $\phi$ the natural morphism
\[
	W_m\Omega_{A/k}^\bullet(D) := W_m\Lambda_{(A,\bN^d)/k}^\bullet
	\to
	W_m\Omega_{B/k}^\bullet(D).
\]
Let $\omega = \sum_{k,\mathcal{P},|I_{-\infty}| > j} \epsilon (\xi_{k,\mathcal{P}},k,\mathcal{P})$
be an element of $W_m\Omega_{A/k}^\bullet(D)$.

Then we have $\phi(\omega)=0$ if $\phi(\omega) \in P_j W_m\Omega_{B/k}^\bullet(D)$.
\end{lem}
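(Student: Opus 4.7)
The plan is to proceed by downward induction on $s := |I_{-\infty}|$ combined with a Poincar\'e residue analysis. First, using the unique log basic Witt differential expansion in $A$ (Corollary to Proposition~\ref{prop:expression-basic-witt-sncd}), I will decompose $\omega = \sum_{s=j+1}^{d}\omega^{[s]}$, where $\omega^{[s]}$ aggregates the terms with $|I_{-\infty}|=s$. Each $\omega^{[s]}$ will further be written in the canonical form
\[
\omega^{[s]} = \sum_{\substack{J\subset[1,d] \\ |J|=s}}\Bigl(\prod_{i\in J}d\log X_i\Bigr)\wedge\eta^{[s]}_J,
\]
where $\eta^{[s]}_J$ is a classical (log-pole-free) Witt differential in $W_m\Omega_{A_J/k}^{\bullet-s}$, and $A_J := k[T_i : i\notin J]$ is identified with the sub-polynomial-ring of $A$ omitting the variables indexed by $J$.

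Next, I will extend the Poincar\'e residue construction of \S\ref{section:Poincare} from the semistable setting to the present pre-log SNCD setting for $A$ and push it down to $B$: there will be canonical isomorphisms $\Res^A_s: \mathrm{Gr}_s W_m\Omega_{A/k}^\bullet(D)\xrightarrow{\sim}\bigoplus_{|J|=s}W_m\Omega_{A_J/k}^{\bullet-s}$ and $\Res^B_s: \mathrm{Gr}_s W_m\Omega_{B/k}^\bullet(D)\xrightarrow{\sim}\bigoplus_{|J|=s}W_m\Omega_{B_J/k}^{\bullet-s}$ (with $B_J := B/(t_i : i\in J)$), compatible via $\phi$ and the induced quotients $A_J \twoheadrightarrow B_J$. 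The well-definedness of $\Res^B_s$ will follow from the description $\ker\phi = W_m(\mathfrak{a})W_m\Omega_A^\bullet + dW_m(\mathfrak{a})W_m\Omega_A^{\bullet-1}$ (Proposition~\ref{prop:exact-sequence-Kahler-diffs}(2)) together with the $A_J$-linearity of the residue, which shows that $\Res^A_s$ carries $\ker\phi\cap P_s^A W_m\Omega_A^\bullet(D)$ into $\bigoplus_{|J|=s}\ker\phi_J$. Performing downward induction on $s$ starting at $s=d$, and assuming $\phi(\omega^{[s']})=0$ for all $s'>s$, I will use that $\omega^{[s']}\in P_{s-1}^A W_m\Omega_A^\bullet(D)$ for $s'<s$ together with the hypothesis $\phi(\omega)\in P_j^B W_m\Omega_B^\bullet(D)\subseteq P_{s-1}^B W_m\Omega_B^\bullet(D)$ to conclude $\phi(\omega^{[s]})\in P_{s-1}^B W_m\Omega_B^\bullet(D)$. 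Applying $\Res^B_s$ will then yield $\phi_J(\eta_J^{[s]})=0$ in $W_m\Omega_{B_J/k}^{\bullet-s}$ for every $|J|=s$.

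Finally, to upgrade the residue vanishing to the genuine vanishing $\phi(\omega^{[s]})=0$ in $W_m\Omega_{B/k}^\bullet(D)$, I will write each $\eta_J^{[s]}\in\ker\phi_J$ as $a'\xi+db'\zeta$ with $a',b'\in W_m(\mathfrak{a}_J)$, lift these to $\tilde a,\tilde b\in W_m(\mathfrak{a})$ via the coordinate-wise surjectivity of $W_m(\mathfrak{a})\twoheadrightarrow W_m(\mathfrak{a}_J)$, and exploit the log-structure identity $[T_i]\cdot d\log X_i = d[T_i]$ together with $d^2=0$ to absorb the discrepancy between the inclusion-lift and the $\ker\phi$-lift into lower levels of the filtration. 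The hard part will be this last step: passing from residue-level vanishing in $\mathrm{Gr}_s^B$ to genuine vanishing in $W_m\Omega_B^\bullet(D)$ itself requires delicate bookkeeping with the log structure relations and an iterative argument down the filtration $P_\bullet^B$, to handle the fact that multiplication by elements of $W_m(\mathfrak{b}_J)$ (where $\mathfrak{b}_J := (t_i : i\in J)$) typically lowers rather than preserves the log-pole count.
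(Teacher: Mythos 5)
Your decomposition by $s=|I_{-\infty}|$, the Poincar\'e residue isomorphisms at each filtration level, and the downward induction on $s$ all match the paper's proof. Where you diverge is the "upgrade" step, and this is where your plan has a genuine gap.

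After the residue argument gives $\phi_J(\eta_J^{[r]})=0$ at the top level $r$, the paper finishes as follows. By the Langer--Zink description of the kernel, $\eta_J^{[r]}$ lies in $W_m(I_J)W_m\Omega_{A_J}^\bullet + dW_m(I_J)W_m\Omega_{A_J}^{\bullet-1}$, and the paper then invokes the identity $I_J=A_J\cap I$ under the \emph{subring inclusion} $A_J\hookrightarrow A$. This places $W_m(I_J)$ literally inside $W_m(I)$, so $\left(\prod_{i\in J}d\log X_i\right)\cdot\eta_J^{[r]}$ sits directly in $\ker\phi$. The conclusion is $\phi(\omega_r)=0$ on the nose, which is exactly what keeps the inductive hypothesis $\phi(\omega)\in P_j$ intact when you pass from $\omega$ to $\omega-\omega_r$.

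Your proposal instead lifts $a',b'\in W_m(\mathfrak{a}_J)$ to $\tilde a,\tilde b\in W_m(\mathfrak{a})$ along the surjection induced by the quotient $A\twoheadrightarrow A_J$, and plans to absorb the discrepancy $\tilde a-\iota(a')\in W_m\bigl((T_i:i\in J)A\bigr)$ using $[T_i]\,d\log X_i=d[T_i]$. Here is why this does not close the argument: that discrepancy, once multiplied by $d\log X_J$, produces genuine non-zero terms of strictly smaller pole count. The best you can extract this way is $\phi(\omega_r)\in P_{r-1}W_m\Omega_B^\bullet(D)$, but that is precisely the information you already had from the choice of $r$ (since $\phi(\omega)\in P_j\subset P_{r-1}$ and $\omega-\omega_r$ has pole count $<r$). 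It does \emph{not} give $\phi(\omega_r)=0$. Consequently, after one round you only know $\phi(\omega-\omega_r)\in P_{r-1}$ rather than $\phi(\omega-\omega_r)\in P_j$, and the downward induction cannot be re-applied. Your iterative plan pushes the obstruction down the filtration instead of eliminating it.

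To close the gap you need what the paper actually uses: an argument showing that $\ker\phi_J$, viewed through the \emph{inclusion} $A_J\subset A$ (not the quotient), lands inside $\ker\phi$, so that there is no discrepancy at all. If you prefer to stick with a lift along the quotient, you would need to show that the lift can be chosen to make the discrepancy term vanish, not merely to drop its pole count. You should also note that the identity $I_J=A_J\cap I$ that the paper relies on is itself not automatic for an arbitrary presentation $q:A\twoheadrightarrow B$ (it encodes a transversality between $q(A_J)$ and the ideal generated by $(t_i:i\in J)$ in $B$); checking it, or reformulating the lemma so that only the consequence needed in Lemma~\ref{lem:residue-number-of-log-poles}'s application survives, is part of making the argument watertight.
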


\begin{proof}
By Proposition \ref{prop:exact-sequence-Kahler-diffs} (2), we have
\[
	\ker \phi =
	W_m(I) W_m\Omega_{A/k}^\bullet (D) + dW_m(I) W_m \Omega_{A/k}^{\bullet-1} (D) \subset W_m \Omega_{A/k}^\bullet (D)
\]
where $I=\ker (q:A \to B)$.

When $J = \{\alpha_1,\ldots, \alpha_r\}$ is a subset of $[1,d]$,
we set
\[
	A_J = k[T_1,\ldots,\widehat{T_{\alpha_1}},\ldots,\widehat{T_{\alpha_r}},\ldots,T_N]
\]
and $B_J := B/(t_{\alpha_1},\ldots, t_{\alpha_r})$.
The map $q$ induces a surjective map $q_J:A_J \to B_J$.
We define $I_J := \ker q_J$.

Let $\omega = \sum_{k,\mathcal{P},|I_{-\infty}| > j} \epsilon (\xi_{k,\mathcal{P}},k,\mathcal{P})$ be an element of $W_m\Omega_{A/k}^\bullet(D)$
such that $\phi(\omega) \in P_j W_m\Omega_{B/k}^\bullet(D)$.

By the construction of the (log) basic Witt differentials and
Proposition \ref{prop:expression-basic-witt-sncd} and \cite{LZDRW} Proposition 2.17,
 we obtain an isomorphism
\[
	\bigoplus_{r=0}^d \bigoplus_{|J|=r} W_m \Omega_{A_J/k}^\bullet
	\xrightarrow{\sim}
	W_m\Omega_{A/k}^\bullet (D).
\]
We see $\omega$ is uniquely written as the sum
$\sum_{|J|>j} d\log X_J \cdot \omega_J$ where $\omega_J \in W_m \Omega_{A_J/k}^\bullet$ via this isomorphism.
For $0 \le s \le d$, we set $\omega_s:=\sum_{|J|=s} d\log X_J \cdot \omega_J$.

Let $r$ be an integer such that $j+1 \le r \le d$ and $\omega_{r+1}=\omega_{r+2}=\cdots=\omega_{d}=0$.
It follows that $\omega = \omega_{j+1}+\cdots+\omega_r$
and $\omega \in P_r W_m \Omega_{A/k}^\bullet (D)$.

We have the following commutative diagram:
\[
\xymatrix{
	Gr_r W_m \Omega_{A/k}^\bullet(D) \ar[r]^{\phi_r} \ar[d]_{\sim}^{\Res}
	&
	Gr_r W_m \Omega_{B/k}^\bullet(D) \ar[d]_{\sim}^{\Res}
	\\
	\bigoplus_{|J|=r} W_m \Omega_{A_J/k}^\bullet \ar[r]^{\bigoplus_{|J|=r} \phi_J}
	&
	\bigoplus_{|J|=r} W_m \Omega_{B_J/k}^\bullet.
}
\]
Here $\phi_r$ is the induced morphism of $\phi$.
We have $\phi_r(\omega)=0$ because
$\phi(\omega) \in P_j W_m\Omega_{B/k}^\bullet(D)$ and $r \ge j+1$.
The image of $\omega$ is mapped to
\[
	(\omega_J)_{|J|=r} \in \bigoplus_{|J|=r} W_m \Omega_{A_J/k}^\bullet
\]
by the residue isomorphism.

For any $J$ satisfying $|J|=r$, we obtain $\phi_J(\omega_J)=0$.
By \cite{LZGMconn}, $\ker \phi_J$ is equal to
\[
	W_m(I_J) W_m \Omega_{A_J/k}^\bullet + dW_m(I_J) W_m \Omega_{A_J/k}^{\bullet-1} \subset W_m \Omega_{A_J/k}^\bullet.
\]
We have $I_J = A_J \cap I$ via the canonical inclusion $A_J \subset A$.
Then we see $d\log X_J \cdot \omega_J \in W_m\Omega_{A/k}^\bullet (D)$ belongs to
\[
	\ker \phi =
	W_m(I) W_m\Omega_{A/k}^\bullet (D) + dW_m(I) W_m \Omega_{A/k}^{\bullet-1} (D) \subset W_m \Omega_{A/k}^\bullet (D).
\]
It shows $\phi(\omega_r)=\sum_{|J|=r} \phi(d\log X_J \cdot \omega_J)=0$.
If we consider $\omega':=\omega-\omega_r$ instead of $\omega$,
we find that $\omega'$ has an expression of the form $\sum_{k,\mathcal{P},|I_{-\infty}| > j} \epsilon (\xi_{k,\mathcal{P}},k,\mathcal{P})$
and that $\phi(\omega') \in P_j W_m\Omega_{B/k}^\bullet(D)$.
It is clear that $\omega'_r=\omega'_{r+1}=\cdots=\omega'_{d}=0$.

By descending induction on $r$, we find $\omega$ is a sum of elements of $\ker \phi$.
\end{proof}

\begin{lem}
The residue isomorphism $\Res$ induces the residue isomorphism of overconvergent de Rham-Witt complexes:
\begin{align*}
	\Res : Gr_j W^\dagger \Omega_{B/k}^\bullet(D)
	\to
	\bigoplus_{|J|=j}W^\dagger \Omega_{B_J/k}^{\bullet-j}.
\end{align*}
\end{lem}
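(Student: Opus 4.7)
The plan is to reduce to the polynomial case via the defining presentation, handle that case explicitly using basic Witt differentials, and then transfer back. Fix a presentation $A = k[T_1,\ldots,T_N] \twoheadrightarrow B$ of pre-log rings as in the definition of $W^\dagger\Omega_{B/k}^\bullet(D)$. In the polynomial case, Proposition \ref{prop:expression-basic-witt-sncd} gives a unique convergent expansion $\omega = \sum_{k,\mathcal{P}}\epsilon(\xi_{k,\mathcal{P}},k,\mathcal{P})$, and the filtration $P_j W\Omega_{A/k}^\bullet(D)$ corresponds precisely to the subgroup of expansions supported on $|I_{-\infty}|\le j$. For each $J=\{\alpha_1,\ldots,\alpha_j\}\subset[1,d]$, the subspace of expansions with $I_{-\infty}=J$ is identified with $W\Omega_{A_J/k}^{\bullet-j}$ by extracting the factor $d\log X_J := d\log X_{\alpha_1}\cdots d\log X_{\alpha_j}$; this realizes the non-overconvergent residue isomorphism $Gr_j W\Omega_{A/k}^\bullet(D) \simeq \bigoplus_{|J|=j} W\Omega_{A_J/k}^{\bullet-j}$ in the polynomial case.

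The key observation will be that the Gauss norm $\gamma_\epsilon$ is compatible with the $|I_{-\infty}|$-decomposition: it only involves $\ord_p \xi_{k,\mathcal{P}}$ and $|k^+|$, and by definition the indices placed in $I_{-\infty}$ do not contribute to $k^+$. Hence an element of $P_j W\Omega_{A/k}^\bullet(D)$ is overconvergent if and only if each ``$J$-component'' is overconvergent, and the norm on the $J$-component matches exactly the Gauss norm on the overconvergent de Rham-Witt complex of the polynomial ring $A_J$ in $N-j$ variables. This will give immediately the overconvergent residue isomorphism $\Res : Gr_j W^\dagger\Omega_{A/k}^\bullet(D) \xrightarrow{\sim} \bigoplus_{|J|=j} W^\dagger\Omega_{A_J/k}^{\bullet-j}$ in the polynomial case.

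To transfer to $B$, I would use that $A \twoheadrightarrow B$ and $A_J \twoheadrightarrow B_J$ are surjective morphisms of pre-log rings, so by Proposition \ref{prop:indep} they induce surjections between the overconvergent log de Rham-Witt complexes. Since $P_j W^\dagger\Omega_{B/k}^\bullet(D)$ is the image of $P_j W^\dagger\Omega_{A/k}^\bullet(D)$, surjectivity of $\Res$ on $B$ follows from the polynomial case. For injectivity, I would use that the canonical map $Gr_j W^\dagger\Omega_{B/k}^\bullet(D) \to Gr_j W\Omega_{B/k}^\bullet(D)$ is injective (this was noted just above the lemma and is formal from the definition $P_j W^\dagger := P_j W \cap W^\dagger$), combined with the fact that the target residue map is already known to be an isomorphism on $W\Omega_{B/k}^\bullet(D)$ via the same basic Witt differential argument used in Lemma \ref{lem:resisom}.

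The main obstacle I expect is the compatibility at the $B$-level: namely, to verify that applying $A$-level residue to an overconvergent lift and then projecting to $B$ is well-defined on $Gr_j$, one needs that an expansion supported on $|I_{-\infty}|>j$ cannot map to something in $P_j W\Omega_{B/k}^\bullet(D)$ without being zero. This is exactly the content of Lemma \ref{lem:residue-number-of-log-poles}, which guarantees that the surjections from $A$ respect the filtration in the required way; once this is combined with the explicit Gauss-norm-compatible decomposition in the polynomial case, the residue isomorphism passes cleanly from $A$ to $B$.
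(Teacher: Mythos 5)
Your proposal is correct and follows essentially the same route as the paper: reduce to the polynomial case by showing the presentation induces a surjection on $P_j W^\dagger$, handle the polynomial case explicitly using the basic Witt differential expansion and the observation that the Gauss norm $\gamma_\epsilon$ only sees $\ord_p\xi_{k,\mathcal{P}}$ and $|k^+|$ (so is insensitive to the $I_{-\infty}$-part), and invoke Lemma \ref{lem:residue-number-of-log-poles} as the key compatibility statement. One small structural quibble: you assert ``$P_j W^\dagger\Omega_{B/k}^\bullet(D)$ is the image of $P_j W^\dagger\Omega_{A/k}^\bullet(D)$'' as though it were already known before explaining, in the final paragraph, that this is precisely the non-trivial point and that Lemma \ref{lem:residue-number-of-log-poles} supplies the missing ingredient. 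In the paper this is the entire content of the first half of the proof: given $\bar\omega\in P_j W^\dagger\Omega_B$, lift it to an overconvergent $\omega$ over $A$, split $\omega=\omega_1+\omega_2$ according to whether $|I_{-\infty}|\le j$ or $>j$, and use the lemma to kill $q(\omega_2)$. Your writeup has the right ideas in the right places; it would read more cleanly if the surjectivity of $P_j W^\dagger\Omega_A\to P_j W^\dagger\Omega_B$ were stated as the goal, with the split-by-$|I_{-\infty}|$ argument and the lemma presented as its proof, rather than first asserting it and then flagging it as the obstacle.
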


\begin{proof}
Take a presentation
\[
\xymatrix{
	\bN^d \ar[r] \ar@{=}[d]
	&
	A=k[T_1,\ldots,T_N] \ar@{->>}[d]^{q}
	\\
	\bN^d \ar[r]
	&
	B
}
\]
and consider the following commutative diagram:
\[
\xymatrix{
	Gr_j W^\dagger \Omega_{B/k}^\bullet(D) \ar[r]^\subset
	&
	Gr_j W\Omega_{B/k}^\bullet(D) \ar[r]^-{\Res}_{\sim}
	&
	\bigoplus_{|J|=j}W\Omega_{{B_J}/k}^{\bullet-j}
	\\
	Gr_j W^\dagger \Omega_{A/k}^\bullet(D) \ar[r]^\subset \ar[u]
	&
	Gr_j W\Omega_{A/k}^\bullet(D) \ar[r]^-{\Res}_{\sim} \ar[u]
	&
	\bigoplus_{|J|=j}W\Omega_{{A_J}/k}^{\bullet-j}. \ar[u]
}
\]
We prove that the map $Gr_j W^\dagger \Omega_{A/k}^\bullet(D)\to Gr_j W^\dagger \Omega_{B/k}^\bullet(D)$ is surjective.
It suffices to show that the map $P_j W^\dagger \Omega_{A/k}^\bullet(D) \to P_j W^\dagger \Omega_{B/k}^\bullet(D)$
induced by $q : W\Omega_{A/k}^\bullet(D)\to W\Omega_{B/k}^\bullet(D)$ is surjective.

Let $\bar{\omega} \in P_j W^\dagger \Omega_{B/k}^\bullet(D)$.
Since $W^\dagger \Omega_{A/k}^\bullet(D) \to W^\dagger \Omega_{B/k}^\bullet(D)$ is surjective,
there exists an element $\omega =
\sum_{k,\mathcal{P}} \epsilon (\xi_{k,\mathcal{P}},k,\mathcal{P})
\in W^\dagger \Omega_{A/k}^\bullet(D)$
such that $q(\omega)=\bar{\omega}$.
Set $\omega_1 = \sum_{k,\mathcal{P},|I_{-\infty}| \le j} \epsilon (\xi_{k,\mathcal{P}},k,\mathcal{P})$
and
$\omega_2 = \sum_{k,\mathcal{P},|I_{-\infty}| > j} \epsilon (\xi_{k,\mathcal{P}},k,\mathcal{P})$.
Then we see $\omega_1 \in P_j W^\dagger \Omega_{A/k}^\bullet(D)$
and $\omega = \omega_1 + \omega_2$.
By Lemma \ref{lem:residue-number-of-log-poles}, we get $q(\omega_2)=0$.
Hence we have $q(\omega_1) = q(\omega) = \bar{\omega}$.
This implies $P_j W^\dagger \Omega_{A/k}^\bullet(D) \to P_j W^\dagger \Omega_{B/k}^\bullet(D)$
is surjective.
Thus, we can assume $B$ is a polynomial ring $A=k[T_1,\ldots,T_N]$.

In this case, the elements $\omega$ of $Gr_j W \Omega_{A/k}^\bullet(D)$ is in $Gr_j W^\dagger \Omega_{A/k}^\bullet(D)$
if and only if it can be written as a overconvergent sum of log basic Witt differentials $\epsilon (\xi,k,\mathcal{P})$
such that $|I_\infty| = j$.
Hence
\[
	\Res:Gr_j W\Omega_{A/k}^\bullet(D) \simeq \bigoplus_{|J|=j}W\Omega_{{A_J}/k}^{\bullet-j}
\]
induces an isomorphism
\[
	\Res:Gr_j W^\dagger \Omega_{A/k}^\bullet(D) \simeq \bigoplus_{|J|=j}W^\dagger \Omega_{{A_J}/k}^{\bullet-j}.
\]
\end{proof}

\begin{thm}
\label{thm:logcomparison}
Let $\tau = 4d\lfloor \log_p \dim B \rfloor$. Then the kernel and cokernel of the homomorphism
\[
	\sigma_*:H^i(\Omega_{\widetilde{B}^\dagger/W(k)}^\bullet(\mathcal{D}))
	\to
	H^i(W^\dagger \Omega_{B/k}^\bullet(D))
\]
induced by $\sigma$ are annihilated by $p^{\tau}$. In particular, $\sigma_*$ is an isomorphism if $\dim B < p$.

There is a rational isomorphism
\[
H_{\logMW}^*((X,D)/K)\simeq H^*(W^\dagger \Omega_{B/k}^\bullet(D)\otimes_{W(k)}K)
\]
between log Monsky-Washnitzer cohomology and logarithmic overconvergent de Rham-Witt cohomology.
\end{thm}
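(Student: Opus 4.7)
The plan is to reduce the theorem to the non-logarithmic comparison theorem of Davis-Langer-Zink by working with the weight filtration $P_\bullet$ on both sides. The first step is to verify that $\sigma$ preserves the filtrations $P_j$ on both complexes. This follows from the explicit description of $\sigma$: since $\lambda_\phi(\widetilde{t}_i)\in 1+{}^V W^\dagger(B)$, we have $\sigma(d\log \widetilde{t}_i)=d\log[t_i]+d\lambda_\phi(\widetilde{t}_i)/\lambda_\phi(\widetilde{t}_i)$, and the correction term $d\lambda_\phi(\widetilde{t}_i)/\lambda_\phi(\widetilde{t}_i)$ has no logarithmic poles, hence lies in $P_0$. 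Thus $\sigma$ induces maps $Gr_j \sigma$ on associated graded pieces for every $j$.

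The second step is to identify $Gr_j \sigma$, via the two residue isomorphisms established above, with the direct sum over subsets $J\subset[1,d]$ of size $j$ of the classical Davis-Langer-Zink comparison morphisms
\[
\sigma_J:\Omega_{\widetilde{B}_J^\dagger/W(k)}^{\bullet-j}\to W^\dagger\Omega_{B_J/k}^{\bullet-j},
\]
where $\widetilde{B}_J$ and $B_J$ inherit coordinates that make them eligible for the non-logarithmic theory of \cite{DLZODRW}. This identification is essentially a direct computation on (log) basic Witt differentials using the explicit formulas for $\sigma$ on $d\log \widetilde{t}_i$ and $d\widetilde{t}_i$, together with Proposition \ref{prop:indep} which controls overconvergence on quotients.

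The third step invokes the comparison theorem of Davis-Langer-Zink: for each $J$, $\sigma_J$ induces a homomorphism on cohomology whose kernel and cokernel are annihilated by $p^{4\lfloor\log_p\dim B_J\rfloor}\le p^{4\lfloor\log_p\dim B\rfloor}$. I then consider the spectral sequences associated to the filtrations $P_\bullet$ (which have length $d+1$) on both complexes together with the morphism of spectral sequences induced by $\sigma$. Applying the five-lemma for $p^c$-isomorphisms iteratively to the short exact sequences $0\to P_{j-1}\to P_j\to Gr_j\to 0$ for $j=1,\ldots,d$, and tracking how the annihilators compose, yields that $\sigma_*$ has kernel and cokernel annihilated by $p^\tau$ with $\tau=4d\lfloor\log_p\dim B\rfloor$. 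The rational isomorphism with log Monsky-Washnitzer cohomology then follows immediately: by \cite{Tsu}, $H_{\logMW}^*((X,D)/K)$ is $H^*(\Omega_{\widetilde{B}^\dagger/W(k)}^\bullet(\mathcal{D}))\otimes_{W(k)}K$ and is independent of the choice of lift and Frobenius, and the $p^\tau$-torsion in kernel and cokernel of $\sigma_*$ is killed after tensoring with $K$.

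The main obstacle I anticipate is the bookkeeping in the third step: one must carefully track how $p^c$-isomorphism bounds accumulate through the $d+1$ filtration steps so as to recover the precise constant $\tau$ stated. A second subtle point lies in the second step, namely showing that the residue isomorphism for $W^\dagger\Omega_{B/k}^\bullet(D)$ is compatible with the non-logarithmic Davis-Langer-Zink map on each stratum; this requires that the overconvergence condition on $W^\dagger\Omega_{B/k}^\bullet(D)$ restricts correctly to overconvergence on each $W^\dagger\Omega_{B_J/k}^\bullet$, for which the decomposition into log basic Witt differentials with fixed $I_{-\infty}=J$ used in the proof of Proposition \ref{prop:indep} will be the key technical input.
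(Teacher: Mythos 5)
Your approach matches the paper's proof essentially step by step: observe that $\sigma$ respects the weight filtration $P_\bullet$, use the Poincar\'e residue isomorphisms to identify $Gr_j\sigma$ with the direct sum of the Davis--Langer--Zink comparison maps on the strata $\widetilde{B}_J^\dagger \to W^\dagger\Omega_{B_J/k}^\bullet$, invoke the DLZ torsion bound on cohomology, and then chase the long exact sequences coming from $0\to P_{j-1}\to P_j\to Gr_j\to 0$ inductively in $j$. One small bookkeeping note: the paper cites \cite{DLZODRW} Proposition 3.24 for a bound of $p^{2\kappa(J)}$ (not $p^{4\kappa(J)}$) on the kernel and cokernel at the $Gr_j$ level; the additional factor of $2$ producing the final exponent $\tau = 4d\lfloor\log_p\dim B\rfloor$ is what accumulates in the diagram chase, so when you carry out your third step you will find the constant works out through the five-lemma argument rather than being present already in the input bound.
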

\begin{proof}
Consider the following commutative diagram:
\[
\xymatrix{
	Gr_j(\Omega_{\widetilde{B}^\dagger/W(k)}^\bullet (\mathcal{D}))\ar[d]^{\Res}_{\sim} \ar[r]
	&
	Gr_j(W^\dagger \Omega_{B/k}^\bullet (D))\ar[d]^{\Res}_{\sim}
	\\
	\bigoplus_{|J|=j}\Omega_{\widetilde{B}_J^\dagger/W(k)}^{\bullet-j} \ar[r]
	&
	\bigoplus_{|J|=j}W^\dagger \Omega_{{B_J}/k}^{\bullet-j}.
}
\]
Let $\kappa:=\lfloor \log_p \dim B \rfloor$. By \cite{DLZODRW} Proposition 3.24,
the kernel and cokernel of
\[
	H^i(\Omega_{\widetilde{B}_J^\dagger/W(k)}^\bullet)
	\to
	H^i(W^\dagger \Omega_{{B_J}/k}^\bullet)
\]
are annihilated by $p^{2\kappa(J)}$ where $\kappa(J)=\lfloor \log_p \dim B_J \rfloor$.
Thus the kernel and cokernel of
\[
	H^i(Gr_j(\Omega_{\widetilde{B}^\dagger/W(k)}^\bullet (\mathcal{D})))
	\to
	H^i(Gr_j(W^\dagger \Omega_{B/k}^\bullet (D)))
\]
are annihilated by $p^{2\kappa}$. Consider the following exact sequences:
\[
\xymatrix{
	0 \ar[r]
	&
	P_{j-1}\Omega_{\widetilde{B}^\dagger/W(k)}^\bullet(\mathcal{D}) \ar[r] \ar[d]
	&
	P_j\Omega_{\widetilde{B}^\dagger/W(k)}^\bullet (\mathcal{D}) \ar[r] \ar[d]
	&
	Gr_j\Omega_{\widetilde{B}^\dagger/W(k)}^\bullet (\mathcal{D}) \ar[r] \ar[d]
	&
	0
	\\
	0 \ar[r]
	&
	P_{j-1}W^\dagger \Omega_{B/k}^\bullet (D) \ar[r]
	&
	P_jW^\dagger \Omega_{B/k}^\bullet (D) \ar[r]
	&
	Gr_j W^\dagger \Omega_{B/k}^\bullet (D) \ar[r]
	&
	0.
}
\]
It induces a long exact sequences of cohomology of chain complexes:
\[
\xymatrix@C=1em{
	\ar[r]
	&
	H^r(P_{j-1}\Omega_{\widetilde{B}^\dagger/W(k)}^\bullet (\mathcal{D})) \ar[r] \ar[d]
	&
	H^r(P_j\Omega_{\widetilde{B}^\dagger/W(k)}^\bullet (\mathcal{D})) \ar[r] \ar[d]
	&
	H^r(Gr_j\Omega_{\widetilde{B}^\dagger/W(k)}^\bullet (\mathcal{D})) \ar[r] \ar[d]
	&
	\\
	\ar[r]
	&
	H^r(P_{j-1}W^\dagger \Omega_{B/k}^\bullet (D)) \ar[r]
	&
	H^r(P_j W^\dagger \Omega_{B/k}^\bullet (D)) \ar[r]
	&
	H^r(Gr_j W^\dagger \Omega_{B/k}^\bullet (D)) \ar[r]
	&
	.
}
\]

By diagram chase and induction, we find that the kernel and cokernel of
\[
	H^r(P_j\Omega_{\widetilde{B}^\dagger/W(k)}^\bullet (\mathcal{D}))
	\to
	H^r(P_j W^\dagger \Omega_{B/k}^\bullet (D))
\]
are annihilated by $p^{\alpha(j)}$
where $\alpha(j)=4j\lfloor \log_p \dim B \rfloor$.
Since $P_d\Omega_{\widetilde{B}^\dagger/W(k)}^\bullet (\mathcal{D})=\Omega_{\widetilde{B}^\dagger/W(k)}^\bullet (\mathcal{D})$
and $P_d W^\dagger \Omega_{B/k}^\bullet (D) =W^\dagger \Omega_{B/k}^\bullet (D)$, we get the claim.
\end{proof}

\begin{prop}
\label{prop:affine-comparison-open}
Let $\displaystyle C:=B\left[ \frac{1}{t_1\cdots t_d}\right]$ and $Y:=\Spec C$.
Let
\[
	\theta :
	W\Omega_{B/k}^\bullet(D)
	\to
	W\Omega_{C/k}^\bullet,\
	d\log [t_i]
	\mapsto
	\frac{[t_1 \cdots \widehat{t_i} \cdots t_d]}{[t_1  \cdots t_d]} d[t_i]
\]
be the canonical morphism induced by the universal property of $W\Omega_{B/k}^\bullet(D)$. Then:

(1) $\theta$ induces a morphism
\[
	\theta^\dagger:
	W^\dagger\Omega_{B/k}^\bullet(D)
	\to
	W^\dagger\Omega_{C/k}^\bullet.
\]

(2) $\theta^\dagger$ induces an isomorphism of cohomology groups
\[
	\theta^\dagger:
	H^* (W^\dagger\Omega_{B/k}^\bullet(D)\otimes_{W(k)}K)
	\to
	H^* (W^\dagger\Omega_{C/k}^\bullet\otimes_{W(k)}K).
\]
\end{prop}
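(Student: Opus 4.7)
The plan is to establish (1) via the functoriality of overconvergent complexes under morphisms of polynomial pre-log rings (Proposition \ref{prop:indep}), and to establish (2) by identifying both sides with (logarithmic) Monsky--Washnitzer cohomology via Theorem \ref{thm:logcomparison} and its non-logarithmic analogue from \cite{DLZODRW}.

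For (1), I will fix the standard polynomial presentation $(k[T_1,\ldots,T_n],\bN^d)\twoheadrightarrow (B,\bN^d)$ used to define $W^\dagger\Omega_{B/k}^\bullet(D)$, and introduce auxiliary variables $S_1,\ldots,S_d$. The inclusion of polynomial pre-log rings $(k[T_1,\ldots,T_n],\bN^d)\hookrightarrow(k[T_1,\ldots,T_n,S_1,\ldots,S_d],\bN^d)$ induces, by Proposition \ref{prop:indep}, a map on overconvergent complexes; composing with the surjection $k[T_1,\ldots,T_n,S_1,\ldots,S_d]\twoheadrightarrow C$ sending $S_i\mapsto 1/t_i$ makes the pre-log structure on $C$ trivial, so the target can be identified with $W^\dagger\Omega_{C/k}^\bullet$ (independence of presentation). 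The key computation is on log $p$-basic elements: $\epsilon(\xi,k,(I_{-\infty},I_0,\ldots,I_l))$ goes to $\prod_{i\in I_{-\infty}}([S_i]d[T_i])\cdot e(\xi,k^+,(I_0,\ldots,I_l))$, whose basic-Witt-differential expansion in the enlarged variables has weight bounded by $|k^+|+2|I_{-\infty}|\le|k^+|+2d$. A Gauss-norm bound $\ord_p\xi_{k,\mathcal{P}}-\epsilon|k^+|\ge C$ thus yields $\ge C-2\epsilon d$ after the substitution, preserving overconvergence.

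For (2), Theorem \ref{thm:logcomparison} provides a rational quasi-isomorphism $\sigma_B\colon\Omega^\bullet_{\widetilde B^\dagger/W(k)}(\mathcal D)\otimes K\to W^\dagger\Omega_{B/k}^\bullet(D)\otimes K$ on cohomology, built from a Frobenius lift $\phi$ on $\widetilde B^\dagger$. Since $\phi(\widetilde t_i)=\widetilde t_i^p u_i$ with $u_i\in 1+p\widetilde B^\dagger$, the lift extends uniquely to $\widetilde C^\dagger=(\widetilde B[1/(\widetilde t_1\cdots\widetilde t_d)])^\dagger$, and the non-log comparison of \cite{DLZODRW} yields a corresponding rational quasi-isomorphism $\sigma_C\colon\Omega^\bullet_{\widetilde C^\dagger/W(k)}\otimes K\to W^\dagger\Omega_{C/k}^\bullet\otimes K$. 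I will verify commutativity of
\[
\xymatrix{
\Omega^\bullet_{\widetilde B^\dagger/W(k)}(\mathcal D) \ar[r]\ar[d]_{\sigma_B}
& \Omega^\bullet_{\widetilde C^\dagger/W(k)} \ar[d]^{\sigma_C} \\
W^\dagger\Omega_{B/k}^\bullet(D) \ar[r]^{\theta^\dagger}
& W^\dagger\Omega_{C/k}^\bullet
}
\]
by direct calculation on generators: on $d\log\widetilde t_i$, the composition through $\sigma_B$ yields $\theta^\dagger(d\log[t_i]+d\lambda_\phi(\widetilde t_i)/\lambda_\phi(\widetilde t_i))=[s_i]d[t_i]+d\lambda_\phi(\widetilde t_i)/\lambda_\phi(\widetilde t_i)$, while the composition through $\sigma_C$ yields $d\log t_\phi(\widetilde t_i)=d\log([t_i]\lambda_\phi(\widetilde t_i))=[s_i]d[t_i]+d\lambda_\phi(\widetilde t_i)/\lambda_\phi(\widetilde t_i)$, matching via $[1/t_i]=[s_i]$ in $W^\dagger(C)$. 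The top horizontal map is a rational quasi-isomorphism on cohomology by the classical log-to-open comparison in Monsky--Washnitzer/rigid cohomology---both sides computing $H^*_{\rig}(Y/K)$, via Tsuzuki's identification \cite{Tsu}---so $\theta^\dagger$ is as well.

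The main obstacle is the compatibility check in (2): tracing the Witt-lift construction through both sides to identify $\theta^\dagger$ with the natural log-to-open map on the Monsky--Washnitzer side, and invoking the log-to-open rigid comparison. Part (1) reduces to a direct Gauss-norm bound once the enlarged polynomial presentation is set up.
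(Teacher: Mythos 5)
Your proposal is correct and follows essentially the same route as the paper: for (1) you introduce $d$ auxiliary variables $S_1,\ldots,S_d$ with $S_i\mapsto 1/t_i$ whereas the paper uses a single $S\mapsto 1/(t_1\cdots t_d)$, but both produce an explicit overconvergent lift of $\theta(\omega)$ in the enlarged polynomial ring (your Gauss-norm bound is a mild sharpening of the paper's appeal to closure of $W^\dagger$ under products). Part (2) is exactly the paper's argument: extend the Frobenius lift from $\widetilde B^\dagger$ to $\widetilde C^\dagger$, form the commutative square with the comparison maps $\sigma_B,\sigma_C$, and conclude from Theorem \ref{thm:logcomparison}, Davis--Langer--Zink's comparison, and Tsuzuki's log-to-open identification of Monsky--Washnitzer cohomologies.
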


\begin{proof}
(1) Choose a presentation
\[
\xymatrix{
	\bN^d \ar[r] \ar@{=}[d]
	&
	k[T_1,\ldots,T_N] \ar@{->>}[d]^{\lambda}
	\\
	\bN^d \ar[r]
	&
	B.
}
\]
Then we have a presentation $\lambda' : k[T_1,\ldots,T_N,S]\to C$ induced by $\lambda$ and $S\mapsto 1/(t_1\cdots t_d)$.
We obtain two surjective morphisms
\begin{align*}
	\tau:
	W \Lambda_{(k[T_1,\ldots,T_N],\bN^d)/k}^\bullet
	&\to W \Omega_{B/k}^\bullet(D),\\
	\tau':W\Omega_{k[T_1,\ldots,T_N,S]/k}^\bullet
	&\to W \Omega_{C/k}^\bullet.
\end{align*}

Let $\omega$ be any element of $W^\dagger \Lambda_{(k[T_1,\ldots,T_N],\bN^d)/k}^\bullet$.
Then as in the proof of Proposition \ref{prop:indep}, $\omega$ can be written as $\omega=\sum_{J\subset [1,d]} ( \prod_{i\in J}d\log X_i ) \bar{\omega}_J$
where $\bar{\omega}_J \in W^\dagger\Omega_{(k[T_1,\ldots,T_N])/k}^\bullet$.

We set
\[
	\widetilde{\omega}:=
	\sum_{J\subset [1,d]} \left( \left( \prod_{i\in J} (Y\cdot X_1\cdots \widehat{X_i} \cdots X_d \cdot dX_i) \right)
	\cdot \bar{\omega}_J \right)
	\in W\Omega_{k[T_1,\ldots,T_N,S]/k}^\bullet
\]
where $Y=[S]$.
It is easy to see that $\theta (\tau (\omega))=\tau'(\widetilde{\omega})$.

One finds $\widetilde{\omega} \in W^\dagger \Omega_{k[T_1,\ldots,T_N,S]/k}^\bullet$ because we have
\[
	\prod_{i\in J} (Y\cdot X_1\cdots \widehat{X_i}\cdots X_d \cdot dX_i) \in W^\dagger \Omega_{k[T_1,\ldots,T_N,S]/k}^\bullet,\
	\bar{\omega}_J \in W^\dagger \Omega_{k[T_1,\ldots,T_N]/k}^\bullet
\]
and $W^\dagger \Omega_{k[T_1,\ldots,T_N,S]/k}^\bullet$ is a ring.
Therefore $\theta$ induces a morphism
\[
	\theta^\dagger:W^\dagger\Omega_{B/k}^\bullet(D)
	\to
	W^\dagger\Omega_{C/k}^\bullet.
\]

(2) A Frobenius map (in the sence of Definition \ref{def:Frob-endmorphism})
on $\widetilde{B}$ induces a Frobenius map on
$\widetilde{C}:=\widetilde{B}\left[ 1/(\tilde{t}_1\cdots\tilde{t}_d)\right]$.
Hence we have a commutative diagram
\[
\xymatrix{
	\Omega_{\widetilde{B}^\dagger/W(k)}^\bullet(\mathcal{D}) \ar[r] \ar[d]
	&
	W\Omega_{B/k}^\bullet(D)\ar[d]
	\\
	\Omega_{\widetilde{C}^\dagger/W(k)}^\bullet \ar[r]
	&
	W \Omega_{C/k}^\bullet.
}
\]
By (1), it induces the following commutative diagram
\[
\xymatrix{
	\Omega_{\widetilde{B}^\dagger/W(k)}^\bullet(\mathcal{D}) \ar[r] \ar[d]
	&
	W^\dagger \Omega_{B/k}^\bullet(D)\ar[d]
	\\
	\Omega_{\widetilde{C}^\dagger/W(k)}^\bullet \ar[r]
	&
	W^\dagger \Omega_{C/k}^\bullet.
}
\]
Tensoring with $K$ and taking cohomology, we obtain the following commutative diagram
\[
\xymatrix{
	H_{\logMW}^*((X,D)/K) \ar[r] \ar[d]
	&
	H^*(W^\dagger \Omega_{B/k}^\bullet(D)\otimes_{W(k)}K)\ar[d]
	\\
	H_{\MW}^*(Y/K) \ar[r]
	&
	H^*(W^\dagger \Omega_{C/k}^\bullet \otimes_{W(k)}K).
}
\]
The horizontal arrows and the left vertical arrow are isomorphisms
by Theorem \ref{thm:logcomparison} and \cite{DLZODRW} Corollary 3.25 and \cite{Tsu} Theorem 3.5.1.
Hence the right vertical arrow is also an isomorphism.
\end{proof}

\subsection{Sheaf of overconvergent log de Rham-Witt complex}
In this subsection, we define the Zariski sheaf of overconvergent log de Rham-Witt complexes
for smooth schemes with simple normal crossing divisor.

\begin{prop}
\label{prop:OdRW-well-defined-affine}
(cf. \cite{DLZODRW} Proposition 1.2)

Let $X=\Spec B$ be a smooth affine scheme and $D$ a simple normal crossing divisor on $X$.
We assume that there is a global chart $\alpha:\bN^d\to \mathcal{M}_{(X,D)}$.
Then we have a pre-log structure
\[
	\beta=\beta_\alpha :
	\bN^d
	\xrightarrow{\alpha}
	\mathcal{M}_{(X,D)}(X)
	\to
	\mathcal{O}_X(X) = B
\]
of $B$.
Let $D_1,\ldots,D_d$ be the irreducible components of $D$.
We also assume that there is an \'etale morphism
$X\xrightarrow{w}\mathbb{A}_k^n=\Spec k[T_1,\ldots,T_n]$
such that $w(T_i)=\beta(e_i)=:t_i \in B$ and that $D_i$ is defined by $t_i=0$ for $1 \le i \le d$.
We fix a nonnegative integer $r$.

(1) We denote by $f\in B$ an arbitrary element.
Then $\beta$ induces a pre-log structure $\beta_f : \bN^d \xrightarrow{\beta} B \to B_f$ of $B_f$.
The presheaf
\[
	D(f)
	\mapsto
	W^\dagger \Lambda_{(B_f,\bN^d)/k}^r
\]
defines a sheaf on the Zariski topology on $X$.
We denote by $W^\dagger \Lambda_{(X,D)/k,\alpha}$ this sheaf.

(2) The Zariski sheaf $W^\dagger \Lambda_{(X,D)/k, \alpha}^r$ is independent of the choice of charts $\alpha$.
We denote by $W^\dagger \Lambda_{(X,D)/k}^r$ this Zariski sheaf.

(3) The Zariski cohomology of the sheaf $W^\dagger \Lambda_{(X,D)/k}^r$ vanishes in degree $j >0$, i.e.,
\[
	H_{\Zar}^j (X, W^\dagger \Lambda_{(X,D)/k}^r)=0.
\]
\end{prop}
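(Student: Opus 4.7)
The strategy is to reduce the logarithmic statements to their non-logarithmic counterparts treated in \cite{DLZODRW} Proposition 1.2, using a canonical splitting of overconvergent log forms according to the set of indices along which they carry a log pole. The key structural input is that for a polynomial ring $\tilde A = k[T_1,\ldots,T_n]$ with log structure $\bN^d \ni e_i \mapsto T_i$, the proof of Proposition \ref{prop:indep} gives every $\omega \in W^\dagger \Lambda^\bullet_{(\tilde A,\bN^d)/k}$ a unique decomposition
\[
\omega = \sum_{J \subset [1,d]} \Bigl(\prod_{i \in J} d\log[T_i]\Bigr) \cdot \bar\omega_J,
\]
where $\bar\omega_J$ lies in the classical overconvergent de Rham--Witt complex $W^\dagger \Omega^{\bullet - |J|}_{k[T_i:i\notin J]/k}$, and $\omega$ is overconvergent if and only if each $\bar\omega_J$ is. Because we are given an \'etale morphism $w : X \to \mathbb{A}^n_k$ with $w^*(T_i) = t_i$, the \'etale base change result of Proposition \ref{prop:etale-base-change-property} applied at each finite level, combined with the analogous naturality for classical $W^\dagger \Omega^\bullet$ from \cite{DLZODRW}, transports this decomposition to $B$: any element of $W^\dagger\Lambda^\bullet_{(B,\bN^d)/k}$ has a unique expression of the same form with $\bar\omega_J \in W^\dagger \Omega^{\bullet - |J|}_{B_J/k}$ for $B_J := B/(t_i : i \in J)$, and this splitting is natural in $B$ and hence commutes with every localization $B \to B_f$.

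For part (1), the sheaf property for the presheaf $D(f) \mapsto W^\dagger\Lambda^r_{(B_f,\bN^d)/k}$ on the basis of principal open sets follows summand by summand from the decomposition: it reduces to the sheaf property of each $W^\dagger \Omega^{r-|J|}_{(B_J)_f/k}$, which is \cite{DLZODRW} Proposition 1.2 (1) applied to the smooth affine scheme $\Spec B_J$.

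For part (2), two global charts $\alpha, \alpha' : \bN^d \to \mathcal{M}_{(X,D)}$ (after matching the irreducible components) satisfy $\alpha'(e_i) = u_i \cdot \alpha(e_i)$ in $\mathcal{M}_{(X,D)}$ for some units $u_i \in B^\times$, since both send $e_i$ to a local defining equation of the same component $D_i$. Under the chart-independent identification of $W\Lambda^r_{(X,D)/k}$ (Proposition-Definition \ref{prop-def:chart-welldefined}), the effect of changing chart on the log-differential generators is a shift by terms $d\log[u_i]$; as $u_i \in B^\times$, each such $d\log[u_i]$ is already classically overconvergent by \cite{DLZODRW} Proposition 0.9, so the subsheaf $W^\dagger \Lambda^r_{(X,D)/k,\alpha}$ of $W\Lambda^r_{(X,D)/k}$ does not depend on the chart.

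For part (3), I would pick a finite cover of $X = \Spec B$ by principal open subsets $D(f_1),\ldots,D(f_s)$ and compute the Zariski cohomology via the associated \v{C}ech complex. The naturality established above makes the decomposition of (1) functorial in the localizations, so the \v{C}ech complex of $W^\dagger \Lambda^r_{(X,D)/k}$ splits as a direct sum over $J \subset [1,d]$ of the \v{C}ech complexes of $W^\dagger \Omega^{r-|J|}_{X_J/k}$, each of which has vanishing higher cohomology by \cite{DLZODRW} Proposition 1.2 (3). The main obstacle I anticipate is the rigorous transfer of the log basic Witt differential decomposition from the polynomial ring to a general smooth $B$ with global coordinates, and in particular verifying that the overconvergence condition splits summand-by-summand and is preserved by localization; this needs the Gauss-norm estimates implicit in the proof of Proposition \ref{prop:indep} together with the \'etale base change of Proposition \ref{prop:etale-base-change-property}.
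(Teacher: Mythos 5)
Your proposal hinges on a direct-sum decomposition
\[
W^\dagger\Lambda^\bullet_{(B,\bN^d)/k} \;=\; \bigoplus_{J\subset [1,d]} \Bigl(\textstyle\prod_{i\in J} d\log[t_i]\Bigr) \cdot W^\dagger\Omega^{\bullet-|J|}_{B_J/k},
\]
which you claim to transport from the polynomial ring to a general smooth $B$ via the \'etale base change of Proposition \ref{prop:etale-base-change-property}. This step is where the argument breaks down. For the polynomial ring $\widetilde{A}=k[T_1,\ldots,T_n]$ the decomposition into pieces indexed by $J=I_{-\infty}$ is a $W(k)$-module decomposition, \emph{not} a $W_m(\widetilde A)$-module decomposition: multiplying the $J$-piece by a Teichm\"uller lift $[T_i]$ with $i\in J$ produces an element in a different piece (e.g.\ $[T_1]\cdot d\log[T_1]=d[T_1]$ lies in the $J=\emptyset$ piece). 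The \'etale base change isomorphism $W_m\Lambda^\bullet_{(B,\bN^d)/k}\simeq W_m(B)\otimes_{W_m(\widetilde A)}W_m\Lambda^\bullet_{(\widetilde A,\bN^d)/k}$ is an isomorphism of $W_m(B)$-modules, so tensoring it against a decomposition that is not a $W_m(\widetilde A)$-module decomposition does not preserve the direct sum. Conceptually, the obstruction is that $B_J=B/(t_i:i\in J)$ is a quotient of $B$, not a subring, so there is no natural $W(k)$-linear map $W^\dagger\Omega^{\bullet-|J|}_{B_J/k}\to W^\dagger\Lambda^\bullet_{(B,\bN^d)/k}$ in the inclusion direction; the residue map only goes the other way. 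For the polynomial ring this is masked by the coincidence that $A_J=k[T_i:i\notin J]$ is simultaneously a quotient and a subring of $\widetilde A$.

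The paper's proof circumvents this by never splitting the filtration. It filters the \v{C}ech complex $C^\bullet$ by the weight filtration $P_j$ introduced in \S\ref{comparison-log-MW}, uses the overconvergent Poincar\'e residue isomorphism $\Res : Gr_j W^\dagger\Omega^\bullet_{B/k}(D) \simeq \bigoplus_{|J|=j}W^\dagger\Omega^{\bullet-j}_{B_J/k}$ (itself a nontrivial lemma whose proof relies on Lemma \ref{lem:residue-number-of-log-poles} to control the kernel of the presentation map $q:A'\twoheadrightarrow B$), and then deduces exactness of $P_j C^\bullet$ by induction along the short exact sequences $0 \to P_{j-1}C^\bullet\to P_j C^\bullet\to Gr_j C^\bullet\to 0$, invoking \cite{DLZODRW} Proposition 1.6 for the graded pieces. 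If you want to pursue a genuine splitting for general $B$, the argument cannot come from \'etale base change; it would have to go through the presentation $A'\twoheadrightarrow B$ and something like Lemma \ref{lem:residue-number-of-log-poles}, which shows that elements of high log-pole order in $W\Lambda^\bullet_{(A',\bN^d)/k}$ cannot map into a low filtration step of $W\Lambda^\bullet_{(B,\bN^d)/k}$ without mapping to zero. As written, your proof has a genuine gap at the decomposition step, and parts (1) and (3), which lean on it, do not follow. The observation in part (2) that a change of chart shifts generators by overconvergent units $d\log[u_i]$ is a sound idea but, as you have set things up, still relies on the same unjustified decomposition to conclude.
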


\begin{proof}
Let $\{f_i \}_{i=1}^l$ be a finite family of elements of $B$ such that $f_i$ generate $B$ as an ideal.
For $1 \le i_1 < \cdots < i_s \le l$, we denote by $\mathfrak{U}_{i_1,\ldots,i_s}$ the intersection $D(f_{i_1})\cap \cdots \cap D(f_{i_s})$.
For simplicity, we set $B_{i_1\cdots i_s}:=B_{f_{i_1}\cdots f_{i_s}}$.
We define a \v{C}ech complex $C^\bullet = C^\bullet((X,D),\alpha)$ by
$
C^0:=W^\dagger \Lambda_{(B,\bN^d)/k}^r
$
and
$
C^s:=\bigoplus_{1 \le i_1 < \cdots < i_s \le l} W^\dagger \Lambda_{(B_{i_1\cdots i_s},\bN^d)/k}^r.
$
for $s \ge 1$.
Then we see the filtration $\{P_j\}_j$ which we introduced in \S \ref{comparison-log-MW} induces a filtration on $C^\bullet$:
\[
	P_j C^s =
	\bigoplus_{1 \le i_1 < \cdots < i_s \le l}
	P_j W^\dagger \Lambda_{(B_{i_1\cdots i_s},\bN^d)/k}^r.
\]
Set $Gr_j C^\bullet:= P_j C^\bullet/P_{j-1} C^\bullet$. Then one has an exact sequence of complexes
\[
0 \to P_{j-1} C^\bullet \to P_j C^\bullet \to Gr_j C^\bullet \to 0.
\]
By the Poincar\'e residue map, one obtains a commutative diagram
\[
\xymatrix{
	\bigoplus_{1 \le i_1 < \cdots < i_{s-1} \le l} \bigoplus_{|J|=j}W^\dagger \Omega_{B_{i_1\cdots i_{s-1}J}/k}^{r-j} \ar[r]^-{{\Res}^{-1}}_-\sim \ar[d]
	&
	Gr_j C^{s-1} \ar[d] \\
	\bigoplus_{1 \le i_1 < \cdots < i_{s} \le l} \bigoplus_{|J|=j}W^\dagger \Omega_{B_{i_1\cdots i_{s}J}/k}^{r-j} \ar[r]^-{{\Res}^{-1}}_-\sim
	&
	Gr_j C^{s}.
}
\]
Here $B_{i_1\cdots i_{s}J}$ denotes $B_{f_{i_1}\cdots f_{i_s}}/(t_{\alpha_1},\ldots, t_{\alpha_j})$ of $B$ for $J=\{\alpha_1,\ldots,\alpha_{j}\}\subset [1,d]$.

We see that
\[
	\bigoplus_{1 \le i_1 < \cdots < i_{s} \le l}
	\bigoplus_{|J|=j}W^\dagger \Omega_{B_{i_1\cdots i_{s}J}/k}^{r-j}
	\simeq
	\bigoplus_{|J|=j} \widetilde{C}^{j}(\Spec B_J, W^\dagger \Omega_{B_J/k}^{r-j}),
\]
where $\widetilde{C}^\bullet(\Spec B_J, W^\dagger \Omega_{B_J/k}^{r-j})$ is the \v{C}ech complex
with degree $s$ elements given by
\[
	\widetilde{C}^{s}(\Spec B_J, W^\dagger \Omega_{B_J/k}^{r-j})=\bigoplus_{1 \le i_1 < \cdots < i_{s} \le l} W^\dagger \Omega_{B_{i_1\cdots i_{s}J}/k}^{r-j}.
\]
Hence the Poincar\'e residue map induces an isomorphism
\[
	\Res : Gr_j C^{s} \xrightarrow{\sim} \bigoplus_{|J|=j} \widetilde{C}^{s}(\Spec B_J, W^\dagger \Omega_{B_J/k}^{r-j}).
\]
The boundary morphism of $Gr_j C^\bullet$ is identified to the direct sum of boundary morphisms of
$\{ \widetilde{C}^\bullet(\Spec B_J, W^\dagger \Omega_{B_J/k}^{r-j}) \}_{|J|=j}$.
It follows that $Gr_j C^\bullet$ is exact by \cite{DLZODRW} Proposition 1.6.
We find $P_j C^\bullet$ is exact by induction for all $j$.
As $P_d C^\bullet=C^\bullet$, we get (1).

We prove (2). Let $\alpha':\bN^d\to \mathcal{M}_{(X,D)}$ be another chart.
We have an isomorphism $W\Lambda_{(B,\bN^d,\beta_\alpha)/k}^\bullet \simeq W\Lambda_{(B,\bN^d,\beta_{\alpha'})/k}^\bullet$ by Proposition-Definition \ref{prop-def:chart-welldefined}.
Let $t'_i:=\beta_{\alpha'}(e_i)$.
We set $B'_J:=B/(t'_{\alpha_1}\cdots t'_{\alpha_j})$ for $J=\{\alpha_1,\ldots,\alpha_{j}\}\subset [1,d]$.
Since $B_J\simeq B'_J$, we see
\[
W^\dagger \Omega_{B_{i_1\cdots i_{s}J}/k}^{r-j}
\simeq W^\dagger \Omega_{B_{i'_1\cdots i'_{s}J}/k}^{r-j}
\]
for all $1 \le i_1 < \cdots < i_s \le l$.
Hence we obtain an isomorphism $Gr_j C^\bullet((X,D),\alpha)\simeq Gr_j C^\bullet((X,D),\alpha')$. Using the exact sequence and induction, we see that
\[
C^\bullet((X,D),\alpha)\simeq C^\bullet((X,D),\alpha').
\]
This shows (2).

(3) is deduced from the exactness of the \v{C}ech complex and
Cartan's criterion (\cite{G}, Th\'eor\`eme 5.9.2),
which allows us to compute the Zariski cohomology of the sheaf of abelian groups
$W^\dagger \Lambda_{(X,D)/k}^r$ via \v{C}ech cohomology in our situation.
\end{proof}

\begin{defn}
Let $X$ be a smooth scheme over $k$ and $D$ be a simple normal crossing divisor on $X$.
Then for any point $x$ of $X$,
there is an affine neighbourhood $U$ of $x$ in $X$
such that the log scheme $(U,D|_U)$ admits a chart of the form $\bN^d\to \mathcal{M}_{(U,D|_U)}$
for some $d$ (cf. \cite{KatoLog}, \S \ref{subsect:topology-of-log-structures})
and that there is an \'etale morphism $U\to \mathbb{A}_k^n$ for some $n$.

By Proposition \ref{prop:OdRW-well-defined-affine},
the Zariski sheaves $W^\dagger \Lambda_{(U,D|_U)/k}^r$ glues together to give a Zariski sheaf $W^\dagger \Lambda_{(X,D)/k}^r$.

We call $W^\dagger \Lambda_{(X,D)/k}^\bullet$ the sheaf of overconvergent log de Rham-Witt complexes.
\end{defn}

\subsection{Comparison with rigid cohomology}
We generalize our results to global cases.
Let $X$ be a smooth quasi-projective variety over a perfect field $k$
and $D$ an SNCD of $X$ over $k$.
Let $j:Y:=X\setminus D\hookrightarrow X$ be the canonical open immersion.
We have the overconvergent de Rham-Witt complex $W^\dagger \Omega_{Y/k}^\bullet$ for a smooth variety $Y$ (\cite{DLZODRW} \S 1),
and the overconvergent log de Rham-Witt complex $W^\dagger \Lambda_{(X,D)/k}^\bullet$ for a smooth variety with SNCD.
The canonical morphism
$W\Lambda_{(X,D)/k}^\bullet \to j_*W \Omega_{Y/k}^\bullet$
induces the map
$W^\dagger \Lambda_{(X,D)/k}^\bullet\to j_*W^\dagger \Omega_{Y/k}^\bullet$.
Davis-Langer-Zink defined a map from the rigid cohomology to the overconvergent de Rham-Witt cohomology
\[
	\mathbb{R}\Gamma_{\rig}(Y/K)
	\to
	\mathbb{R}\Gamma_{\Zar}(Y,W^\dagger \Omega_{Y/k}^\bullet) \otimes K
\]
and showed this is a quasi-isomorphism when $Y$ is smooth and quasi-projective over $k$ (\cite{DLZODRW} Theorem 4.40).

\begin{lem}
We have a canonical morphism
\[
	\mathbb{R}\Gamma_{\Zar}(X,W^\dagger \Lambda_{(X,D)/k}^\bullet)
	\to
	\mathbb{R}\Gamma_{\Zar}(Y,W^\dagger \Omega_{Y/k}^\bullet).
\]
\end{lem}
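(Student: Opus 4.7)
The plan is to build the desired morphism by composing three standard constructions, starting from the canonical map of complexes of Zariski sheaves
\[
W^{\dagger}\Lambda_{(X,D)/k}^{\bullet}\;\longrightarrow\;j_{*}W^{\dagger}\Omega_{Y/k}^{\bullet}
\]
that was singled out in the paragraph preceding the lemma. First I would apply the functor $\mathbb{R}\Gamma_{\Zar}(X,-)$ to this morphism, obtaining a morphism
\[
\mathbb{R}\Gamma_{\Zar}(X,W^{\dagger}\Lambda_{(X,D)/k}^{\bullet})
\;\longrightarrow\;
\mathbb{R}\Gamma_{\Zar}(X,\, j_{*}W^{\dagger}\Omega_{Y/k}^{\bullet})
\]
in the derived category of $K$-vector spaces (well-defined since both sides are complexes of sheaves of abelian groups on $X_{\Zar}$).

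Next I would compose with the natural map $j_{*}\to \mathbb{R}j_{*}$ applied termwise, which yields
\[
\mathbb{R}\Gamma_{\Zar}(X,\, j_{*}W^{\dagger}\Omega_{Y/k}^{\bullet})
\;\longrightarrow\;
\mathbb{R}\Gamma_{\Zar}(X,\, \mathbb{R}j_{*}W^{\dagger}\Omega_{Y/k}^{\bullet}).
\]
Finally, the composition-of-derived-functors identity $\mathbb{R}\Gamma_{\Zar}(X,-)\circ \mathbb{R}j_{*} \simeq \mathbb{R}\Gamma_{\Zar}(Y,-)$, which holds since $j\colon Y\hookrightarrow X$ is an open immersion, gives the canonical isomorphism
\[
\mathbb{R}\Gamma_{\Zar}(X,\, \mathbb{R}j_{*}W^{\dagger}\Omega_{Y/k}^{\bullet})
\;\simeq\;
\mathbb{R}\Gamma_{\Zar}(Y,\, W^{\dagger}\Omega_{Y/k}^{\bullet}).
\]
Composing these three arrows produces the required morphism.

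There is no real obstacle here: the only mild care needed is to interpret $\mathbb{R}\Gamma_{\Zar}(X,-)$ uniformly on complexes (e.g.\ via a functorial injective or flasque resolution) so that the arrow $j_{*}\to \mathbb{R}j_{*}$ extends termwise to a morphism of complexes in the derived category, and to verify that the construction does not depend on the choice of resolution. One could, if desired, spell this out by choosing a Cartan--Eilenberg or flasque resolution of $W^{\dagger}\Omega_{Y/k}^{\bullet}$ on $Y_{\Zar}$, pushing forward under $j_{*}$, and comparing with an injective resolution of $j_{*}W^{\dagger}\Omega_{Y/k}^{\bullet}$ on $X_{\Zar}$, but this is a routine bookkeeping exercise rather than a substantive difficulty.
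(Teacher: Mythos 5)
Your proposal follows the paper's proof essentially verbatim: apply $\mathbb{R}\Gamma_{\Zar}(X,-)$ to the map $W^{\dagger}\Lambda_{(X,D)/k}^{\bullet}\to j_{*}W^{\dagger}\Omega_{Y/k}^{\bullet}$, compose with the natural transformation $j_{*}\to\mathbb{R}j_{*}$, and invoke $\mathbb{R}\Gamma_{\Zar}(X,-)\circ\mathbb{R}j_{*}\simeq\mathbb{R}\Gamma_{\Zar}(Y,-)$. One small slip: at this stage the complexes are sheaves of $W(k)$-modules (the scalar extension to $K$ happens only later in the text), so ``derived category of $K$-vector spaces'' should be ``derived category of $W(k)$-modules'' or of abelian groups.
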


\begin{proof}
Take a quasi-isomorphism $W^\dagger \Omega_{Y/k}^\bullet \to I^\bullet$
to a complex of injective abelian sheaves on $Y$.
Applying $j_*$, we have a natural map
$j_* W^\dagger \Omega_{Y/k}^\bullet \to \mathbb{R}j_* W^\dagger \Omega_{Y/k}^\bullet$.
The morphism we want is the composition
\begin{align*}
	\tau:
	\mathbb{R}\Gamma_{\Zar}(X,W^\dagger \Lambda_{(X,D)/k}^\bullet)
	&\to \mathbb{R}\Gamma_{\Zar}(X,j_* W^\dagger \Omega_{Y/k}^\bullet) \\
	&\to \mathbb{R}\Gamma_{\Zar}(X,\mathbb{R}j_* W^\dagger \Omega_{Y/k}^\bullet) \\
	&\simeq \mathbb{R}\Gamma_{\Zar}(Y,W^\dagger \Omega_{Y/k}^\bullet).
\end{align*}
\end{proof}

By this lemma we have a diagram
\[
\xymatrix{
	&
	\mathbb{R}\Gamma_{\Zar}(X,W^\dagger \Lambda_{(X,D)/k}^\bullet) \otimes K \ar[d] \\
	\mathbb{R}\Gamma_{\rig}(Y/K) \ar[r]
	&
	\mathbb{R}\Gamma_{\Zar}(Y,W^\dagger \Omega_{Y/k}^\bullet)\otimes K.
}
\]
We show that the vertical arrow is a quasi-isomorphism.
Take an open covering $\{X^{i}\}_{i\in I}$ of $X$ by affine schemes $X^{i}=\Spec A^i$
which satisfy the following condition:
There is an \'etale morphism $X^i\to \mathbb{A}_k^{r_i}$
and $D^i:=X^i\cap D$ is defined by $t_1\cdots t_s=0$ for some $s\le r_i$,
where $t_j$ is the image of $T_i$ of $\mathbb{A}_k^{r_i}$ in $A^i$.
$A^i$ has a smooth lifting $\widetilde{A}^i$ over $W(k)$
which has an \'etale morphism $\widetilde{X}^i = \Spec \widetilde{A}^i \to \mathbb{A}_{W(k)}^{r_i}$
such that $\widetilde{D}^i$ defined by $t_1,\ldots,t_s$ is a lifting of $D^i$.

For a subset $\underline{i}\subset I$
we set $X^{\underline{i}}:=\bigcap_{j\in \underline{i}}X^i, D^{\underline{i}}:=X^{\underline{i}}\cap D, Y^{\underline{i}}:=X^{\underline{i}}\setminus D^{\underline{i}}$.
Since $X$ is quasi-projective, each $X^{\underline{i}}$ is a smooth quasi-projective affine scheme and it satisfies the condition indicated above.
We form a simplicial scheme $X^\bullet$ by $X^n:=X' \times_X \ldots \times_X X'$ ($n$-times),
where $X':=\sqcup_{i\in I}X^i$. The simplicial scheme $Y^\bullet$ is defined in the similar fashion.

Consider the following commutative diagram of simplicial schemes:
\[
\xymatrix{
	Y^\bullet \ar[r]^{\theta_Y} \ar[d]_{j^\bullet}
	&
	Y \ar[d]^j
	\\
	X^\bullet \ar[r]^{\theta_X}
	&
	X.
}
\]

This diagram induces the following diagram:
\[
\xymatrix{
	W^\dagger \Lambda_{(X,D)/k}^\bullet \otimes K \ar[rr]^\sim \ar[d]
	&
	&
	\mathbb{R}\theta_{X*}(W^\dagger \Lambda_{(X^\bullet,D^\bullet)/k}^\bullet )\otimes K \ar[d]
	\\
	j_* (W^\dagger \Omega_{Y/k}^\bullet )\otimes K  \ar[d]
	&
	&
	\mathbb{R}\theta_{X*}((j^\bullet)_* W^\dagger \Omega_{Y^\bullet/k}^\bullet )\otimes K \ar[d]
	\\
	\mathbb{R}j_* (W^\dagger \Omega_{Y/k}^\bullet )\otimes K \ar[r]^-\sim
	&
	\mathbb{R}j_* (\mathbb{R}\theta_{Y*}W^\dagger \Omega_{Y^\bullet/k}^\bullet )\otimes K \ar[r]^-\sim
	&
	\mathbb{R}\theta_{X*}(\mathbb{R}(j^\bullet)_* W^\dagger \Omega_{Y^\bullet/k}^\bullet )\otimes K.
}
\]
By Proposition \ref{prop:affine-comparison-open}, we see
\[
	\mathbb{R}\theta_{X*}(W^\dagger \Lambda_{(X^\bullet,D^\bullet)/k}^\bullet )\otimes K
	\to
	\mathbb{R}\theta_{X*}((j^\bullet)_* W^\dagger \Omega_{Y^\bullet/k}^\bullet )\otimes K
\]
is an isomorphism.

Since $Y^\bullet$ is an affine simplicial scheme,
we conclude $\mathbb{R}^q (j^\bullet)_* W^\dagger \Omega_{Y^\bullet/k}^\bullet =0$
for all $q > 0$ and all $r$.
Hence we have $(j^\bullet)_* W^\dagger \Omega_{Y^\bullet/k}^\bullet
\simeq \mathbb{R}(j^\bullet)_* W^\dagger \Omega_{Y^\bullet/k}^\bullet$.

Hence we find the morphism
\[
	W^\dagger \Lambda_{(X,D)/k}^\bullet \otimes K
	\to
	\mathbb{R}j_* (W^\dagger \Omega_{Y/k}^\bullet )\otimes K
\]
is a quasi-isomorphism.
Therefore, we get the following comparison theorem.

\begin{thm}
Let $X$ be a smooth quasi-projective variety over
a perfect field $k$ and $D$ be a simple normal crossing divisor of $X$.
Then we have an isomorphism
\[
	H^*_{\rig}(Y/K)
	\simeq
	\mathbb{H}^*_{\Zar}(X,W^\dagger \Lambda_{(X,D)/k}^\bullet \otimes_{W(k)}K).
\]
\end{thm}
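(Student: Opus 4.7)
The plan is to reduce the global statement to the affine comparison result of Proposition~\ref{prop:affine-comparison-open} by means of a \v{C}ech/simplicial descent argument, and then to invoke the Davis--Langer--Zink comparison between rigid cohomology and overconvergent de~Rham--Witt cohomology (\cite{DLZODRW} Theorem 4.40). The target of the proof is to show that the composition
\[
\mathbb{R}\Gamma_{\rig}(Y/K)
\xrightarrow{\text{DLZ}}
\mathbb{R}\Gamma_{\Zar}(Y,W^\dagger\Omega_{Y/k}^\bullet)\otimes K
\xleftarrow{\tau\otimes K}
\mathbb{R}\Gamma_{\Zar}(X,W^\dagger\Lambda_{(X,D)/k}^\bullet)\otimes K
\]
is a pair of quasi-isomorphisms; the left arrow is given by loc.\ cit., so only the right arrow needs to be handled.

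First, since $X$ is quasi-projective and $D$ is an SNCD, I would choose an affine open cover $\{X^i=\Spec A^i\}_{i\in I}$ such that each $X^i$ admits an \'etale map to $\mathbb{A}_k^{r_i}$ whose first $s_i$ coordinates cut out the trace of $D$ on $X^i$, and such that each $A^i$ has a smooth $W(k)$-lift $\widetilde A^i$ with a compatible divisor lift. Quasi-projectivity of $X$ guarantees that every finite intersection $X^{\underline i}$ is again affine and inherits exactly the same type of local data, so Proposition~\ref{prop:affine-comparison-open} applies to each $(X^{\underline i},D^{\underline i})$. Form the \v{C}ech nerves $X^\bullet$ and $Y^\bullet$ with augmentations $\theta_X:X^\bullet\to X$, $\theta_Y:Y^\bullet\to Y$, and the induced simplicial open immersion $j^\bullet: Y^\bullet\to X^\bullet$.

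The key step is to show that the natural map $W^\dagger\Lambda_{(X,D)/k}^\bullet\otimes K \to \mathbb{R}j_\ast(W^\dagger\Omega_{Y/k}^\bullet)\otimes K$ is a quasi-isomorphism. Cohomological descent gives $W^\dagger\Lambda_{(X,D)/k}^\bullet\otimes K\simeq \mathbb{R}\theta_{X\ast}W^\dagger\Lambda_{(X^\bullet,D^\bullet)/k}^\bullet\otimes K$ and $\mathbb{R}j_\ast W^\dagger\Omega_{Y/k}^\bullet \otimes K\simeq \mathbb{R}\theta_{X\ast}\mathbb{R}j^\bullet_\ast W^\dagger\Omega_{Y^\bullet/k}^\bullet\otimes K$. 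Since every $Y^{\underline i}$ is affine, Proposition~\ref{prop:OdRW-well-defined-affine}(3) yields the vanishing of $\mathbb{R}^q j^\bullet_\ast W^\dagger\Omega_{Y^\bullet/k}^\bullet$ for $q>0$, so the derived pushforward collapses to the ordinary one. It then suffices to verify the affine comparison termwise, which is exactly Proposition~\ref{prop:affine-comparison-open}(2) applied to each pair $(X^{\underline i},D^{\underline i})$; the compatibility of these isomorphisms with the restriction maps of the simplicial object follows from the functoriality of the construction of $\theta^\dagger$ there.

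The main technical obstacle I expect is the justification of the cohomological descent isomorphism $W^\dagger\Lambda_{(X,D)/k}^\bullet\otimes K\simeq \mathbb{R}\theta_{X\ast}W^\dagger\Lambda_{(X^\bullet,D^\bullet)/k}^\bullet\otimes K$, i.e.\ the vanishing of higher \v{C}ech cohomology for the sheaf $W^\dagger\Lambda_{(X,D)/k}^r$ on the chosen affine cover; Proposition~\ref{prop:OdRW-well-defined-affine}(3) delivers this on a single affine, but to apply it globally one must additionally verify that the restriction of $W^\dagger\Lambda_{(X,D)/k}^r$ to any affine $X^{\underline i}$ in the cover agrees with the sheaf constructed intrinsically from the pre-log ring $(A^{\underline i},\bN^{d_{\underline i}})$; this is the globalization baked into the definition of $W^\dagger\Lambda_{(X,D)/k}^\bullet$. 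Once that is in hand, combining $\tau\otimes K$ (now known to be a quasi-isomorphism) with the DLZ comparison yields the stated isomorphism $H^\ast_{\rig}(Y/K)\simeq \mathbb{H}^\ast_{\Zar}(X,W^\dagger\Lambda_{(X,D)/k}^\bullet\otimes_{W(k)}K)$.
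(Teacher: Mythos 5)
Your proposal is correct and mirrors the paper's own proof almost step for step: the same choice of affine cover with \'etale coordinates and $W(k)$-lifts, the same \v{C}ech/simplicial descent reduction, the termwise application of Proposition~\ref{prop:affine-comparison-open}(2), the collapse of $\mathbb{R}(j^\bullet)_*$ to $(j^\bullet)_*$ via affineness, and the final invocation of \cite{DLZODRW} Theorem 4.40. The only tiny inaccuracy is a citation: the vanishing of $\mathbb{R}^q j^\bullet_* W^\dagger\Omega_{Y^\bullet/k}^\bullet$ for $q>0$ is the non-log statement, which in the paper is really \cite{DLZODRW} Proposition 1.6 rather than Proposition~\ref{prop:OdRW-well-defined-affine}(3) — though that proposition with trivial log structure ($d=0$) gives the same conclusion, so the argument is unaffected.
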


\bibliographystyle{alpha}
\bibliography{ref.bib}

\end{document}